\newtheorem{theorem}{Theorem}[section]
\newtheorem*{theorem*}{Theorem}
\newtheorem{corollary}[theorem]{Corollary}
\newtheorem{lemma}[theorem]{Lemma}
\newtheorem{rem}[theorem]{Remark}
\newtheorem{proposition}[theorem]{Proposition}
\newtheorem{claim}{Claim}
\newtheorem{claimp}{Claim}
\theoremstyle{definition}
\newcommand{\rr}{\mathbb{R}}
\newcommand{\nn}{\mathbb{N}}
\newcommand{\ee}{\varepsilon}
\newcommand{\supp}{\mathrm{supp}}
\newcommand{\upp}{\upharpoonright}
\newcommand{\ran}{\text{ran}}
\begin{document}

\title[Uniform Upper Estimates]{Uniform upper estimates and the \\ repeated averages hierarchy}

\begin{abstract} We use the repeated averages hierarchy to prove a Ramsey theorem regarding uniform upper estimates of convex block sequences of weakly null sequences.  The base case of the theorem recovers a result of Freeman.

\end{abstract}

\author{R.M. Causey}
\address{Department of Mathematics, Miami University, Oxford, OH 45056, USA}
\email{causeyrm@miamioh.edu}

\thanks{2010 \textit{Mathematics Subject Classification}. Primary: 46B03; Secondary: 46B06.}
\thanks{\textit{Key words}: Uniform boundedness, Ramsey theory, repeated averages hierarchy}

\maketitle

\section{Introduction}

In \cite{KO},\cite{KO2} Knaust and Odell proved the following.

\begin{theorem} Let $X$ be a Banach space. \begin{enumerate}[(i)]\item\cite{KO} If every normalized, weakly null sequence in $X$ has a subsequence dominated by the canonical $c_0$ basis, then there exists a constant $C$ such that every normalized, weakly null sequence in $X$ has a subsequence $C$-dominated by the canonical $c_0$ basis. \item \cite{KO2} For $1<p<\infty$, if every normalized, weakly null sequence in $X$ has a subsequence dominated by the canonical $\ell_p$ basis, then there exists a constant $C$ such that every normalized, weakly null sequence in $X$ has a subsequence $C$-dominated by the canonical $\ell_p$ basis.
\end{enumerate}

\label{veryspec}
\end{theorem}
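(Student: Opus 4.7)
The plan is a proof by contradiction. Assume the conclusion of (i) fails; then for each $n\in\nn$ one can fix a normalized weakly null sequence $(x^n_k)_{k=1}^\infty$ in $X$ such that for every infinite $M\subseteq\nn$ there exist a finite $F\subseteq M$ and scalars $(a_i)_{i\in F}$ with
\[
\Bigl\|\sum_{i\in F}a_i x^n_i\Bigr\| > n\max_{i\in F}|a_i|.
\]
The idea is to interleave the family $\{(x^n_k)_k : n\in\nn\}$ into a single normalized weakly null sequence $(y_m)_m$ in $X$ to which the standing hypothesis can then be applied, and to pull the resulting finite domination constant back onto the strata to derive a contradiction.

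Concretely, I would fix a map $f:\nn\to\nn$ with each $f^{-1}(n)$ infinite and set $y_m:=x^{f(m)}_{g(m)}$, choosing $g(m)$ recursively so as to force weak-nullity of $(y_m)$. Passing without loss of generality to the separable closed linear span of the whole family, the dual unit ball is weak-$*$ metrizable and contains a countable weak-$*$ dense subset $\{\phi_j\}_j$; picking $g(m)$ so that $|\phi_j(x^{f(m)}_{g(m)})|<2^{-m}$ for all $j\le m$ delivers weak-nullity, and normality is automatic. This is the step where the repeated averages hierarchy enters as the organizing principle: averaging along Schreier families produces weakly null convex-block sequences that respect a prescribed stratification of the index set and supplies the Ramsey-theoretic extraction guiding the choice of $g$. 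Applying the hypothesis to $(y_m)$ yields an infinite $N\subseteq\nn$ and a finite constant $C$ with $(y_m)_{m\in N}$ being $C$-dominated by the canonical $c_0$-basis; fixing any $n>C$ and, by a pigeonhole refinement of $N$ if necessary, arranging $N\cap f^{-1}(n)$ to be infinite, one restricts the $C$-domination estimate to scalars supported on $N\cap f^{-1}(n)$.

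This exhibits $(x^n_{g(i)})_{i\in N\cap f^{-1}(n)}$ as a $C$-dominated subsequence of $(x^n_k)_k$ with $C<n$, contradicting the defining property of $(x^n_k)_k$ and proving (i). The proof of (ii) is formally identical after replacing $\max_{i\in F}|a_i|$ by $\bigl(\sum_{i\in F}|a_i|^p\bigr)^{1/p}$ throughout. The main obstacle is the construction of $(y_m)$: achieving simultaneously (a) normality and weak-nullity of $(y_m)$ so that the hypothesis applies, and (b) ``stratum faithfulness'' so that norm estimates on $(y_m)$ restrict to the corresponding estimates on each $(x^n_k)_k$. The repeated averages hierarchy is the tool designed to reconcile these dual requirements, and its use here explains the paper's framework.
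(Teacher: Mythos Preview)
Your argument has a genuine gap at the pigeonhole step. After obtaining $N$ and $C$ with $(y_m)_{m\in N}$ $C$-dominated by the $c_0$ basis, you assert that ``by a pigeonhole refinement of $N$'' one can arrange $N\cap f^{-1}(n)$ to be infinite for some $n>C$. This is false in general: $N$ could meet each fibre $f^{-1}(n)$ in a finite set (take one index from each fibre), or $N$ could lie entirely inside $\bigcup_{n\le C} f^{-1}(n)$. In either case no stratum with $n>C$ receives an infinite piece of $N$, and no contradiction follows. Since the hypothesis gives you no a priori bound on $C$, you cannot arrange in advance for $N$ to concentrate on a high stratum, and $c_0$-domination of further subsequences of $(y_m)_{m\in N}$ does not improve the constant. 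Your invocation of the repeated averages hierarchy is also misplaced: that hierarchy produces norm-null \emph{convex block} sequences and is irrelevant to the $\xi=0$ subsequence case; weak nullity of $(y_m)$ already follows from your functional argument alone.

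The paper's route is structurally different. Rather than interleave, it introduces an ordinal-indexed gauge $\Gamma(\zeta)$ measuring the worst constant needed for upper $s$-estimates on index sets lying in $\mathcal F_\zeta[\mathcal P]$, shows $\zeta\mapsto\Gamma(\zeta)$ is non-decreasing and continuous on $[0,\omega_1]$, and deduces that if the uniform bound fails there is a minimal countable limit ordinal $\gamma$ with $\Gamma(\gamma)=\infty$ but $\Gamma(\zeta)<\infty$ for all $\zeta<\gamma$. One then selects bad sequences $\varrho_l\in B_R$ at levels $\mu_l<\gamma$ with badness $D_l\to\infty$, and the contradiction comes from the \emph{weighted sum} $\varrho=\sum_l D_l^{-1/2}\varrho_l\in B_R$ via a gliding-hump estimate modelled on the Principle of Uniform Boundedness (Section~2 of the paper). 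The finite quantities $\Gamma(\mu_k+1)$ play the role of the operator norms $\|A_n\|$ in that analogy, supplying uniform control on the later $\varrho_l$ over $\mathcal F_{\mu_k}$-sized supports; it is precisely this control---unavailable in your interleaving scheme---that makes the tail estimates close.
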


In \cite{F}, Freeman proved the fully general extension.

\begin{theorem} Let  $X$ be a Banach space and $(g_n)_{n=1}^\infty$  a seminormalized Schauder basic sequence.  Suppose  that  every weakly null sequence  in $X$ has a subsequence dominated by $(g_n)_{n=1}^\infty$.  Then there exists a constant $C$ such that any weakly null sequence in $B_X$ has a subsequence which is $C$-dominated by $(g_n)_{n=1}^\infty$.

\label{spec1}
\end{theorem}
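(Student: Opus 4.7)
The plan is a proof by contradiction: assuming no uniform constant $C$ exists, I will construct a single weakly null sequence in $B_X$ which admits no $(g_n)$-dominated subsequence at any constant, contradicting the hypothesis directly.

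Suppose the conclusion fails. Then for each $n \in \nn$ one can fix a weakly null sequence $(x^{(n)}_k)_{k=1}^\infty \subset B_X$ none of whose subsequences is $n$-dominated by $(g_j)$. The main step is to amalgamate these sequences into a single weakly null sequence $(z_j) \subset B_X$ that inherits the bad witnesses at every level $n$ simultaneously. I would proceed in two stages. First, for each $n$, apply a Ramsey-type stabilization along the Schreier family $\mathcal{S}_1$ and extract a subsequence of $(x^{(n)}_k)_k$ on which the finite combinations witnessing failure of $n$-domination have a uniform combinatorial shape---for instance, near-equal convex combinations supported on an $\mathcal{S}_1$-set, which is precisely the base case of the repeated averages hierarchy. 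Second, concatenate tail-blocks $(x^{(n)}_k)_{k \in K_n}$, choosing $\min K_n \to \infty$ fast enough that $(z_j)$ is weakly null; weak nullity is enforced by a diagonal argument against a countable norm-dense subset of the dual of the separable closed linear span of all the $x^{(n)}_k$.

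The main obstacle is that for any subsequence $(z_{j_i})$ of $(z_j)$, the terms lying in block $n$ form a further subsequence of the stabilized $(x^{(n)}_k)_k$, but the norm comparison with $(g_j)$ uses the new indexing of $(z_{j_i})$ in $(z_j)$, not the original indexing within $(x^{(n)}_k)$. Reconciling this mismatch uses the basic constant of $(g_n)$ together with the rigidity from the first stage: on the stabilized subsequence every $\mathcal{S}_1$-convex combination yields a near-optimal failure, and the factor of $n$ cannot be absorbed by the bounded re-indexing overhead. Hence $(z_j)$ admits no $C$-dominated subsequence for any $C$, providing the desired contradiction.
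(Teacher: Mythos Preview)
Your approach has a genuine gap at both stages. In Stage~1, there is no reason the failure of $n$-domination should be witnessed on $\mathcal{S}_1$-sets or by near-equal convex combinations: for a general $(g_j)$ the witnessing linear combinations may require supports of arbitrary (and growing) combinatorial complexity, as the $\ell_2$-versus-$c_0$ and Tsirelson-versus-$c_0$ examples in Section~2 illustrate. No Ramsey stabilization on a fixed family such as $\mathcal{S}_1$ can force a uniform shape. In Stage~2, the re-indexing obstacle is not dissolved by the basic constant: that constant bounds projections but says nothing about how $\|\sum a_i g_{k_i}\|$ compares to $\|\sum a_i g_i\|$ for an arbitrary subsequence $(g_{k_i})$, which is a spreading-type property $(g_j)$ need not enjoy. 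And the amalgamation itself is ill-posed: if each $K_n$ is finite, any infinite subsequence of $(z_j)$ meets each block in only finitely many terms and so cannot inherit the badness of any single $(x^{(n)}_k)_k$; if each $K_n$ is infinite, there is no concatenation into a single sequence.

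The paper's route is different in kind. It defines an ordinal-indexed gauge $\Gamma(\zeta)$, the worst constant needed for upper $s$-estimates on combinations whose supports lie in the fine Schreier family $\mathcal{F}_\zeta$, and shows that if uniform estimates fail there is a minimal countable limit ordinal $\gamma$ with $\Gamma(\gamma)=\infty$ but $\Gamma(\zeta)<\infty$ for all $\zeta<\gamma$. One then selects $\varrho_n\in B_R$ and $\mu_n<\gamma$ so that $\varrho_n$ is $D_n$-bad on $\mathcal{F}_{\mu_n}$-sets, and forms the \emph{weighted sum} $\varrho=\sum D_n^{-1/2}\varrho_n$ in the Banach space $R$ (not a concatenation). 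The crucial point---the analogue of the bound $\|A_n\|<\infty$ in the uniform-boundedness gliding hump---is that for $l>k$ the later $\varrho_l$ obey the \emph{uniform} bound $\Gamma(\mu_k+1)<\infty$ on $\mathcal{F}_{\mu_k}$-supported combinations, which is exactly what makes the tail estimate close. Your scheme has no mechanism producing such uniform control over the later bad sequences, and without it the gliding hump cannot be completed.
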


The  Mazur lemma states that a sequence in some Banach space is weakly null if and only if every subsequence of the sequence has a norm null convex block sequence.  A Banach space is said to have the  \emph{weak Banach-Saks property} if each of its weakly null sequences has a subsequence whose Cesaro means converge to zero in norm. Having the weak Banach-Saks property is equivalent to the property that every weakly null sequence in the space has a norm null convex block sequence such that for each convex combination, the convex coefficients are equal.  Schreier \cite{S} gave an example of a Banach space lacking the weak Banach-Saks property, prompting the question of quantifying the complexity of supports and coefficients required to witness weak nullity via the Mazur lemma. In \cite{AMT}, Argyros, Mercourakis, and Tsarpalias solved this problem by introducing the repeated averages hierarchy. The repeated averages hierarchy is an ordinal-indexed collection of families of convex coefficients such that blockings with respect to the zero level simply corresponds to taking subsequences, blockings with respect to the first level corresponds to Cesaro means, blockings with respect to the second level corresponds to Cesaro means of the Cesaro means, etc. In \cite{AMT}, the authors defined the Banach-Saks index of a weakly null sequence, corresponding to the minimum level of the hierarchy required to witness weak nullity via the Mazur lemma.  Furthermore, they showed that for any ordinal $\xi<\omega_1$, there is a weakly null sequence whose Banach-Saks index exceeds $\xi$, a result the $\xi=1$ case of which corresponds to Schreier's example. Since the introduction of the repeated averages hierarchy, a number of classical results, such as Rosenthal's characterization \cite{R} of when a weakly null sequence admits a subsequence generating a spreading model isomorphic to $\ell_1$ and Elton's theorem \cite{E} on near unconditionality, have seen transfinite generalizations using the repeated averages hierarchy. We recall that Elton's theorem states that every normalized, weakly null sequence has a \emph{nearly unconditional} subsequence. We also recall that the subsequences of a given sequence are simply the level zero blockings of that sequence with respect to the repeated averages hierarchy.  Argyros and Gasparis \cite{AG} proved an ordinal quantified verison of Elton's theorem such that the statement corresponding to the ordinal $\xi$ replaces the level zero blocking with the level $\xi$ blocking.

The goal of this work is to provide such a treatment to  Freeman's theorem.  Freeman's theorem has hypotheses and conclusions concerning sequences and their subsequences, which corresponds to level zero blockings with respect to the repeated averages hierarchy. We wish to prove in full generality the corresponding result for level $\xi$ blockings. 
The case $\xi=0$ of Theorem \ref{spec1} recovers the theorem of Freeman, as discussed in Section \ref{ex}.  Our proof  avoids the intermediate use of $C(K)$ spaces which was present in the arguments of both \cite{KO} and \cite{F}. 
Furthermore, it provides ordinal-quantified information not contained in those proofs. By this, we mean that if uniform estimates fail, they must fail in a quantifiable way at a countable ordinal. Moreover, it turns out to be more convenient to work in more generality than in the class of normalized, weakly null sequences. We now discuss the general setting in which we will work.

For a Banach space $X$, $x\in X$, and $n\in\nn$, we let $x\otimes e_n$ denote the sequence whose $n^{th}$ term is $x$, and each other term is zero. We denote by the formal series $\sum_{n=1}^\infty x_n\otimes e_n$ the sequence $(x_1, x_2, \ldots, )$.   We let $c_{00}(X)$ denote the span of $\{x\otimes e_n: x\in X, n\in\nn\}$ in $\ell_\infty(X)$.  For a Banach space $X$, we say a Banach space $(R,r)$ with $c_{00}(X)\subset R\subset \ell_\infty(X)$ is a \emph{subsequential space on} $X$ provided that, with $B_R=\{\varsigma\in R: r(\varsigma)\leqslant 1\}$,  \begin{enumerate}[(i)]\item $B_R\subset B_{\ell_\infty(X)}$, 

\item if $\varsigma\in B_R$, then every subsequence of $\varsigma$ is also in $B_R$. \end{enumerate}

Let us say a norm $s$ on $c_{00}(X)$ is \emph{bimonotone}  if \begin{enumerate}[(i)]\item for any $x\in X$ and $n\in\nn$, $s(x\otimes e_n)=\|x\|$,   \item for $\varsigma=(x_n)_{n=1}^\infty \in c_{00}(X)$, $$s(\varsigma)=\sup_{l\leqslant m} s\Bigl(\sum_{n=l}^m x_n\otimes e_n\Bigr)=\lim_m s\Bigl(\sum_{n=1}^m x_n\otimes e_n\Bigr) .$$  \end{enumerate}  In this case, we may define $S$ to be the subspace of $\ell_\infty(X)$ consisting of those $(x_n)_{n=1}^\infty\in \ell_\infty(X)$ such that $$\sup_m s\Bigl(\sum_{n=1}^m x_n\otimes e_n\Bigr)<\infty.$$  We may then extend $s$ to $S$ by letting $$s\bigl((x_n)_{n=1}^\infty \bigr)=\sup_m s\Bigl(\sum_{n=1}^m x_n\otimes e_n\Bigr)$$ for $(x_n)_{n=1}^\infty \in S\setminus c_{00}(X)$.  We refer to $S$ as the \emph{natural domain} of $s$. When a bimonotone norm $s$ on $c_{00}(X)$ is given, we will let $S$ denote the space constructed from $s$ in this way. We note that $(S,s)$ is a Banach space, and the inequality in $(ii)$ remains valid for any $\varsigma\in S$.

We are now ready to state the $\xi=0$  case of our main theorem, which generalizes Theorem \ref{spec1}.

\begin{theorem} Let $X$ be a Banach space, $(R, r)$ a subsequential space on $X$, and $s$ a bimonotone norm on $c_{00}(X)$ with natural domain $S$.  The following are equivalent. \begin{enumerate}\item Every member of $R$ has a subsequence which is a member of $S$. \item For any $\varsigma\in R$, there exist a constant $C$ and a subsequence $\varrho$ of $\varsigma$ such that every subsequence of $\varrho$ lies in $CB_S$. \item There exists a constant $C$ such that for every $\varsigma\in B_R$, there exists a subsequence $\varrho$ of $\varsigma$ such that every subsequence of $\varrho$ lies in $CB_S$.  \end{enumerate}

\label{main1}

\end{theorem}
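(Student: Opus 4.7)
The implications $(3)\Rightarrow(2)\Rightarrow(1)$ are immediate: $(3)\Rightarrow(2)$ follows by rescaling $\varsigma/r(\varsigma)\in B_R$ and multiplying the uniform constant by $r(\varsigma)$, and $(2)\Rightarrow(1)$ holds because the subsequence $\varrho$ produced by (2) already lies in $CB_S\subset S$. The content therefore resides in $(1)\Rightarrow(3)$, which I propose to split into an intermediate step $(1)\Rightarrow(2)$ followed by the uniform-constant upgrade $(2)\Rightarrow(3)$.

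For $(1)\Rightarrow(2)$, I argue by contrapositive. Given $\varsigma=(x_n)_n\in R$, define $\phi(M):=\sup_k s\bigl(\sum_{i=1}^k x_{m_i}\otimes e_i\bigr)$ for $M=\{m_1<m_2<\cdots\}\in[\nn]^\omega$; by the monotonicity built into bimonotonicity this is the $s$-norm of the $M$-subsequence of $\varsigma$. The failure of (2) means that for every infinite $N$ and every $C$ there is $M\in[N]^\omega$ with $\phi(M)>C$. Apply the Galvin--Prikry Ramsey theorem to the $G_\delta$ set $B=\{M:\phi(M)=\infty\}$: either some $N$ satisfies $[N]^\omega\subset B$, in which case the $N$-subsequence of $\varsigma$ lies in $R$ by axiom (ii) yet has no subsequence in $S$, directly contradicting (1); or $[N]^\omega\cap B=\emptyset$, in which case I iterate Galvin--Prikry on the closed sets $E_C=\{M:\phi(M)\leqslant C\}$ inside $[N]^\omega$. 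A Ramsey-positive $E_C$ supplies a subsequence witnessing (2) for $\varsigma$, contradicting the standing assumption; in the opposite scenario a descending chain of Ramsey-negative $E_n$ yields, via diagonalization and bimonotonicity applied to interior blocks of the resulting subsequence, an element of $[N]^\omega\cap B$, again contradicting the case assumption.

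For $(2)\Rightarrow(3)$, suppose (3) fails. For each $n\in\nn$ fix $\varsigma_n\in B_R$ admitting no subsequence $\varrho$ with $[\varrho]^\omega\subset nB_S$; after applying the Galvin--Prikry stabilization to the open set $\{M:\phi_{\varsigma_n}(M)>n\}$, one may replace $\varsigma_n$ by a subsequence (still in $B_R$ by (ii)) every further subsequence of which has $s$-norm exceeding $n$. The task is to assemble these witnesses into a single $\varsigma\in R$ violating (2), so that (2), already established, gives the contradiction. Here I would exploit the Banach-space structure of $R$ (absolutely summable combinations of elements of $B_R$ converge in $R$) together with a Rosenthal-type dichotomy applied to $(\varsigma_n)\subset B_{\ell_\infty(X)}$: in the weakly Cauchy alternative, differences provide the necessary mixing within $R$, while an $\ell_1$-subsequence allows the $\varsigma_n$ to be absorbed as a convex/weighted tail contributing genuinely independent bad subsequences. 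The principal obstacle is this combining step: closure of $R$ under subsequences alone does not obviously permit the required assembly, and the position-sensitivity of bimonotone norms (sub-subsequences may strictly inflate the $s$-norm, so tail arguments do not control re-indexed quantities) complicates both the Ramsey reductions of $(1)\Rightarrow(2)$ and the manufacture of the counterexample in $(2)\Rightarrow(3)$.
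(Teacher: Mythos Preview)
Your reduction $(3)\Rightarrow(2)\Rightarrow(1)$ and the argument for $(1)\Rightarrow(2)$ are essentially correct; the latter matches the paper's Proposition~\ref{j}, which applies the infinite Ramsey theorem to the closed sets $\mathcal{V}_q=\{M:\text{the }M\text{-subsequence lies in }2qB_S\}$, stabilizes to the negative alternative for each $q$, and diagonalizes to a subsequence whose tails (via bimonotonicity) witness arbitrarily large $s$-norm, contradicting membership in $S$.

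The genuine gap is in $(2)\Rightarrow(3)$. The Rosenthal-type dichotomy you invoke is not applicable: $(\varsigma_n)$ is a sequence in the Banach space $R\subset\ell_\infty(X)$, not in $X$, and there is no Rosenthal $\ell_1$ theorem for such sequences, nor any topology in which ``weakly Cauchy'' would deliver the mixing you describe. More fundamentally, even after forming the absolutely convergent sum $\varrho=\sum_n c_n\varrho_n\in R$, a subsequence of $\varrho$ selects the \emph{same} index set from every $\varrho_n$ simultaneously, so to extract the badness of $\varrho_k$ on its preferred index set you must control the contribution of every other $\varrho_l$ on that \emph{same} set. For $l<k$ the pointwise constants $C_l$ coming from (2) suffice, but for $l>k$ you have nothing: the only information about $\varrho_l$ is its own constant $C_l$, which may be enormous, and the coefficient $c_l$ was committed before $C_l$ was known. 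You correctly identify this as the principal obstacle, but the proposed remedy does not address it.

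The paper resolves exactly this obstruction by introducing an ordinal-indexed measure of failure. For each $\zeta\leqslant\omega_1$ one defines $\Gamma(\zeta)$ to be the infimum of $C$ such that every $\varsigma\in B_R$ admits a subsequence all of whose finite sub-blocks lying in the fine Schreier family $\mathcal{F}_\zeta$ have $s$-norm at most $C$. This function is non-decreasing and continuous, with $\Gamma(0)=0$ and $\Gamma(\zeta+1)\leqslant 1+\Gamma(\zeta)$; if (3) fails then $\Gamma(\omega_1)=\infty$, whence there is a minimal countable limit ordinal $\gamma$ with $\Gamma(\gamma)=\infty$ but $\Gamma(\mu)<\infty$ for all $\mu<\gamma$. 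The bad witnesses $\varrho_n$ are then chosen so that their badness is realized on sets in some $\mathcal{F}_{\mu_n}$ with $\mu_n<\gamma$, and the key point is that for $l>k$ the contribution of $\varrho_l$ on any set of complexity $\mu_k$ is bounded by the \emph{finite} number $\Gamma(\mu_k+1)$, uniformly in $l$. This quantity plays exactly the role of $\|A_n\|$ in the uniform-boundedness gliding hump of Section~2 and is the missing ingredient in your outline.
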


We will also work in more generality than with the repeated averages hierarchy. Given an infinite subset $M$ of $\nn$, we let $[M]$ denote the infinite subsets of $M$. In Section 6 we recall all required definitions regarding $\xi$-homogeneous probability blocks. For the moment, let us simply recall the   property of a probability block $(\mathfrak{P}, \mathcal{P})$ which connects it to convex block sequences. If $P=(\mathfrak{P}, \mathcal{P})$ is a probability block, then $\mathfrak{P}$ is a collection $\{\mathbb{P}_{M,n}: M\in[\nn], n\in\nn\}$ of finitely supported probability measures on $\nn$ such that for any sequence $\varsigma=(x_n)_{n=1}^\infty$ in the Banach space $X$ and for any $M\in[\nn]$, the sequence $(\sum_{i=1}^\infty \mathbb{P}_{M,n}(i)x_i)_{n=1}^\infty$ is a convex block sequence of $(x_n)_{n\in M}$, where $\mathbb{P}_{M,n}(i)$ is the measure $\mathbb{P}_{M,n}(\{i\})$ of the singleton $\{i\}$.  For convenience, we denote the convex block  sequence $(\sum_{i=1}^\infty \mathbb{P}_{M,n}(i)x_i)_{n=1}^\infty$ of $\varsigma=(x_n)_{n=1}^\infty$ by $\mathbb{E}^P_M \varsigma$.

\begin{theorem} Fix $\xi<\omega_1$, let $P=(\mathfrak{P}, \mathcal{P})$ be a $\xi$-homogeneous probability block.  Let $X$ be a Banach space, $(R, r)$ a subsequential space on $X$, and $s$ a bimonotone norm on $c_{00}(X)$ with natural domain $S$.  The following are equivalent. \begin{enumerate}\item For every $\varsigma\in R$, there exists $M\in [\nn]$ such that for every $N\in[M]$, $\mathbb{E}^P_N\varsigma\in S$. \item For every $\varsigma\in R$, there exist $M\in[\nn]$ and a constant $C$ such that for every $N\in [M]$, $\mathbb{E}^P_N\varsigma\in CB_S$. \item There exists a constant $C$ such that for every $\varsigma\in B_R$, there exists $M\in [\nn]$ such that for every $N\in [M]$, $\mathbb{E}^P_N \varsigma\in CB_S$. \item For every $\varsigma\in R$ and $L\in[\nn]$, there exists $M\in [L]$ such that for all $N\in [M]$, $\mathbb{E}^P_N\varsigma\in S$. \item For every $\varsigma\in R$ and $L\in [\nn]$, there exist $M\in[L]$ and a constant $C$ such that for all $N\in[M]$, $\mathbb{E}^P_N \varsigma\in CB_S$. \item There exists a constant $C$ such that for every $\varsigma\in B_R$ and every $L\in [\nn]$, there exists $M\in [L]$ such that for all $N\in [M]$, $\mathbb{E}^P_N\varsigma\in CB_S$.  \end{enumerate}

\label{main2}
\end{theorem}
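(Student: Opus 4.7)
\emph{Proof plan.} The six assertions split into trivial and substantive directions. Immediate are $(3) \Rightarrow (2) \Rightarrow (1)$ and $(6) \Rightarrow (5) \Rightarrow (4)$, together with $(4) \Rightarrow (1)$, $(5) \Rightarrow (2)$, $(6) \Rightarrow (3)$ via the specialization $L = \nn$. The reverse implications $(1) \Rightarrow (4)$, $(2) \Rightarrow (5)$, $(3) \Rightarrow (6)$ follow from the spreading property built into a $\xi$-homogeneous probability block: given $L \in [\nn]$ enumerated as $\ell_1 < \ell_2 < \cdots$, the reindexed sequence $\varsigma_L = (x_{\ell_n})_{n=1}^\infty$ belongs to $R$ (respectively $B_R$) by the subsequential hypothesis on $R$; applying the unrestricted statement to $\varsigma_L$ yields $M' \in [\nn]$, and then $M = \{\ell_n : n \in M'\} \in [L]$ serves as a witness for the restricted statement, since $\xi$-homogeneity of $P$ identifies $\mathbb{E}^P_N \varsigma_L$ with $\mathbb{E}^P_{\{\ell_n : n \in N\}} \varsigma$ for $N \in [M']$. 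Thus the theorem reduces to the chain $(1) \Rightarrow (2) \Rightarrow (3)$.

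For $(1) \Rightarrow (2)$, fix $\varsigma \in R$, let $M_0 \in [\nn]$ be as in $(1)$, and for each $C \in \nn$ set
\[
A_C = \bigl\{N \in [M_0] : s(\mathbb{E}^P_N \varsigma) \leqslant C\bigr\}.
\]
Each measure $\mathbb{P}_{N,n}$ is finitely supported, so for every fixed $k$ the partial sum $\sum_{n=1}^k (\mathbb{E}^P_N \varsigma)_n \otimes e_n$ depends on $N$ only through a finite initial segment; combined with bimonotonicity of $s$, this makes $N \mapsto s(\mathbb{E}^P_N \varsigma)$ lower semicontinuous on the Polish space $[M_0]$, so each $A_C$ is closed. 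Condition $(1)$ forces $[M_0] = \bigcup_{C \geqslant 1} A_C$. Iterating the Galvin--Prikry theorem applied to each $A_C$ and diagonalizing produces $M \in [M_0]$ for which either $[M] \subseteq A_C$ for some $C$ (which is $(2)$), or $[M] \cap A_C = \emptyset$ for every $C$; the latter forces $s(\mathbb{E}^P_M \varsigma) = \infty$, contradicting $M \in [M_0]$ together with $(1)$.

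For $(2) \Rightarrow (3)$, one runs a Baire category / uniform boundedness argument on $B_R$. After reducing to separable $X$ (via the closed linear span of a countable collection of relevant vectors), $B_R$ is a closed subset of the compact metrizable space $\prod_n B_X$, and hence Polish. Setting
\[
F_C = \bigl\{\varsigma \in B_R : \forall L \in [\nn],\ \exists M \in [L],\ \forall N \in [M],\ \mathbb{E}^P_N \varsigma \in CB_S\bigr\},
\]
one verifies that $\{\varsigma : \mathbb{E}^P_N \varsigma \in CB_S\}$ is closed for each fixed $N$, by continuity of $\varsigma \mapsto (\mathbb{E}^P_N \varsigma)_n$ (finite support of $\mathbb{P}_{N,n}$) together with lower semicontinuity of $s$, so $F_C$ is co-analytic and has the Baire property. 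By $(2)$ together with the already-established equivalence $(2) \Leftrightarrow (5)$, $B_R = \bigcup_C F_C$, and Baire's theorem produces some $F_C$ comeager in a non-empty open subset of $B_R$. A symmetry/translation argument using the linearity of $\mathbb{E}^P$ and the semigroup inclusion $F_{C_1} + F_{C_2} \subseteq F_{C_1 + C_2}$ (where the witnessing sets for the two summands are refined to a common infinite subset via $(5)$) then upgrades this to $B_R \subseteq F_{C'}$ for a uniform $C'$, which is exactly $(3)$.

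The principal obstacle is the semigroup inclusion $F_{C_1} + F_{C_2} \subseteq F_{C_1 + C_2}$ underlying the symmetry/translation step in $(2) \Rightarrow (3)$: given $\varsigma_1 \in F_{C_1}$ and $\varsigma_2 \in F_{C_2}$, one must arrange that for every $L \in [\nn]$ there is a \emph{single} $M \in [L]$ simultaneously witnessing membership in $F_{C_1}$ for $\varsigma_1$ and in $F_{C_2}$ for $\varsigma_2$, so that $\mathbb{E}^P_N(\varsigma_1 + \varsigma_2) = \mathbb{E}^P_N \varsigma_1 + \mathbb{E}^P_N \varsigma_2 \in (C_1 + C_2)B_S$ for all $N \in [M]$. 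This nesting of witness sets, without inflation of constants, is what the hereditary/spreading structure of a $\xi$-homogeneous probability block provides; the remaining bookkeeping (reduction to separable $X$, the Pettis-type extraction of a ball from the difference of a non-meager symmetric set) is routine once this ingredient is in hand.
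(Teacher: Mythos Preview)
Your reduction of $(1)\Rightarrow(4)$, $(2)\Rightarrow(5)$, $(3)\Rightarrow(6)$ to a reindexing argument is incorrect for $\xi>0$, and this is precisely the central difficulty of the paper. You assert that ``$\xi$-homogeneity of $P$ identifies $\mathbb{E}^P_N \varsigma_L$ with $\mathbb{E}^P_{\{\ell_n : n \in N\}} \varsigma$''. It does not. The measures $\mathbb{P}_{N,n}$ depend on the actual set $N\subset\nn$, not merely on the relative positions of its elements: $\mathbb{P}_{N,1}$ has support $N|\mathcal{P}$, and for $\xi>0$ this support (and the weights on it) change when $N$ is replaced by $L(N)$. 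The paper's Remark~1.6 says exactly this, and the bulk of Section~5 (Theorem~\ref{god2}, Lemma~\ref{beer}, Corollary~\ref{shift1}) is devoted to overcoming it via functionals witnessing the badness and a delicate combinatorial transfer. Your argument is valid only when $\xi=0$, i.e., for the Dirac block.

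This gap propagates into your $(2)\Rightarrow(3)$ argument: you invoke the ``already-established equivalence $(2)\Leftrightarrow(5)$'' both to cover $B_R$ by the sets $F_C$ (which are defined with a $\forall L$ quantifier) and to obtain the semigroup inclusion, so the Baire argument as written is circular. Independently, the Baire approach has further problems you have not addressed: the sets $F_C$ carry a $\forall L\,\exists M\,\forall N$ quantifier string over $[\nn]$ and are a priori $\Pi^1_2$, so the Baire property is not automatic; the reduction to separable $X$ is unjustified (nothing in the hypotheses lets you replace $R$ by sequences in a separable subspace); and there is no reason $B_R$ should be closed, or even Borel, in the product topology on $\prod_n B_X$. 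The paper avoids all of this by an ordinal-quantified gliding hump: one defines a non-decreasing continuous function $\Gamma:\omega_1{+}1\to[0,\infty]$ via the fine Schreier families, shows it is subadditive in $\zeta$, and if it were unbounded locates a minimal blow-up ordinal $\omega^\gamma$; approaching that ordinal one assembles a single $\varrho\in B_R$ (a weighted sum of witnesses with geometrically decaying coefficients) for which no $M$ and $C$ can work, contradicting the hypothesis. Your $(1)\Rightarrow(2)$ Ramsey step is fine and matches the paper's Proposition~\ref{j}, but the rest of the plan does not go through.
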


\begin{rem}\upshape Let $\Gamma(R,S,P)$ be the infimum of $C>0$ such that property $(6)$ of Theorem \ref{main2} holds, where $\Gamma(R,S, P)=\infty$ if there is no such $C$.  We will later show that if $P=(\mathfrak{P}, \mathcal{P})$ and $Q=(\mathfrak{Q}, \mathcal{Q})$ are any two $\xi$-homogeneous probability blocks, $\Gamma(R,S,P)=\Gamma(R,S,Q)$.  Therefore the six properties in Theorem \ref{main2} are properties of the ordinal $\xi$ which do not depend upon the particular $\xi$-homogeneous probability block $(\mathfrak{P}, \mathcal{P})$.  Therefore we can unambiguously define $\Gamma(R,S, \xi)$ to be $\Gamma(R,S, P)$, where $P=(\mathfrak{P}, \mathcal{P})$ is any $\xi$-homogeneous probability block.  The importance of this result is that in some instances, such as the proofs of \cite[Corollary $4.9$]{CN} and \cite[Corollary $2.9$]{CDP}, it is more convenient to use a probability block which is constructed from two others in order to prove results concerning convex block sequences of convex block sequences. Therefore having flexibility in choosing probability blocks is beneficial. 

We will also show that if $\upsilon\leqslant \xi<\omega_1$, $\Gamma(R,S,\xi)\leqslant \Gamma(R,S, \upsilon)$.  That is, if the pair $R,S$ satisfies the six equivalent properties in Theorem \ref{main2} for some $\upsilon$-homogeneous probability block, then for every $\upsilon<\xi<\omega_1$, the pair $R,S$ satisfies the same six conditions, with at least as small a uniform constant, for any $\xi$-homogeneous probability block.

\end{rem}

\begin{rem}\upshape Let us note that Theorem \ref{main1} is a special case, the $\xi=0$ case, of Theorem \ref{main2}. Let us discuss why Theorem \ref{main2} has six conditions, while Theorem \ref{main1} has only three. We observe that the last three conditions in Theorem \ref{main2} appear similar to the first three, except they state that the desired set $M$ not only exists, but any infinite subset $L$ of $\nn$ contains such a subset $M$. Since Theorem \ref{main1} deals with sequences and subsequences, and since $R$ and $B_R$ contain all subsequences of their members, the hypothesis that every member of $R$ (resp. $B_R$) has a subsequence with some certain property is the same as the hypothesis that every subsequence of a member of $R$ (resp. $B_R$) has a further subsequence with that property. Therefore in the case that we are dealing with subsequences rather than convex blocks, condition $(1)$ of Theorem \ref{main2} is equivalent to condition $(4)$, condition $(2)$ is equivalent to condition $(5)$, and condition $(3)$ is equivalent to $(6)$.  

However, when $(\mathfrak{P}, \mathcal{P})$ is a $\xi$-homogeneous probability block with $0<\xi<\omega_1$, the conditions on $\mathbb{E}^P_M\varsigma$ depend on convex coefficients, which themselves depend upon the positions of the vectors in the sequence $\varsigma$. Therefore, if we know $(1)$ holds and $\varsigma=(x_n)_{n=1}^\infty\in R$ and $L\in [\nn]$ are given, we know that $\varrho=(x_{L(n)})_{n=1}^\infty\in R$. Here, for an infinite subset $I$ of $\nn$, $I(n)$ denotes the $n^{th}$ smallest member of $I$. But an application of $(1)$ to the sequence $(x_{L(n)})_{n=1}^\infty$ yields $K\in[\nn]$ such that for all $N\in [K]$, $\mathbb{E}^P_K \varrho\in S$.   In the sequence/subsequence setting (that is, in the $\xi=0$ setting), this would mean that all subsequences of $(x_{L(K(n))})_{n=1}^\infty$ lie in $S$, and we could finish by letting $M=L(K)$ (that is, $M(n)=L(K(n))$).  However, in the $0<\xi<\omega_1$ case, since the convex blocks coming from $P$ depend upon the positions of the vectors in the sequence, $\mathbb{E}^P_K \varrho$ need not be equal to $\mathbb{E}^P_{L(K)} \varsigma$.  Therefore showing that the first three conditions of Theorem \ref{main2} imply the last three will involve using properties of $\xi$-homogeneous probability blocks to overcome this difficulty.  This requires a combinatorial result, Theorem \ref{god2}.

\end{rem}

We obtain the following transfinite analogue of Freeman's result.

\begin{corollary} Fix $0<\xi<\omega_1$ and let $P=(\mathfrak{P}, \mathcal{P})$ be a $\xi$-homogeneous probability block. Let $X$ be a Banach space and let $(g_n)_{n=1}^\infty$ be a seminormalized Schauder basis. If for every weakly null $\varsigma\in \ell_\infty(X)$, there exists $M\in [\nn]$ such that $\mathbb{E}^P_M\varsigma$ is dominated by $(g_n)_{n=1}^\infty$, then there exists a constant $C$ such that for any weakly null $\varsigma\in B_{\ell_\infty(X)}$ and $L\in [\nn]$, there exists $M\in [L]$ such that for all $N\in [M]$, $\mathbb{E}^P_N\varsigma$ is $C$-dominated by $(g_n)_{n=1}^\infty$.  Furthermore, if such a constant $C$ exists, then depends only on $\xi$, and not on the particular $\xi$-homogeneous probability block $(\mathfrak{P}, \mathcal{P})$.

\label{cor2}
\end{corollary}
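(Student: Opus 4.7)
The plan is to deduce the corollary from Theorem \ref{main2} applied to a carefully chosen pair $(R,r), (S,s)$. After passing to an equivalent norm on $Y := \overline{\mathrm{span}(g_n)}$, I may assume $(g_n)$ is normalized and bimonotone; this modification changes domination constants by only a bounded factor depending on the basis constant of $(g_n)$ and its normalization bounds. Define $s\colon c_{00}(X)\to [0,\infty)$ by
\[
s\bigl((x_n)_n\bigr) = \sup\Bigl\{ \Bigl\|\sum_n a_n x_n\Bigr\|_X : (a_n)\in c_{00},\ \Bigl\|\sum_n a_n g_n\Bigr\|_Y \leqslant 1\Bigr\}.
\]
Then $s$ is a bimonotone norm on $c_{00}(X)$, and its natural domain $S$ consists precisely of those sequences in $\ell_\infty(X)$ dominated by $(g_n)$, with $s$ returning the best domination constant. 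Take $R\subset\ell_\infty(X)$ to be the closed subspace of weakly null sequences equipped with $r := \|\cdot\|_{\ell_\infty(X)}$; since finitely supported sequences are weakly null and weak nullity passes to subsequences, $(R,r)$ is a subsequential space on $X$.

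The content of the proof is verifying condition $(1)$ of Theorem \ref{main2}: for every $\varsigma\in R$ there exists $M\in[\nn]$ with $\mathbb{E}^P_N\varsigma\in S$ for \emph{every} $N\in[M]$. The corollary's hypothesis supplies only a single $M$ with $\mathbb{E}^P_M\varsigma\in S$. To upgrade, fix $\varsigma$ and set $\mathcal{A}_k := \{M\in[\nn]: s(\mathbb{E}^P_M\varsigma)\leqslant k\}$ and $\mathcal{A} := \bigcup_k \mathcal{A}_k$. Because each measure $\mathbb{P}_{M,n}$ is finitely supported, the map $M\mapsto\mathbb{E}^P_M\varsigma$ is coordinatewise continuous in the product topology on $[\nn]$, and combined with bimonotonicity of $s$ this forces each $\mathcal{A}_k$ to be closed, so $\mathcal{A}$ is Borel. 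By the Galvin--Prikry Ramsey theorem there exists $M\in[\nn]$ with either $[M]\subset\mathcal{A}$ or $[M]\cap\mathcal{A}=\emptyset$. To rule out the second alternative, apply the hypothesis to the weakly null subsequence $\varsigma|_M$ to obtain $K$ with $\mathbb{E}^P_K(\varsigma|_M)\in S$; the combinatorial compatibility of $\xi$-homogeneous probability blocks with subsequencing (the content of Theorem \ref{god2}, flagged in the second Remark) then produces some $N\in[M]$ with $\mathbb{E}^P_N\varsigma\in S$, contradicting disjointness.

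With condition $(1)$ of Theorem \ref{main2} in hand, its condition $(6)$ gives a constant $C$ such that for every $\varsigma\in B_R$ and every $L\in[\nn]$ there exists $M\in[L]$ with $\mathbb{E}^P_N\varsigma\in CB_S$ for all $N\in[M]$; this translates back, after absorbing the bounded renorming factor, to $C$-domination of $\mathbb{E}^P_N\varsigma$ by $(g_n)$. Independence of $C$ from the particular $\xi$-homogeneous probability block is precisely the identity $\Gamma(R,S,P)=\Gamma(R,S,\xi)$ noted in the first Remark. The main obstacle of the plan is the Ramsey step of the previous paragraph: passing from the hypothesis's \emph{some} $M$ to the \emph{every} $N\in[M]$ of condition $(1)$ requires both the Galvin--Prikry dichotomy on Borel subsets of $[\nn]$ and the nontrivial combinatorial translation of Theorem \ref{god2}, needed because for $\xi\geqslant 1$ the convex combinations encoded by $P$ depend on the absolute positions of the vectors in $\varsigma$, so $\mathbb{E}^P_K(\varsigma|_M)$ and $\mathbb{E}^P_{M(K)}\varsigma$ need not coincide.
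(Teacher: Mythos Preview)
Your setup of the subsequential space $R = c_0^w(X)$ and the bimonotone norm $s$ matches the paper's Section~\ref{ex} (your formula for $s$ is dual to the paper's but defines the same norm), and the appeal to Theorem~\ref{main2} together with the remark on $\Gamma(R,S,\xi)$ for the ``independence of $P$'' clause is exactly how the paper proceeds.

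You are also right to flag that the corollary's hypothesis is literally weaker than condition~(1) of Theorem~\ref{main2}: the hypothesis supplies a single $M$ with $\mathbb{E}^P_M\varsigma\in S$, while condition~(1) demands an $M$ working for every $N\in[M]$. The paper's Section~\ref{ex} does not address this; it simply asserts that Theorem~\ref{main2} yields the corollary.

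However, your bridging argument does not work as written. After Galvin--Prikry gives $[M]\cap\mathcal{A}=\varnothing$, you obtain $K$ with $\mathbb{E}^P_K(\varsigma|_M)\in S$ and then invoke Theorem~\ref{god2} to ``produce some $N\in[M]$ with $\mathbb{E}^P_N\varsigma\in S$.'' Theorem~\ref{god2} does no such thing. It is a finite-level statement (at some fixed $\zeta<\omega_1$) that starts from a \emph{lower} bound $\mathbb{E}^P_F g(\cdot,F)\geqslant D$ holding for \emph{every} maximal $F$ and manufactures a single set $F'\in\mathcal{F}_{\zeta+1}^M[\mathcal{Q}]$ on which an analogous lower bound persists. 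Its role in the paper, through Lemma~\ref{beer}, is to transfer uniform \emph{badness} of $\varsigma$ on some $L$ to badness of the re-indexed subsequence $\varrho=\varsigma|_L$ --- the opposite direction from what you need, and at countable levels rather than at $\omega_1$. You have supplied no mechanism for passing from a single good $K$ for $\varsigma|_M$ to a single good $N\in[M]$ for $\varsigma$; as the second Remark in the introduction emphasizes, the convex coefficients in $\mathbb{E}^P$ depend on absolute position, so there is no direct correspondence between $\mathbb{E}^P_K(\varsigma|_M)$ and any $\mathbb{E}^P_N\varsigma$. If you want to rule out $[M]\cap\mathcal{A}=\varnothing$ via the paper's machinery, the natural route is the contrapositive: use Lemma~\ref{beer} and the continuity of $\Gamma$ (Lemma~\ref{work2}) to argue that $[M]\cap\mathcal{A}=\varnothing$ forces $\Gamma^P(\omega_1,0,\varsigma|_M)=\infty$, contradicting the hypothesis applied to $\varsigma|_M$. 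But that argument must be written out --- it is not a one-line citation of Theorem~\ref{god2}.
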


\section{The Principle of Uniform Boundedness}

We first recall the Principle of Uniform Boundedness and recite a proof a the gliding hump argument. Our proof of Theorems \ref{main1} and \ref{main2} are analogous to this proof. The idea is quite simple: If uniform inequalities do not hold, we take vectors with worse and worse constants and then glue them together to find a single vector for which the inequality in question does not hold. We use the following argument as a painless introduction to the obstructions present in this argument, as well as give an indication to how we will eventually overcome these obstructions.

Let us suppose that $\mathfrak{A}$ is a collection of (continuous) operators from the Banach space $X$ into a Banach  space $Y$ such that for each $x\in X$, $\sup_{A\in \mathfrak{A}} \|Ax\|<\infty$.  Seeking a contradiction, assume that $\sup_{A\in \mathfrak{A}}\|A\|=\infty$.    Let us recursively choose $x_n\in B_X$, $A_n\in \mathfrak{A}$, and positive constants $C_n, D_n$ such that for all $n\in\nn$, \begin{enumerate}[(i)]\item $D_n>4^n$, \item $\sup_{A\in \mathfrak{A}} \|Ax_n\|=C_n<\infty$, \item $\|A_nx_n\|>D_n$, \item $\sum_{k=1}^{n-1} \frac{C_k}{D_k^{1/2}}<D_n^{1/2}/3$, \item for each $1\leqslant k<n$, $3\cdot 2^n \|A_k\|<D_k^{1/2}D_n^{1/2}$.  \end{enumerate}

As we discuss in the following paragraphs, it follows from these choices that for each $n\in\nn$, $\|A_n\frac{1}{D_n^{1/2}}x_n\|>D_n^{1/2}$, $\sum_{k=1}^{n-1} \|A_n \frac{1}{D_k^{1/2}} x_k\|<\frac{D_n^{1/2}}{3}$, and $\sum_{k=n+1}^\infty \|A_n \frac{1}{D_k^{1/2}} x_k\|<\frac{D_n^{1/2}}{3}$. Therefore with $x=\sum_{n=1}^\infty \frac{x_n}{D_n^{1/2}}$, $$\sup_{A\in \mathfrak{A}} \|Ax\|\geqslant \sup_n \|A_nx\| \geqslant \sup_n D^{1/2}_n-\frac{2D_n^{1/2}}{3}=\infty,$$ yielding the necessary contradiction and finishing the proof. We next discuss how to obtain these estimates, and the analogy to our eventual proof of Theorems \ref{main1} and \ref{main2}.

   We know from (i)-(iv) that $\|A_n\frac{1}{D_n^{1/2}}x_n\|>D_n^{1/2}$. We have to guarantee that the action of $A_n$ on $\frac{1}{D_k^{1/2}} x_k$, $k\neq n$, does not cancel out the action of $A_n$ on $\frac{1}{D_n^{1/2}}x_n$.    We estimate the action of $A_n$ on $\frac{1}{D_k^{1/2}} x_k$, $k\neq n$ differently in the cases $k<n$ and $k>n$. 
		
		For  $k<n$, we have the estimate $$\|A_n\frac{1}{D_k^{1/2}} x_k\|= \frac{1}{D_k^{1/2}}\|A_nx_k\|\leqslant \frac{C_k}{D_k^{1/2}},$$ so $$\sum_{k=1}^{n-1} \|A_n\frac{1}{D_k^{1/2}} x_k\|\leqslant \sum_{k=1}^{n-1}\frac{C_k}{D_k^{1/2}}<D_n^{1/2}/3.$$   Given that $x_k$ was chosen before $A_n$, we could not choose $A_n$ to satisfy anything better than the inequality $\|A_nx_k\|\leqslant C_k$.    In our proofs of Theorems \ref{main1} and \ref{main2}, the   part of the argument analogous to this, in which we use the hypothesis of pointwise estimates, will be  straightforward.

Now suppose $k>n$. Since for $k>n$, $A_n$ was chosen before $x_k$ was chosen, we are able to use the estimate $\|A_nx_k\|\leqslant \|A_n\|$ rather than the weaker estimate $\|A_n x_k\|\leqslant \sup_{A\in \mathfrak{A}} \|Ax_k\|=C_k$.   The stronger estimate yields
 that  $$\|A_n\frac{1}{D_k^{1/2}}x_k\|\leqslant \frac{\|A_n\|}{D_k^{1/2}} \leqslant \frac{D_n^{1/2}}{3\cdot 2^k},$$  from which it follows that $$\sum_{k=n+1}^\infty \|A_n\frac{1}{D_k^{1/2}}x_k\|\leqslant  \sum_{k=n+1}^\infty \frac{D_n^{1/2}}{3\cdot 2^k} \leqslant \frac{D_n^{1/2}}{3},$$ while the weaker estimate $\|A_n x_k\|\leqslant \sup_{A\in \mathfrak{A}} \|Ax_k\|=C_k$ yields the useless $$\|A_n \frac{1}{D_k^{1/2}}x_k\|\leqslant \frac{C_k}{D_k^{1/2}}>D_k^{1/2}.$$    The main obstruction to our proof of Theorems \ref{main1} and \ref{main2} will be to modify the estimate analogous to the $k>n$ case here.  

We will use the fine Schreier families $\mathcal{F}_\xi$, $\xi<\omega_1$ (defined in Section \ref{comb}), to introduce a quantified measure of the failure of uniform upper estimates. That is, if the uniform estimates desired in Theorems \ref{main1} and \ref{main2} do not hold, we will have an ordinal quantification such that for some sequence in $B_R$, the failure of the uniform estimates will be witnessed on sets in $\mathcal{F}_\xi$. Moreover, there will be a minimum countable ordinal $\gamma$ such that the desired estimates are infinitely bad on members of sets in $\mathcal{F}_\gamma$, but which are uniformly controlled by some constants $a_\zeta$ on members of sets in $\mathcal{F}_\zeta$ for $\zeta<\gamma$, and such that $\sup_{\zeta<\gamma} a_\zeta=\infty$. Therefore we can find sequences $\varsigma_n\in B_R$, analogous to $x_n$ above, such that the $D_n$-badness of $\varsigma_n$ can be witnessed on sets in $\mathcal{F}_{\zeta_n}$, and for the later sequences $\varsigma_k$, we have uniform control $a_{\zeta_n}$ over how bad $\varsigma_k$ can be on sets in $\mathcal{F}_{\zeta_n}$.  The finite quantity $a_{\zeta_n}$ will then play the role of $\|A_n\|$ in our proof of the Principle of Uniform Boundedness.    

Let us give two examples.  In our first example, we consider uniform domination of normalized, weakly null sequences in $\ell_2$ by the canonical $c_0$ basis. Of course, uniform upper estimates do not hold.  But note that for each finite $k$, there exists a constant $a_k$, which in this case is equal to $k^{1/2}$, such that for each $C>a_k$ and any normalized, weakly null sequence $(x_n)_{n=1}^\infty$ in $\ell_2$, there exists a subsequence $(y_n)_{n=1}^\infty$ of $(x_n)_{n=1}^\infty$ such that for each set $G$ with $|G|\leqslant k$ and any scalars $(a_n)_{n\in G}$, $$\|\sum_{n\in G}a_ny_n\|_{\ell_2}\leqslant C \|\sum_{n\in G}a_n e_n\|_{c_0}.$$  Thus while we do not have upper estimates of the form $$\|\sum_{n\in G}a_ny_n\|_{\ell_2}\leqslant C\|\sum_{n\in G}a_ne_n\|_{c_0}$$ for all finite $G\subset \nn$, we have this estimate for all $G$ with $|G|\leqslant k$ (that is, $G$ in the fine Schreier family $\mathcal{F}_k$) and a constant $C$ depending on $k$.    Although it is unnecessary to perform such an involved computation in this particular example, we can find weakly null sequences $\varsigma_n\in B_E$, integers $m_1<m_2<\ldots$, and constants $C_n$ such that for $k>n$, the sequence $\varsigma_k$ admits $c_0$ upper estimates with constant $m_n^{1/2}+\ee$ for linear combinations of not more than $m_n$ vectors, and such that any subseqence of $\varsigma_n$ does not exhibit $c_0$ upper estimates with any constant less than $m_n^{1/2}-\ee$ on linear combinations of $m_n$ vectors.  Therefore the lack of $c_0$ upper estimates on $\varsigma_n$ can specifically be witnessed on linear combinations of vectors such that the support of the linear combination lies in $\mathcal{F}_{m_n}$, and we do have uniform upper estimates on such linear combinations.   Thus we can maintain uniform, but worse and worse, upper estimates on linear combinations with supports in $\mathcal{F}_{\zeta_n}$ where $\zeta_n$ approaches the breaking point $\omega$. 

For our second example, let us consider Tsirelson's space $T$ (\cite{T}).  This space is reflexive and infinite dimensional, and therefore it cannot have the property that every normalized, weakly null sequence has a subsequence dominated by the $c_0$ basis.  How do we witness that this space does not have such upper estimates? Tsirelson's space has the property that for any $C>2$, any $m\in\nn$,  and any normalized, weakly null sequence $(x_n)_{n=1}^\infty$ in $T$, there exists a subsequence $(y_n)_{n=1}^\infty$ of $(x_n)_{n=1}^\infty$ such that for any $G$ with $|G|\leqslant m$ and any scalars $(a_n)_{n\in G}$, $$\|\sum_{n\in G}a_ny_n\|_T\leqslant C\|\sum_{n\in G}a_ne_n\|_{c_0}.$$   By diagonalizing, it follows that we can have the upper estimates $$\|\sum_{n\in G}a_ny_n\|_T\leqslant C\|\sum_{n\in G} a_ne_n\|_{c_0}$$ for all $G\in \mathcal{F}_\omega$.  More generally, it is known that for any normalized, weakly null sequence, $m\in\nn$, and $C>2^m$, we can find a subsequence $(y_n)_{n=1}^\infty$ of $(x_n)_{n=1}^\infty$ such that for any $G\in \mathcal{F}_{\omega^m}$ and scalars $(a_n)_{n\in G}$,  $$\|\sum_{n\in G}a_n y_n\|_T \leqslant C\|\sum_{n\in G}a_n e_n\|_{c_0}.$$   But it is known that for any normalized, weakly null sequence $(x_n)_{n=1}^\infty$, $m\in\nn$, and $\ee>0$,  we can find $G\in \mathcal{F}_{\omega^{m+1}}$ and scalars $(a_n)_{n\in G}$ such that $$\|\sum_{n\in G} a_nx_n\|_T >(2^m-\ee)\|\sum_{n\in G} a_ne_n\|_{c_0}.$$   Thus we can find ordinals $\zeta_n\uparrow \omega^\omega$ and weakly null sequences $\varsigma_n\subset B_T$ such that the badness of $\varsigma_n$ is witnessed by linear combinations of vectors whose supports are members of $\mathcal{F}_{\zeta_n}$, on which we do have uniform upper estimates by some constant depending on $\zeta_n$.   Thus we can maintain uniform, but worse and worse, upper estimates on linear combinations with supports in $\mathcal{F}_{\zeta_n}$ where $\zeta_n$ approaches the breaking point $\omega^\omega$.

\section{Domination by a basis and subsequences}\label{ex}

In this section, we show how to deduce Corollary  \ref{cor2} from Theorems \ref{main1} and \ref{main2}.   First we note that any seminormalized Schauder basis is equivalent to a normalized, bimonotone Schauder basis. Therefore it suffices to prove Corollary \ref{cor2} under the stronger hypothesis that $(g_n)_{n=1}^\infty$ is normalized and bimonotone.

Assume $(g_n)_{n=1}^\infty$ is normalized and bimonotone, from which it follows that the sequence of coordinate functionals $(g^*_n)_{n=1}^\infty$ is also a normalized, bimonotone Schauder basis for its closed span in $G^*$.  Let us define the norm $s$ on $c_{00}(X)$ by letting $$s((x_n)_{n=1}^\infty)=\sup_{x^*\in B_{X^*}} \|\sum_{n=1}^\infty x^*(x_n) g_n^*\|_{G^*}.$$   For a fixed sequence $(x_n)_{n=1}^\infty$ and $p\in \nn$, let $I_p$ denote the map from $\text{span}\{g_n:n\leqslant p\}$ to $X$ taking $g_n$ to $x_n$.   Then \begin{align*} \|I_p\| & = \sup\Bigl\{ \bigl\|\sum_{n=1}^p a_n x_n\bigr\|_X: \bigl\|\sum_{n=1}^p a_ng_n\bigr\|_G \leqslant 1\Bigr\} \\ & = \sup\Bigl\{ \bigl|x^*\bigl(\sum_{n=1}^p a_n x_n\bigr)\bigr|: x^*\in B_{X^*},  \bigl\|\sum_{n=1}^p a_ng_n\bigr\|_G \leqslant 1\Bigr\} \\ & = \sup\Bigl\{ \bigl|\sum_{n=1}^p a_n x^*(x_n)\bigr|: x^*\in B_{X^*},  \bigl\|\sum_{n=1}^p a_ng_n\bigr\|_G \leqslant 1\Bigr\} \\ &  =\sup\Bigl\{ \bigl|\bigl(\sum_{n=1}^p x^*(x_n)g_n^*\bigr)\bigl(\sum_{n=1}^p a_n g_n\bigr)\bigr|: x^*\in B_{X^*},  \bigl\|\sum_{n=1}^p a_ng_n\bigr\|_G \leqslant 1\Bigr\} \\ & = \sup\Bigl\{ \bigl\|\sum_{n=1}^p x^*(x_n)g_n^*\bigr\|_{G^*} : x^*\in B_{X^*}\Bigr\}=s\bigl(\sum_{n=1}^p x_n\otimes e_n\bigr).\end{align*}  It is also easy to see that since  $(g^*_n)_{n=1}^\infty$ is normalized and bimonotone, the norm $s$ on $c_{00}(X)$ is bimonotone.   Furthermore, $\varsigma=(x_n)_{n=1}^\infty\in \ell_\infty(X)$ is dominated by $(g_n)_{n=1}^\infty$ if and only if $\sup_p \|I_p\|=\sup_p s(\sum_{n=1}^p x_n\otimes e_n)<\infty$, which happens if and only if $\varsigma$ is in the natural domain $S$ of $s$, and in this case $s(\varsigma)$ is the domination constant.    Therefore applying Theorem \ref{main1} with this choice of $s$ recovers the result of Freeman. Applying Theorem \ref{main2} with this choice of $s$ yields Corollary \ref{cor2}, which is new.

\section{Combinatorics}\label{comb}

Given a set $\Lambda$, we let $\Lambda^{<\omega}$ denote the set of finite sequences whose members lie in $\Lambda$. This includes the empty sequence, which we denote by $\varnothing$.  Given $t\in \Lambda^{<\omega}$, we let $|t|$ denote the length of $t$. If $t\in \Lambda^{<\omega}$ and if $s$ is any (finite or empty) sequence whose members lie in $\Lambda$, we let $t\preceq s$ denote the relation that $t$ is an initial segment of $s$, and $t\prec s$ denotes the relation that $t$ is a proper initial segment of $s$.  A subset $T$ of $\Lambda^{<\omega}$ is called a \emph{tree on} $\Lambda$ (or just a \emph{tree}) provided that for any $s\prec t\in T$, it follows that $s\in T$. We let $s\smallfrown t$ denote the concatenation of $s$ and $t$.  Given a tree $T$, we define the set $MAX(T)$ to be the set of $\preceq$-maximal members of $T$, sometimes called the \emph{leaves} of $T$. We define the derivative $T'$ of $T$ by $$T'=T\setminus MAX(T).$$ We note that $T'\subset T$ is also a tree. We then define the transfinite derivatives $$T^0=T,$$ $$T^{\xi+1}=(T^\xi)',$$ and if $\xi$ is a limit ordinal, $$T^\xi=\bigcap_{\zeta<\xi}T^\zeta.$$   If there exists an ordinal $\xi$ such that $T^\xi=\varnothing$, we say $T$ is \emph{well-founded}, and otherwise we say $T$ is \emph{ill-founded}. If $T$ is well-founded, the \emph{rank} of $T$, denoted by $\text{rank}(T)$, is the miniumum ordinal $\xi$ such that $T^\xi=\varnothing$.   The following properties are standard, so we omit the proof. However, we isolate them for future reference.

\begin{proposition} Let $\Lambda$ be a countable set. Then either $T$ is well-founded and  $\text{\emph{rank}}(T)$ is countable or $T$ is ill-founded.  
\label{tree}
\end{proposition}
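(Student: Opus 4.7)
The plan is to use that $\Lambda^{<\omega}=\bigcup_{n=0}^\infty \Lambda^n$ is countable, as a countable union of countable sets. Consequently every tree $T$ on $\Lambda$ is itself a countable set. The dichotomy between well-founded and ill-founded is immediate from the definitions, so the only substantive point is to show that if $T$ is well-founded, then $\text{rank}(T)<\omega_1$.

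To bound the rank, I would construct an injection from $\text{rank}(T)$ into $T$. The key observation is that for each $\zeta<\text{rank}(T)$, the inclusion $T^{\zeta+1}\subseteq T^\zeta$ is strict. Suppose toward a contradiction that $T^\zeta=T^{\zeta+1}$ for some $\zeta<\text{rank}(T)$. A transfinite induction on $\eta\geqslant \zeta$, using $T^{\eta+1}=(T^\eta)'$ at successor steps and $T^\eta=\bigcap_{\mu<\eta}T^\mu$ at limit steps, gives $T^\eta=T^\zeta$ for every $\eta\geqslant \zeta$. Taking $\eta=\text{rank}(T)$ forces $T^\zeta=\varnothing$, contradicting the assumption $\zeta<\text{rank}(T)$.

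Given this strict nesting, choose any $t_\zeta\in T^\zeta\setminus T^{\zeta+1}$ for each $\zeta<\text{rank}(T)$. If $\zeta_1<\zeta_2<\text{rank}(T)$, then $\zeta_2\geqslant \zeta_1+1$, so $t_{\zeta_2}\in T^{\zeta_2}\subseteq T^{\zeta_1+1}$, while $t_{\zeta_1}\notin T^{\zeta_1+1}$; hence $t_{\zeta_1}\neq t_{\zeta_2}$. The map $\zeta\mapsto t_\zeta$ is therefore an injection from $\text{rank}(T)$ into the countable set $T$, which forces $\text{rank}(T)$ to be a countable ordinal.

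There is no genuine obstacle here; this is the standard bound for ranks of well-founded trees on countable alphabets. The only nuance is the verification at the limit step of the stabilization argument, where one observes that the intersection of a cofinal sequence of sets each equal to $T^\zeta$ is again $T^\zeta$.
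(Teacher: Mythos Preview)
Your argument is correct and is the standard one: $\Lambda^{<\omega}$ is countable, the derived sequence of a well-founded tree is strictly decreasing until it hits $\varnothing$, and picking witnesses from the successive differences gives an injection of $\text{rank}(T)$ into the countable set $T$. The paper itself does not supply a proof, stating only that the result is standard and omitting the argument, so there is nothing to compare against beyond noting that your write-up fills in exactly the routine details the authors chose to skip.
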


Throughout, we identify subsets of $\nn$ with strictly increasing sequences of members of $\nn$ by identifying a subset of $\nn$ with the sequence obtained by listing its members in strictly increasing order. Because of this identification, we use set and sequence notation interchangeably. For $E,F\subset \nn$, we let $E<F$ denote the relation that either $E=\varnothing$, $F=\varnothing$, or $\max E<\min F$. We let $n\leqslant E$ denote the relation that $(n)\leqslant E$ and $E<n$ denote the relation that $E<(n)$.

Given an infinite subset $M$ of $\nn$, we let $[M]$ denote the set of infinite subsets of $M$ and $[M]^{<\omega}$ denote the set of finite subsets of $M$. Given $\mathcal{F}\subset [\nn]^{<\omega}$ and $M\in[\nn]$, we let $\mathcal{F}\upp M$ denote the members of $\mathcal{F}$ which are subsets of $M$. If $\mathcal{F}$ is hereditary, this is the same as $$\mathcal{F}=\{F\cap M: F\in \mathcal{F}\}.$$  
 For $(l_n)_{n=1}^t, (m_n)_{n=1}^t \in [\nn]^{<\omega}$, we say $(l_n)_{n=1}^t$ is a \emph{spread} of $(m_n)_{n=1}^t$ if $m_n\leqslant l_n$ for all $1\leqslant n\leqslant t$.  We will also topologize the power set $2^\nn$ of $\nn$ with the \emph{Cantor topology}, which is the topology making the identification $2^\nn \ni E\leftrightarrow 1_E\in \{0,1\}^\nn$ a homeomorphism, where  $\{0,1\}^\nn$ has the product topology.   We say $\mathcal{F}\subset [\nn]^{<\omega}$ is \begin{enumerate}[(i)]\item \emph{spreading} if $\mathcal{F}$ contains all spreads of its members, \item \emph{hereditary} if $\mathcal{F}$ contains all subsets of its members, \item \emph{compact} if it is compact with respect to the Cantor topology, \item \emph{regular} if it is spreading, hereditary, and compact. \end{enumerate}

We say a regular family $\mathcal{F}$ is \emph{nice} if \begin{enumerate}[(i)]\item it contains all singletons, \item for any $F\in \mathcal{F}$, either $F\in MAX(\mathcal{F})$ or $F\smallfrown (n)\in \mathcal{F}$ for all $n>F$. \end{enumerate}

If $\mathcal{F}\subset [\nn]^{<\omega}$ is regular and non-empty,  then for any $M\in[\nn]$,  we let $M|\mathcal{F}$ denote the maximal initial segment of $M$ which is a member of $\mathcal{F}$.  Since $\mathcal{F}$ is compact, there must exist some $n\in\nn$, and therefore there must exist some minimal $n\in\nn$, such that $(M(1), \ldots, M(n))\notin \mathcal{F}$.  We then let $M|\mathcal{F}=(M(1), \ldots, M(n-1))$, where if $n=1$, $(M(1), \ldots, M(n-1))=\varnothing$. We note that $M|\mathcal{F}$ is possibly empty, and $M|\mathcal{F}$ need not be a member of $MAX(\mathcal{F})$. However, if $\mathcal{F}$ is nice, then $M|\mathcal{F}$ is non-empty and a member of $MAX(\mathcal{F})$. 

Given a nice family $\mathcal{P}$, we use the notation $F=_\mathcal{P} \cup_{n=1}^t F_n$ to mean \begin{enumerate}[(i)]\item $F=\cup_{n=1}^t F_n$, \item $F_1<\ldots <F_t$, \item for each $1\leqslant n\leqslant t$, $F_n\in MAX(\mathcal{P})$. \end{enumerate}    This notation will be used heavily throughout.

Since we identify sets with sequences, every hereditary set $\mathcal{F}\subset [\nn]^{<\omega}$ is naturally identified with a tree on $\nn$.  In this case,  each derivative is also hereditary. In particular, for each ordinal $\xi$,  either $\varnothing\in \mathcal{F}^\xi$ or $\mathcal{F}^\xi=\varnothing$. From this it follows that the rank of $\mathcal{F}$ cannot be a limit ordinal. We also remark here that for a spreading, hereditary set $\mathcal{F}$, a member $F\in \mathcal{F}$ is maximal in $\mathcal{F}$ with respect to the tree order $\preceq$ if and only if it is maximal in $\mathcal{F}$ with respect to inclusion.  Moreover, if $F\in \mathcal{F}$ is not maximal in $F$, then there exists $p\in\nn$ such that $F\cup (n)$ for all $n\geqslant p$. From this it follows that for any regular $\mathcal{F}$ and $M\in[\nn]$, $MAX(\mathcal{F})\upp M=MAX(\mathcal{F}\upp M)$.

Given $M\in[\nn]$ and $n\in\nn$, we let $M(n)$ denote the $n^{th}$ smallest member of $M$, so that $M=(M(n))_{n=1}^\infty$. If $F\in [\nn]^{<\omega}$ and $1\leqslant n\leqslant |F|$, we let $F(n)$ denote the $n^{th}$ smallest member of $F$.   Given $M\in[\nn]$ and $F\subset \nn$ (finite or infinite), $M(F)=(M(n))_{n\in F}$, and note that $M(F)$ is a spread of $F$.     Given $M\in [\nn]$ and $\mathcal{F}\subset [\nn]^{<\omega}$, we let $\mathcal{F}(M)=\{M(F):F\in\mathcal{F}\}$.     We note that if $\mathcal{F}$ is hereditary, then $\mathcal{F}(M)$ is also hereditary, and either $\mathcal{F}, \mathcal{F}(M)$ are both ill-founded or both well-founded with the same rank.  

Note that $\mathcal{F}(M)$ need not be spreading, since  a spread of a member of $\mathcal{F}(M)$ need not be a subset of $M$, but $\mathcal{F}(M)$ is \emph{spreading relative to} $M$. That is, if $E\in \mathcal{F}(M)$ and $F\in [M]^{<\omega}$ is a spread of $E$, then $F\in \mathcal{F}(M)$. To see this, note that since $E\in \mathcal{F}(M)$ and $ F\in [M]^{<\omega}$, we can write $E=M(G)$ and $F=M(H)$ for some $G\in\mathcal{F}$ and $H\in[\nn]^{<\omega}$. Then for each $1\leqslant n\leqslant |E|$, $$M(G(n))=E(n)\leqslant F(n) = M(H(n)),$$ from which it follows that $G(n)\leqslant H(n)$. That is, $H$ is a spread of $G$, and therefore $H\in \mathcal{F}$ and $M(H)\in \mathcal{F}(M)$.

We recall the \emph{fine Schreier families}, $(\mathcal{F}_\xi)_{\xi<\omega_1}$,  and the \emph{Schreier families}, $(\mathcal{S}_\xi)_{\xi<\omega_1}$.   We define $$\mathcal{F}_0=\{\varnothing\},$$ $$\mathcal{F}_{\xi+1} = \{\varnothing\}\cup \{(n)\cup F: n<F\in \mathcal{F}_\xi\},$$ and if $\xi<\omega_1$ is a limit ordinal, we fix $\xi_n\uparrow \xi$ and define $$\mathcal{F}_\xi=\{F: \exists n\leqslant F\in \mathcal{F}_{\xi_n}\}.$$  Note that we employed a choice of $\xi_n\uparrow \xi$ when $\xi$ is a limit ordinal, but none of our results depend upon making any particular choice of $\xi_n$.  We also note that for $n<\omega$, $\mathcal{F}_n$ is the set of subsets of $\nn$ with cardinality not exceeding $n$. For convenience, we let $\mathcal{F}_{\omega_1}=[\nn]^{<\omega}$, the set of all finite subsets of $\nn$. Of course, $\mathcal{F}_{\omega_1}$ is spreading and hereditary, but ill-founded and non-compact.  Moreover, $\mathcal{F}_\xi\subset \mathcal{F}_{\omega_1}$ for all $\xi\leqslant \omega_1$.

We define $$\mathcal{S}_0=\mathcal{F}_1=\{\varnothing\}\cup \{(n): n\in\nn\},$$ $$\mathcal{S}_{\xi+1}= \Bigl\{\bigcup_{n=1}^t F_n: F_1<\ldots <F_t, F_n \in \mathcal{S}_\xi, t\leqslant F_1\Bigr\},$$ and if $\xi<\omega_1$ is a limit ordinal, fix $\xi_n\uparrow \xi$ and define $$\mathcal{S}_\xi= \{F: \exists n\leqslant F\in \mathcal{S}_{\xi_n}\Bigr\}.$$

  We note that the choice of $\xi_n$ in the definition of $\mathcal{S}_\xi$ need not be the same as the choice of $\xi_n$ in the definition of $\mathcal{F}_\xi$. Furthermore, some of the results we cite below for $\mathcal{S}_\xi$ assumed that when $\xi<\omega_1$ is a limit ordinal,  $\xi_n\uparrow \xi$ is chosen so that for all $n\in\nn$, $\xi_n$ is a successor and $\mathcal{S}_{\xi_n+1}\subset \mathcal{S}_{\xi_{n+1}}$.    It is known (see, for example, the presentation in \cite{Con}) that such a choice of $\xi_n\uparrow \xi$ exists.

Given a spreading, hereditary family $\mathcal{F}$ and a regular family $\mathcal{P}$, we let $$\mathcal{F}[\mathcal{P}]= \{\varnothing\}\cup \Bigl\{\bigcup_{n=1}^t F_n: F_1<\ldots <F_t, \varnothing\neq F_n\in \mathcal{P}, (\min F_n)_{n=1}^t \in \mathcal{F}\Bigr\},$$ and for $M\in[\nn]$, we let \begin{align*} \mathcal{F}^M[\mathcal{P}] & =\{\varnothing\}\cup \Bigl\{\bigcup_{n=1}^t F_n: F_1<\ldots <F_t, \varnothing\neq F_n\in \mathcal{P}\upp M, (\min F_n)_{n=1}^t\in \mathcal{F}(M)\Bigr\}.\end{align*} Note that $\mathcal{F}^\nn[\mathcal{P}]=\mathcal{F}[\mathcal{P}]$ and if $M\in[\nn]$ and $N\in[M]$, the spreading property of $\mathcal{F}$ yields that $\mathcal{F}(N)\subset \mathcal{F}(M)$, so $\mathcal{F}^N[\mathcal{P}] \subset \mathcal{F}^M[\mathcal{P}]$.  

\begin{rem}\upshape Note that if $\mathcal{F}$ is regular, $\mathcal{P}$ is nice, $M\in[\nn]$, and $F=_\mathcal{P}\cup_{n=1}^t F_n\in [M]^{<\omega}$, then $F\in \mathcal{F}^M[\mathcal{P}]$ if and only if $(\min F_n)_{n=1}^t \in \mathcal{F}(M)$. Indeed, if $F=_\mathcal{P}\cup_{n=1}^t F_n$ and $(\min F_n)_{n=1}^t\in \mathcal{F}(M)$, then $F_n\in \mathcal{P}$ by the definition of $=_\mathcal{P}$ and $F_n\subset F\in [M]^{<\omega}$. From this it follows that $F\in \mathcal{F}^M[\mathcal{P}]$.   For the converse,  suppose $F=_\mathcal{P}\cup_{n=1}^t F_n$ and $F=\cup_{n=1}^r E_n$ for some $E_1<\ldots <E_r$, $\varnothing\neq E_n\in \mathcal{P}\upp M$, and $(\min E_n)_{n=1}^r\in \mathcal{F}(M)$.  Then since $E_1, \ldots, E_s$ are successive members of $\mathcal{P}$ and $F_1, \ldots, F_t$ are successive, maximal members of $\mathcal{P}$, there exist $0=l_0< l_1<\ldots <l_t\leqslant r$ such that for each $1\leqslant j< t$,  $$\bigcup_{n=1}^{l_j} E_n \preceq \bigcup_{n=1}^j F_n\preceq \bigcup_{n=1}^{l_j+1}E_n.$$   From this it follows that $\min E_{l_{j-1}+1} \leqslant \min F_j$ for each $1\leqslant j\leqslant t$, and $(\min F_n)_{n=1}^t$ is a spread of a subset of $(\min E_n)_{n=1}^r \in \mathcal{F}(M)$. Since $\mathcal{F}(M)$ is hereditary,  $\mathcal{F}(M)$ is spreading relative to $M$, and $(\min F_n)_{n=1}^t\in [M]^{<\omega}$ is a spread of a subset of $(\min E_n)_{n=1}^r$, $(\min F_n)_{n=1}^t\in \mathcal{F}(M)$, and $F=_\mathcal{P}\cup_{n=1}^t F_n\in \mathcal{F}^M[\mathcal{P}]$. 

\label{triv}
\end{rem}

In the next proposition, we recall the following facts from \cite[Propositions $3.1$, $3.2$]{Con} regarding regular families, and specifically the fine Schreier and Schreier families. Here, we note that, as shown in that source, the rank of a regular family is the same as its Cantor-Bendixson index as a topological space.

\begin{proposition} \begin{enumerate}[(i)]\item  For any two regular families $\mathcal{F}, \mathcal{G}$, $\text{\emph{rank}}(\mathcal{F})\leqslant \text{\emph{rank}}(\mathcal{G})$ if and only if there exists $N\in[\nn]$ such that $\mathcal{F}(N)\subset \mathcal{G}$ if and only if for any $M\in[\nn]$, there exists $N\in[M]$ such that $\mathcal{F}(N)\subset \mathcal{G}$. \item For each $\xi<\omega_1$, $\mathcal{F}_\xi$ is regular with rank $\xi+1$, and $\mathcal{F}_\xi$ is nice for $0<\xi<\omega_1$. \item For $\xi<\omega_1$, $\mathcal{S}_\xi$ is nice with rank $\omega^\xi+1$. \item If $\mathcal{F}, \mathcal{G}$ is regular, then so is $$\mathcal{H}:= \{F\cup G: F<G, F\in \mathcal{F}, G\in \mathcal{G}\}.$$  Moreover, if $\text{\emph{rank}}(\mathcal{F})=\mu+1$ and $\text{\emph{rank}}(\mathcal{G})=\zeta+1$, then $\text{\emph{rank}}(\mathcal{H})=\zeta+\mu+1$. \item If $\mathcal{F}, \mathcal{P}$ are regular (resp. nice) families, so is $\mathcal{F}[\mathcal{P}]$. Moreover, if  $\text{\emph{rank}}(\mathcal{F})=\xi+1$ and $\text{\emph{rank}}(\mathcal{P})=\zeta+1$, then $\text{\emph{rank}}(\mathcal{F}[\mathcal{P}])=\zeta\xi+1$.    \item The fine Schreier families have the \emph{almost monotone property}. That is, for any $\zeta<\xi<\omega_1$, there exists $l\in\nn$ such that if $l<F\in \mathcal{F}_\zeta$, then $F\in \mathcal{F}_\xi$. \item The Schreier families have the almost monotone property.  \end{enumerate}

\label{reg}
\end{proposition}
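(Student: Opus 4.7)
My plan is to prove each item by transfinite induction, leveraging that the rank of a regular family coincides with the Cantor--Bendixson index of its realization as a compact metrizable subset of $2^\nn$. This identification reduces every rank computation to an iterated derivative calculation, and the closure properties in each case reduce to verifying that the relevant operations commute appropriately with taking spreads, restrictions, and subsets.

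For (i), the non-trivial direction is to produce, inside a prescribed $M\in[\nn]$, an $N\in[M]$ with $\mathcal{F}(N)\subset \mathcal{G}$ under the hypothesis $\text{rank}(\mathcal{F})\leqslant \text{rank}(\mathcal{G})$. I would induct on $\text{rank}(\mathcal{F})$: the base case is immediate, and for a successor rank I would pass to the derivatives $\mathcal{F}'$ and $\mathcal{G}'$, apply the inductive hypothesis, and reassemble. The limit case is the delicate one, and I would handle it by combining the inductive hypothesis with the infinite Ramsey theorem (or Nash--Williams) applied to the coloring $L\mapsto L|\mathcal{G}$, producing a single $N$ on which all the inductive choices align simultaneously. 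The converse direction is cheap: $\mathcal{F}(N)\subset \mathcal{G}$ immediately yields $\text{rank}(\mathcal{F})=\text{rank}(\mathcal{F}(N))\leqslant \text{rank}(\mathcal{G})$, since $\mathcal{F}$ and $\mathcal{F}(N)$ are order-isomorphic as trees and tree inclusion is rank-nonincreasing.

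Items (ii) and (iii) are direct transfinite inductions using the recursive definitions of $\mathcal{F}_\xi$ and $\mathcal{S}_\xi$. The successor step for $\mathcal{F}_{\xi+1}$ is straightforward, since prepending a single coordinate $(n)$ shifts the rank by exactly one. For $\mathcal{S}_\xi$, the analogous successor step yields the multiplicative jump $\omega^\xi\cdot\omega=\omega^{\xi+1}$ because the constraint $t\leqslant F_1$ effectively glues $\omega$-many shifted copies of $\mathcal{S}_\xi$ above each initial node. Limit steps are handled by the chosen sequence $\xi_n\uparrow \xi$ together with a direct derivative count, while niceness is verified by inspection. For (iv), the family $\mathcal{H}$ is obtained by grafting a copy of $\mathcal{G}$ above each leaf of $\mathcal{F}$: peeling off the first $\mu+1$ derivatives removes the $\mathcal{F}$-layer, leaving a contribution of rank $\zeta+1$ from $\mathcal{G}$, which gives $\text{rank}(\mathcal{H})=\zeta+\mu+1$. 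The same grafting picture with leaves replaced by $\mathcal{P}$-blocks yields $\text{rank}(\mathcal{F}[\mathcal{P}])=\zeta\xi+1$; regularity and niceness are inherited by routine verification of closure under spreads and subsets.

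Finally, (vi) and (vii) are proved by transfinite induction on the ambient ordinal using the sequences $\xi_n\uparrow \xi$ chosen in the definitions: for a successor step, the claim reduces via the recursive definition to the inductive claim one level down; for limit steps, the special choice $\mathcal{S}_{\xi_n+1}\subset \mathcal{S}_{\xi_{n+1}}$ (and the analogous property for the fine Schreier families) guarantees the required nesting. The main obstacle I expect is the limit step of (i), where one must simultaneously control countably many infinite-subset choices coming from an induction on $\xi_n\uparrow \xi$; a naive diagonalization is incompatible with the spreading property, and it is here that the Ramsey-theoretic input, together with the identification of tree-rank with Cantor--Bendixson index, is indispensable.
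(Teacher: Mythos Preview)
The paper does not actually prove this proposition: it is stated as a collection of known facts and cited to \cite[Propositions~3.1,~3.2]{Con}, so there is no in-paper argument to compare against. Your sketch is in line with the standard treatments of these results in the literature (Gasparis for (i), and the usual inductive computations for the Schreier-type families), so in that sense your approach is the expected one.

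One small correction worth flagging in (iv): your description of ``peeling off the first $\mu+1$ derivatives removes the $\mathcal{F}$-layer'' has the order reversed. In $\mathcal{H}=\{F\cup G: F<G,\ F\in\mathcal{F},\ G\in\mathcal{G}\}$, the $\mathcal{G}$-part sits on top (it consists of the larger coordinates, hence the extendable tail), so the first $\zeta$ derivatives strip away the $\mathcal{G}$-layer, after which one is left with $\mathcal{F}$ contributing the remaining $\mu+1$ to the rank; this is precisely why the answer is $\zeta+\mu+1$ and not $\mu+\zeta+1$. The same ordering issue is implicit in (v), where one first exhausts the $\mathcal{P}$-blocks before advancing in $\mathcal{F}$, yielding the product $\zeta\xi$ in that order. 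This is a bookkeeping point rather than a genuine gap, and your overall strategy is sound.
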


\begin{rem}\upshape Note that for any regular $\mathcal{F}, \mathcal{P}$ and any $M\in[\nn]$, $\mathcal{F}[\mathcal{P}]$ and $\mathcal{F}^M[\mathcal{P}]$ are tree isomorphic to subtrees of each other, and therefore have the same rank.  The inclusion is a tree isomorphism of $\mathcal{F}^M[\mathcal{P}]$ onto a subtree of $\mathcal{F}[\mathcal{P}]$, while $F\mapsto M(F)$ is a tree isomorphism of $\mathcal{F}[\mathcal{P}]$ onto a subtree of $\mathcal{F}^M[\mathcal{P}]$.

\end{rem}

\begin{rem}\upshape We will frequently use the fact that if $M\in[\nn]$ and $N\in[M]$, and if $M(m)=N(n)$ for some $m,n\in\nn$, it follows that $m\geqslant n$. That is, the $n^{th}$ term of the subsequence $N$ of $M$ cannot occur before the $n^{th}$ position in $M$. For such $M,N$ and $F\in [\nn]^{<\omega}$, since $N(F)\in [M]^{<\omega}$, there exists $G\in [\nn]^{<\omega}$ such that $M(G)=N(F)$. It then follows from the first sentence of the remark that this $G$ must be a spread of $F$.

\label{gas}

\end{rem}

The next proposition concerns our standard diagonalization procedure. 

\begin{proposition} Suppose that $M_1\supset M_2\supset \ldots$ are infinite subsets of $\nn$ and $M(n)=M_n(n)$ for all $n\in\nn$. Fix a limit ordinal $\zeta<\omega_1$ and let $\zeta_k\uparrow \zeta$ be such that  $\mathcal{F}_\zeta= \{F: \exists k\leqslant F\in \mathcal{F}_{\zeta_k}\}$. Then $$\mathcal{F}_\zeta(M)\subset \bigcup_{k=1}^\infty \mathcal{F}_{\zeta_k}(M_k)$$ and for any regular family $\mathcal{P}\subset [\nn]^{<\omega}$, $$\mathcal{F}^M_\zeta[\mathcal{P}]\subset \bigcup_{k=1}^\infty \mathcal{F}_{\zeta_k}^{M_k}[\mathcal{P}].$$

\label{silly}
\end{proposition}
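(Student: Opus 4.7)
My approach is a routine diagonalization whose technical core is the following observation: since $M_1\supset M_2\supset\ldots$ and $M(n)=M_n(n)$, for every $n\geqslant k$ we have $M(n)=M_n(n)\in M_n\subset M_k$, so the tail $\{M(n):n\geqslant k\}$ is contained in $M_k$. I will use this to convert any identification $E=M(F)$ into an identification $E=M_k(G)$ whenever $\min F\geqslant k$, and then verify that the new index $G$ still lies in $\mathcal{F}_{\zeta_k}$.

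For the first inclusion, I would take a non-empty $E\in\mathcal{F}_\zeta(M)$ and write $E=M(F)$ for some $F\in\mathcal{F}_\zeta$. The definition of $\mathcal{F}_\zeta$ at a limit ordinal supplies $k\in\nn$ with $k\leqslant\min F$ and $F\in\mathcal{F}_{\zeta_k}$. The observation gives $E\subset M_k$, hence there is a unique $G\in[\nn]^{<\omega}$ with $M_k(G)=E$. For each $i$, combining the two identifications yields
\[
M_k(G(i))=M(F(i))=M_{F(i)}(F(i)).
\]
Since $M_{F(i)}\subset M_k$, the $F(i)$-th element of the sparser set $M_{F(i)}$ is at least the $F(i)$-th element of $M_k$, so $M_k(G(i))\geqslant M_k(F(i))$ and hence $G(i)\geqslant F(i)$. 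Thus $G$ is a spread of $F$, and the spreading property of the regular family $\mathcal{F}_{\zeta_k}$ gives $G\in\mathcal{F}_{\zeta_k}$, so $E=M_k(G)\in\mathcal{F}_{\zeta_k}(M_k)$.

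For the second inclusion, I would take a non-empty $E\in\mathcal{F}_\zeta^M[\mathcal{P}]$, write $E=\bigcup_{n=1}^tE_n$ with $E_1<\ldots<E_t$, $\varnothing\neq E_n\in\mathcal{P}\upp M$, and $(\min E_n)_{n=1}^t\in\mathcal{F}_\zeta(M)$, and apply the first inclusion to the sequence of minima. This yields $k$ together with $H\in\mathcal{F}_{\zeta_k}$, $k\leqslant\min H$, such that $(\min E_n)_{n=1}^t=M(H)$. In particular $\min E_n=M(H(n))$ with $H(n)\geqslant k$. Any $x\in E_n\subset M$ satisfies $x\geqslant\min E_n$, so $x=M(j)$ for some $j\geqslant H(n)\geqslant k$, and the basic observation gives $x\in M_k$. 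Thus $E_n\subset M_k$, so $E_n\in\mathcal{P}\upp M_k$; together with the membership $(\min E_n)_{n=1}^t\in\mathcal{F}_{\zeta_k}(M_k)$ this shows $E\in\mathcal{F}_{\zeta_k}^{M_k}[\mathcal{P}]$.

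The only genuinely subtle step is the re-indexing in the first inclusion: the natural index $F$ of $E$ relative to $M$ is not generally equal to the natural index $G$ of $E$ relative to $M_k$, and one must check that $G$ inherits membership in $\mathcal{F}_{\zeta_k}$. The spread comparison $G(i)\geqslant F(i)$, combined with the spreading property of $\mathcal{F}_{\zeta_k}$, is what bridges the two indices, and everything else follows mechanically from the definitions.
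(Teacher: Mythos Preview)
Your proof is correct and follows essentially the same route as the paper: pick $k$ with $k\leqslant F\in\mathcal{F}_{\zeta_k}$, use the nesting $M_1\supset M_2\supset\ldots$ to see that the tail of $M$ past position $k$ lies in $M_k$, and conclude via the spreading property that the re-indexing $G$ of $E$ relative to $M_k$ stays in $\mathcal{F}_{\zeta_k}$. Your version is in fact slightly more explicit than the paper's, which packages the spread comparison into an auxiliary sequence $N=(M_k(1),\ldots,M_k(k-1),M(k),M(k+1),\ldots)\in[M_k]$ and an appeal to Remark~\ref{gas}, and which leaves the verification $E_n\subset M_k$ in the second inclusion implicit.
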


\begin{proof} For the first inclusion, if $\varnothing \neq F\in\mathcal{F}_\zeta$, we may fix $k\in\nn$ such that $k\leqslant F\in \mathcal{F}_{\zeta_k}$. Let $N=(M_k(1), \ldots, M_k(k-1), M(k), M(k+1))\in [M_k]$. Then $M(F)=N(F)=M_k(G)$ for some spread $G$ of $F$. The fact that $G$ is a spread of $F$ follows from the content of Remark \ref{gas}.  By the spreading property of  $\mathcal{F}_{\zeta_k}$, $G\in \mathcal{F}_{\zeta_k}$. Therefore $$M(F)=N(F)=M_k(G)\in \mathcal{F}_{\zeta_k}(M_k).$$  Since $F\in\mathcal{F}_\zeta$ was arbitrary, we have the first inclusion. 

For the second inclusion, we note that any set $H\in \mathcal{F}^M_\zeta[\mathcal{P}]$ can be written as $H=\cup_{n=1}^t F_n$, where $F_1<\ldots <F_t$, $\varnothing\neq F_n\in \mathcal{P}\upp M$, and $(\min F_n)_{n=1}^t\in \mathcal{F}_\zeta(M)$.  Then $(\min F_n)_{n=1}^t= M(F)$ for some $F\in \mathcal{F}_\zeta$. As in the preceding paragraph, we may choose some $k$ such that $k\leqslant F\in \mathcal{F}_{\zeta_k}$ and note that $(\min F_n)_{n=1}^t=M(F)=M_k(G)\in \mathcal{F}_{\zeta_k}$ for some spread $G$ of $F$.  This yields that $H\in \mathcal{F}_{\zeta_k}^{M_k}[\mathcal{P}]$. Since $H\in \mathcal{F}^M_\zeta[\mathcal{P}]$ was arbitrary, we deduce the second inclusion.

\end{proof}

We next recall a special case of the infinite Ramsey theorem, the proof of which was achieved in steps by Nash-Williams \cite{NW}, Galvin  and Prikry \cite{GP}, Silver \cite{Silver}, and Ellentuck \cite{Ellentuck}. The general form of the infinite Ramsey theorem holds for analytic sets, but we will only need it for closed sets.

\begin{theorem} If $\mathcal{V}\subset [\nn]$ is closed, then for any $L\in[\nn]$, there exists $M\in[L]$ such that either $[L]\subset \mathcal{V}$ or $[L]\cap \mathcal{V}=\varnothing$. 

\label{Ramsey}
\end{theorem}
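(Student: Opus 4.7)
The plan is to adapt the classical Galvin--Prikry combinatorial forcing argument in its simpler closed-set version. Since $\mathcal{V}$ is closed in the Cantor topology, it is the set of infinite branches through the tree
$$T=\{s\in [\nn]^{<\omega}:s\text{ is an initial segment of some }N\in \mathcal{V}\},$$
so that $N\notin \mathcal{V}$ is witnessed by a single finite initial segment of $N$ lying outside $T$. This finite-witness property is the only place closedness is used.

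For $s\in [\nn]^{<\omega}$ and $L\in [\nn]$ with $s<L$, I would call $(s,L)$ \emph{accepting} if $s\cup N\in \mathcal{V}$ for every $N\in [L]$, \emph{rejecting} if $s\cup N\notin \mathcal{V}$ for every $N\in [L]$, and \emph{deciding} if it is one of the two. The theorem is then exactly the assertion that for every $L\in [\nn]$ there is $L'\in [L]$ with $(\varnothing,L')$ deciding. I would prove the stronger combinatorial lemma that for every pair $(s,L)$ with $s<L$, some $L'\in [L]$ has $(s,L')$ deciding.

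The proof of the lemma is a diagonal/fusion argument. If no $L'\in [L]$ accepts, then for every $L'\in [L]$ there exists $N\in [L']$ which produces, by the tree description above, a least initial segment $s\cup (m_1,\ldots,m_k)\notin T$. Applying the finite Ramsey theorem iteratively position by position to homogenize which next element forces a $T$-exit, and shrinking $L$ at each stage, I would extract a single $L'\in [L]$ on which the $T$-exit occurs uniformly for every $N\in [L']$; this $L'$ rejects. Applying the lemma to $(\varnothing,L)$ yields the desired $M\in [L]$, since a deciding $(\varnothing,M)$ gives exactly $[M]\subset \mathcal{V}$ or $[M]\cap \mathcal{V}=\varnothing$.

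The principal obstacle is the decide lemma, and specifically the Ramsey-type homogenization that leverages the finite-witness property. Without closedness, the reject alternative need not have a finite witness, so position-wise homogenization breaks down; that is exactly why the general analytic version of the Ramsey theorem requires the much deeper machinery of Silver and Ellentuck. For the closed case, once the decide lemma is in hand, the extraction of $M$ is immediate.
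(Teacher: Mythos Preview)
The paper does not prove this statement; it is cited as the closed-set case of the infinite Ramsey theorem, attributed to Nash-Williams, Galvin--Prikry, Silver, and Ellentuck. (The printed statement also contains a typo: the conclusion should read $[M]\subset\mathcal{V}$ or $[M]\cap\mathcal{V}=\varnothing$, as you correctly interpret it.) So there is no paper proof to compare against; what follows is an assessment of your sketch on its own terms.

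Your framework---the tree $T$ of initial segments of members of $\mathcal{V}$, the accept/reject/decide trichotomy, and the reduction to a decide lemma for $(\varnothing,L)$---is the standard Galvin--Prikry setup and is correct. The gap is in your proof of the decide lemma. From ``no $L'\in[L]$ accepts'' you know only that each $L'\in[L]$ contains \emph{some} $N$ with $s\cup N\notin\mathcal{V}$, hence with some finite $T$-exit; you must produce an $L'$ in which \emph{every} $N$ has one. Your proposed mechanism, ``the finite Ramsey theorem iteratively position by position to homogenize which next element forces a $T$-exit'', does not accomplish this: the finite Ramsey theorem is not the relevant tool, and there is no uniform exit length or exit pattern to homogenize toward---different $N$'s may leave $T$ at arbitrarily late and unrelated stages.

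The argument you want is a genuine fusion that propagates non-acceptance. One first shows (and this step already requires its own diagonalization, not finite Ramsey) that if $(s,L)$ is non-accepting then there exists $L_1\in[L]$ such that $(s\cup\{m\},L_1\cap(m,\infty))$ is non-accepting for every $m\in L_1$. Iterating and diagonalizing over all finite initial segments yields $M\in[L]$ such that $(t,M\cap(\max t,\infty))$ is non-accepting for every initial segment $t$ of $M$. Only then does closedness enter: if some $N\in[M]$ lay in $\mathcal{V}$, all its initial segments would lie in $T$, forcing some $(t,\cdot)$ to accept, a contradiction. You do say ``diagonal/fusion argument'' at the outset, so you may have this picture in mind, but the description you give afterward does not match it; replacing the finite-Ramsey homogenization by the non-acceptance--propagation fusion is what is needed.
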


The following dichotomy was shown in \cite{PR} by Pudl\'{a}k and R\"{o}dl. 

\begin{theorem} For any non-empty  regular family $\mathcal{P}$, any $N\in[\nn]$, and any finite partition $MAX(\mathcal{P})\upp N=\cup_{n=1}^r T_n$ of $MAX(\mathcal{P})\upp N$,  there exist $M\in[N]$ and $1\leqslant n\leqslant r$ such that $MAX(\mathcal{P})\upp M\subset T_n$. 

\label{pr}
\end{theorem}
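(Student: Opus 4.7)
The plan is to reduce to the $r=2$ case by induction on $r$ and then apply Theorem \ref{Ramsey} (Galvin--Prikry) to a suitable closed set in $[\nn]$.

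The case $r=1$ is vacuous. For the inductive step, given a partition $MAX(\mathcal{P})\upp N = T_1 \cup \ldots \cup T_{r+1}$, apply the two-colour statement to $T_1$ versus $T_2 \cup \ldots \cup T_{r+1}$: this yields $M_1 \in [N]$ with $MAX(\mathcal{P})\upp M_1$ contained either in $T_1$ (and we are done) or in $T_2 \cup \ldots \cup T_{r+1}$, in which case we apply the induction hypothesis to the induced $r$-piece partition of $MAX(\mathcal{P})\upp M_1$ (noting that $\mathcal{P}$ and $M_1$ play the roles of the original $\mathcal{P}$ and $N$).

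For the $r=2$ case, set
$$\mathcal{V} = \bigcup_{F \in T_1} \{L \in [\nn] : F \preceq L\}.$$
Each cylinder $\{L \in [\nn] : F \preceq L\}$ fixes only finitely many coordinates of the characteristic function and so is basic clopen in the Cantor topology on $2^\nn$; hence $\mathcal{V}$ is open and $\mathcal{V}^c$ is closed. By Theorem \ref{Ramsey} there exists $M \in [N]$ with either $[M] \subset \mathcal{V}^c$ or $[M] \subset \mathcal{V}$. Given any $F \in MAX(\mathcal{P}) \upp M$, form $L := F \cup \{m \in M : m > \max F\} \in [M]$, so that $F \preceq L$. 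If $[M] \subset \mathcal{V}^c$ then no initial segment of $L$ lies in $T_1$, forcing $F \in T_2$ (since $F \in MAX(\mathcal{P})\upp N = T_1 \cup T_2$). If $[M] \subset \mathcal{V}$ then some $F' \in T_1$ satisfies $F' \preceq L$; since also $F \preceq L$, $F$ and $F'$ are $\preceq$-comparable members of $\mathcal{P}$, each $\preceq$-maximal there, hence $F = F' \in T_1$. Either way, $MAX(\mathcal{P})\upp M$ is monochromatic, completing the $r=2$ case and the induction.

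The main obstacle is precisely the transfer from the Ramsey dichotomy, which is a statement about infinite subsets of $M$, to the desired conclusion, which concerns the finite sets in $MAX(\mathcal{P})\upp M$. The extension device $L = F \cup \{m \in M : m > \max F\}$ together with the fact that two distinct $\preceq$-comparable members of $\mathcal{P}$ cannot both be $\preceq$-maximal is what bridges this gap. Note that this argument uses only that $MAX(\mathcal{P})$ consists of $\preceq$-maximal members of $\mathcal{P}$; we never need $L|\mathcal{P}$ itself to belong to $MAX(\mathcal{P})$, which is fortunate since this can fail for regular (non-nice) families.
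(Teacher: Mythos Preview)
The paper does not prove this theorem; it is quoted from Pudl\'{a}k--R\"{o}dl \cite{PR} without argument. Your derivation from Theorem~\ref{Ramsey} is correct and is essentially the standard way to recover the finitary Ramsey statement from the infinite one: the induction to reduce to two colours is routine, and in the two-colour case your use of the open set $\mathcal{V}=\bigcup_{F\in T_1}\{L:F\preceq L\}$ and the extension $L=F\cup\{m\in M:m>\max F\}$ is exactly right. The key point you identify---that two $\preceq$-comparable members of $MAX(\mathcal{P})$ must coincide---is what makes Case~2 go through, and you are correct that niceness of $\mathcal{P}$ is not needed, only regularity (so that $MAX(\mathcal{P})\upp M=MAX(\mathcal{P}\upp M)$, which the paper records just before Proposition~\ref{reg}).
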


\begin{rem}\upshape We will use a continuous version of Theorem \ref{pr}. Namely, for any $N\in[\nn]$, any non-empty regular family $\mathcal{F}$, any $\delta>0$, and any bounded function $h_0:MAX(\mathcal{F})\upp N\to \rr$, there exist $M\in[N]$ and $a,b\in\rr$ such that $0<b-a<\delta$ and $h_0(F)\in [a,b]$ for any $F\in \mathcal{F}\upp M$.  Indeed, we partition the range of $h_0$ into finitely many sets $A_1, \ldots, A_n$ of diameter less than $\delta$ and partition $MAX(\mathcal{F})\upp N$ into $T_1, \ldots, T_n$ by letting $T_i= h_0^{-1}(A_i)$. We then apply Theorem \ref{pr} to obtain $M\in[N]$ and $1\leqslant i\leqslant n$ such that $MAX(\mathcal{F})\upp M\subset T_i$.  We conclude by letting $a=\inf A_i$ and $b=\sup A_i$. 

\label{prremark}
\end{rem}

The next theorem was shown in \cite{G} by Gasparis.

\begin{theorem} If $\mathcal{F}, \mathcal{G}\subset [\nn]^{<\omega}$ are hereditary, then for any $L\in[\nn]$, there exists $M\in[L]$ such that either $\mathcal{F}\upp M\subset \mathcal{G}$ or $\mathcal{G}\upp M\subset \mathcal{F}$. 

\label{gasp}
\end{theorem}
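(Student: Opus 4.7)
My plan is to apply Theorem \ref{Ramsey} twice to appropriately chosen closed subsets of $[L]$, and then use the classical (finite) Ramsey theorem for $k$-element subsets, together with the hereditary property of $\mathcal{F}$ and $\mathcal{G}$, to rule out the remaining case.

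For each $\mathcal{H}\subset [\nn]^{<\omega}$, the set $\mathcal{V}(\mathcal{H}) = \{N\in[L] : \mathcal{H}\upp N = \varnothing\}$ is closed in the Cantor topology, because its complement is the union over $F\in\mathcal{H}$ of the basic open cylinders $\{N\in[L] : F\subset N\}$. First I apply Theorem \ref{Ramsey} to $\mathcal{V}(\mathcal{F}\setminus\mathcal{G})\subset [L]$, obtaining $M_1\in[L]$ such that either $[M_1]\subset \mathcal{V}(\mathcal{F}\setminus\mathcal{G})$ (which gives $\mathcal{F}\upp M_1\subset\mathcal{G}$ and finishes the proof) or every $N\in[M_1]$ contains a finite subset in $\mathcal{F}\setminus\mathcal{G}$. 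In the latter case I apply Theorem \ref{Ramsey} again to $\mathcal{V}(\mathcal{G}\setminus\mathcal{F})\cap [M_1]$, obtaining $M_2\in[M_1]$: either $[M_2]\subset \mathcal{V}(\mathcal{G}\setminus\mathcal{F})$ (giving $\mathcal{G}\upp M_2\subset\mathcal{F}$ and finishing) or every $N\in[M_2]$ also contains a finite subset in $\mathcal{G}\setminus\mathcal{F}$. The residual case to rule out is that every infinite $N\subset M_2$ contains finite subsets in both $\mathcal{F}\setminus\mathcal{G}$ and $\mathcal{G}\setminus\mathcal{F}$.

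To handle this residual case I would iterate the finite Ramsey theorem: for each $k\geqslant 1$, color $[\nn]^k$ by four colors according to membership in $\mathcal{F}\cap\mathcal{G}$, $\mathcal{F}\setminus\mathcal{G}$, $\mathcal{G}\setminus\mathcal{F}$, or $(\mathcal{F}\cup\mathcal{G})^c$, and obtain nested infinite sets $M_2\supset N_1\supset N_2\supset\cdots$ with $[N_k]^k$ monochromatic of some color $c_k\in\{0,1,2,3\}$. The hereditary property of $\mathcal{G}$ forbids having both $c_{k_1}=1$ and $c_{k_2}=2$: assuming $k_1\leqslant k_2$, any $F_2\in [N_{k_2}]^{k_2}$ is in $\mathcal{G}\setminus\mathcal{F}$, so any $k_1$-subset of $F_2$ is in $\mathcal{G}$ by hereditary, yet by nesting it is in $[N_{k_1}]^{k_1}$ and should be of color $1$, i.e.\ not in $\mathcal{G}$. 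So WLOG $c_k\neq 1$ for all $k$. Applying the residual-case hypothesis at each $N_k$ produces an $F\subset N_k$ in $\mathcal{F}\setminus\mathcal{G}$ with $|F|>k$ (else $F$ would realize $c_{|F|}=1$ by nesting), whose $k$-subsets are in $\mathcal{F}$ by hereditary and of color $c_k\in\{0,1\}$; combined with $c_k\neq 1$ this forces $c_k=0$ for every $k$.

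The main obstacle is converting the final configuration — $c_k=0$ for all $k$, together with the density of both $\mathcal{F}\setminus\mathcal{G}$ and $\mathcal{G}\setminus\mathcal{F}$ witnesses inside every $N_k$ — into an explicit contradiction. I would pick a minimum-size witness $F^*\in\mathcal{F}\setminus\mathcal{G}$ with $F^*\subset M_2$ of size $m=\min\{|F|:F\in\mathcal{F}\setminus\mathcal{G}, F\subset M_2\}$, observe that $c_m=0$ forces $F^*\not\subset N_m$ (else $F^*$ would be colored $0$ and so lie in $\mathcal{G}$), and then use the hereditary/minimality constraints on proper subsets of $F^*$ and of the corresponding minimum-size $\mathcal{G}\setminus\mathcal{F}$ witness $G^*$ to derive the contradiction by feeding the positional information back through the nested $N_k$ along with repeated applications of the residual-case density hypothesis on successive tails. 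Making this diagonal argument precise — balancing the sizes of hereditary witnesses against their positions in the Ramsey nesting — is the delicate part of the proof.
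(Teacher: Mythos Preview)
Your proposal has a genuine gap. The paper itself does not prove this theorem (it is cited from Gasparis), but it proves the closely related Proposition~\ref{dich}, remarking that the argument is ``similar to that of the proof of Theorem~\ref{gasp}.'' That argument, and Gasparis's original, hinge on \emph{initial segments} rather than arbitrary finite subsets: one applies Theorem~\ref{Ramsey} to the closed set $\{N\in[L]:\text{no initial segment of }N\text{ lies in }\mathcal{F}\setminus\mathcal{G}\}$, and then symmetrically for $\mathcal{G}\setminus\mathcal{F}$. The residual case is then that every $N\in[M]$ has an initial segment $I_N\in\mathcal{F}\setminus\mathcal{G}$ and an initial segment $J_N\in\mathcal{G}\setminus\mathcal{F}$; since initial segments of a fixed $N$ are \emph{totally ordered} by inclusion, one of $I_N,J_N$ contains the other, and heredity immediately forces the smaller one into both families, a contradiction.

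Your setup applies Ramsey to $\mathcal{V}(\mathcal{F}\setminus\mathcal{G})=\{N:(\mathcal{F}\setminus\mathcal{G})\uup N=\varnothing\}$, so in the residual case you only know that each $N\in[M_2]$ contains \emph{some} finite subset in $\mathcal{F}\setminus\mathcal{G}$ and some in $\mathcal{G}\setminus\mathcal{F}$. These two witnesses need not be comparable under inclusion, so the one-line heredity argument is unavailable, and you are forced into the finite-Ramsey machinery. That machinery does correctly yield $c_k=0$ for all $k$ (so $[N_k]^{\leqslant k}\subset\mathcal{F}\cap\mathcal{G}$), but this is not yet a contradiction: a witness $F\in(\mathcal{F}\setminus\mathcal{G})\uup N_k$ only satisfies $|F|>k$, and there is no control linking $|F|$ to the depth at which $F$ sits in the nesting $N_1\supset N_2\supset\cdots$. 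Your final paragraph acknowledges this --- ``making this diagonal argument precise \ldots\ is the delicate part'' --- but does not carry it out; the sketch involving a minimum-size witness $F^*\subset M_2$ does not work as stated, since $F^*$ has no reason to lie in $N_m$ and hence $c_m=0$ says nothing about it. The clean fix is to abandon arbitrary subsets and rerun your two Ramsey steps with the initial-segment colorings above; the finite-Ramsey layer then becomes superfluous.
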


We also include the following result, whose proof is similar to that of the proof of Theorem \ref{gasp}. 

\begin{proposition} Let $\mathcal{F}, \mathcal{G}$ be hereditary families such that $\mathcal{F}$ is nice. Then for any $L\in[\nn]$, there exists $M\in[L]$ such that either $\mathcal{F}\upp M\subset \mathcal{G}$ or $\mathcal{G}\cap MAX(\mathcal{F}\upp M)=\varnothing$.

\label{dich}
\end{proposition}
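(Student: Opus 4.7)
The plan is to mimic the proof of Theorem \ref{gasp}, with niceness of $\mathcal{F}$ replacing regularity to ensure that $N|\mathcal{F}$ is always a maximal element of $\mathcal{F}$. Since $\mathcal{F}$ is nice (hence regular and compact), for each $N\in [\nn]$ the quantity $N|\mathcal{F}$ is well-defined, non-empty, and lies in $MAX(\mathcal{F})$. Moreover $N|\mathcal{F}$ depends only on an initial segment of $N$: if $N|\mathcal{F}=(N(1),\dots,N(k))$, then any $N'\in [\nn]$ agreeing with $N$ on the first $k+1$ coordinates satisfies $N'|\mathcal{F}=N|\mathcal{F}$. Hence the map $N\mapsto N|\mathcal{F}$ is locally constant on $[\nn]$, so the set
$$
\mathcal{V}=\{N\in [L]: N|\mathcal{F}\in \mathcal{G}\}
$$
is clopen in $[L]$.

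I will then invoke the Galvin--Prikry form of the Ramsey theorem (Theorem \ref{Ramsey}) applied to $\mathcal{V}$ to obtain $M\in [L]$ with either $[M]\subset \mathcal{V}$ or $[M]\cap\mathcal{V}=\varnothing$. The key observation bridging $\mathcal{V}$ to the statement of the proposition is that every $F\in MAX(\mathcal{F})\upp M=MAX(\mathcal{F}\upp M)$ can be realized as $F=N|\mathcal{F}$ for some $N\in [M]$: just write $M\setminus (F\cup\{1,\dots,\max F\})$ as $\{n_1<n_2<\dots\}$ and set $N=F\cup\{n_1,n_2,\dots\}\in [M]$; maximality of $F$ in $\mathcal{F}$ together with niceness forces $N|\mathcal{F}=F$.

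In the case $[M]\subset \mathcal{V}$, this realization shows that every $F\in MAX(\mathcal{F})\upp M$ lies in $\mathcal{G}$. To pass from $MAX(\mathcal{F})\upp M$ to all of $\mathcal{F}\upp M$, I use that $\mathcal{F}$ is nice: any $F\in \mathcal{F}\upp M$ can be extended inside $M$ by successively adjoining elements of $M$ larger than $\max F$, and since $\mathcal{F}$ is well-founded this extension terminates in some $F'\in MAX(\mathcal{F})\upp M$ with $F\subset F'$. Since $F'\in \mathcal{G}$ and $\mathcal{G}$ is hereditary, $F\in \mathcal{G}$, yielding $\mathcal{F}\upp M\subset \mathcal{G}$. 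In the case $[M]\cap \mathcal{V}=\varnothing$, the same realization shows that every $F\in MAX(\mathcal{F}\upp M)$ satisfies $F\notin \mathcal{G}$, which is exactly $\mathcal{G}\cap MAX(\mathcal{F}\upp M)=\varnothing$.

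The only mildly delicate step is verifying that $\mathcal{V}$ is closed (or equivalently that $N\mapsto N|\mathcal{F}$ is continuous), since this is where niceness—rather than mere regularity plus hereditariness of $\mathcal{G}$—is used to guarantee that $N|\mathcal{F}$ is itself a leaf of $\mathcal{F}$ and thus stable under small perturbations of $N$. Everything else is a direct translation of Gasparis's argument.
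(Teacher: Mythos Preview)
Your proposal is correct and follows essentially the same route as the paper's proof: define $\mathcal{V}=\{N:N|\mathcal{F}\in\mathcal{G}\}$, use niceness of $\mathcal{F}$ to see that $N\mapsto N|\mathcal{F}$ is locally constant so $\mathcal{V}$ is clopen, apply the infinite Ramsey theorem to get $M\in[L]$, and then realize each $F\in MAX(\mathcal{F}\upp M)$ as $N|\mathcal{F}$ for some $N\in[M]$ to read off the two alternatives. The only cosmetic difference is that you spell out explicitly why every $F\in\mathcal{F}\upp M$ extends inside $M$ to a maximal member (using niceness and compactness), whereas the paper simply invokes hereditariness of $\mathcal{G}$ after showing $MAX(\mathcal{F}\upp M)\subset\mathcal{G}$; this is the same step made more explicit.
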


\begin{proof} If $\varnothing\notin \mathcal{F}$, then $\mathcal{F}\upp L =  \varnothing \subset \mathcal{G}$. Therefore in this case we can take $M=L$. Assume $\varnothing\in \mathcal{F}$. Recall that for $M\in[\nn]$, $M|\mathcal{F}$ is the maximal (possibly empty) initial segment of $M$ which lies in $\mathcal{F}$.   Note that $M\mapsto M|\mathcal{F}$ is locally constant. Indeed, since $\mathcal{F}$ is nice, for any $M\in[\nn]$, $M|\mathcal{F}\in MAX(\mathcal{F})$, and $M|\mathcal{F}=N|\mathcal{F}$ for any $N$ which has $M|\mathcal{F}$ as an initial segment. Since the set of such $N$ is clopen, it follows that $$\mathcal{V}=\{M\in[\nn]: M|\mathcal{F}\in \mathcal{G}\}$$ is closed. By Theorem \ref{Ramsey}, there exists $M\in[\nn]$ such that either $[M]\subset \mathcal{V}$ or $[M]\cap \mathcal{V}=\varnothing$. If $[M]\subset \mathcal{V}$, then fix $F\in MAX(\mathcal{F}\upp M)$ and fix $F\prec N\in[M]$. Note that since $F\in MAX(\mathcal{F}\upp M)$,  $F=N|\mathcal{F}$.  Since $N\in[M]\subset \mathcal{V}$,  $F=N|\mathcal{F}\in \mathcal{G}$.  This shows that $MAX(\mathcal{F}\upp M)\subset \mathcal{G}$. Since $\mathcal{G}$ is hereditary, $\mathcal{F}\upp M\subset \mathcal{G}$.  

If $[M]\cap \mathcal{V}=\varnothing$, for any $F\in MAX(\mathcal{F}\upp M)$, we fix $F\prec N\in [M]$ and note that, since $N\notin \mathcal{V}$, $F=N|\mathcal{F}\notin \mathcal{G}$. Therefore $MAX(\mathcal{F}\upp M)\cap \mathcal{G}=\varnothing$.

\end{proof}

We isolate here the following technical piece which will be of later use.

\begin{proposition} Let $\mathcal{P}, \mathcal{Q}$ be nice families such that $\text{\emph{rank}}(\mathcal{Q})\leqslant \text{\emph{rank}}(\mathcal{P})$. Then for any  $M,L\in[\nn]$, $K\in[L(M)]$, and $m\in\nn$, there exist $F,E\in[\nn]^{<\omega}$ such that \begin{enumerate}[(i)]\item $m<F\in MAX(\mathcal{Q})\upp M$, \item $L(F\setminus (\min F))=K(E)$, \item $E\in \mathcal{P}$. \end{enumerate}

\label{star}
\end{proposition}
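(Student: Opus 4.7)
The plan is to parameterize $K$ via an indexing sequence $\tilde N\in[\nn]$ with $K=L(M(\tilde N))$, so that condition (ii) becomes $F\setminus \min F = M(\tilde N(E))$, and then to construct $F$ as a maximal initial segment in $\mathcal{Q}$ of a carefully chosen sequence inside $M$.

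By Proposition \ref{reg}(i), since $\text{rank}(\mathcal{Q})\leqslant \text{rank}(\mathcal{P})$ and using the second equivalence there, I obtain $N^*\in[\tilde N]$ with $\mathcal{Q}(N^*)\subseteq\mathcal{P}$. Write $N^*=\tilde N(J^*)$ for the corresponding $J^*\in[\nn]$. By passing to a tail of $N^*$ I may further assume $M(N^*(1))>m$. Set $Y=(M(N^*(i)))_{i\geqslant 1}\in[M]$ and let $F:=Y|\mathcal{Q}$. Niceness of $\mathcal{Q}$ places $F$ in $MAX(\mathcal{Q})\upp M$ with $m<F$, establishing (i). Writing $F=(Y(1),\dots,Y(r))$ and unwinding $K(i)=L(M(\tilde N(i)))$ yields $L(F\setminus\min F)=K(E)$ for $E=(J^*(2),\dots,J^*(r))$, establishing (ii).

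The main obstacle will be verifying (iii), that $E\in\mathcal{P}$. The naive route --- writing $E=N^*((2,\dots,r))$ and appealing to $\mathcal{Q}(N^*)\subseteq\mathcal{P}$ --- demands $(2,\dots,r)\in\mathcal{Q}$, which is typically false since $r$ can exceed the length of any $\mathcal{Q}$-admissible initial segment of $\nn$. To resolve this I refine $N^*$ to a suitably sparse $N^{**}\in[N^*]$; the inclusion $\mathcal{Q}(N^{**})\subseteq\mathcal{P}$ is preserved under passage to subsequences, since $N^{**}(A)$ is pointwise larger than $N^*(A)$ for every $A\in\mathcal{Q}$ and $\mathcal{P}$ is spreading. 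Exploiting niceness of $\mathcal{P}$, which produces maximal $\mathcal{P}$-admissible sequences of any prescribed length starting at any sufficiently large position, I arrange that the refined index sequence $J^{**}$ pointwise dominates such a $\mathcal{P}$-element of length $r-1$. Spreading of $\mathcal{P}$ then delivers $E\in\mathcal{P}$ directly, bypassing the need for $(2,\dots,r)\in\mathcal{Q}$.

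The refinement $N^{**}$ is built recursively: once $N^{**}(1)$ has been chosen, the value $r=\ell_\mathcal{Q}(M(N^{**}))$ is bounded by some $R$ depending only on $N^{**}(1)$ and the structure of $\mathcal{Q}$, after which $N^{**}(2),\dots,N^{**}(R)$ can be selected successively to satisfy the finitely many pointwise growth inequalities imposed by the target $\mathcal{P}$-element of length $R-1$; since the true $r$ will then be at most $R$, the constraints needed for $E\in\mathcal{P}$ are a fortiori met.
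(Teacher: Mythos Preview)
Your strategy is sound and lands in the same place as the paper's proof, but there is one real (if easily repaired) gap. Your claim that ``niceness of $\mathcal{P}$ produces maximal $\mathcal{P}$-admissible sequences of any prescribed length starting at any sufficiently large position'' is false: the family $\mathcal{F}_5=\{F:|F|\leqslant 5\}$ is nice yet contains no set of length $6$. What actually guarantees a target $P\in\mathcal{P}$ with $|P|\geqslant R-1$ is the rank hypothesis, or more directly the inclusion $\mathcal{Q}(N^*)\subseteq\mathcal{P}$ you already have in hand: take any $G\in\mathcal{Q}$ with $|G|=R$ (such $G$ exist by definition of $R$) and set $P=N^*(G)\in\mathcal{P}$, which has $|P|=R$. (There is also a minor slip in your ``naive route'': you write $E=N^*((2,\dots,r))$, but in fact $E=J^*((2,\dots,r))$; the entries of $E$ are positions in $\tilde N$, not elements of $N^*$. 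This does not affect your final argument.)

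The paper's proof avoids your detour through the a priori bound $R$ and the auxiliary target $P$ by interleaving the recursion. It fixes once and for all some $N\in[\nn]$ with $\mathcal{Q}(N)\subseteq\mathcal{P}$ and then chooses pairs $(k_i,n_i)$ with $K(k_i)=L(M(n_i))$ subject to the single growth condition $k_{i+1}>N(M(n_i))$. Stopping when $F=(M(n_i))_{i=1}^t\in MAX(\mathcal{Q})$ and setting $E=(k_i)_{i=2}^t$, one sees at once that $E$ is a spread of $(N(M(n_i)))_{i=1}^{t-1}\subset N(F)\in\mathcal{P}$. The target $\mathcal{P}$-element is thus $N(F)$ itself, whose length is fixed automatically by $F\in\mathcal{Q}$, so no pre-computation of $R$ and no separate existence argument for $P$ are needed. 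Your route works once the gap above is patched, but the paper's execution is shorter and more direct.
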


\begin{proof} By Proposition \ref{reg}, there exists $N\in[\nn]$ such that $\{N(G):G\in\mathcal{Q}\}\subset \mathcal{P}$.  Note that since $K\in [L(M)]$, if $L(M(n))=K(k)$ for some $k,n\in\nn$, then $n\geqslant k$.  Choose $k_1\in\nn$ such that $m<k_1$. Since $K\in[L(M)]$, $K(k_1)=L(M(n_1))$ for some $n_1 \geqslant k_1$.    

Now assume that $k_1< \ldots< k_s$ and $ n_1< \ldots< n_s$ have been chosen such that $L(M(n_i))=K(k_i)$ for each $1\leqslant i\leqslant s$.  As stated in  the previous paragraph, $n_i\geqslant k_i$ for each $1\leqslant i\leqslant s$. Choose $k_{s+1}>N(M(n_s))$ and note that $k_{s+1} >N(M(n_s)) \geqslant n_s\geqslant k_s$.  Choose $n_{s+1}$ such that $K(k_{s+1})= L(M(n_{s+1}))$ and note that $n_{s+1} \geqslant  k_{s+1}>N(M(n_s))\geqslant n_s$.   This completes the recursive construction. 

Since $\mathcal{Q}$ is nice, there exists $t\in \nn$ such that $(M(n_i))_{i=1}^t\in MAX(\mathcal{Q})$.    Let $F=(M(n_i))_{i=1}^t$ and let $E=(k_i)_{i=2}^t$.  Then $$m<k_1\leqslant n_1\leqslant M(n_1) =\min F,$$ so $(i)$ is satisfied. For $(ii)$, note that $$L(F\setminus (\min F))= (L(M(n_i))_{i=2}^t=(K(k_i))_{i=2}^t=K(E).$$  For $(iii)$, we note that since $k_{i+1}>N(M(n_i))$ for all $i\in\nn$, $E=(k_i)_{i=2}^t=(k_{i+1})_{i=1}^{t-1}$ is a spread of $$(N(M(n_i)))_{i=1}^{t-1} \subset N(F)\in \{N(G): G\in\mathcal{Q}\}\subset \mathcal{P},$$ and $E\in\mathcal{P}$, since $\mathcal{P}$ is spreading and hereditary.

\end{proof}

We next collect some standard facts about functions on $\omega_1+=[0, \omega_1]$.

\begin{proposition} Suppose  $\Gamma:\omega_1+\times\omega\to [0,\infty]$ is a function such that for each $k<\omega$, $\zeta\mapsto \Gamma(\zeta, k)$ is non-decreasing.  Define $\Gamma:\omega_1+\to [0,\infty]$ by $\Gamma(\zeta)=\sup_{k<\omega} \Gamma(\zeta, k)$.  \begin{enumerate}[(i)]\item For $k<\omega$, the map $\zeta\mapsto \Gamma(\zeta, k)$ is continuous if and only if whenever $\zeta\leqslant \omega_1$ is a limit ordinal and $C\in (0,\infty)$ is such that $\sup_{\mu<\zeta}\Gamma(\mu, k)<C$, then $\Gamma(\zeta, k)\leqslant C$. \item If for each $k<\omega$, $\zeta\mapsto \Gamma(\zeta, k)$ is continuous, then so is $\Gamma:\omega_1+\to [0,\infty]$.  \item Suppose that $\Gamma:\omega_1+\to [0,\infty]$ is continuous,  $\Gamma(0, k)=0$ for all $k<\omega$, and $\Gamma(\zeta, k+p)\leqslant \Gamma(\zeta+p, k)\leqslant p+\Gamma(\zeta, k+p)$ for all $k,p<\omega$. Then either $\Gamma(\omega_1)<\infty$, or there exists a countable ordinal $\gamma$ such that $\Gamma(\gamma)=\infty$. In the case that $\Gamma(\omega_1)=\infty$,  if $\gamma$ is the minimum  ordinal such that $\Gamma(\gamma)=\infty$, then $\gamma$ is a countable limit ordinal and for any $p<\omega$,  $\{\Gamma(\zeta, p): \zeta<\gamma\}$ is an unbounded subset of $[0, \infty)$.  \end{enumerate}

\label{ord1}

\end{proposition}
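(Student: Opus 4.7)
The plan is to handle (i), (ii), (iii) in order, with the bulk of the work in (iii). The central computational ingredient for (iii) is the chained inequality
\[
\Gamma(\zeta, k) \leqslant \Gamma(\zeta+k, 0) \leqslant p + \Gamma(\zeta+k, p),
\]
valid for every $\zeta$ and every $k,p<\omega$. The first step specializes the compatibility $\Gamma(\zeta, k+p)\leqslant \Gamma(\zeta+p,k)$ to second index $0$ and shift $k$. The second step combines monotonicity in the first argument, $\Gamma(\zeta+k,0)\leqslant \Gamma(\zeta+k+p,0)$, with another instance of the compatibility, $\Gamma((\zeta+k)+p,0)\leqslant p+\Gamma(\zeta+k,p)$.

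For (i), the fact that $\Gamma(\cdot,k)$ is non-decreasing makes continuity automatic at $0$ and at successor ordinals, while at a limit $\zeta$ it is equivalent to $\Gamma(\zeta,k)=\sup_{\mu<\zeta}\Gamma(\mu,k)$. Equivalence with the stated condition (that $\sup_{\mu<\zeta}\Gamma(\mu,k)<C$ implies $\Gamma(\zeta,k)\leqslant C$) is immediate after taking the infimum over admissible $C$. For (ii), since $\Gamma=\sup_k\Gamma(\cdot,k)$ is again non-decreasing, it suffices to check left-continuity at a limit ordinal $\zeta$; this follows by exchanging the two suprema,
\[
\Gamma(\zeta)=\sup_k\sup_{\mu<\zeta}\Gamma(\mu,k)=\sup_{\mu<\zeta}\sup_k\Gamma(\mu,k)=\sup_{\mu<\zeta}\Gamma(\mu).
\]

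For (iii), assume $\Gamma(\omega_1)=\infty$. Each $\Gamma(\cdot,k):\omega_1\to[0,\infty]$ is non-decreasing, hence by the uncountable cofinality of $\omega_1$ it is eventually constant; its eventual value equals $\Gamma(\omega_1,k)$ by continuity at $\omega_1$ and is attained at some countable ordinal $\zeta_k$. Setting $\gamma=\sup_k\zeta_k$, a countable supremum of countable ordinals, gives $\gamma<\omega_1$ and $\Gamma(\gamma,k)\geqslant\Gamma(\omega_1,k)$ for every $k$, so $\Gamma(\gamma)\geqslant\Gamma(\omega_1)=\infty$. Let $\gamma_0$ be the minimum countable ordinal with $\Gamma(\gamma_0)=\infty$. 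Then $\gamma_0\neq 0$ because $\Gamma(0,k)=0$ for all $k$, and $\gamma_0$ is not a successor $\beta+1$: taking sup over $k$ of the compatibility gives $\Gamma(\beta+1)\leqslant 1+\Gamma(\beta)$, which would force $\Gamma(\beta)=\infty$ and contradict minimality. Hence $\gamma_0$ is a countable limit ordinal.

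For the unboundedness claim, fix $p<\omega$ and suppose for contradiction $\sup_{\zeta<\gamma_0}\Gamma(\zeta,p)=M<\infty$. For any $\zeta<\gamma_0$ and $k<\omega$, since $\gamma_0$ is a limit ordinal we have $\zeta+k<\gamma_0$, so the key inequality delivers $\Gamma(\zeta,k)\leqslant p+\Gamma(\zeta+k,p)\leqslant p+M$. Hence $\Gamma(\gamma_0,k)=\sup_{\zeta<\gamma_0}\Gamma(\zeta,k)\leqslant p+M$ uniformly in $k$, and so $\Gamma(\gamma_0)\leqslant p+M<\infty$, contradicting $\Gamma(\gamma_0)=\infty$. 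The main obstacle is exactly this step: the compatibility only directly relates values with the same $\zeta+k$ total up to an additive $p$, and one must recognize that its chained form converts a bound on $\Gamma(\cdot, p)$ into a $k$-uniform bound on $\Gamma(\cdot,k)$, which is what drives the unboundedness conclusion for every fixed $p$.
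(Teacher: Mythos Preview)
Your proofs of (i) and (ii) match the paper's. Your approach to (iii) is also close in spirit (the paper proves unboundedness directly by first handling $p=0$ and then reducing general $p$ to $p=0$, whereas you argue by contradiction for each $p$; both work via the same chained inequality). However, there is a genuine, if easily repaired, gap that appears twice in your argument for (iii).

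The hypothesis of (iii) is that the \emph{supremum} function $\Gamma(\zeta)=\sup_k\Gamma(\zeta,k)$ is continuous; it does \emph{not} assume that each individual $\Gamma(\cdot,k)$ is continuous. You invoke the latter twice. First, when you write ``its eventual value equals $\Gamma(\omega_1,k)$ by continuity at $\omega_1$'': eventual constancy of $\Gamma(\cdot,k)$ on $\omega_1$ only gives that $\Gamma(\zeta_k,k)=L_k:=\sup_{\zeta<\omega_1}\Gamma(\zeta,k)$, and without continuity of $\Gamma(\cdot,k)$ at $\omega_1$ you cannot conclude $L_k=\Gamma(\omega_1,k)$. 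The fix is to use continuity of $\Gamma$ instead: with $\gamma=\sup_k\zeta_k$ one has $\Gamma(\gamma)\geqslant\sup_k L_k=\sup_k\sup_{\zeta<\omega_1}\Gamma(\zeta,k)=\sup_{\zeta<\omega_1}\Gamma(\zeta)=\Gamma(\omega_1)=\infty$. Second, in your unboundedness step you write ``$\Gamma(\gamma_0,k)=\sup_{\zeta<\gamma_0}\Gamma(\zeta,k)$'', which is again continuity of $\Gamma(\cdot,k)$ at the limit ordinal $\gamma_0$. The fix is the same: from $\sup_{\zeta<\gamma_0}\Gamma(\zeta,k)\leqslant p+M$ for every $k$ you get $\sup_{\zeta<\gamma_0}\Gamma(\zeta)\leqslant p+M$, and then continuity of $\Gamma$ at $\gamma_0$ gives $\Gamma(\gamma_0)\leqslant p+M<\infty$, the desired contradiction. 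With these two corrections your proof is complete.
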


\begin{proof}$(i)$ Continuity means that for any limit ordinal $\zeta\leqslant \omega_1$, $\lim_{\mu\uparrow \zeta}\Gamma(\mu, k)=\Gamma(\zeta, k)$. Since $\mu\mapsto \Gamma(\mu,k)$ is non-decreasing, $$\lim_{\mu\uparrow \zeta}\Gamma(\mu, k)=\sup_{\mu<\zeta}\Gamma(\mu, k)\leqslant \Gamma(\zeta, k).$$  Then $\mu\mapsto \Gamma(\mu, k)$ is continuous if and only if for all limit ordinals $\zeta\leqslant \omega_1$, $\sup_{\mu<\zeta}\Gamma(\mu, k) \geqslant \Gamma(\zeta, k)$ if and only for all limit ordinals $\zeta\leqslant \omega_1$ and $C\in\rr$ such that $\sup_{\mu<\zeta}\Gamma(\mu,k)<C$, $\Gamma(\zeta, k)\leqslant C$. 

$(ii)$ If each $\zeta\mapsto \Gamma(\zeta, k)$ is non-decreasing, then so is $\zeta\mapsto \Gamma(\zeta)$. We check continuity as in $(i)$. Since each $\zeta\mapsto \Gamma(\zeta, k)$ is continuous, then for any limit ordinal $\zeta\leqslant \omega_1$, \begin{align*} \Gamma(\zeta) & = \sup_{k<\omega} \Gamma(\zeta, k)=\sup_{k<\omega}\sup_{\mu<\zeta}\Gamma(\mu,k)= \sup_{\mu<\zeta}\sup_{k<\omega}\Gamma(\mu,k)= \sup_{\mu<\zeta}\Gamma(\mu).  \end{align*}

$(iii)$ If  $\Gamma:\omega_1+\to [0,\infty]$ is continuous and $\Gamma(\omega_1)=\infty$, then for each $l\in\nn$, $\min \Gamma^{-1}([l,\infty])<\omega_1$. Therefore $\gamma:=\sup_l \min \Gamma^{-1}([l,\infty])<\omega_1$, and since $\Gamma$ is non-decreasing, $\Gamma(\gamma)=\infty$.  

Now suppose that $\Gamma(\omega_1)=\infty$ and $\gamma$ is the minimum ordinal $\zeta$ such that $\Gamma(\zeta)=\infty$. By the previous paragraph, $\gamma$ is countable. Since $\Gamma(0)=0$, $\gamma\neq 0$.  We also note that $\gamma$ cannot be a successor. Indeed, if $\gamma=\zeta+1$, then $\Gamma(\zeta)<\infty$ and by the properties of $\Gamma$ assumed in $(iii)$, $$\Gamma(\gamma)=\sup_{k<\omega}\Gamma(\zeta+1, k) \leqslant 1+\sup_{k<\omega}\Gamma(\zeta,k+1) \leqslant 1+\Gamma(\zeta)<\infty.$$   Thus $\gamma$ is a countable limit ordinal.  By minimality of $\gamma$, $\Gamma(\zeta, p)\leqslant \Gamma(\zeta)<\infty$ for all $\zeta<\gamma$ and $p<\omega$.  Therefore for any $p<\omega$,  $\{\Gamma(\zeta, p): \zeta<\gamma\}$ is a subset of $[0,\infty)$. By continuity of $\Gamma$, $\sup_{\zeta<\gamma}\Gamma(\zeta)=\infty$. Therefore for any $0<D<\infty$, there exists $\zeta<\gamma$ such that $D<\Gamma(\zeta)$. This means there exists $k<\omega$ such that $D<\Gamma(\zeta, k)$. By the properties of $\Gamma$, $D<\Gamma(\zeta, k)\leqslant \Gamma(\zeta+k, 0)$.  This shows that $\{\Gamma(\zeta, 0): \zeta<\gamma\}$ is unbounded. Now for any $p<\omega$ and $0<D<\infty$, since $\{\Gamma(\zeta, 0): \zeta<\gamma\}$ is unbounded, we may find $\zeta<\gamma$ such that $\Gamma(\zeta, 0)>p+D$. Then $$D<\Gamma(\zeta, 0) - p\leqslant \Gamma(\zeta+p, 0) - p\leqslant \Gamma(\zeta, p).$$

\end{proof}

\section{Probability blocks}\label{rev}

Recall that for a regular family $\mathcal{P}$, a member of $\mathcal{P}$ is maximal in $\mathcal{P}$ with respect to inclusion if and only if it is maximal with respect to the initial segment ordering. Therefore,  $MAX(\mathcal{P})$ denotes the set of maximal members of $\mathcal{P}$ with respect to  either one of these orders.  Similarly, for any $N\in[\nn]$,  a member of $\mathcal{P}\upp N$ is maximal in $\mathcal{P}\upp N$ with respect to inclusion if and only if it is maximal in $\mathcal{P}\upp N$ with respect to the initial segment ordering, so $MAX(\mathcal{P}\upp N)$ is unambiguous.  Furthermore, $MAX(\mathcal{P}\upp N)=MAX(\mathcal{P})\upp N$.

Let us recall that if $\mathcal{P}$ is a nice family, it is spreading, hereditary, compact, contains all singletons, and for each $F\in\mathcal{P}$, either $F\in MAX(\mathcal{P})$ or $F\smallfrown (n)\in \mathcal{P}$ for all $F<n$.  For $M\in[\nn]$ and a nice family $\mathcal{P}$, $M|\mathcal{P}$ denotes the maximal initial segment of $M$ which is in $\mathcal{P}$. Since $\mathcal{P}$ contains all singletons, $M|\mathcal{P}\neq \varnothing$. Since for each $F\in \mathcal{P}$, either $F\in MAX(\mathcal{P})$ or $F\smallfrown (n)\in \mathcal{P}$ for all $F<n$, it follows that $M|\mathcal{P}\in MAX(\mathcal{P})$.   

In this section, we treat probability measures on $\nn$ as functions on $\nn$ by letting $\mathbb{P}(i)=\mathbb{P}(\{i\})$. For a probability measure $\mathbb{P}$ on $\nn$, we let $\supp(\mathbb{P})=\{i\in\nn: \mathbb{P}(i)\neq 0\}$.   Suppose that $\mathcal{P}$ is a nice family and $\mathfrak{P}=\{\mathbb{P}_{M,n}: M\in[\nn], n\in\nn\}$ is a collection of probability measures on $\nn$.  Then we say $(\mathfrak{P}, \mathcal{P})$ is a \emph{probability block} provided that \begin{enumerate}[(i)]\item for each $M\in[\nn]$, $\text{supp}(\mathbb{P}_{M,1})= M|\mathcal{P}$, \item for $M,N\in[\nn]$ and $r\in\nn$ such that $\supp(\mathbb{P}_{M,r})\prec N\in[\nn]$, it follows that $\mathbb{P}_{N,1}= \mathbb{P}_{M,r}$. \end{enumerate}

These properties together imply that for any $M\in[\nn]$,  $(\supp(\mathbb{P}_{M,n}))_{n=1}^\infty$ is the unique partition of $M$ into successive, maximal members of $\mathcal{P}$, and if two measures $\mathbb{P}_{M,m}$, $\mathbb{P}_{N,n}$ have equal supports then they are equal measures. From this it follows that for any $n\in\nn$, the map $M\mapsto \mathbb{P}_{M,n}$ is locally constant.   We refer to this as the \emph{permanence property}. By the permanence property, if $F=_\mathcal{P}\cup_{n=1}^t F_n$, then $\mathbb{P}_{M,n}=\mathbb{P}_{N,n}$ for each $1\leqslant n\leqslant t$ and any two $M,N\in[\nn]$ which have $F$ as an initial segment.  Therefore we can define for $F=_\mathcal{P}\cup_{n=1}^t F_n$ and each $1\leqslant n\leqslant t$ the measure $\mathbb{P}_{F,n}$ by letting $\mathbb{P}_{F,n}=\mathbb{P}_{M,n}$ for any $F\prec M\in[\nn]$, and the definition of $\mathbb{P}_{F,n}$ is independent of the particular choice of $M$.

If $(\mathfrak{P}, \mathcal{P})$ is a probability block, then any set which is a finite union of successive, maximal members of $\mathcal{P}$ is uniquely expressible as such. That is, if $F=\cup_{n=1}^s F_n=\cup_{n=1}^t G_n$, where $F_1<\ldots <F_s$, $G_1<\ldots <G_t$, and $F_m, G_n\in MAX(\mathcal{P})$ for all $1\leqslant m\leqslant s$ and $1\leqslant n\leqslant t$, then $s=t$ and $F_n=G_n$ for all $1\leqslant n\leqslant t$.  Therefore if $F=_\mathcal{P}\cup_{n=1}^s F_n$ and $F=_\mathcal{P} \cup_{n=1}^t G_n$, then $s=t$ and $F_n=G_n$ for all $1\leqslant n\leqslant t$.   Moreover, $F=_\mathcal{P}\cup_{n=1}^t F_n$ if and only if for some (equivalently, every) $M\in[\nn]$ such that $F\prec M$, $F=\cup_{n=1}^t \supp(\mathbb{P}_{M,n})$, and in this case $F_n=\supp(\mathbb{P}_{M,n})$ for each $1\leqslant n\leqslant t$.   

Given a Banach space $X$ and a sequence $\varsigma=(x_n)_{n=1}^\infty\subset X$,  $M\in[\nn]$, and a probability block $P=(\mathfrak{P}, \mathcal{P})$, we define $\mathbb{E}^P_M\varsigma$ to be the sequence whose $n^{th}$ term is $\sum_{i=1}^\infty \mathbb{P}_{M,n}(i)x_i=\sum_{i\in \supp(\mathbb{P}_{M,n})} \mathbb{P}_{M,n}(i)x_i$.   We denote this $n^{th}$ term by $\mathbb{E}^P_M \varsigma(n)$. The superscript $P$ in this notation is to refer that expectations are taken with respect to measures coming from the probability block $P=(\mathfrak{P}, \mathcal{P})$. When we are considering a second probability block $Q=(\mathfrak{Q}, \mathcal{Q})$, we use the notations $\mathbb{E}^Q_M\varsigma$, $\mathbb{E}^Q_M\varsigma(n)$.     When we are only considering a fixed probability block $P=(\mathfrak{P}, \mathcal{P})$ and no confusion can arise, we omit the superscript $P$ from the notation and write simply $\mathbb{E}_M \varsigma$, $\mathbb{E}_M\varsigma (n)$. 

If $X$ is a Banach space,  $F=_\mathcal{P} \cup_{n=1}^t F_n$, $\varsigma\in \ell_\infty(X)$, and $M,N\in[\nn]$ are such that $F\prec M$ and $F\prec N$, then $\mathbb{E}_M\varsigma(n)=\mathbb{E}_N\varsigma(n)$ for all $1\leqslant n\leqslant t$ by the permanence property.  For this reason, we can unambiguously define for $F=_\mathcal{P}\cup_{n=1}^t F_n$ and $\varsigma=(x_n)_{n=1}^\infty\subset X$ the sequence $\mathbb{E}_F^P \varsigma$ such that, with $F\prec M\in[\nn]$,  $\mathbb{E}_F^P\varsigma (n)=\mathbb{E}_M^P\varsigma(n)$ for all $n\leqslant t$ and $\mathbb{E}_F^P\varsigma(n)=0$ for all $n>t$.   That is, by the facts mentioned at the beginning of this paragraph, the definition $\mathbb{E}_F^P\varsigma(n)=\mathbb{E}_M^P\varsigma(n)$ for all $n\leqslant t$ is independent of the choice of $F\prec M\in[\nn]$.  When there is no potential for confusion, we write $\mathbb{E}_F \varsigma$ in place of $\mathbb{E}_F^P \varsigma$ and $\mathbb{E}_F\varsigma(n)$ in place of $\mathbb{E}_F^P \varsigma(n)$.

For a regular family $\mathcal{F}$ and $N\in[\nn]$, we define $$N\oplus \mathcal{F}=\{(n,F)\in\nn\times [\nn]^{<\omega}: n\in F\in \mathcal{F}\upp N\}$$ and $$N\ominus \mathcal{F}= \{(n,F)\in \nn\times [\nn]^{<\omega}: n\in F\in MAX(\mathcal{F}\upp N)\}.$$

Given $N\in[\nn]$ and a regular family $\mathcal{F}$, we say a function $h:N\ominus \mathcal{F}\to \rr$ is \emph{tail independent} provided that for any $n\in N$ and $F,G\in MAX(\mathcal{F})\upp N$ such that $[1,n]\cap F=[1,n]\cap G$, $h(n,F)=h(n,G)$.   This is equivalent to saying that for any $H\in \mathcal{F}$ and $F,G\in MAX(\mathcal{F})$ such that $H\preceq F$ and $H\preceq G$, $h(n,F)=h(n,G)$ for all $n\in H$.    In this case, we can extend $h$ to a function, which we also denote by $h$, defined on $N\oplus \mathcal{F}$. For $n\in H\in \mathcal{F}\upp N$, we fix $F\in MAX(\mathcal{F})\upp N$ such that $H\preceq F$ and let $h(n,H)=h(n,F)$. We note that, by tail independence, this definition is independent of the choice of maximal extension $F$ of $H$.   We refer to the extension of $h:N\ominus \mathcal{F}\to \rr$ to $h:N\oplus \mathcal{F}\to \rr$ as the \emph{natural extension} of $h$.  

If $f$ is a function defined on a subset $S$ of $\nn$ such that $F=_\mathcal{P} \cup_{n=1}^t F_n\subset S$, we define $$\mathbb{E}^P_F f = \sum_{n=1}^t \sum_{i\in F_n} f(i)\mathbb{P}_{F,n}(i)= \sum_{n=1}^t \mathbb{E}_{\mathbb{P}_{F,n}}f|_{F_n}.$$ We note that the last sum is a sum of expectations in the usual sense.  If $f:N\ominus \mathcal{F}\to \rr$ is tail independent and we extend $f$ to its natural extension on $N\oplus \mathcal{F}$, and if  $F=_\mathcal{P} \cup_{n=1}^t F_n$, $$\mathbb{E}_F^P f(\cdot, F) = \sum_{n=1}^t \sum_{i\in F_n} f(i, F) \mathbb{P}_{F,n}(i) = \sum_{n=1}^t \sum_{i\in F_n} f(i, \cup_{m=1}^n F_m) \mathbb{P}_{F,n}(i).$$ That is, if $i\in \cup_{m=1}^n F_m$, then $f(i, F)$ does not depend on $F_{n+1}, \ldots, F_t$.   Again, if no confusion can arise, we omit the superscript $P$ from the notation.

We next observe that, up to passing to infinite subsets, every function on $N\ominus \mathcal{F}$ is close to being tail independent.

\begin{proposition} Fix $N\in[\nn]$, a regular family $\mathcal{F}$ containing a singleton, and a bounded function $g:N\ominus \mathcal{F}\to \rr$. For any sequence $(\ee_n)_{n=1}^\infty$ of positive numbers, there exist $M\in[N]$ and  a tail independent function $h:M\ominus \mathcal{F}\to \rr$ such that if $(M(n), F)\in M\ominus \mathcal{F}$, then $|g(M(n), F)-h(M(n),F)|<\ee_n$.    Furthermore, $h$ may be taken to satisfy $h(n, F)\geqslant g(n,F)$ for all $(n,F)\in M\ominus \mathcal{F}$.

\label{tail}
\end{proposition}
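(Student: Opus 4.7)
The plan is to build $M\in[N]$ recursively by diagonalization, using the continuous Pudl\'ak--R\"odl theorem (Remark \ref{prremark}) at each stage to stabilize $g$ over the maximal extensions of every candidate initial segment of $M$.

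Set $M_0=N$. At stage $k\geqslant 1$, with $M_{k-1}\in[N]$ in hand, let $m_k=\min M_{k-1}$ and $L=M_{k-1}\setminus\{m_k\}$. The candidate initial segments of elements of $MAX(\mathcal{F})\upp M$ ending at $m_k$ are the finitely many sets $H=S\cup\{m_k\}$ with $S\subseteq\{m_1,\ldots,m_{k-1}\}$ and $H\in\mathcal{F}$. For each such $H$ which is non-maximal in $\mathcal{F}$, the family
\[
\mathcal{F}_H:=\{\varnothing\}\cup\{F'\in[\nn]^{<\omega}:H<F',\ H\cup F'\in\mathcal{F}\}
\]
inherits regularity from $\mathcal{F}$ (spreading and hereditariness transfer directly, and compactness follows because $\{F\in\mathcal{F}:H\preceq F\}$ is clopen in $\mathcal{F}$). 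Apply Remark \ref{prremark} to the bounded function $F'\mapsto g(m_k,H\cup F')$ on $MAX(\mathcal{F}_H)\upp L$ with tolerance $\ee_k$, obtaining $L'\in[L]$ and an interval $[a_H,b_H]$ with $b_H-a_H<\ee_k$ such that $g(m_k,H\cup F')\in[a_H,b_H]$ for every $F'\in MAX(\mathcal{F}_H)\upp L'$; set $h(m_k,H):=b_H$ and replace $L$ by $L'$. After iterating over the finitely many eligible $H$ at stage $k$, let $M_k$ be the resulting $L$. Set $M=(m_k)_{k=1}^\infty$.

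Now define $h:M\ominus\mathcal{F}\to\rr$ by $h(n,F):=h(n,F\cap[1,n])$ when $n<\max F$, and $h(n,F):=g(n,F)$ when $n=\max F$. In the former case, $F\cap[1,n]$ is a non-maximal initial segment of $F$ lying in $\mathcal{F}$ (by hereditariness) ending at $n=m_j$, where $j$ is the position of $n$ in $M$; this $H$ was one of those processed at stage $j$. Tail independence is then immediate from the formula, and $h(n,F)\geqslant g(n,F)$ because $b_H$ is the upper endpoint of the stabilization interval (with equality when $n=\max F$). For the approximation bound with $n=M(j)$ and $H=F\cap[1,n]$, the construction ensures $M\setminus\{m_1,\ldots,m_j\}\subseteq M_j$, so $F\setminus H\subseteq M\cap(m_j,\infty)\subseteq M_j$ is contained in the refinement $L'$ associated to $H$ at stage $j$. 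Consequently $F\setminus H\in MAX(\mathcal{F}_H)\upp L'$, and the stabilization yields $g(n,F)=g(m_j,H\cup(F\setminus H))\in[a_H,b_H]$, whence $|h(n,F)-g(n,F)|\leqslant b_H-a_H<\ee_j=\ee_n$.

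The main point requiring care is the diagonal bookkeeping: one must ensure that every stabilization performed at stage $j$ remains valid in the final $M$, which is exactly the containment $M\setminus\{m_1,\ldots,m_j\}\subseteq M_j$ together with the persistence of $MAX(\mathcal{F}_H)$-maximality inside any further refinement of $L'$. Everything else amounts to counting candidate segments at each stage (at most $2^{k-1}$, hence finitely many) and invoking the continuous form of Pudl\'ak--R\"odl once for each.
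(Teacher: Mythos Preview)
Your proof is correct and follows essentially the same approach as the paper: recursively choose $m_1<m_2<\ldots$ while at each stage stabilizing (via the continuous Pudl\'ak--R\"odl lemma) the function $F'\mapsto g(m_k,H\cup F')$ over all finitely many initial segments $H\subseteq\{m_1,\ldots,m_k\}$ with $\max H=m_k$, then define $h$ in a tail-independent way from the stabilized values. The only cosmetic difference is that the paper defines $h(m,F)$ as the supremum of $g(m,(F\cap[1,m])\cup H)$ over admissible tails $H\subset M$, whereas you record the upper endpoint $b_H$ of the stabilization interval directly and handle the case $n=\max F$ separately by setting $h=g$ there; both choices yield $h\geqslant g$ and the same approximation bound.
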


\begin{proof}  Let us begin with an observation. Fix $F\in \mathcal{F}$ and let $$\mathcal{G}=\{G\in \mathcal{F}: F<G, F\cup G\in \mathcal{F}\}.$$  This is a regular family, possibly containing only $\varnothing$.    For any $L\in[\nn]$, any $\ee>0$, and  any bounded function $f:MAX(\mathcal{G})\to \rr$, as noted in Remark \ref{prremark}, we may choose $L_1\in [L]$ such that $\text{diam}\{f(G): G\in MAX(\mathcal{G})\upp L_1\}<\ee$.    Of course, $\text{diam}\{f(G): G\in MAX(\mathcal{G})\upp L_2\}<\ee$ for any $L_2\in [L_1]$.   

We now return to the proof, wherein we will apply the argument in the preceding paragraph.  For the base step of the recursion, we define $N_0=N$.  

Now assume that $N_0\supset N_{n-1}\in [\nn]$ and $m_1<\ldots <m_{n-1}$, $m_i\in N_i$ have been chosen.  If $n=1$, choose $m_1\in N_0$. If $n>1$, choose $m_n\in N_{n-1}$ such that $m_n>m_{n-1}$.   Now let $F_1, \ldots, F_t$ be an enumeration of those members $F$ of $\mathcal{F}$ such that $F\subset (m_1, \ldots, m_n)$ and $\max F=m_n$. If there are no such $F$, we simply let $N_n=N_{n-1}\cap (m_n, \infty)$.  Otherwise we apply the procedure from the first paragraph of the proof to obtain $N_{n-1}\supset L_1\supset \ldots \supset L_t$ such that for each $1\leqslant i\leqslant t$, $$\text{diam}\{g(m_n, F_i\cup G): F_i<G\in [L_i]^{<\omega}, F_i\cup G\in MAX(\mathcal{F})\}< \ee_n.$$  Let $N_n=L_t$.  This completes the recursive step.

Now let $M(n)=m_n$ for each $n\in\nn$, so $M\in [N]$. Fix $(m,F)\in M\ominus \mathcal{F}$ and define $$h(m,F)=\sup \{g(m, ([1,m]\cap F)\cup H):  m<H \in [M]^{<\omega}, ([1,m]\cap F)\cup H\in MAX(\mathcal{F})\}.$$ Note that this definition only depends on $[1,m]\cap F$. From this it follows that if $(m,F),(m,G)\in M\ominus \mathcal{F}$ and $[1,m]\cap F=[1,m]\cap G$, then  $h(m,F)=h(m,G)$. Thus $h$ is tail independent. If $(m,F)\in M\ominus \mathcal{F}$, we define $H_0=([1,m]\cap F)\setminus [1,m]$ and note that $m<H_0$ and $F=([1,m]\cap F)\cup H_0\in MAX(\mathcal{F})$.  From this it follows that \begin{align*} g(m,F) & = g(m, ([1,m]\cap F)\cup H_0) \\ & \leqslant \sup \{g(m, ([1,m]\cap F)\cup H):  m<H \in [M]^{<\omega}, ([1,m]\cap F)\cup H\in MAX(\mathcal{F})\} \\ & =h(m,F).\end{align*}  For $(m,F)\in M\ominus \mathcal{F}$, there exists some $n\in\nn$ such that $m=m_n$ and, with $F_1, \ldots, F_t$ as defined in the recursive construction, there exists some $1\leqslant i\leqslant t$ such that $[1,m]\cap F=F_i$.    Let $H_0=F\setminus F_i$, so that  $H_0\in [N_n]^{<\omega}\subset [L_i]^{<\omega}$, and note that since $$\text{diam}\{g(m_n, F_i\cup G): F_i<G\in [L_i]^{<\omega}, F_i\cup G\in MAX(\mathcal{F})\}< \ee_n,$$ it follows that \begin{align*} g(m,F) +\ee_n & =g(m_n, F_i\cup H_0) +\ee_n \\ & \geqslant \ee_n+ \inf\{g(m_n, F_i\cup H): m_n<H\in [L_i]^{<\omega}, F_i\cup H\in MAX(\mathcal{F})\} \\ & \geqslant \sup\{g(m_n, F_i\cup H): m_n<H\in [L_i]^{<\omega}, F_i\cup H\in MAX(\mathcal{F})\} \\ & \geqslant \sup\{g(m_n, F_i\cup H): m_n<H\in [M]^{<\omega}, F_i\cup H\in MAX(\mathcal{F})\}  \\ & = h(m,F).\end{align*}  The last inequality uses the fact that $m_n<H\in [M]^{<\omega}$ implies $m_n<H\in [L_i]^{<\omega}$.

\end{proof}

For a countable ordinal $\xi$, we say that a probability block $(\mathfrak{P}, \mathcal{P})$ is $\xi$-\emph{sufficient} provided that for any regular $\mathcal{F}$ with $\text{rank}(\mathcal{F}) \leqslant \omega^\xi$, for any $L\in[\nn]$,  and any $\ee>0$, there exists $M\in[L]$ such that $$\sup \{\mathbb{P}_{N,1}(E): N\in[M], E\in \mathcal{F}\} \leqslant \ee.$$   We say $(\mathfrak{P}, \mathcal{P})$ is $\xi$-\emph{homogeneous} provided that it is $\xi$-sufficient and $\text{rank}(\mathcal{P})=\omega^\xi+1$.

We note that there is only one nice family $\mathcal{P}$ with rank $\omega^0+1=2$, which is $\mathcal{F}_1=\{F\in[\nn]^{<\omega}: |F|\leqslant 1\}$.  From this it follows that if $(\mathfrak{P}, \mathcal{P})$ is $0$-homogeneous, then $\mathcal{P}=\mathcal{F}_1$ and $\mathfrak{P}$ is the collection of Dirac measures given by $\mathbb{P}_{M,n}=\delta_{M(n)}$.  We refer to this unique $0$-homogeneous probability block as the \emph{Dirac block}.   In this case,  if $\varsigma=(x_n)_{n=1}^\infty$, then for $M\in [\nn]$, $\mathbb{E}_M\varsigma=(x_{M(n)})_{n=1}^\infty$.   Therefore in the $\xi=0$ case, the hypothesis that for every $\varsigma$ in $R$ (or $B_R$) and every $L\in[\nn]$, there exists $M\in[L]$ such that $\mathbb{E}_M \varsigma$ has some specified property is precisely the hypothesis that every member of $R$ (or $B_R$) has a subsequence with that specified property. Therefore we can see how the sequence-subsequence hypothesis in Theorem \ref{main1} fits as a particular case of our sequence-$(\mathfrak{P}, \mathcal{P})$-convex block hypothesis of Theorem \ref{main2}.

We next observe that if $0<\xi<\omega_1$, $\xi$-homogeneous probability blocks can be taken to have small $c_0$ norms. 

\begin{proposition} Fix $0<\xi<\omega_1$ and let $P=(\mathfrak{P}, \mathcal{P})$ be a $\xi$-homogeneous probability block. For any sequence $(\delta_n)_{n=1}^\infty$ of positive numbers and any $L\in[\nn]$, there exists $M\in[L]$ such that for all $N\in[M]$, $n\in\nn$, and $i\in\nn$, $\mathbb{P}_{N,n}(i) \leqslant \delta_n$. 

\label{diag}

\end{proposition}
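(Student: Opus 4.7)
The plan is to reduce controlling $\mathbb{P}_{N,n}(i)$ for arbitrary $n$ to the case $n=1$, where $\xi$-sufficiency directly applies, and then to diagonalize over $n$. The reduction uses the permanence property: for any $N\in[\nn]$ with maximal $\mathcal{P}$-block partition $N=F_1\cup F_2\cup\ldots$, the set $N':=N\setminus (F_1\cup\ldots\cup F_{n-1})$ has $F_n$ as an initial segment, so by property (ii) of a probability block together with the fact that measures with equal supports coincide, $\mathbb{P}_{N,n}=\mathbb{P}_{N',1}$. Hence it suffices to arrange that every such tail $N'$ lies in a set on which a uniform bound on the first measure is available.

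Since $\xi\geqslant 1$, the family $\mathcal{F}_1=\{\varnothing\}\cup\{\{i\}:i\in\nn\}$ has rank $2\leqslant \omega^\xi$, so the $\xi$-sufficiency of $(\mathfrak{P},\mathcal{P})$ applied with $\mathcal{F}=\mathcal{F}_1$ gives: for every $L'\in[\nn]$ and every $\varepsilon>0$, there exists $M'\in[L']$ such that $\mathbb{P}_{N,1}(i)\leqslant \varepsilon$ for every $N\in[M']$ and every $i\in\nn$. I would then build $M$ by a standard diagonalization: set $L_1=L$; given $L_k$, apply the above to $L_k$ with $\varepsilon=\delta_k$ to obtain $M_k\in[L_k]$ with $\mathbb{P}_{N,1}(i)\leqslant\delta_k$ for every $N\in[M_k]$; let $m_k=\min M_k$ and $L_{k+1}=M_k\cap (m_k,\infty)$. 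Take $M=\{m_k:k\in\nn\}\in[L]$. Because $L_{k+1}\subset M_k$ at every stage, one has $\{m_n,m_{n+1},\ldots\}\subset M_n$ for each $n$.

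It remains to verify the uniform bound. Fix $N\in[M]$ together with $n,i\in\nn$, and let $N'$ be defined as above. Because $\mathcal{P}$ is nice, each block $F_j$ is non-empty, so the first element of $N'$ is $N(k)$ for some $k\geqslant n$; since $N\subset M$, we have $N(k)\geqslant M(k)=m_k\geqslant m_n$, and therefore $N'\subset \{m_n,m_{n+1},\ldots\}\subset M_n$, i.e.\ $N'\in[M_n]$. The defining property of $M_n$ then gives $\mathbb{P}_{N',1}(i)\leqslant \delta_n$, and invoking the identity $\mathbb{P}_{N,n}=\mathbb{P}_{N',1}$ completes the argument. The only subtle point is arranging the diagonalization so that \emph{tails} of $M$ (rather than $M$ itself) fall inside each $M_n$; the nested choice $L_{k+1}\subset M_k$ is designed precisely to secure this, and this is why the hypothesis $0<\xi$ (which legitimizes applying $\xi$-sufficiency to the singleton family $\mathcal{F}_1$) is indispensable.
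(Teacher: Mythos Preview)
Your proof is correct and follows essentially the same approach as the paper: apply $\xi$-sufficiency to $\mathcal{F}_1$ to control $\mathbb{P}_{N,1}$, diagonalize over $n$, and use the permanence property to reduce $\mathbb{P}_{N,n}$ to $\mathbb{P}_{N',1}$ for a tail $N'$. The only cosmetic difference is that the paper uses the diagonal $M(n)=M_n(n)$ from a nested chain $M_1\supset M_2\supset\ldots$, whereas you take $m_k=\min M_k$ with $L_{k+1}=M_k\cap(m_k,\infty)$; both arrangements secure $\{M(k):k\geqslant n\}\subset M_n$ and the remainder of the argument is identical.
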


\begin{proof} Since $CB(\mathcal{F}_1)=2<\omega^\xi$, for any $M\in[L]$, by the definition of $\xi$-sufficient, for any $\delta>0$ and $L\in [\nn]$, there exists $M\in[L]$ such that $$\sup \{\mathbb{P}_{N,1}(i): i\in\nn, N\in[M]\}=\sup\{\mathbb{P}_{N,1}(F):F\in\mathcal{F}_1, N\in[M]\} \leqslant \delta.$$  We recursively select $M_1\supset M_2\supset \ldots$, $M_n\in [L]$ such that for all $n\in\nn$, $$\sup\{\mathbb{P}_{N,1}(i): N\in[M_n], i\in\nn\}\leqslant \delta_n.$$   Now let $M(n)=M_n(n)$ and fix $N\in[M]$. Let $N_n=N\setminus \cup_{i=1}^{n-1}\text{supp}(\mathbb{P}_{M,i})\in [M_n]$. Note that by the permanence property, for any $n\in\nn$, $\mathbb{P}_{N,n}=\mathbb{P}_{N_n, 1}$, so $$\sup\{\mathbb{P}_{N,n}(i):i\in\nn\}\leqslant \delta_n.$$

\end{proof}

The $\xi=0$ case of the following result is trivial. The $0<\xi<\omega_1$ case of the following was shown in \cite{CN}, which combines \cite[Corollary $4.10$]{Sch} and \cite[Lemma $3.12$]{CN}.

\begin{theorem} Let $P=(\mathfrak{P}, \mathcal{P})$ be a $\xi$-homogeneous probability block. Fix $K\in[\nn]$ and a bounded function $h_0:K\ominus \mathcal{P}\to \rr$. If for some $D\in \rr$ and each $F\in MAX(\mathcal{P})\upp K$, $\mathbb{E}_F h_0(\cdot, F)\geqslant D$, then for any $\delta>0$,  there exists $L\in[K]$ such that for each $F\in \mathcal{P}$, there exists $L(F)\subset G\in MAX(\mathcal{P})\upp K$ such that for each $n\in F$, $h_0(L(n), G)\geqslant D-\delta$.

\label{pt}
\end{theorem}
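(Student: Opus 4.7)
My plan is to first reduce to a tail-independent version of $h_0$ via Proposition \ref{tail}, and then argue the pointwise bound by transfinite induction on $\xi$, combining the continuous Pudlak--R\"odl theorem (Remark \ref{prremark}) for stabilization with the $\xi$-sufficiency property (manifested in Proposition \ref{diag}) to control atomic mass.

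First, I would apply Proposition \ref{tail} to $h_0$ with a constant error sequence of value $\delta/2$ to obtain $M\in[K]$ and a tail-independent $h: M\ominus \mathcal{P} \to \rr$ satisfying $h_0 \leqslant h \leqslant h_0 + \delta/2$ pointwise. Because $h\geqslant h_0$, the hypothesis $\mathbb{E}_F h_0(\cdot, F) \geqslant D$ transfers to $\mathbb{E}_F h(\cdot, F) \geqslant D$ for every $F \in MAX(\mathcal{P})\upp M$. After extending $h$ naturally to $M\oplus \mathcal{P}$, it suffices to produce $L\in[M]$ such that for each $F\in \mathcal{P}$ some $G\in MAX(\mathcal{P})\upp M$ with $L(F)\subset G$ satisfies $h(L(n), G) \geqslant D-\delta/2$ for all $n\in F$, since then $h_0(L(n), G) \geqslant h(L(n), G) - \delta/2 \geqslant D-\delta$. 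Note that the base case $\xi=0$ is trivial: $\mathcal{P}=\mathcal{F}_1$ consists of singletons, $\mathbb{E}_F h_0(\cdot, F)$ is precisely $h_0(\min F, F)$, and $L=K$ itself works.

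For $\xi>0$ I would proceed by transfinite induction. In the successor step $\xi=\zeta+1$, each measure $\mathbb{P}_{N,n}$ decomposes along the block structure of $\mathcal{P}$ as an iterated average of measures belonging to a $\zeta$-homogeneous block, and one applies the inductive hypothesis at level $\zeta$ within each block with a smaller tolerance (say $\delta/4$); tail independence of $h$ ensures that the local selections glue into a coherent choice of $L$. For the limit case, fix $\xi_n\uparrow \xi$ and use Proposition \ref{silly}: a diagonal sequence $M_1\supset M_2\supset\cdots$ with $M(n)=M_n(n)$ yields the needed $L$ by assembling the inductive outputs at the levels $\xi_n$. In either case, at each stage of the recursive construction of $L=(l_1<l_2<\cdots)$, one invokes the continuous Pudlak--R\"odl theorem to stabilize $h(l, G)$ to within a $\delta/4$-interval as $G$ ranges over maximal extensions, and Proposition \ref{diag} to force the mass of the "bad" atoms at each stage below a summable tolerance.

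The principal obstacle is the transfer from the averaged inequality $\mathbb{E}_F h(\cdot, F) \geqslant D$ to pointwise lower bounds $h(L(n), G) \geqslant D-\delta/2$ holding \emph{uniformly} in $n\in F$. A priori, a small set of atoms carrying very large values of $h$ could inflate the expectation while most atoms fall far below $D$, and a naive recursive selection of $L$ could land entirely on such atoms. The $\xi$-sufficiency of $P$ is precisely the tool that blocks this pathology: it forces every low-rank subfamily, in particular any single atom, to carry arbitrarily small mass, so the spiky atoms contribute at most the prescribed tolerance to the expectation and can be safely avoided during the recursive selection. This is the combined content of \cite[Corollary 4.10]{Sch} and \cite[Lemma 3.12]{CN}.
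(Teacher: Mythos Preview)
The paper does not supply its own proof of this theorem; it observes that the $\xi=0$ case is trivial and cites \cite[Corollary 4.10]{Sch} and \cite[Lemma 3.12]{CN} for $0<\xi<\omega_1$. Your proposal ultimately defers to the same references, so at that level there is no discrepancy to report.

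That said, the sketch you give before the citation has a real gap. Your successor step asserts that for a $(\zeta+1)$-homogeneous block $(\mathfrak{P},\mathcal{P})$ the measures $\mathbb{P}_{N,n}$ decompose as iterated averages of measures from some $\zeta$-homogeneous block. This holds for the repeated averages hierarchy $(\mathfrak{S}_{\zeta+1},\mathcal{S}_{\zeta+1})$, where maximal $\mathcal{S}_{\zeta+1}$ sets are unions of maximal $\mathcal{S}_\zeta$ sets and the measures factor accordingly, but the definition of a $\xi$-homogeneous block imposes only two conditions: $\text{rank}(\mathcal{P})=\omega^\xi+1$ and $\xi$-sufficiency. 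Neither gives an inductive decomposition of $\mathcal{P}$ or of the measures $\mathbb{P}_{N,n}$ into lower-level objects. The same objection applies to your limit step: Proposition \ref{silly} is about the fine Schreier families $\mathcal{F}_\zeta$ and says nothing about how an arbitrary nice family $\mathcal{P}$ of rank $\omega^\xi+1$ relates to families of smaller rank. Nor can you bootstrap from the repeated averages case, since the paper's mechanism for transferring between different $\xi$-homogeneous blocks (Corollary \ref{shift1}, via Theorem \ref{god2} and Lemma \ref{beer}) already invokes Theorem \ref{pt}.

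Your reduction through Proposition \ref{tail} to a tail-independent function is correct and useful, and you have correctly identified that $\xi$-sufficiency is what rules out the spike scenario you describe. But the transfinite-induction architecture you propose does not go through for arbitrary $\xi$-homogeneous probability blocks; the argument in the cited sources works directly from $\xi$-sufficiency rather than by induction on $\xi$.
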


\begin{theorem} Fix $0<\zeta,\xi<\omega_1$ and assume $(\mathfrak{P}, \mathcal{P})$ is a $\xi$-homogeneous probability block. Assume $L\in[\nn]$, $D\in\rr$, and  $g:L\ominus \mathcal{F}_\zeta[\mathcal{P}]\to \rr$ are such that $g$ is a bounded function and for every $F\in MAX(\mathcal{F}_\zeta[\mathcal{P}])\upp L$, $\mathbb{E}^P_F g(\cdot, F)\geqslant D$. Then for any sequence $(\delta_n)_{n=1}^\infty$ of positive numbers, any $M\in[\nn]$,  any $0<\upsilon\leqslant \xi$, and any $\upsilon$-homogeneous probability block $(\mathfrak{Q}, \mathcal{Q})$, there exist $$F=_\mathcal{Q}\bigcup_{n=1}^t F_n \in \mathcal{F}_{\zeta+1}^M[\mathcal{Q}],$$ numbers $b_1, \ldots, b_t\in\rr$,   and  sets $H_1, \ldots, H_t\in \mathcal{P}\upp L$, $H\in MAX(\mathcal{F}_\zeta[\mathcal{P}])\upp L$, such that  \begin{enumerate}[(i)]\item  for each $1\leqslant n\leqslant t$ and each $i\in\nn$, $\mathbb{Q}_{F,n}(i) \leqslant \delta_n$, \item $L(F_n\setminus (\min F_n))\subset H_n$,  \item  $H=_\mathcal{P}\bigcup_{n=1}^t H_n$, \item for each $1\leqslant n\leqslant t$ and $m\in F_n\setminus (\min F_n)$, $g(L(m), H) \geqslant b_n-\delta_n$, \item $\sum_{n=1}^t b_n\geqslant D$.   \end{enumerate}

\label{god2}
\end{theorem}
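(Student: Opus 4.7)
The plan is an inductive construction of the pieces $F_n$, $H_n$ and the scalars $b_n$ one index at a time, combining pointwise realization via Theorem \ref{pt}, the $\mathcal{P}$-to-$\mathcal{Q}$ transfer in Proposition \ref{star} (applicable because $\text{rank}(\mathcal{Q})=\omega^\upsilon+1\leqslant \omega^\xi+1=\text{rank}(\mathcal{P})$), and Pudl\'ak-R\"odl stabilization via Remark \ref{prremark}. Before the recursion, I would normalize the data: applying Proposition \ref{tail} with a rapidly decreasing tolerance on $L\ominus \mathcal{F}_\zeta[\mathcal{P}]$ lets me replace $g$ by (an extension of) a tail-independent function on some $L_1\in[L]$, at the cost of an arbitrarily small loss in $D$; and applying Proposition \ref{diag} to $(\mathfrak{Q},\mathcal{Q})$ produces $M_1\in[M]$ with $\mathbb{Q}_{N,n}(i)\leqslant \delta_n$ for every $N\in[M_1]$, securing $(i)$ for any $\mathcal{Q}$-pieces eventually built in $M_1$.

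At stage $n$ of the recursion I have the prefix $H_1<\ldots<H_{n-1}$ in $L$ and $F_1<\ldots<F_{n-1}$ in $M_1$, together with nested $L_{n-1}\in[L_1]$ and $M_{n-1}\in[M_1]$. Tail-independence makes the functional
$$f_n(H'_n):=\sum_{i} g\bigl(i,\; H_1\cup\ldots\cup H_{n-1}\cup H'_n\bigr)\,\mathbb{P}_{H'_n,1}(i),$$
defined for $H'_n\in MAX(\mathcal{P})\upp L_{n-1}$ sitting after $\max H_{n-1}$, depend only on $H'_n$ and the fixed prefix. I stabilize $f_n$ on some $L'_{n-1}\in[L_{n-1}]$ to a value $b_n^*$ within tolerance $\delta_n/3$ using Remark \ref{prremark}. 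I then apply Theorem \ref{pt} to the $\xi$-homogeneous block $(\mathfrak{P},\mathcal{P})$ with the function $h_0(i,H'_n):=g(i,H_1\cup\ldots\cup H_{n-1}\cup H'_n)$ on $L'_{n-1}\ominus \mathcal{P}$ (whose $\mathcal{P}$-expectation is $\geqslant b_n^*$) to extract a further $L''_{n-1}\in[L'_{n-1}]$ with pointwise realization: every $E\in\mathcal{P}$ admits a $\mathcal{P}$-maximal $G\supset L''_{n-1}(E)$ with $h_0(L''_{n-1}(k),G)\geqslant b_n^*-\delta_n/3$ for $k\in E$. Finally I apply Proposition \ref{star} with $K=L''_{n-1}$ and the parameter $m$ chosen so that $\min F_n$ lands at the position of $M$ dictated by the target $\mathcal{F}_{\zeta+1}(M)$-spread shape; this produces $F_n\in MAX(\mathcal{Q})\upp M_{n-1}$ and $E_n\in\mathcal{P}$ with $L(F_n\setminus\min F_n)=L''_{n-1}(E_n)$, and I set $H_n:=G$ and $b_n:=b_n(H)=\mathbb{E}_{\mathbb{P}_{H,n}}g(\cdot,H)|_{H_n}$ for the eventual $H$. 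The recursion halts at the unique $t$ for which $H:=\bigcup_{n=1}^t H_n$ is maximal in $\mathcal{F}_\zeta[\mathcal{P}]\upp L$.

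The verification is then essentially routine: $(i)$ follows from $F\subset M_1$; $(ii)$ and $(iii)$ follow directly from Proposition \ref{star} and the termination condition; $(iv)$ follows from the chain $g(L(m),H)=h_0(L(m),H_n)\geqslant b_n^*-\delta_n/3\geqslant b_n(H)-2\delta_n/3>b_n-\delta_n$, using tail-independence to reduce $g(L(m),H)$ to $h_0(L(m),H_n)$ on $L(F_n\setminus\min F_n)$; and $(v)$ is $\sum_{n=1}^t b_n=\sum_{n=1}^t b_n(H)=\mathbb{E}^P_H g(\cdot,H)\geqslant D$ by the hypothesis applied to the specific $H$ we constructed. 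The main technical obstacle is the coordination at each stage: the ``$m$'' parameter in the $n^{th}$ application of Proposition \ref{star} must be chosen so that $\min F_n$ advances the partial $(\min F_k)_{k\leqslant n}$-sequence inside an $\mathcal{F}_{\zeta+1}(M)$-spread, while simultaneously the $L''_{n-1}$-refinement produced by Theorem \ref{pt} must be arranged so that $\min H_n$ extends the partial $(\min H_k)_{k\leqslant n}$-sequence inside the target $MAX(\mathcal{F}_\zeta[\mathcal{P}])\upp L$-shape. Ensuring the compatibility required by Proposition \ref{star}'s hypothesis $K\in [L(M)]$ across successive stages, and arranging that the $\mathcal{P}$-supports of the measures shrink fast enough to fit between consecutive elements of the ambient $\mathcal{F}_\zeta$-spread on $L$, is where the $\xi$-sufficiency of $(\mathfrak{P},\mathcal{P})$ (and the flexibility granted by choosing the $\delta_n$) becomes essential.
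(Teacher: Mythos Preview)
Your outline is precisely the paper's proof: preprocess with Propositions~\ref{tail} and~\ref{diag}, then at each stage apply Remark~\ref{prremark} (stabilize the conditional $\mathcal{P}$-average to $b_n$), Theorem~\ref{pt} (lift to a pointwise lower bound), and Proposition~\ref{star} (produce $F_n\in MAX(\mathcal{Q})$ with $L(F_n\setminus\min F_n)\subset H_n\in MAX(\mathcal{P})$), terminating when $(\min H_n)_{n=1}^t\in MAX(\mathcal{F}_\zeta)$. The one point your verification of (iv) elides is that the equality $g(L(m),H)=h_0(L(m),H_n)$ holds only for the tail-independent approximant $h$, not for the original $g$ required in (iv); the paper closes this gap by taking the $m$-parameter in Proposition~\ref{star} large enough (specifically $m>M(T(n))$) so that $L(m)$ occurs at position $>n$ in the set $L_0$ produced by Proposition~\ref{tail}, whence $g(L(m),H)\geqslant h(L(m),H)-\delta_n/2$ and the chain goes through.
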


\begin{proof} By replacing $(\delta_n)_{n=1}^\infty$ with a sequence of smaller numbers if necessary, we may assume this sequence is decreasing. By replacing $M$ with a subset thereof and appealing to Proposition \ref{diag}, we may assume that for any $n\in\nn$, $$\sup \{\mathbb{Q}_{N,n}(i): i\in\nn, N\in[M]\}\leqslant \delta_n.$$  Therefore $(i)$ will be satisfied by any choice of $F\in \mathcal{F}^M_{\zeta+1}[\mathcal{Q}]\upp M$.

Let $J\subset \rr$ be a compact interval containing the range of $g$.    By applying Proposition \ref{tail} with $\ee_n=\delta_n/2$, we can find $L_0\in [L(M)]$ and a bounded, tail independent function $h:L_0\ominus \mathcal{F}_\zeta[\mathcal{P}]\to\rr$ such that if $(L_0(n), F)\in L_0 \ominus \mathcal{F}_\zeta[\mathcal{P}]$, $$g(L_0(n), F)\leqslant h(L_0(n), F)\leqslant g(L_0(n), F)+\delta_n/2.$$   Since $MAX(\mathcal{F}_\zeta[\mathcal{P}])\upp L_0\subset MAX(\mathcal{F}_\zeta[\mathcal{P}])\upp L$ and $g|_{L_0\ominus \mathcal{F}_\zeta[\mathcal{P}]}\leqslant h|_{L_0\ominus \mathcal{F}_\zeta[\mathcal{P}]}$, it follows that for any $H\in MAX(\mathcal{F}_\zeta[\mathcal{P}])\upp L_0$, $$D\leqslant \mathbb{E}^P_H g(\cdot, H)\leqslant \mathbb{E}^P_H h(\cdot, H).$$    Since $L_0\in [L(M)]$, it follows that $L_0=L(M(T))$ for some $T\in[\nn]$.   Let $K_0=L_0(M(T))\in [L(M(T))]$.

Since $h$ is tail independent, we may consider the natural extension $h:K_0\oplus \mathcal{F}_\zeta[\mathcal{P}]\to J$.  Define $f_1:MAX(\mathcal{P})\upp K_0\to J$ by letting $\eta_1(F)=\mathbb{E}^P_F h(\cdot,F)$.  This is well defined because we have taken the natural extension. As noted in Remark \ref{prremark}, we may fix $a_1, b_1\in\rr$ such that $b_1-a_1<\delta_1/4$ and $L_1\in [K_0]$ such that $f_1(F)\in [a_1, b_1]$ for all $F\in MAX(\mathcal{P})\upp L_1$.   By considering the function $h_0:L_1\ominus \mathcal{P}\to J$ given by $h_0(n,F)=h(n,F)$, we may apply Theorem \ref{pt} to find $K_1\in [L_1]$ such that for any $F\in \mathcal{P}$, there exists $K_1(F)\subset G\in MAX(\mathcal{P})$ such that for any $n\in F$, $h(K_1(n),G)=h_0(K_1(n), G) \geqslant a_1-\delta_1/4\geqslant b_1-\delta_1/2$.   Since $K_1\in [L(M(T))]$, we may appeal to Proposition \ref{star} to find $F_1\in MAX(\mathcal{Q})\upp M(T)$ and $E_1\in \mathcal{P}$ such that $M(T(1))<F_1$ and $L(F_1\setminus (\min F_1))=K_1(E_1)$.    Since $E_1\in \mathcal{P}$, there exists $K_1(E_1)\subset H_1\subset MAX(\mathcal{P})\upp L_1$ such that for each $n\in E_1$, $h(K_1(n), H_1)\geqslant b_1-\delta_1/2$.  Note that the $n=1$ case of item $(ii)$ is satisfied with this choice.   Since for any $m\in F_1\setminus (\min F_1)$, $L(m)=K_1(n)$ for some $n\in E_1$, it follows that $h(L(m), H_1)\geqslant b_1-\delta_1/2$ for any $m\in F_1\setminus (\min F_1)$.    Since $M(T(1))<F_1\in [M(T)]^{<\omega}$ and $L_0=L(M(T))$, for any $m\in F_1\setminus (\min F_1)$, $L(m)=L_0(p)$ for some $p>1$, from which it follows that for any $H_1\preceq H\in MAX(\mathcal{F}_\zeta[\mathcal{P}])$,  \begin{align*} g(L(m), H) & =g(L_0(p), H)\geqslant h(L_0(p), H) -\delta_p/2 = h(L_0(p), H_1) -\delta_p/2 \geqslant h(L_0(p),H_1)-\delta_1/2 \\ & =h(L(m), H_1)-\delta_1/2 \geqslant b_1-\delta_1.\end{align*} Here we have used that $h$ is tail independent and $L_0(p)=L(m)\in H_1$.    Therefore $g(L(m), H)\geqslant b_1-\delta_1$ for each $m\in F_1\setminus (\min F_1)$ and $H_1\preceq H\in MAX(\mathcal{F}_\zeta[\mathcal{P}])$. Therefore for $n=1$, item $(iv)$ will be satisfied for our eventual choice of $H_1\preceq H\in MAX(\mathcal{F}_\zeta[\mathcal{P}])$

Now suppose that for some $s\in\nn$, $b_1, \ldots, b_s$, $F_1<\ldots<F_s$, $F_n\in MAX(\mathcal{Q})$, $L_1\supset K_1\supset \ldots \supset L_s\supset K_s$, $H_1<\ldots<H_s$, $H_n\in MAX(\mathcal{P})$ have been chosen such that $(\min H_n)_{n=1}^s\in \mathcal{F}_\zeta$. If $(\min H_n)_{n=1}^s\in MAX(\mathcal{F}_\zeta)$, we let $t=s$ and we are done. Otherwise we perform the following recursive step. Let $A_s=(\min H_n)_{n=1}^s$ and $B_s=\cup_{n=1}^s H_n$. Note that since $\mathcal{F}_\zeta$ is nice, $A_s\smallfrown (m)\in \mathcal{F}_\zeta$ for all $A_s<m$. Since $\max B_s\geqslant \max A_s$, it follows that $A_s\smallfrown (m)\in \mathcal{F}_\zeta$ for all $B_s<m$, and therefore $B_s\cup F\in \mathcal{F}_\zeta[\mathcal{P}]$ for all $B_s<F\in \mathcal{P}$.      We now choose $L_{s+1}\in [K_s]$ such that $B_s<L_{s+1}$ and define $f_{s+1}:MAX(\mathcal{P})\upp L_{s+1}\to J$ by $f_{s+1}(F)=\mathbb{E}^P_F h(\cdot, B_s\cup F)$.    Again using the fact stated in Remark \ref{prremark}, by passing to a subset of $L_{s+1}$ and relabeling, we may assume there exist $a_{s+1}, b_{s+1}$ such that $b_{s+1}-a_{s+1}<\delta_{s+1}/4$ and $f_{s+1}(F)\in [a_{s+1}, b_{s+1}]$ for all $F\in MAX(\mathcal{P})\upp L_{s+1}$.    By another application of Theorem \ref{pt} applied to the function $h_s:L_{s+1}\ominus \mathcal{P}\to J$ given by $h_s(n, F)=h(n,B_s\cup F)$, we can find $K_{s+1}\in [L_{s+1}]$ such that for any $F\in \mathcal{P}$, there exists $K_{s+1}(F)\subset G\in MAX(\mathcal{P})\upp L_{s+1}$ such that for each $n\in F$, $h(K_{s+1}(n), B_s\cup G) \geqslant a_{s+1}-\delta_{s+1}/4 \geqslant b_{s+1}-\delta_{s+1}/2$.    By another application of Proposition \ref{star}, we may find $F_{s+1}\in MAX(\mathcal{Q})\upp M(T)$ and $E_{s+1}\in \mathcal{P}$ such that $$\max\{\max F_s, M(T(s+1)), M(\min H_s)\}<F_{s+1}$$ and $L(F_{s+1}\setminus (\min F_{s+1}))=K_{s+1}(E_{s+1})$.    Since $E_{s+1}\in \mathcal{P}$, there exists $K_{s+1}(E_{s+1})\subset H_{s+1}\in MAX(\mathcal{P})\upp L_{s+1}$ such that for all $n\in E_{s+1}$, $h(K_{s+1}(n), B_s\cup H_{s+1}) \geqslant b_{s+1}-\delta_{s+1}/2$.  Then the $n=s+1$ case of item $(ii)$ is satisfied.   For any $m\in F_{s+1}\setminus (\min F_{s+1})$, there exists $n\in E_{s+1}$ such that $L(m)=K_{s+1}(n)$, and $h(L(m), B_s\cup H_{s+1})\geqslant b_{s+1}-\delta_{s+1}$.       Since $M(T(s+1))<F_{s+1}\in [M(T)]^{<\omega}$ and $L_0=L(M(T))$, for any $m\in F_{s+1}\setminus (\min F_{s+1})$, $L(m)=L_0(p)$ for some $p>s+1$, from which it follows that for any $B_s\cup H_{s+1}\preceq H\in MAX(\mathcal{F}_\zeta[\mathcal{P}])$,  \begin{align*} g(L(m),  H) & =g(L_0(p),  H)\geqslant h(L_0(p), H) -\delta_p/2 = h(L_0(p), B_s\cup H_{s+1}) - \delta_p/2 \\ & \geqslant h(L_0(p),B_s\cup H_{s+1})-\delta_{s+1}/2  =h(L(m), B_s\cup H_{s+1})-\delta_{s+1}/2 \geqslant b_{s+1}-\delta_{s+1}.\end{align*} Here we have used the fact that $h$ is tail independent and $L(m)=L_0(p)\in B_s\cup H_{s+1}$.  Therefore $g(L(m), B_s\cup H_{s+1})\geqslant b_{s+1}-\delta_{s+1}$ for each $m\in F_{s+1}\setminus (\min F_{s+1})$ and $B_s\cup H_{s+1}\preceq H\in MAX(\mathcal{F}_\zeta[\mathcal{P}])$.  Therefore the $n=s+1$ case of item $(iv)$ will be satisfied for our eventual choice of $B_s\cup H_{s+1}\preceq H\in MAX(\mathcal{F}_\zeta[\mathcal{P}])$.   This completes the recursive construction.

Since $\mathcal{F}_\zeta$ is nice, this process must eventually terminate when $(\min H_n)_{n=1}^t\in MAX(\mathcal{F}_\zeta)$.  As in the previous paragraph, we let $B_s=\cup_{n=1}^s H_n$ for $1\leqslant s\leqslant t$.  For convenience, we let $B_0=\varnothing$. Item $(i)$ is satisfied as noted in the first paragraph of the proof. Items $(ii)$ and $(iv)$ were verified in the recursive construction.  Let $H=\cup_{n=1}^t H_n$. It follows from the construction that $H_1<\ldots <H_n$, since for each $1\leqslant s<t$, $H_s\subset B_s <L_{s+1}\supset H_{s+1}$.  Since $H_1<\ldots <H_t$, $H_n\in MAX(\mathcal{P})$, and $(\min H_n)_{n=1}^t\in MAX(\mathcal{F}_\zeta)$, $H=_\mathcal{P}\cup_{n=1}^t H_n\in MAX(\mathcal{F}_\zeta[\mathcal{P}])$.     Therefore item $(iii)$ is satisfied. As was noted in the construction, since $B_s\cup H_{s+1}\preceq H\in MAX(\mathcal{F}_\zeta[\mathcal{P}])$ for $s=0,1, \ldots, t-1$, item $(iv)$  is satisfied. By the permanence property together with tail independence of $h$ and the fact that $H_s\in MAX(\mathcal{P})\upp L_s$ and $H\in MAX(\mathcal{F}_\zeta[\mathcal{P}])\upp L_0$, $$D\leqslant \mathbb{E}^P_H h(\cdot, H) = \sum_{s=1}^t \mathbb{E}^P_{H_s}h (\cdot, \cup_{n=1}^s H_n) \leqslant \sum_{s=1}^t b_s.$$  Here we have used the fact that for $s=0, \ldots, t-1$ and any $G\in MAX(\mathcal{P})\upp L_{s+1}$, $$\mathbb{E}^P_G h(\cdot, \cup_{n=1}^{s+1}H_n) = \mathbb{E}^P_G h(\cdot, B_s\cup G) = f_{s+1}(G)\leqslant b_{s+1}.$$

It remains to show that $F=_\mathcal{Q}\cup_{n=1}^t\in \mathcal{F}_{\zeta+1}^M[\mathcal{Q}]$.     We note that by construction, $F_1<\ldots <F_t$ and $F_n\in MAX(\mathcal{Q})\upp M$, so $F=_\mathcal{Q}\cup_{n=1}^t F_n$.  Note that we can write $(\min F_n)_{n=1}^t= M(G)$ for some $G\in [\nn]^{<\omega}$.     Since $\mathcal{F}_\zeta$ is hereditary and $(\min H_n)_{n=1}^t\in \mathcal{F}_\zeta$, $(\min H_n)_{n=1}^{t-1}\in \mathcal{F}_\zeta$.    Since for each $1\leqslant n<t$, $M(\min H_n)<F_{n+1}$, $$M(G\setminus (\min G))=(\min F_n)_{n=2}^t=(\min F_{n+1})_{n=1}^{t-1}$$ is a spread of $M((\min H_n)_{n=1}^{t-1})$, so that $G\setminus (\min G)$ is a spread of $(\min H_n)_{n=1}^{t-1}\in \mathcal{F}_\zeta$.    Therefore $G\setminus (\min G)\in \mathcal{F}_\zeta$, and $G=(\min G)\smallfrown (G\setminus (\min G))\in \mathcal{F}_{\zeta+1}$.  Since $F_1<\ldots <F_t$, $F_n\in \mathcal{Q}\upp M$, and $(\min F_n)_{n=1}^t=M(G)\in \mathcal{F}_{\zeta+1}(M)$, $F\in \mathcal{F}_{\zeta+1}^M[\mathcal{Q}]$.

\end{proof}

The proof in the case that $(\mathfrak{Q}, \mathcal{Q})$ is a $0$-homogeneous (or, more accurately, the $0$-homogeneous) probability block is  easier.

\begin{theorem} Fix $0<\zeta<\omega_1$ and assume $(\mathfrak{P}, \mathcal{P})$ is a $\xi$-homogeneous probability block. Assume $L\in[\nn]$, $D\in\rr$, and  $g:L\ominus \mathcal{F}_\zeta[\mathcal{P}]\to \rr$ are such that $g$ is a bounded function and for every $F\in MAX(\mathcal{F}_\zeta[\mathcal{P}])\upp L$, $\mathbb{E}^P_F g(\cdot, F)\geqslant D$. Then for any sequence $(\delta_n)_{n=1}^\infty$ of positive numbers, any $M\in[\nn]$,  and the $0$-homogeneous probability block $(\mathfrak{Q}, \mathcal{Q})$, there exist $L\in [L]$, $F =(m_n)_{n=1}^t\in \mathcal{F}_{\zeta+1}(M),$ numbers $b_1, \ldots, b_t\in\rr$,  and  sets $H_1, \ldots, H_t\in \mathcal{P}$, $H\in MAX(\mathcal{F}_\zeta[\mathcal{P}])$, such that  \begin{enumerate}[(i)]\item $L(m_n)\in H_n$,  \item  $H=_\mathcal{P}\bigcup_{n=1}^t H_n$, \item for each $1\leqslant n\leqslant t$, $g(L(m_n), H) \geqslant b_n-\delta_n$, \item $\sum_{n=1}^t b_n\geqslant D$.   \end{enumerate}

\label{demigod2}

\end{theorem}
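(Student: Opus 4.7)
The plan is to mimic the proof of Theorem \ref{god2} while replacing the use of Proposition \ref{star}, which carries no content when $\mathcal{Q}=\mathcal{F}_1$, by a direct position-tracking argument that secures the $\mathcal{F}_{\zeta+1}(M)$ condition on the selected indices. Assume without loss that $(\delta_n)_{n=1}^\infty$ is decreasing. First I would apply Proposition \ref{tail} with errors $\delta_n/2$ and base $N=L(M)$ to obtain $L_0\in[L(M)]$ and a tail-independent function $h:L_0\ominus\mathcal{F}_\zeta[\mathcal{P}]\to\rr$ with $g\leqslant h\leqslant g+\delta_n/2$ at position $L_0(n)$; after taking the natural extension of $h$ on $L_0\oplus\mathcal{F}_\zeta[\mathcal{P}]$, write $L_0=L(M(T))$ for some $T\in[\nn]$.

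Next I would recursively build $L_1\supset K_1\supset L_2\supset K_2\supset\cdots$ and data $(H_s,b_s,m_s)$. At step $s+1$, with $B_s=\bigcup_{n\leqslant s}H_n$ (and $B_0=\varnothing$), I would apply Remark \ref{prremark} to stabilize $f_{s+1}(F)=\mathbb{E}_F^P h(\cdot,B_s\cup F)$ on some $L_{s+1}\in[K_s]$ in an interval $[a_{s+1},b_{s+1}]$ of diameter less than $\delta_{s+1}/4$, and then apply Theorem \ref{pt} to $h_{s+1}(n,F)=h(n,B_s\cup F)$ to obtain $K_{s+1}\in[L_{s+1}]$ with the expected inequality. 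The two departures from Theorem \ref{god2} are (a) bypassing Proposition \ref{star} by applying the conclusion of Theorem \ref{pt} to the singleton $F=\{1\}$ to obtain $H_{s+1}\in MAX(\mathcal{P})\upp L_{s+1}$ containing $K_{s+1}(1)$, and setting $m_{s+1}=M(T(j_{s+1}))$ where $j_{s+1}$ is the position of $K_{s+1}(1)$ in $L_0$; and (b) before stabilizing $f_{s+1}$, further restricting $L_{s+1}$ to $\{L_0(p):p\geqslant\max(s+1,1+\min H_s)\}$ (with $\min H_0:=0$), so that $j_{s+1}\geqslant\max(s+1,1+\min H_s)$. Since $\mathcal{F}_\zeta$ is nice, the recursion terminates at some $t$ with $(\min H_n)_{n=1}^t\in MAX(\mathcal{F}_\zeta)$.

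Items (i) and (ii) are immediate from the construction; (iv) follows from $\sum_{s=1}^t b_s\geqslant\mathbb{E}^P_H h(\cdot,H)\geqslant\mathbb{E}^P_H g(\cdot,H)\geqslant D$ via tail independence, exactly as in Theorem \ref{god2}; and (iii) follows by noting that $j_s\geqslant s$ gives $\delta_{j_s}\leqslant\delta_s$, so tail independence of $h$ together with the Proposition \ref{tail} bound gives $g(L(m_s),H)\geqslant h(L(m_s),B_{s-1}\cup H_s)-\delta_s/2\geqslant b_s-\delta_s$. The hard part, and the only use of the position constraint in (b), is verifying $(m_s)_{s=1}^t\in\mathcal{F}_{\zeta+1}(M)$. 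For this I would use that $T(k)\geqslant k$ for every $k\in\nn$ together with $j_{s+1}\geqslant 1+\min H_s$ to conclude $T(j_{s+1})>\min H_s$, so that $(T(j_s))_{s=2}^t$ is a spread of $(\min H_s)_{s=1}^{t-1}\in\mathcal{F}_\zeta$ by heredity, hence lies in $\mathcal{F}_\zeta$ by the spreading property; prepending $T(j_1)$ then yields $(T(j_s))_{s=1}^t\in\mathcal{F}_{\zeta+1}$, and therefore $(m_s)=M((T(j_s))_{s=1}^t)\in\mathcal{F}_{\zeta+1}(M)$.
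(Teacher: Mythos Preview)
Your approach is correct and follows essentially the same architecture as the paper's proof: pass to a tail-independent $h$ on some $L_0\in[L(M)]$, then recursively stabilize the conditional averages $f_{s+1}(F)=\mathbb{E}_F^P h(\cdot,B_s\cup F)$, at each step selecting a point in some $H_{s+1}\in MAX(\mathcal{P})$ where $h$ is close to $b_{s+1}$, with an index constraint ensuring the output lands in $\mathcal{F}_{\zeta+1}(M)$.

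The one place where the paper is more economical is your step (a). You invoke Theorem~\ref{pt} and then specialize its conclusion to the singleton $F=\{1\}$; the paper simply fixes any $H_{s+1}\in MAX(\mathcal{P})\upp L_{s+1}$ and uses the elementary fact that, since $\mathbb{E}^P_{H_{s+1}} h(\cdot,B_s\cup H_{s+1})\geqslant a_{s+1}$, some point $n_{s+1}\in H_{s+1}$ already satisfies $h(n_{s+1},B_s\cup H_{s+1})\geqslant a_{s+1}\geqslant b_{s+1}-\delta_{s+1}/2$. This avoids Theorem~\ref{pt} entirely in the $\upsilon=0$ case. Your index bookkeeping through $j_s$ and $T(j_s)$ is a minor variant of the paper's, which instead arranges $L(M(\min H_s))<L_{s+1}$ so that $M(\min H_s)<m_{s+1}$ directly; both routes yield that the index sequence, after dropping its first term, is a spread of $(\min H_n)_{n=1}^{t-1}\in\mathcal{F}_\zeta$, and hence the full sequence lies in $\mathcal{F}_{\zeta+1}(M)$.
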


\begin{proof} By replacing $(\delta_n)_{n=1}^\infty$ with a sequence of smaller numbers if necessary, we may assume this sequence is decreasing.

Let $J\subset \rr$ be a compact interval containing the range of $g$.    By applying Proposition \ref{tail} with $\ee_n=\delta_n/2$, we can find $L_0\in [L(M)]$ and a bounded, tail independent function $h:L_0\ominus \mathcal{F}_\zeta[\mathcal{P}]\to\rr$ such that if $(L_0(n), F)\in L_0 \ominus \mathcal{F}_\zeta[\mathcal{P}]$, $$g(L_0(n), F)\leqslant h(L_0(n), F)\leqslant g(L_0(n), F)+\delta_n/2.$$   Since $MAX(\mathcal{F}_\zeta[\mathcal{P}])\upp L_0\subset MAX(\mathcal{F}_\zeta[\mathcal{P}])$ and $g|_{L_0\ominus \mathcal{F}_\zeta[\mathcal{P}]}\leqslant h|_{L_0\ominus \mathcal{F}_\zeta[\mathcal{P}]}$, it follows that for any $H\in MAX(\mathcal{F}_\zeta[\mathcal{P}])\upp L_0$, $$D\leqslant \mathbb{E}^P_H g(\cdot, H)\leqslant \mathbb{E}^P_H h(\cdot, H).$$    Since $L_0\in [L(M)]$, it follows that $L_0=L(M(T))$ for some $T\in[\nn]$.   Let $K_0=L_0(M(T))\in [L(M(T))]$.

Since $h$ is tail independent, we may consider the natural extension $h:K_0\oplus \mathcal{F}_\zeta[\mathcal{P}]\to J$.  Define $f_1:MAX(\mathcal{P})\upp K_0\to J$ by letting $\eta_1(F)=\mathbb{E}^P_F h(\cdot,F)$.  This is well defined because we have taken the natural extension. As noted in Remark \ref{prremark}, we may fix $a_1, b_1\in\rr$ such that $b_1-a_1<\delta_1/2$ and $L_1\in [K_0]$ such that $f_1(F)\in [a_1, b_1]$ for all $F\in MAX(\mathcal{P})\upp L_1$.   Fix $H_1\in MAX(\mathcal{P})\upp L_1$.     Since $\mathbb{E}^P_{H_1}h(\cdot, H_1)\geqslant a_1\geqslant b_1-\delta_1/2$, there exists $n_1\in H_1$ such that $h(n_1, H_1)\geqslant b_1-\delta_1/2$.  Since $L_1\subset L(M)$, there exists $m_1\in M$ such that $n_1=L(m_1)$, and $h(L(m_1), H_1)=h(n_1, H_1)\geqslant b_1-\delta_1/2$.    Since $n_1\in L_1\subset L_0$, $n_1=L(M(T(p))=L_0(p)$ for some $1\leqslant p\in\nn$.    Then for any $H_1\preceq H\in MAX(\mathcal{F}_\zeta[\mathcal{P}])$, \begin{align*} g(L(m_1), H) & = g(L_0(p), H) \geqslant h(L_0(p), H) -\delta_1/2= h(L(m_1),H)=h(L(m_1), H_1)-\delta_1/2 \\ & \geqslant b_1-\delta_1.\end{align*}    

Assume that $m_1<\ldots <m_s$, $H_1<\ldots <H_s$, $L_1, \ldots, L_s$ have been chosen such that $(\min H_n)_{n=1}^s\in \mathcal{F}_\zeta$. If $(\min H_n)_{n=1}^s\in MAX(\mathcal{F}_\zeta[\mathcal{P}])$, we are done.  Otherwise we perform the following recursive step.    Let $B_s=\cup_{n=1}^s H_s$ and fix $L_{s+1}\in [L_s]$ such that $B_s<L_{s+1}$. Since $(\min H_n)_{n=1}^s\in \mathcal{F}_\zeta\setminus MAX(\mathcal{F}_\zeta)$, $B_s\cup F\in \mathcal{F}_\zeta[\mathcal{P}]$ for all $B_s<F\in \mathcal{P}$.    By again using Remark \ref{prremark}, passing to a subset and relabeling if necessary, we may also assume that for some $a_{s+1}<b_{s+1}$ with $b_{s+1}-a_{s+1}<\delta_{s+1}/2$, $\mathbb{E}^P_F h(\cdot, B_s\cup F)\in [a_{s+1}, b_{s+1}]$ for all $F\in MAX(\mathcal{P})\upp L_{s+1}$.   Fix $H_{s+1}\in MAX(\mathcal{P})\upp L_{s+1}$ and $n_{s+1}\in H_{s+1}$ such that $h(n_{s+1}, B_s\cup H_{s+1})\geqslant a_{s+1}\geqslant b_{s+1}-\delta_{s+1}/2$.    Since $n_{s+1}\in L_{s+1}\subset L(M)$, $n_{s+1}=L(m_{s+1})$ for some $m_{s+1}\in M$. Since $L(m_s)<L_{s+1}(1)\leqslant n_{s+1}=L(m_{s+1})$, it follows that $m_{s+1}>m_s$.  Since $m_1<\ldots <m_{s+1}$ and $m_n\in L_0$ for $n=1,\ldots, s+1$, it follows that $m_{s+1}=L_0(p)$ for some $p\geqslant s+1$.    Therefore for any $B_s\cup H_{s+1}\preceq H\in MAX(\mathcal{F}_\zeta[\mathcal{P}])$,  \begin{align*} g(L(m_{s+1}), H) & = g(L_0(p), H) \geqslant h(L_0(p), H) -\delta_{s+1}/2= h(L(m_{s+1}),H)=h(L(m_{s+1}), H_1)-\delta_{s+1}/2 \\ & \geqslant b_{s+1}-\delta_{s+1}.\end{align*}  Since $L(M(\min H_s))<L_{s+1}\leqslant n_{s+1}=L(m_{s+1})$, it follows that $M(\min H_s)<m_{s+1}$.    This completes the recursive step. The details are checked as in the proof of Theorem \ref{god2}.

\end{proof}

\section{Theorems \ref{main1} and  \ref{main2}}\label{full}

In this section, let $X$ be  a Banach space. Let $R$ be a subsequential space on $X$. We recall that this means that $c_{00}(X)\subset R\subset \ell_\infty(X)$, $R$ is a (not necessarily closed) subspace of $\ell_\infty(X)$ on which there exists a norm $r$ such that $(R, r)$ is a Banach space and, with $B_R=\{\varsigma\in R: r\leqslant 1\}$, $B_R\subset B_{\ell_\infty(X)}$ and if $\varsigma\in B_R$, then every subsequence of $\varsigma$ also lies in $B_R$.  We also fix a bimonotone norm $s$ on $c_{00}(X)$, which means that for each $x\in X$ and $n\in\nn$, $s(x\otimes e_n)=\|x\|$ and for any $\varsigma =(x_n)_{n=1}^\infty\in c_{00}(X)$, $$s=\sup_{l\leqslant m} s\Bigl(\sum_{n=l}^m x_n\otimes e_n\Bigr)=\lim_m s\Bigl(\sum_{n=1}^m x_n\otimes e_n\Bigr).$$  The \emph{natural domain} of a bimonotone norm is defined to be the space $S$ of all sequences $(x_n)_{n=1}^\infty\in \ell_\infty(X)$ such that $\sup_m s(\sum_{n=1}^m x_n\otimes e_n)<\infty$. The norm $s$ naturally extends by the formula $$s((x_n)_{n=1}^\infty)=\sup_m s\Bigl(\sum_{n=1}^m x_n\otimes e_n\Bigr)$$ to all of $S$, and $(S, s)$ is a Banach space.

Throughout this section, $0\leqslant \xi<\omega_1$ and   $P=(\mathfrak{P}, \mathcal{P})$ is a fixed, $\xi$-homogeneous probability block.  When $\mathbb{E}_M$ is written with no superscript, it is understood that the convex block sequence is taken with respect to this probability block $P=(\mathfrak{P}, \mathcal{P})$. If we wish to consider convex blocks coming from some other probability block $Q=(\mathfrak{Q}, \mathcal{Q})$, we include the superscripts $\mathbb{E}^P_M$ and $\mathbb{E}^Q_M$ to distinguish.

We will prove the following, further quantified theorem, and then deduce a large part of Theorem \ref{main2} and \ref{main1} as special cases. We now state this further quantified theorem, for which  recall that if $\mathcal{F}$ is regular and $M\in[\nn]$, $M|\mathcal{F}$ denotes the maximal initial segment of $M$ which is a member of $\mathcal{F}$. We also agree to the convention that $M|[\nn]^{<\omega}=M$.   

\begin{theorem} Fix $\zeta\leqslant \omega_1$, $\xi<\omega_1$, and let $P=(\mathfrak{P}, \mathcal{P})$ be a $\xi$-homogeneous probability block.  Let $X$ be a Banach space, $(R, \|\cdot\|_R)$ a subsequential space on $X$, and $s$ a bimonotone norm on $c_{00}(X)$ with natural domain $S$.  The following are equivalent. \begin{enumerate}\item For every $\varsigma\in R$, there exist $M\in[\nn]$ and a constant $C$ such that for every $N\in [M]$, $\mathbb{E}_{N|\mathcal{F}_\zeta[\mathcal{P}]}\varsigma\in CB_S$. \item There exists a constant $C$ such that for every $\varsigma\in B_R$, there exists $M\in [\nn]$ such that for every $N\in [M]$, $\mathbb{E}_{N|\mathcal{F}_\zeta[\mathcal{P}]}\varsigma\in CB_S$. \item For every $\varsigma\in R$ and $L\in [\nn]$, there exist $M\in[L]$ and a constant $C$ such that for all $N\in[M]$, $\mathbb{E}_{N|\mathcal{F}_\zeta[\mathcal{P}]} \varsigma\in CB_S$.  \item There exists a constant $C$ such that for every $\varsigma\in B_R$ and every $L\in [\nn]$, there exists $M\in [L]$ such that for all $N\in [M]$, $\mathbb{E}_{N|\mathcal{F}_\zeta[\mathcal{P}]}\varsigma\in CB_S$.  \end{enumerate}

\label{main3}
\end{theorem}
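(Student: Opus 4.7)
The plan is to establish $(1)\Rightarrow (4)$; the implications $(4)\Rightarrow (3)\Rightarrow (1)$ and $(4)\Rightarrow (2)\Rightarrow (1)$ are immediate, via specializing to $L=\nn$ and rescaling by $r(\varsigma)$ to pass between $R$ and $B_R$.   The strategy is the ordinal-quantified Principle of Uniform Boundedness argument sketched in Section 2, organized around the auxiliary function $\Gamma\colon[0,\omega_1]\times\omega\to [0,\infty]$ defined by
$$\Gamma(\upsilon,k)=\inf\Bigl\{C>0:\forall\,\varsigma\in B_R,\ \forall\,L\in[\nn]\text{ with }k<L,\ \exists\,M\in[L],\ \forall\,N\in[M],\ s\bigl(\mathbb{E}^P_{N|\mathcal{F}_\upsilon[\mathcal{P}]}\varsigma\bigr)\leqslant C\Bigr\}$$
(with $\inf\varnothing=\infty$), whose finiteness at $\Gamma(\zeta,0)$ is equivalent to statement $(4)$.

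The first task is to verify the hypotheses of Proposition \ref{ord1}(iii) for $\Gamma$: monotonicity in $k$ is trivial; monotonicity in $\upsilon$ and continuity at limit ordinals use the almost monotone property of the fine Schreier families (Proposition \ref{reg}(vi)) combined with the diagonalization of Proposition \ref{silly}; the base case $\Gamma(0,k)=0$ holds because $\mathcal{F}_0[\mathcal{P}]=\{\varnothing\}$ forces the expectation to vanish; and the inequalities $\Gamma(\zeta,k+p)\leqslant \Gamma(\zeta+p,k)\leqslant p+\Gamma(\zeta,k+p)$ follow by decomposing any $\mathcal{F}_{\zeta+p}[\mathcal{P}]$-block beyond position $k$ into its $p$ leading $\mathcal{P}$-blocks (each of $s$-norm at most $1$, since $\varsigma\in B_R\subset B_{\ell_\infty(X)}$ and $s(x\otimes e_n)=\|x\|$) followed by an $\mathcal{F}_\zeta[\mathcal{P}]$-block supported past position $k+p$, together with bimonotonicity of $s$ for the first inequality and the triangle inequality for the second.

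If $(4)$ fails then $\Gamma(\zeta,0)=\infty$, and Proposition \ref{ord1}(iii) produces a countable limit ordinal $\gamma\leqslant\zeta$ with $\Gamma(\gamma)=\infty$, while $\Gamma(\upsilon,p)<\infty$ for all $\upsilon<\gamma$ and $p<\omega$, with $\{\Gamma(\upsilon,p):\upsilon<\gamma\}$ unbounded for every fixed $p$.   Fixing a cofinal $\upsilon_n\uparrow\gamma$ and rapidly growing $D_n>4^n$, the plan is to recursively select $\varsigma_n\in B_R$, increasing starting indices $k_n\in\nn$, and uniform bounds $a_n:=\Gamma(\upsilon_n,k_{n+1})+1<\infty$ so that $\varsigma_n$ realizes badness exceeding $D_n$ at level $\upsilon_n$ beyond $k_n$, while every $\varsigma_k$ with $k>n$ automatically satisfies the $a_n$-estimate at level $\upsilon_n$.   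Here the finiteness $a_n<\infty$ plays the role of $\|A_n\|<\infty$ in the PUB sketch, controlling the action of ``later'' vectors on the $n$-th test.

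The decisive and most delicate step is the gluing: form a single $\varsigma\in R$ whose $\varsigma_n$-blocks sit in disjoint and well-separated index ranges in $\nn$ with coefficients $D_n^{-1/2}$, and then exhibit, for each $n$, a single $N\in[\nn]$ realizing $s(\mathbb{E}^P_{N|\mathcal{F}_\zeta[\mathcal{P}]}\varsigma)\geqslant D_n^{1/2}/3$.   The main obstacle is that the coefficients of $\mathbb{E}^P_{N|\mathcal{F}_\zeta[\mathcal{P}]}$ depend on the absolute positions of $N$'s elements in $\nn$, so the badness of $\varsigma_n$ — witnessed through $\mathcal{F}_{\upsilon_n}[\mathcal{P}]$-expectations in its own relative scale — does not transfer transparently into an $\mathcal{F}_\zeta[\mathcal{P}]$-expectation of the glued $\varsigma$.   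This is exactly what Theorem \ref{god2} resolves: it converts badness observed through one probability-block structure into badness visible through another, with explicit weight control, and iteratively nesting such transfers up to level $\zeta$ produces the required $N$.   Once $N$ is in hand, the standard PUB bookkeeping — bounding earlier-$\varsigma_k$ cross-terms by the pointwise constants supplied by $(1)$ and later-$\varsigma_k$ cross-terms via the $a_n$ and summability of the $D_k^{-1/2}$ — yields $s(\mathbb{E}^P_{N|\mathcal{F}_\zeta[\mathcal{P}]}\varsigma)\to\infty$ along a choice of $N$'s depending on $n$, contradicting $(1)$ applied to $\varsigma$.
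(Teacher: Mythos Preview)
Your architecture is right — the PUB-style contradiction organized around a minimal limit ordinal $\gamma$ from Proposition \ref{ord1}, and the identification of Theorem \ref{god2} as the tool that handles position-dependence — but the direct $(1)\Rightarrow(4)$ route has a genuine gap at the cross-term step, and Theorem \ref{god2} is not deployed where you place it.

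The gap is this: when you invoke ``pointwise constants supplied by $(1)$'' to bound the earlier-$\varsigma_k$ cross-terms, condition $(1)$ applied to $\varsigma_k$ only produces \emph{some} $M_k\in[\nn]$ on which a $C_k$-bound holds. But the test set $N$ must simultaneously lie inside the $M^*$ that $(1)$ gives for the glued $\varsigma$, inside the set witnessing $D_n$-badness of $\varsigma_n$, and inside sets where the uniform $a_n$-bounds apply to the later $\varsigma_j$. Nothing forces the $M_k$'s from $(1)$ to be nested or compatible with these, so the triangle-inequality bookkeeping does not close. (``Disjoint index ranges'' does not rescue this: each $\varsigma_n$ is an infinite sequence, and if you mean finite truncations the sum lands in $c_{00}(X)$, where $(1)$ is vacuous.) The paper repairs this by factoring through $(3)$: first $(1)\Rightarrow(3)$, then $(3)\Rightarrow(4)$. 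The role of Theorem \ref{god2}, packaged as Lemma \ref{beer}, is not to transfer badness \emph{after} gluing but to upgrade localized badness to hereditary badness \emph{before} it: if $s_k(\mathbb{E}_{N|\mathcal{F}_\mu[\mathcal{P}]}\varsigma)\geqslant D$ for all $N\in[L]$, then the subsequence $\varrho=(x_{L(n)})_{n=1}^\infty$ satisfies $\Gamma(\mu+1,k,\varrho,N)\geqslant D$ for \emph{every} $N\in[\nn]$. In $(1)\Rightarrow(3)$ this converts ``no $M\in[L]$ works for $\varsigma$'' into ``no $M\in[\nn]$ works for $\varrho$''. In $(3)\Rightarrow(4)$ it ensures each bad $\varrho_l$ can have its badness witnessed inside any prescribed infinite set, so one can recursively build nested $M_1\supset M_2\supset\cdots$ on which both the $(3)$-supplied constants $C_l$ and the badness of $\varrho_l$ coexist; the gluing is then simply the sum $\sum_l D_l^{-1/2}\varrho_l$ in the Banach space $R$, and the cross-term control comes from $(3)$, not $(1)$. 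A secondary issue: your $\Gamma(\upsilon,k)$ is defined via the constraint $k<L$, whereas the paper uses the shifted norms $s_k$ and hereditary ``good sets'' $\mathfrak{G}(k,C,\varsigma)$; the latter is what makes both the successor inequalities and the ill-foundedness argument at $\zeta=\omega_1$ go through.
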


We also include here the relationship between these properties for two different probability blocks. 

\begin{theorem} Let $\zeta\leqslant \omega_1$ be a limit ordinal and fix $\xi\leqslant \upsilon<\omega_1$.  Let $P=(\mathfrak{P}, \mathcal{P})$ be a $\xi$-homogeneous probability block and suppose that $R,S$ and $(\mathfrak{P}, \mathcal{P})$ satisfy item $(4)$ of Theorem \ref{main3} with constant $C$. If $Q=(\mathfrak{Q}, \mathcal{Q})$ is any $\upsilon$-homogeneous probability block, then for any $C'>C$,  $R,S$ and $(\mathfrak{Q}, \mathcal{Q})$ satisfy item $(4)$ of Theorem \ref{main3} with constant $C'$.

\label{diff}

\end{theorem}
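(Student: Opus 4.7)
The plan is to argue by contradiction. Suppose $(R,S,P)$ satisfies item $(4)$ of Theorem \ref{main3} with constant $C$ while $(R,S,Q)$ fails to satisfy it with some fixed $C' > C$. Then there exist $\varsigma \in B_R$ and $L \in [\nn]$ such that for every $M \in [L]$ there is $N \in [M]$ with $s(\mathbb{E}^Q_{N|\mathcal{F}_\zeta[\mathcal{Q}]}\varsigma) > C'$. Using bimonotonicity of $s$ together with Hahn--Banach duality, each such excess is witnessed by a norm-one linear functional $\varphi = \sum_n \varphi_n \otimes e_n^*$ on the natural domain $S$. Expanding $\sum_n \varphi_n(\mathbb{E}^Q_{N|\mathcal{F}_\zeta[\mathcal{Q}],n}\varsigma) > C'$ produces a bounded function $g : L \ominus \mathcal{F}_\zeta[\mathcal{Q}] \to \rr$ given by $g(i, F) = \varphi_n^F(x_i)$ for $i \in F_n$ (where $F = F_1 \cup \cdots \cup F_t$), and satisfying $\mathbb{E}^Q_F g(\cdot, F) > C'$ at each bad configuration.

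Next I stabilize this data. Theorem \ref{pr} combined with the tail-independence smoothing of Proposition \ref{tail} yields a subset $L' \in [L]$ and a tail-independent bounded function (still denoted $g$) with $\mathbb{E}^Q_F g(\cdot, F) \geqslant C' - \delta$ for every $F \in MAX(\mathcal{F}_\zeta[\mathcal{Q}]) \upp L'$, where $\delta > 0$ is chosen so that the cumulative errors below remain under $C' - C$. Because $\zeta$ is a limit ordinal, a diagonalization via Proposition \ref{silly} further restricts to some $L'' \in [L']$ on which the bound is realized at a fixed finite level $\zeta_k < \zeta$, that is, $\mathbb{E}^Q_F g(\cdot, F) \geqslant C' - \delta$ for every $F \in MAX(\mathcal{F}_{\zeta_k}[\mathcal{Q}]) \upp L''$. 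Shrinking $L''$ using the almost monotone property (Proposition \ref{reg}(vi)) arranges $\mathcal{F}_{\zeta_k + 1}[\mathcal{P}] \upp L'' \subset \mathcal{F}_\zeta[\mathcal{P}]$, possible since $\zeta_k + 1 < \zeta$.

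The core step is to apply Theorem \ref{god2} with the roles of the two probability blocks interchanged. Set the ``input'' block of Theorem \ref{god2} to be $(\mathfrak{Q}, \mathcal{Q})$, with its rank parameter $\upsilon$ playing the role of the theorem's ``$\xi$'', and set the ``output'' block to be $(\mathfrak{P}, \mathcal{P})$, with rank $\xi$ playing the role of the theorem's ``$\upsilon$''. The hypothesis ``$\upsilon \leqslant \xi$'' required by Theorem \ref{god2} translates to our hypothesis $\xi \leqslant \upsilon$. Applied to $g$ at level $\zeta_k$, the theorem produces $F =_{\mathcal{P}} \bigcup_{n=1}^t F_n \in \mathcal{F}^{M}_{\zeta_k + 1}[\mathcal{P}]$, scalars $b_n$ with $\sum_n b_n \geqslant C' - \delta$, and maximal sets $H_n \in \mathcal{P}$, $H \in MAX(\mathcal{F}_{\zeta_k}[\mathcal{Q}])$, witnessing the relevant $g$-estimates as in items (i)--(v) of that theorem. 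Reversing the duality encoding, the $b_n$ reassemble into the action of a norming functional against $\mathbb{E}^P_{F|\mathcal{F}_{\zeta_k+1}[\mathcal{P}]}\varsigma$, yielding $s(\mathbb{E}^P_{F|\mathcal{F}_{\zeta_k+1}[\mathcal{P}]}\varsigma) \geqslant C' - O(\delta) > C$. Extending $F$ to any $N \in [M]$ having $F$ as its $\mathcal{F}_{\zeta_k+1}[\mathcal{P}]$-prefix and invoking bimonotonicity of $s$ together with $\mathcal{F}_{\zeta_k+1}[\mathcal{P}] \upp L'' \subset \mathcal{F}_\zeta[\mathcal{P}]$ gives $s(\mathbb{E}^P_{N|\mathcal{F}_\zeta[\mathcal{P}]}\varsigma) > C$, contradicting $(4)_P$.

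The hard part is the dualization step: the structural data of Theorem \ref{god2} (the relations between $F_n$, $H_n$, $H$, the $b_n$, and the $g$-values) must be repackaged into the action of a norming functional on the appropriate $P$-convex block of $\varsigma$. The strict inequality $C' > C$ is essential to absorb the cumulative errors from the tail-independence approximation of Proposition \ref{tail}, from the $\delta_n$-corrections in Theorem \ref{god2}, and from the passage from the function $g$ back to the norm $s$.
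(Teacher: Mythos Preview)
Your proposal follows the same underlying mechanism as the paper --- transfer badness from $Q$ to $P$ via Theorem \ref{god2} with the roles of the two blocks interchanged --- but there is a genuine gap in your reduction step.

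You claim that ``a diagonalization via Proposition \ref{silly} further restricts to some $L''\in[L']$ on which the bound is realized at a fixed finite level $\zeta_k<\zeta$, that is, $\mathbb{E}^Q_F g(\cdot,F)\geqslant C'-\delta$ for every $F\in MAX(\mathcal{F}_{\zeta_k}[\mathcal{Q}])\upp L''$.'' Proposition \ref{silly} does not give this. It only asserts the containment $\mathcal{F}^M_\zeta[\mathcal{P}]\subset\bigcup_k\mathcal{F}^{M_k}_{\zeta_k}[\mathcal{P}]$, which goes in the wrong direction for your purpose: members of $MAX(\mathcal{F}_{\zeta_k}[\mathcal{Q}])\upp L''$ are typically \emph{not} members of $MAX(\mathcal{F}_\zeta[\mathcal{Q}])\upp L''$, so the lower bound you established at level $\zeta$ does not descend. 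Worse, your function $g$ is defined only on $L\ominus\mathcal{F}_\zeta[\mathcal{Q}]$ (its second argument must be $\zeta$-maximal, since that is where the functional $\varphi^F$ was chosen); evaluating $g(\cdot,F)$ for $F\in MAX(\mathcal{F}_{\zeta_k}[\mathcal{Q}])\setminus MAX(\mathcal{F}_\zeta[\mathcal{Q}])$ is not even meaningful. The case $\zeta=\omega_1$ is still more problematic, since $N|\mathcal{F}_{\omega_1}[\mathcal{Q}]=N$ and there is no finite prefix structure to pass to.

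The paper avoids this obstacle by working through the $\Gamma$-function formalism and arguing level by level rather than at a single extracted level. From item $(4)$ for $P$ with constant $C$ one deduces (Proposition \ref{home3}(i) and Lemma \ref{work2}(v)) that $\Gamma^P(\zeta)=\Gamma^P(\zeta,0)\leqslant C$. Corollary \ref{shift1}(ii), applied with the roles of the two blocks interchanged (so that the $\upsilon$-homogeneous $Q$ plays the part of the higher block), yields $\Gamma^Q(\zeta)\leqslant\Gamma^P(\zeta)\leqslant C$; the proof of that corollary is exactly ``for each $\mu<\zeta$ apply Lemma \ref{beer} at level $\mu$, then take the supremum.'' Finally, one translates $\Gamma^Q(\zeta)<C'$ back to item $(4)$ for $Q$: for each $\mu<\zeta$ one has $\Gamma^Q(\mu+1,0)<C'$, so Proposition \ref{home3}(iii) gives the desired $M$ at level $\mu$, and one diagonalizes over $\mu<\zeta$ (for $\zeta=\omega_1$ this is Proposition \ref{home2}). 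Your outline can be repaired by following this route: abandon the attempt to fix a single $\zeta_k$, and instead run the Lemma \ref{beer} argument once for each $\mu<\zeta$.
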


We will prove Theorem \ref{main3} by completing the implications $$(1)\Rightarrow (3)\Rightarrow (4)\Rightarrow (2)\Rightarrow (1).$$  It is obvious that $(4)\Rightarrow (2)\Rightarrow (1)$, from which it follows that we need only to prove $(1)\Rightarrow (3)\Rightarrow (4)$ to complete the proof of Theorem \ref{main3}.

Let us  note that conditions $(1), (2), (3),$ and $(4)$ of Theorem \ref{main3}, in the special case that $\zeta=\omega_1$, are respectively equivalent to $(2),(3), (5),$ and $(6)$, of Theorem \ref{main2}.  We will complete Theorem \ref{main2} by completing the implications $$(1)\Rightarrow(2) \Rightarrow(5)\Rightarrow(6)\Rightarrow(3)\Rightarrow (1)$$ and $$(5)\Rightarrow (4)\Rightarrow (1).$$  In Theorem \ref{main2}, the implications $(6)\Rightarrow (3)\Rightarrow (1)$ and $(5)\Rightarrow (4)\Rightarrow (1)$ are clear. Moreover, since conditions $(2)$, $(5)$, and $(6)$ of Theorem \ref{main2} are the $\zeta=\omega_1$ cases, respectively,  of conditions $(1)$, $(3)$, and $(4)$ in Theorem \ref{main3},  the implications $(2)\Rightarrow (5)\Rightarrow (6)$ in Theorem \ref{main2} will be special cases of the implications $(1)\Rightarrow (3)\Rightarrow (4)$ in Theorem \ref{main3}.   Therefore, once we complete the proof of Theorem \ref{main3}, the only implication remaining to complete the proof of Theorem \ref{main2} will be $(1)\Rightarrow (2)$, which is Proposition \ref{j}.  Let us also discuss why Theorem \ref{main2} has six conditions while Theorem \ref{main3} only has four.  Theorem \ref{main2} contains the $\zeta=\omega_1$ case of Theorem \ref{main3}, as well as two additional conditions, $(1)$ and $(4)$, the analogues of which do not appear for the $\zeta<\omega_1$ case. The reason for this is because, since $M|\mathcal{F}_\zeta[\mathcal{P}]$ is a finite set for any $M\in[\nn]$, any nice $\mathcal{P}$, and any countable $\zeta$, it follows that for any $\varsigma\in \ell_\infty(X)$ and any such $M$, $\mathcal{P}$, $\zeta$, $\mathbb{E}_{M|\mathcal{F}_\zeta[\mathcal{P}]}\varsigma \in c_{00}(X)\subset S$.

We also note that Theorem \ref{main1} is a special case of Theorem \ref{main2}. Namely, Theorem \ref{main2} when $P$ is the Dirac probability block is precisely Theorem \ref{main1}.

The proof of the next proposition is an application of the Ramsey theorem similar to an unpublished result of Johnson \cite{O}.

\begin{proposition} If for every $\varsigma\in R$, there exists $M\in[\nn]$ such that for all $N\in[M]$, $\mathbb{E}_N\varsigma\in S$, then for any $\varsigma\in R$, there exist $M\in[\nn]$ and $C>0$ such that for all $N\in[M]$, $\mathbb{E}_N\varsigma\in CB_S$. 

\label{j}

\end{proposition}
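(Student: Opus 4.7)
My plan is to prove the proposition by contradiction, via the infinite Ramsey theorem (Theorem \ref{Ramsey}) applied to a nested sequence of closed ``uniformly bounded'' sets. Fix $\varsigma \in R$ and apply the hypothesis to obtain $M_0 \in [\nn]$ with $\mathbb{E}_N\varsigma \in S$ for all $N \in [M_0]$. For each $C > 0$ set $\mathcal{V}_C = \{N \in [\nn] : s(\mathbb{E}_N\varsigma) \leqslant C\}$. By the permanence property of $(\mathfrak{P}, \mathcal{P})$, for each fixed $m$ the map $N \mapsto s\bigl(\sum_{k=1}^m \mathbb{E}_N\varsigma(k) \otimes e_k\bigr)$ depends only on a finite initial segment of $N$ and is therefore locally constant in the Cantor topology; taking the sup over $m$, $N \mapsto s(\mathbb{E}_N\varsigma)$ is lower semicontinuous, so each $\mathcal{V}_C$ is closed.

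Suppose toward a contradiction that the conclusion fails, so that for every $M \in [M_0]$ and every $C > 0$, $[M] \not\subset \mathcal{V}_C$. I apply Theorem \ref{Ramsey} to $\mathcal{V}_n$ with starting set $M_{n-1}$ (take $M_{-1} = M_0$) at each stage $n \in \nn$; the first alternative of the dichotomy would immediately produce the conclusion with constants $(M_n, n)$ and is excluded, so the second alternative yields $M_n \in [M_{n-1}]$ with $[M_n] \cap \mathcal{V}_n = \varnothing$, i.e.\ $s(\mathbb{E}_N\varsigma) > n$ for all $N \in [M_n]$. Diagonalize by setting $M^*(k) = M_k(k)$. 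Then $M^* \in [M_0]$, and for every $n$, the tail $M^* \cap [M^*(n), \infty)$ lies in $[M_n]$, so $s\bigl(\mathbb{E}_{M^*\cap[M^*(n),\infty)}\varsigma\bigr) > n$ for every $n$.

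The remaining step, and the main obstacle, is converting this divergence of tail norms along $M^*$ into a genuine contradiction with the hypothesis. My plan is to apply the hypothesis to the subsequence $\varsigma^* = (x_{M^*(k)})_{k=1}^\infty$, which lies in $R$ by the subsequential property of $R$; this yields some $M' \in [\nn]$ with $\mathbb{E}_L\varsigma^* \in S$ for all $L \in [M']$. Using the permanence property, one relates $\mathbb{E}_L\varsigma^*$ to $\mathbb{E}_{M^*(L)}\varsigma$ and extracts from the Ramsey data specific $L_n \in [M']$ whose image under $M^*$ sits inside some $[M_{n}]$, forcing $s(\mathbb{E}_{L_n}\varsigma^*) > n$; a compactness-plus-lower-semicontinuity argument on $[M']$ then produces a single $L_\infty \in [M']$ with $s(\mathbb{E}_{L_\infty}\varsigma^*) = \infty$, contradicting $\mathbb{E}_{L_\infty}\varsigma^* \in S$. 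The delicate point is comparing the measures $\mathbb{P}_{L,k}$ on $\varsigma^*$ with $\mathbb{P}_{M^*(L),k}$ on $\varsigma$ in a general $\xi$-homogeneous probability block: in the Dirac case $\xi = 0$ the two measures coincide under the correspondence $M^*$ and the reduction is immediate, but in the general case one must invoke the smallness of the $\mathbb{P}_{N,k}$ afforded by Proposition \ref{diag} together with a careful alignment of the $\mathcal{P}$-partitions of $L$ and $M^*(L)$ to match coefficients asymptotically.
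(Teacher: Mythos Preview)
Your setup through the Ramsey dichotomy and the diagonal $M^*$ is fine and matches the paper. The gap is in your ``remaining step.'' You propose to pass to the subsequence $\varsigma^* = (x_{M^*(k)})_{k=1}^\infty$ and compare $\mathbb{E}_L\varsigma^*$ with $\mathbb{E}_{M^*(L)}\varsigma$. This comparison is genuinely hard for $\xi>0$: the $\mathcal{P}$-partition of $L$ and the $\mathcal{P}$-partition of $M^*(L)$ are unrelated in general, so the convex coefficients do not match, and Proposition~\ref{diag} alone does not align them. This is precisely the obstruction discussed in Remark~1.5 and is the reason the paper develops Theorem~\ref{god2}; invoking that machinery here would be circular overkill. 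Your final ``compactness-plus-lower-semicontinuity'' step is also not an argument: $[M']$ is not compact, and lower semicontinuity of $N\mapsto s(\mathbb{E}_N\varsigma^*)$ gives no limit point with value $\infty$.

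The paper avoids the subsequence detour entirely. The diagonal $N$ already lies in $[M_0]$, so $\mathbb{E}_N\varsigma\in S$ directly and hence $s(\mathbb{E}_N\varsigma)\leqslant C$ for some finite $C$. For $q>C$ one manufactures $L\in[M_q]$ by \emph{splicing with the permanence property}: take $I_q=\cup_{n=1}^{q-1}\supp(\mathbb{P}_{M_q,n})$ and set $L=I_q\cup\bigl(N\setminus\cup_{n=1}^{q-1}\supp(\mathbb{P}_{N,n})\bigr)$. The permanence property gives $\mathbb{P}_{L,n}=\mathbb{P}_{N,n}$ for all $n\geqslant q$, so $\mathbb{E}_L\varsigma$ and $\mathbb{E}_N\varsigma$ agree from position $q$ on. Bimonotonicity then bounds $s\bigl(\sum_{n=q}^p\mathbb{E}_L\varsigma(n)\otimes e_n\bigr)\leqslant C$, while the first $q-1$ terms contribute at most $q-1$ (since $\varsigma\in B_{\ell_\infty(X)}$); combined with $s(\mathbb{E}_L\varsigma)>2q$ (from $L\in[M_q]$, using constant $2q$ rather than $q$ in the definition of $\mathcal{V}_q$) this gives $C\geqslant 2q-(q-1)>q>C$. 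The point you are missing is that the permanence property lets you replace a finite initial block of one set by that of another without disturbing the tail measures, so no comparison between $\varsigma$ and a subsequence is ever needed.
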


\begin{proof} By homogeneity, it is sufficient to prove the result for each $\varsigma\in B_R\subset B_{\ell_\infty(X)}$. Suppose that for some $\varsigma\in B_R$, no such $M$ and $C$ exist. For each $p,q\in\nn$, define $$\mathcal{V}_{q,p}=\{M\in[\nn]: s(\sum_{n=1}^p \mathbb{E}_M \varsigma(n)\otimes e_n)\leqslant 2q\}.$$   Since for each $n\in\nn$,  $M\mapsto \mathbb{E}_M \varsigma(n)$ is locally constant, it follows that $\mathcal{V}_{p,q}$ is a closed set, as is $$\mathcal{V}_q:=\bigcap_{p=1}^\infty \mathcal{V}_{p,q}.$$  Moreover, note that by the properties of $s$ and its definition on the natural domain $S$, for $N\in[\nn]$, $\mathbb{E}_N \varsigma\in 2qB_S$ if and only if $N\in \mathcal{V}_q$.   By hypothesis, there exists some $M_0\in [\nn]$ such that for all $N\in[M_0]$, $\mathbb{E}_N\varsigma\in S$. Since $\mathcal{V}_q$ is closed, we can apply Theorem \ref{Ramsey} recursively  to select $M_1\supset M_1\supset \ldots$ such that for each $q\in\nn$, either $[M_q]\subset \mathcal{V}_q$ or $[M_q]\cap \mathcal{V}_q=\varnothing$.  We note that for each $q$, the second alternative must occur. Indeed, if $[M_q]\subset \mathcal{V}_q$, then we set $M=M_q$ and $C=2q$ to obtain $M$ and $C$ as in the conclusion of the proposition. This contradicts our hypothesis that for this $\varsigma\in B_R$, no such $M$ and $C$ exist.   Therefore we have $M_0\supset M_1\supset M_2\supset \ldots$ such that for each $q\in\nn$, $[M_q]\cap \mathcal{V}_q=\varnothing$.

For each $q\in \nn$, let $I_q=\cup_{n=1}^{q-1}\text{supp}(\mathbb{P}_{M_q, n})$. Select $N(1)<N(2)<\ldots$ such that for each $q\in\nn$, $I_q<N(q)\in M_q$. Then $N\in[M_0]$, from which it follows that $\mathbb{E}_N\varsigma\in S$. Therefore there exists some $C\in \nn$ such that $\mathbb{E}_N\varsigma\in CB_S$. Fix $C<q\in\nn$ and define $L=I_q\cup (N\setminus \cup_{n=1}^{q-1} \text{supp}(\mathbb{P}_{N,n}))\in [M_q]$.  By the permanence property, $\mathbb{P}_{L,n}=\mathbb{P}_{N,n}$ for all $n\geqslant q$. Since we have assumed $\varsigma\in B_R\subset B_{\ell_\infty(X)}$, it follows that $\|\mathbb{E}_L\varsigma(n)\|\leqslant 1$ for all $n\in\nn$.  Since $L\in[M_q]$, there exists $p\in\nn$, which by the properties of $s$ we may assume exceeds $q$, such that $s(\sum_{n=1}^p \mathbb{E}_L\varsigma(n)\otimes e_n)>2q$.    Then \begin{align*} C & \geqslant s(\sum_{n=1}^p \mathbb{E}_N \varsigma(n)\otimes e_n) \geqslant s(\sum_{n=q}^p \mathbb{E}_N\varsigma(n)\otimes e_n) = s(\sum_{n=q}^p \mathbb{E}_L\varsigma(n)\otimes e_n) \\ & \geqslant s(\sum_{n=1}^p \mathbb{E}_L \varsigma(n)\otimes e_n)- s(\sum_{n=1}^{q-1} \mathbb{E}_L\varsigma(n)\otimes e_n) \\ & \geqslant  s(\sum_{n=1}^p \mathbb{E}_L\varsigma(n)\otimes e_n) - \sum_{n=1}^{q-1} \|\mathbb{E}_L\varsigma(n)\| \\ & >2q-q =q>C.\end{align*} This contradiction finishes the proof.

\end{proof}

For our proof of the implications $(1)\Rightarrow (3)\Rightarrow (4)$ of Theorem \ref{main3}, we will define a transfinite way of measuring the failure of uniform upper estimates.  Our next few technical definitions lay the necessary groundwork for accomplishing this. For each $k<\omega$, define the bimonotone norm $s_k$ on $c_{00}(X)$ by $$s_k\Bigl(\sum_{n=1}^\infty x_n\otimes e_n\Bigr)=s\Bigl(\sum_{n=1}^\infty x_n\otimes e_{n+k}\Bigr)$$ and let $S_k$ be the natural domain of $s_k$.

The definitions in the following paragraphs are made, and depend upon, our fixed probability block $P$. These definitions can be made with respect to any probability block, and later we will briefly wish to consider these notions for another probability block $Q=(\mathfrak{Q}, \mathcal{Q})$. When necessary, we will include in our notation a reference to the underlying probability block.

For $k<\omega$, $0\leqslant C\leqslant \infty$, and a sequence $\varsigma\in R$, let us say a finite subset $G$ of $\nn$ is $(k, C, \varsigma)$-\emph{good} provided that for any $F=_\mathcal{P}\cup_{n=1}^t F_n\subset G$, $s_k(\mathbb{E}_F \varsigma)\leqslant C$.  In the case that $P$ is the Dirac probability block, this is equivalent to the condition that, if $\varsigma=(x_n)_{n=1}^\infty$, then for any $(m_1, \ldots, m_t)\subset G$, $s_k(\sum_{n=1}^t x_{m_n}\otimes e_n)\leqslant C$.    Obviously any subset of a $(k, C, \varsigma)$-good set is also $(k, C, \varsigma)$-good.    Let $\mathfrak{G}(k,C, \varsigma)$ be the set of all $(k,C, \varsigma)$-good sets. Since any subset of a $(k, C, \varsigma)$-good set is also $(k, C, \varsigma)$-good, it follows that $\mathfrak{G}(k, C, \varsigma)$ is hereditary. 

For $\varsigma\in R$, $M\in[\nn]$, $\zeta\leqslant \omega_1$, $k<\omega$, and $0\leqslant C\leqslant \infty$, let us say that $(\varsigma, M)$ is $(\zeta, k, C)$-\emph{stable} if $\mathcal{F}_\zeta^M[\mathcal{P}]\subset \mathfrak{G}(k,C ,\varsigma)$. In the case that $P$ is the Dirac probability block, this is equivalent to the condition that $\mathcal{F}_\zeta(M)\subset \mathfrak{G}(k,C, \varsigma)$.  Let us say that $(\varsigma, L)$ is $(\zeta, k, C)$-\emph{Ramsey} if for any $M\in[L]$, there exists $N\in [M]$ such that $(\varsigma, N)$ is $(\zeta, k, C)$-stable.   For $\zeta\leqslant \omega_1$, $k<\omega$, $\varsigma\in B_R$, and  $L\in[\nn]$, let $\Gamma(\zeta, k, \varsigma, L)$ denote the infimum of $C>0$ such that $(\varsigma, L)$ is $(\zeta, k, C)$-Ramsey.   We recall the convention that $\inf \varnothing=\infty$, so that $\Gamma(\zeta, k, \varsigma, L)=\infty$ if no such $C$ exists.  

\begin{rem}\upshape \begin{enumerate}[(i)]\item For $\varsigma\in R$,  $(\varsigma, \nn)$ is $(\zeta, k, C)$-Ramsey if and only if for any $L\in[\nn]$, there exists $M\in[L]$ such that $(\varsigma, M)$ is $(\zeta, k ,C)$-stable (equivalently, for any $L\in[\nn]$, there exists $M\in[L]$ such that $\mathcal{F}_\zeta^M[\mathcal{P}]\subset \mathfrak{G}(k, C, \varsigma)$).   \item For $\varsigma\in R$, $(\varsigma, M)$ is $(\omega_1, k, C )$-stable if and only if $[M]^{<\omega}\subset \mathfrak{G}(\zeta, k, C)$ if and only if $\mathbb{E}_N\varsigma\in CB_{S_k}$ for all $N\in [M]$.    Similarly, $(\varsigma, \nn)$ is $(\omega_1, k, C)$-Ramsey if and only if for any $L\in [\nn]$, there exists $M\in [L]$ such that $[M]^{<\omega}\subset \mathfrak{G}(k, C, \varsigma)$ if and only if for any $L\in[\nn]$, there exists $M\in [L]$ such that for all $N\in [M]$, $\mathbb{E}_N\varsigma\in CB_{S_k}$.  \item If $(\varsigma, L)$ is $(\zeta, k, C)$-stable, so is $(\varsigma, M)$ for any $M\in[L]$, since $\mathcal{F}^M_\zeta[\mathcal{P}]\subset \mathcal{F}^L_\zeta[\mathcal{P}]$ when $M\in [L]$.   Similarly, if $(\varsigma, L)$ is $(\zeta, k, C)$-Ramsey, so is $(\varsigma, M)$ for any $M\in [L]$.  \item Since $(\varsigma, L)$ being $(\zeta, k, C)$-Ramsey implies that $(\varsigma, M)$ is $(\zeta, k, C)$-Ramsey for any $M\in [L]$, it follows that $$\Gamma(\zeta, k, \varsigma, \nn)=\sup_{L\in [\nn]}\Gamma(\zeta, k, \varsigma, L).$$ \end{enumerate}

\label{good remark}
\end{rem}

Note that our function $\Gamma$ takes four arguments: An ordinal $\zeta\leqslant \omega_1$, a non-negative integers $k<\omega$, a sequence $\varsigma\in B_R$, and a set $L\in[\nn]$. For brevity, we adopt the convention that if $\Gamma$ appears with one or more of these four arguments missing, this means that we have taken the supremum over the appropriate set of the missing arguments. For example,  $$\Gamma(\zeta, k, \varsigma)=\sup_{L\in [\nn]} \Gamma(\zeta, k, \varsigma, L),$$ $$\Gamma(\zeta)=\sup_{k<\omega}\sup_{\varsigma\in B_R}\sup_{L\in[\nn]} \Gamma(\zeta, k, \varsigma, L),$$ $$\Gamma=\sup_{\zeta\leqslant \omega_1} \sup_{k<\omega} \sup_{\varsigma\in B_R} \sup_{L\in[\nn]} \Gamma(\zeta, k, \varsigma, L),$$ etc.  We write $\Gamma^P$ and $\Gamma^Q$ if we need to distinguish between these functions defined for different probability blocks $P,Q$.

  We next deduce several properties of these functions.

\begin{lemma} \begin{enumerate}[(i)]\item For a fixed $k<\omega$,  $\varsigma\in B_R$, and $M_0\in [\nn]$,  the function $\zeta\mapsto \Gamma(\zeta, k, \varsigma,M_0)$ is non-decreasing from $\omega_1+$ to $[0,\infty]$. This implies that for fixed $k<\omega$ and $\varsigma\in B_R$, the functions $\zeta\mapsto \Gamma(\zeta, k, \varsigma)$,  $\zeta\mapsto \Gamma(\zeta, k)$, and $\zeta\mapsto \Gamma(\zeta)$ are non-decreasing.  

\item For a fixed $k<\omega$,  $\varsigma\in B_R$, and $M_0\in [\nn]$,  the function $\zeta\mapsto \Gamma(\zeta, k, \varsigma, M_0)$ is continuous from $\omega_1+$ to $[0,\infty].$ Therefore for a fixed $k<\omega$ and $\varsigma\in B_R$, the functions $\zeta\mapsto \Gamma(\zeta, k, \varsigma)$, $\zeta\mapsto \Gamma(\zeta, k)$, and $\zeta\mapsto \Gamma$ are continuous.

\item $\Gamma(0)=0$. 

\item For each $k,p<\omega$,  $\varsigma\in B_R$,  and $\zeta<\omega_1$, $\Gamma(\zeta, k+p, \varsigma)\leqslant \Gamma(\zeta+p, k, \varsigma)\leqslant p+\Gamma(\zeta, k+p, \varsigma)$. Moreover, for any $\zeta<\omega_1$ and $k,p<\omega$, $\Gamma(\zeta, k+p)\leqslant \Gamma(\zeta+p, k) \leqslant p+\Gamma(\zeta, k+p) $ and $\Gamma(\zeta+p) \leqslant p+\Gamma(\zeta)$. 

\item For any countable $\zeta$ and $p<\omega$, $\Gamma(\zeta)\leqslant n+p+\Gamma(\zeta,p),$ where $\zeta=\beta+n$ is the unique expression of $\zeta$ as a non-successor  ordinal $\beta$ and $n<\omega$. In particular, if $\beta\leqslant \omega_1$ is a  limit ordinal, $\Gamma(\beta)=\Gamma(\beta, 0)$.  \item For any ordinal $\zeta\leqslant \omega_1$, $\Gamma(\zeta)<\infty$ if and only if $\Gamma(\zeta, 0)<\infty$.  \end{enumerate}

\label{work2}
\end{lemma}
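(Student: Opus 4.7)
The plan is to establish the six items in the order (iii), (iv), (i), (ii), (v), (vi), with each step building on the previous ones. Part (iii) is immediate: $\mathcal{F}_0 = \{\varnothing\}$ forces $\mathcal{F}_0^M[\mathcal{P}] = \{\varnothing\}$ and $\mathbb{E}_\varnothing \varsigma$ to be the zero sequence, so every pair $(\varsigma, M)$ is $(0, k, 0)$-stable and $\Gamma(0, k, \varsigma, M_0) = 0$ for every input, giving $\Gamma(0) = 0$.

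For (iv) the key move is to pass between $\mathcal{F}_{\zeta+p}^M[\mathcal{P}]$ and $\mathcal{F}_\zeta^{M'}[\mathcal{P}]$ by prepending or excising the first $p$ blocks of $M$. Let $F_1 < \cdots < F_p$ be the first $p$ maximal $\mathcal{P}$-blocks of $M$ and $M' := M \setminus (F_1 \cup \cdots \cup F_p)$. For the left inequality, any $H \in \mathcal{F}_\zeta^{M'}[\mathcal{P}]$ extends to $H' := F_1 \cup \cdots \cup F_p \cup H \in \mathcal{F}_{\zeta+p}^M[\mathcal{P}]$, because deleting the $p$ smallest elements of the index set of $H'$ leaves a spread of the index set of $H$, which lies in $\mathcal{F}_\zeta$; then bimonotonicity of $s$ (restricting to the window $[p+1,p+t]$) gives $s_{k+p}(\mathbb{E}_H\varsigma) \leq s_k(\mathbb{E}_{H'}\varsigma)$, so $(\zeta+p, k, C)$-stability on $M$ transfers to $(\zeta, k+p, C)$-stability on $M'$. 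For the right inequality, an arbitrary $H' \in \mathcal{F}_{\zeta+p}^M[\mathcal{P}]$ decomposes as $F_1 \cup \cdots \cup F_p \cup H$ with $H \in \mathcal{F}_\zeta^M[\mathcal{P}]$, and the triangle inequality yields
\[
s_k(\mathbb{E}_{H'}\varsigma) \leq s_k\Bigl(\sum_{n=1}^p \mathbb{E}_{F_n}\varsigma \otimes e_n\Bigr) + s_{k+p}(\mathbb{E}_H \varsigma) \leq p + s_{k+p}(\mathbb{E}_H \varsigma),
\]
the first summand being $\leq p$ because $\|\mathbb{E}_{F_n}\varsigma\| \leq 1$ and $s$ is bimonotone. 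Taking suprema gives the moreover statements for $\Gamma(\zeta, k)$ and $\Gamma(\zeta+p) \leq p + \Gamma(\zeta)$.

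For (i), apply Proposition \ref{reg}(i) to $\mathcal{F}_{\zeta_1}, \mathcal{F}_{\zeta_2}$ (of ranks $\zeta_1+1 \leq \zeta_2+1$) to obtain $H \in [\nn]$ with $\mathcal{F}_{\zeta_1}(H) \subset \mathcal{F}_{\zeta_2}$. Given $N_0$ witnessing $(\zeta_2, k, C)$-stability, set $N := N_0(H)$; for each $F \in \mathcal{F}_{\zeta_1}$, $N(F) = N_0(H(F))$ with $H(F) \in \mathcal{F}_{\zeta_2}$, so $N(F) \in \mathcal{F}_{\zeta_2}(N_0)$. Hence $\mathcal{F}_{\zeta_1}(N) \subset \mathcal{F}_{\zeta_2}(N_0)$ and $\mathcal{F}_{\zeta_1}^N[\mathcal{P}] \subset \mathcal{F}_{\zeta_2}^{N_0}[\mathcal{P}] \subset \mathfrak{G}(k, C, \varsigma)$, delivering $(\zeta_1, k, C)$-stability on $N$; the case $\zeta_2 = \omega_1$ is trivial since $\mathcal{F}_{\omega_1} = [\nn]^{<\omega}$. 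For (ii), by Proposition \ref{ord1}(i) it suffices to show $\sup_{\mu<\zeta}\Gamma(\mu,k,\varsigma,M_0) < C$ implies $\Gamma(\zeta,k,\varsigma,M_0) \leq C$. For a countable limit $\zeta$, fix $\xi_n \uparrow \zeta$ as in the definition of $\mathcal{F}_\zeta$, recursively choose $M_1 \supset M_2 \supset \cdots$ in $[L]$ with $(\varsigma,M_n)$ being $(\xi_n, k, C)$-stable, and take the diagonal $M(n) := M_n(n)$; Proposition \ref{silly} then gives $\mathcal{F}_\zeta^M[\mathcal{P}] \subset \bigcup_n \mathcal{F}_{\xi_n}^{M_n}[\mathcal{P}] \subset \mathfrak{G}$. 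The main technical obstacle is the case $\zeta = \omega_1$: while $\mathcal{F}_{\omega_1}^M[\mathcal{P}] = \bigcup_{n<\omega}\mathcal{F}_n^M[\mathcal{P}]$ and $\Gamma(n,k,\varsigma,M_0) < C$ for all $n$, the lack of countable cofinality means the naive diagonal may land an $F$ with $t$ $\mathcal{P}$-pieces inside some $M_j$ with $j < t$; this is handled by constructing $M$ block by block so that any $F \in \mathcal{F}_{\omega_1}^M[\mathcal{P}]$ beginning in the $j$th block of $M$ has at most $j$ pieces (using the hereditary nature of the $\mathcal{V}_n$'s together with Theorem \ref{Ramsey}).

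Finally, for (v) and (vi), first note that for any limit $\beta \leq \omega_1$, continuity (ii) at $\beta$ together with (iv) gives $\Gamma(\beta, m) = \sup_{\mu<\beta}\Gamma(\mu, m) \leq \sup_{\mu<\beta}\Gamma(\mu + m, 0) \leq \Gamma(\beta, 0)$ (using that $\mu + m < \beta$ because $\beta$ is a limit), which together with the trivial reverse inequality gives $\Gamma(\beta) = \Gamma(\beta, 0)$. Writing $\zeta = \beta + n$, (iv) yields $\Gamma(\zeta, k) \leq n + \Gamma(\beta, k+n) \leq n + \Gamma(\beta, 0)$ for every $k < \omega$, hence $\Gamma(\zeta) \leq n + \Gamma(\beta, 0)$. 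By (i) we have $\Gamma(\beta, 0) \leq \Gamma(\zeta, 0)$, and one more application of (iv) gives $\Gamma(\zeta, 0) \leq \Gamma(\zeta+p, 0) \leq p + \Gamma(\zeta, p)$, so chaining these delivers $\Gamma(\zeta) \leq n + p + \Gamma(\zeta, p)$. Part (vi) is immediate from the $p = 0$ case of (v) combined with $\Gamma(\zeta, 0) \leq \Gamma(\zeta)$. The continuity at $\omega_1$ from (ii) is the single substantive obstacle; everything else is an algebraic manipulation of inequalities already in place.
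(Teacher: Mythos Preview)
Your treatment of (iii), (i), (v), and (vi) is correct and in places cleaner than the paper's (your derivation of $\Gamma(\beta)=\Gamma(\beta,0)$ via $\Gamma(\mu,m)\leqslant\Gamma(\mu+m,0)\leqslant\Gamma(\beta,0)$ is more direct). The countable-limit case of (ii) is also fine and matches the paper's diagonalization through Proposition~\ref{silly}.

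There is a minor slip in (iv): for the right-hand inequality you write that an arbitrary $H'\in\mathcal{F}_{\zeta+p}^M[\mathcal{P}]$ decomposes as $F_1\cup\cdots\cup F_p\cup H$ where $F_1,\ldots,F_p$ are \emph{the first $p$ blocks of $M$}. That is false --- $H'$ need not begin with those particular blocks. The paper (and the correct argument) takes $F_1,\ldots,F_p$ to be the first $p$ $\mathcal{P}$-pieces of $H'$ itself; then removing the first $p$ entries from the index set in $\mathcal{F}_{\zeta+p}$ lands in $\mathcal{F}_\zeta$ and the triangle-inequality estimate goes through. This is easily fixed.

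The genuine gap is the $\zeta=\omega_1$ case of (ii). Your proposed fix --- ``construct $M$ block by block so that any $F\in\mathcal{F}_{\omega_1}^M[\mathcal{P}]$ beginning in the $j$th block of $M$ has at most $j$ pieces'' --- cannot work as stated: for any infinite $M$ there are subsets beginning in the first block with arbitrarily many $\mathcal{P}$-pieces. More generally, no countable diagonalization over the sets $M_n$ (with $\mathcal{F}_n^{M_n}[\mathcal{P}]\subset\mathfrak{G}$) can succeed, because a set $F\subset M$ with $t$ pieces may have its early pieces lying only in $M_j$ for $j<t$, and nothing forces such $F$ into $\mathfrak{G}$; the heredity of the $\mathcal{V}_n$'s and Theorem~\ref{Ramsey} do not circumvent this. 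The paper's argument is different in kind: from $\sup_{\mu<\omega_1}\Gamma(\mu,k,\varsigma,M_0)<C$ one gets, for every countable $\zeta$, a copy of $\mathcal{F}_\zeta^{M}[\mathcal{P}]$ inside $\mathfrak{G}(k,C,\varsigma)\upharpoonright L$, forcing $\text{rank}(\mathfrak{G}(k,C,\varsigma)\upharpoonright L)>\omega^\xi\zeta$ for all $\zeta<\omega_1$. By Proposition~\ref{tree} this hereditary tree is then ill-founded, yielding an infinite branch $M\in[L]$ with $[M]^{<\omega}\subset\mathfrak{G}(k,C,\varsigma)$. You should replace your block-by-block sketch with this rank/ill-foundedness argument.
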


\begin{proof} In all parts of the proof, an inequality is trivial if the majorizing quantity is infinite. Therefore we prove the inequalities only in the cases that the majorizing quantity is finite. 

$(i)$ Fix $k<\omega$, $\mu\leqslant \zeta\leqslant \omega_1$, $\varsigma\in B_R$, $M_0\in [\nn]$, and assume $\Gamma(\zeta, k, \varsigma,M_0)<C$. We must show that for any $L\in[M_0]$, there exists $M\in [L]$ such that $\mathcal{F}^M_\mu[\mathcal{P}]\subset \mathfrak{G}(k, C, \varsigma)$.  But we know that for any such $L$, there exists $N\in [L]$ such that $\mathcal{F}^N_\zeta[\mathcal{P}]\subset \mathfrak{G}(k, C, \varsigma)$. By the almost monotone property of the fine Schreier families, there exists $l\in \nn$ such that $l<H\in \mathcal{F}_\mu$ implies $H\in \mathcal{F}_\zeta$.  Let  $M(n)=N(l+n)$ for all $n\in\nn$. Then $M\in [L]$ and $$\mathcal{F}^M_\mu[\mathcal{P}]\subset \mathcal{F}_\zeta^N[\mathcal{P}]\subset \mathfrak{G}(k, C, \varsigma).$$ The second inclusion follows from the properties of $N$.   To see the first inclusion, fix $$F=\bigcup_{n=1}^t F_n$$ with $\varnothing \neq F_1<\ldots <F_t$, $F_n\in \mathcal{P}\upp M\subset \mathcal{P}\upp N$, and $$(\min F_n)_{n=1}^t = (M(n))_{n\in G}$$ for some $G\in \mathcal{F}_\mu$. Then with $H=(l+n: n\in G)$, $l<H\in \mathcal{F}_\mu$, so $H\in \mathcal{F}_\zeta$.  Moreover, $(\min F_n)_{n=1}^t = (M(n))_{n\in G}=(N(l+n))_{n\in G}=(N(n))_{n\in H}\in \mathcal{F}_\zeta(N)$. From this it follows that $F=\cup_{n=1}^t F_n\in \mathcal{F}_\zeta^N[\mathcal{P}]$.    Thus we have shown that for any $L\in[M_0]$, there exists $M\in[L]$ such that $\mathcal{F}^M_\mu[\mathcal{P}]\subset \mathfrak{G}(k, C, \varsigma)$, from which it follows that $\Gamma(\mu, k ,\varsigma, M_0)\leqslant C$.   Since this holds for any $C>\Gamma(\zeta, k, \varsigma,M_0)$, $\Gamma(\mu, k, \varsigma,M_0)\leqslant \Gamma(\zeta, k, \varsigma,M_0)$.  The remaining parts of $(i)$ follow from taking the appropriate suprema.

$(ii)$ Fix $k<\omega$, $\varsigma\in B_R$, and $M_0\in [\nn]$. By Proposition \ref{ord1}, we must show that for any limit ordinal $\zeta\leqslant \omega_1$, if $\sup_{\mu<\zeta}\Gamma(\mu, k, \varsigma,M_0)<C$, then $\Gamma(\zeta, k, \varsigma,M_0)\leqslant C$.  We do this in two cases.   First suppose that $\zeta<\omega_1$ and let $\zeta_n\uparrow \zeta$ be such that $$\mathcal{F}_\zeta=\{E: \exists n\leqslant E\in \mathcal{F}_{\zeta_n}\}.$$   Assume $\sup_{\mu<\zeta}\Gamma(\mu, k, \varsigma,M_0)<C$ and fix $L\in[M_0]$.   As noted in Remark \ref{good remark}, we may find $L\supset M_1\supset M_2\supset \ldots$ such that for all $n\in\nn$, $\mathcal{F}_{\zeta_n}(M_n)\subset \mathfrak{G}(k, C, \varsigma)$.  Let $M(n)=M_n(n)$ and note that $$\mathcal{F}_\zeta^M[\mathcal{P}]\subset \bigcup_{n=1}^\infty \mathcal{F}_{\zeta_n}^{M_n}[\mathcal{P}]\subset \mathfrak{G}(k, C, \varsigma)$$ by Proposition \ref{silly}. Since $L\in [M_0]$ was arbitrary, $\Gamma(\zeta, k, \varsigma,M_0)\leqslant C$.   This completes the $\zeta<\omega_1$ case. 

Now suppose that $\zeta=\omega_1$ and assume that $\sup_{\zeta<\omega_1}\Gamma(\zeta, k, \varsigma,M_0)<C$.  Fix $L\in [M_0]$.    For any $\zeta<\omega_1$, there exists $M\in [L]$ such that $\mathcal{F}_\zeta^M[\mathcal{P}]\subset \mathfrak{G}(k, C, \varsigma)$, from which it follows that $$\text{rank}(\mathfrak{G}(k, C, \varsigma)\upp L)>\text{rank}(\mathcal{F}_\zeta^M[\mathcal{P}])=\omega^\xi \zeta+1.$$  Since this holds for any $\zeta<\omega_1$, $\text{rank}(\mathfrak{G}(k, C, \varsigma)\upp L)=\infty$. By Proposition \ref{tree}, $\mathfrak{G}(k, C, \varsigma)\upp L$ is ill-founded. From this it follows that there exists $M\in[\nn]$ such that for all $t\in \nn$, $(M(n))_{n=1}^t \in \mathfrak{G}(k, C, \varsigma)\upp L$.   This obviously implies that $M\in [L]$. Moreover, since $\mathfrak{G}(k,C, \varsigma)\upp L$ is hereditary and contains all initial segments of $M$, it contains all subsets of $M$.  This means that $\mathcal{F}_{\omega_1}^M[\mathcal{P}]= [M]^{<\omega}\subset \mathfrak{G}(k, C, \varsigma)$. Since $L\in[M_0]$ was arbitrary, this yields that $\Gamma(\omega_1, k, \varsigma,M_0)\leqslant C$.   The remainder of $(ii)$ follows from taking the appropriate suprema.

$(iii)$  This follows from the fact that $\mathcal{F}_0=\{\varnothing\}$ and $\varnothing$ is vacuously $\mathfrak{G}(k, 0, \varsigma)$-good for all $k<\omega$ and $\varsigma\in R$.

$(iv)$ Fix $\varsigma\in B_R$ and  suppose that $\Gamma(\zeta+p, k, \varsigma)<C$. Fix $L\in[\nn]$.  We may fix $N\in [L]$ such that $\mathcal{F}^N_{\zeta+p}[\mathcal{P}]\subset \mathfrak{G}(k, C, \varsigma)$.   Let $I= \cup_{n=1}^p \text{supp}(\mathbb{P}_{N,n})$ and $M=N\setminus I$. We claim that $\mathcal{F}^M_\zeta[\mathcal{P}]\subset \mathfrak{G}(k+p, C, \varsigma)$.   Indeed, fix $G\in \mathcal{F}^M_\zeta[\mathcal{P}]$ and $F=_\mathcal{P}\cup_{n=1}^t F_n\subset G$.  Then $$I\cup F=_\mathcal{P} \Bigl(\bigcup_{n=1}^p \text{supp}(\mathbb{P}_{N,n})\Bigr)\cup \Bigl(\bigcup_{n=1}^t F_n \Bigr)\in \mathcal{F}^N_{\zeta+p}[\mathcal{P}] \subset \mathfrak{G}(k, C, \varsigma)$$ and \begin{align*} s_{k+p}\Bigl(\sum_{n=1}^t \mathbb{E}_F \varsigma(n)\otimes e_n\Bigr) &  = s_{k+p}\Bigl(\sum_{n=1}^t \mathbb{E}_{I\cup F} \varsigma(n+p)\otimes e_n\Bigr) =s_k\Bigl(\sum_{n=1}^t \mathbb{E}_{I\cup F} \varsigma(n+p)\otimes e_{n+p}\Bigr) \\ & = s_k\Bigl(\sum_{n=1+p}^{t+p} \mathbb{E}_{I\cup F}\varsigma(n)\otimes e_n\Bigr) \leqslant s_k\Bigl(\sum_{n=1}^{t+p} \mathbb{E}_{I\cup F}\varsigma(n)\otimes e_n\Bigr) \leqslant C.  \end{align*}    Therefore for any $L\in[\nn]$, there exists $M\in [L]$ such that $\mathcal{F}^M_\zeta[\mathcal{P}]\subset \mathfrak{G}(k+p, C, \varsigma)$.  This yields that $\Gamma(\zeta, k+p, \varsigma) \leqslant C$, and since $C>\Gamma(\zeta+p, k, \varsigma)$ was arbitrary, $\Gamma(\zeta, k+p, \varsigma)\leqslant \Gamma(\zeta+p, k, \varsigma)$.    

Now suppose that $\Gamma(\zeta, k+p, \varsigma)<C$ and fix $L\in[\nn]$. We may fix $M\in [L]$ such that $\mathcal{F}^M_\zeta[\mathcal{P}]\subset \mathfrak{G}(k+p, C, \varsigma)$.  We claim that $\mathcal{F}^M_{\zeta+p}[\mathcal{P}]\subset \mathfrak{G}(k,p+ C, \varsigma)$, which will finish the proof of the first inequality of $(iv)$, since $L\in[\nn]$ and $C>\Gamma(\zeta, k+p, \varsigma)$ are arbitrary.    Fix $G\in \mathcal{F}^M_{\zeta+p}[\mathcal{P}]$ and $F=_\mathcal{P}\cup_{n=1}^t F_n \subset G $.    If $t\leqslant p$, then $$s_k\Bigl(\sum_{n=1}^t \mathbb{E}_F \varsigma(n)\otimes e_n\Bigr)\leqslant \sum_{n=1}^t \|\mathbb{E}_F\varsigma(n)\|\leqslant t\leqslant p\leqslant p+C.$$   If $t>p$, let $H_n=F_{n+p}$ for each $1\leqslant n\leqslant t-p$.   Let $H=\cup_{n=1}^{t-p} H_n\in \mathcal{F}^M_\zeta[\mathcal{P}]$ and note that $H=_\mathcal{P}\cup_{n=1}^{t-p}H_n$ and  \begin{align*} s_k\Bigl(\sum_{n=1}^t \mathbb{E}_F \varsigma(n)\otimes e_n\Bigr) & =s_k\Bigl(\sum_{n=1}^p \mathbb{E}_F \varsigma(n)\otimes e_n+ \sum_{n=p+1}^t \mathbb{E}_F \varsigma(n)\otimes e_n\Bigr) \\ & \leqslant \sum_{n=1}^p \|\mathbb{E}_F \varsigma(n)\| + s_k\Bigl(\sum_{n=p+1}^t \mathbb{E}_F \varsigma(n)\otimes e_n\Bigr)\\ &  \leqslant p+ s_k\Bigl(\sum_{n=p+1}^t \mathbb{E}_F \varsigma(n)\otimes e_n\Bigr) \\ & =p+s_k\Bigl(\sum_{n=1}^{t-p} \mathbb{E}_H \varsigma(n)\otimes e_{n+p}\Bigr) \\ & = p+s_{k+p}\Bigl(\sum_{n=1}^{t-p} \mathbb{E}_H \varsigma(n)\otimes e_n\Bigr)\leqslant p+C. \end{align*} 

The remainder of $(iv)$ follows from taking the appropriate suprema.

$(v)$  Let $\zeta=\beta+n$ be the unique expression of $\zeta$ as the sum of a non-successor ordinal $\beta$ and $n<\omega$.    If $\beta=0$, then $\zeta=n$.  In this case, by $(iii)$ and $(iv)$, $\Gamma(\zeta)\leqslant n+\Gamma(0)=n<\infty$.  Now assume $\beta>0$, which means $\zeta\geqslant \omega$.    For $q<\omega$, if $q\leqslant p$, we write $p=k+q$ and use $(iv)$ and the fact that $\Gamma(\cdot, q)$ is non-decreasing to find that for any $\mu<\beta$, $$\Gamma(\mu, q)\leqslant \Gamma(\mu+k, q) \leqslant k+\Gamma(\mu, p)\leqslant p+\Gamma(\mu,p).$$ If $q\geqslant p$, then we write $q=k+p$ and use $(iv)$ to find that $$\Gamma(\mu, q)=\Gamma(\mu, k+p) \leqslant \Gamma(\mu+k, p)\leqslant p+\Gamma(\mu+k, p).$$ In either case, we obtain the inequality $$\Gamma(\mu, q)\leqslant p+\Gamma(\mu+(q-p)^+, p),$$ where $m^+=\max\{0, m\}$.

By continuity of $\Gamma(\cdot)$,  \begin{align*} \Gamma(\beta) & = \sup_{\mu<\beta}\Gamma(\mu) = \sup_{\mu<\beta}\sup_{q<\omega} \Gamma(\mu, q) \leqslant p+\Gamma(\mu+(q-p)^+, p) \leqslant p+\Gamma(\beta,p).  \end{align*}  Combining this with  $(iv)$, we find that  $$\Gamma(\zeta)\leqslant n+\Gamma(\beta)\leqslant p+n+\Gamma(\beta,p).$$

For $\beta<\omega_1$, the second statement of $(v)$ follows  from the first with $p=n=0$.  From continuity of the functions $\zeta\mapsto \Gamma(\zeta)$ and $\zeta\mapsto \Gamma( \zeta,0)$, $$\Gamma(\omega_1)=\sup\{\Gamma(\beta): \beta<\omega_1, \beta\text{\ a limit ordinal}\}= \sup\{\Gamma(\beta,0): \beta<\omega_1, \beta\text{\ a limit ordinal}\}=\Gamma(\omega_1, 0).$$

$(vi)$ Fix $\zeta\leqslant  \omega_1$. If $\zeta$ is zero or a limit ordinal, $\Gamma(\zeta)=\Gamma(\zeta, 0)$ by $(iii)$ if $\zeta=0$ and by $(v)$ if $\zeta$ is a limit ordinal, and the result is trivial in this case. If $\zeta$ is a successor ordinal,  then $\zeta<\omega_1$ and there exist a non-successor ordinal $\beta<\omega_1$ and  $p<\omega$ such that $\zeta=\beta+p$. By $(iv)$ and $(v)$, \begin{align*} \Gamma(\zeta) & = \sup_{k<\omega}\Gamma(\zeta, k) = \sup_{k<\omega}\Gamma(\beta+p, k) \leqslant p+\sup_{k<\omega}\Gamma(\beta, p+k) \\ & \leqslant p+\Gamma(\beta)=p+\Gamma(\beta, 0) \leqslant p+\Gamma(\zeta,0). \end{align*} 

\end{proof}

\begin{rem}\upshape Since $\Gamma(\zeta)$ is a non-decreasing, continuous function of $\zeta\in \omega_1+$, $\Gamma=\sup_{\zeta\leqslant \omega_1}\Gamma(\zeta)=\sup_{\zeta<\omega_1} \Gamma(\zeta)=\Gamma(\omega_1)$.   Moreover, we have the following, which is a direct application of Proposition \ref{ord1}.

\label{wbmw}
\end{rem}

\begin{corollary} Either $\Gamma<\infty$ or there exists a countable limit ordinal $\gamma$ such that $\Gamma(\gamma)=\infty$ and for each $k<\omega$, $\{\Gamma(\zeta, k): \zeta<\gamma\}$ is  an unbounded subset of $[0,\infty)$.

\label{pen2}
\end{corollary}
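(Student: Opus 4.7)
The plan is to recognize Corollary \ref{pen2} as a direct instance of Proposition \ref{ord1}(iii) applied to the function $\Gamma:\omega_1+\times \omega\to [0,\infty]$ previously defined (together with its one-variable projection $\Gamma(\zeta)=\sup_{k<\omega}\Gamma(\zeta,k)$). Essentially all the work has already been done in Lemma \ref{work2}; the task is just to match the hypotheses and extract the conclusion.

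First I would check that all the assumptions of Proposition \ref{ord1}(iii) are in force. Monotonicity of $\zeta\mapsto \Gamma(\zeta,k)$ is given by Lemma \ref{work2}(i); continuity of $\zeta\mapsto \Gamma(\zeta,k)$ and of $\zeta\mapsto \Gamma(\zeta)$ is given by Lemma \ref{work2}(ii); the base case $\Gamma(0,k)=0$ for all $k<\omega$ is a special case of Lemma \ref{work2}(iii) (since $\Gamma(0)=\sup_{k<\omega}\Gamma(0,k)=0$ forces every $\Gamma(0,k)=0$); and the two-sided shift inequality $\Gamma(\zeta,k+p)\leqslant \Gamma(\zeta+p,k)\leqslant p+\Gamma(\zeta,k+p)$ is exactly Lemma \ref{work2}(iv).

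With the hypotheses verified, Proposition \ref{ord1}(iii) yields the dichotomy: either $\Gamma(\omega_1)<\infty$, or there is a minimum countable ordinal $\gamma$ with $\Gamma(\gamma)=\infty$, in which case $\gamma$ must be a countable limit ordinal and $\{\Gamma(\zeta,p):\zeta<\gamma\}$ is an unbounded subset of $[0,\infty)$ for every $p<\omega$. To translate the first alternative into the statement of Corollary \ref{pen2}, I would invoke Remark \ref{wbmw}: since $\zeta\mapsto \Gamma(\zeta)$ is non-decreasing and continuous on $\omega_1+$, the supremum $\Gamma=\sup_{\zeta\leqslant \omega_1}\Gamma(\zeta)$ is attained at $\omega_1$, so $\Gamma(\omega_1)<\infty$ is the same as $\Gamma<\infty$.

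There is no real obstacle here; the proof is simply an assembly of pieces that have already been developed. The only thing worth being careful about is making sure the continuity of $\Gamma(\cdot)$ (as opposed to $\Gamma(\cdot,k)$) is used at exactly the point needed by the proof of Proposition \ref{ord1}(iii), which guarantees the minimum $\gamma$ with $\Gamma(\gamma)=\infty$ is countable and forbids it from being a successor. Both facts are supplied by Lemma \ref{work2}, so the proof is a one-paragraph verification followed by a direct citation.
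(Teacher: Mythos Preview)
Your proposal is correct and follows exactly the approach the paper itself takes: the paper states explicitly in Remark \ref{wbmw} that Corollary \ref{pen2} ``is a direct application of Proposition \ref{ord1},'' and you have carefully spelled out how the hypotheses of Proposition \ref{ord1}(iii) are supplied by Lemma \ref{work2}(i)--(iv) and how the identification $\Gamma=\Gamma(\omega_1)$ from Remark \ref{wbmw} translates the conclusion.
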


To give further information about the minimum ordinal $\gamma$ such that $\Gamma(\gamma)=\infty$, we have the following.

\begin{lemma}\begin{enumerate}[(i)]\item Let $\mathcal{F}, \mathcal{G}$ be regular families with the same non-zero rank. Then for $k<\omega$, $L\in[\nn]$, $0<C<\infty$, and $\varsigma\in B_R$, there exists $M\in[L]$ such that $\mathcal{F}^M[\mathcal{P}] \subset \mathfrak{G}(k,C, \varsigma)$ if and only if there exists $M\in[L]$ such that $\mathcal{G}^M[\mathcal{P}]\subset \mathfrak{G}(k,C, \varsigma)$.  \item  For $\zeta, \mu<\omega_1$, $\Gamma(\zeta+\mu)\leqslant \Gamma(\zeta)+\Gamma(\mu)$.  \item If $\Gamma=\infty$, then there exists $0<\gamma<\omega_1$ such that $\min \{\xi: \Gamma(\xi)=\infty\}=\omega^\gamma$. \end{enumerate}

\label{work3}

\end{lemma}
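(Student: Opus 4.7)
My plan is to prove (i), (ii), and (iii) in order; part (ii) will carry the bulk of the combinatorial work, and (iii) will drop out of (ii) via Cantor normal form.

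For (i), by symmetry I may assume some $M\in[L]$ satisfies $\mathcal{F}^M[\mathcal{P}]\subset\mathfrak{G}(k,C,\varsigma)$ and produce $M'\in[L]$ giving the analogous inclusion for $\mathcal{G}$. Since $\mathcal{F},\mathcal{G}$ share a non-zero rank, Proposition \ref{reg}(i) supplies $N\in[\nn]$ with $\mathcal{G}(N)\subset\mathcal{F}$, and I will set $M':=M(N)\in[L]$. The identity $\mathcal{G}(M')=\mathcal{G}(M(N))=M(\mathcal{G}(N))\subset M(\mathcal{F})=\mathcal{F}(M)$, together with $\mathcal{P}\upp M'\subset\mathcal{P}\upp M$, shows each $E\in\mathcal{G}^{M'}[\mathcal{P}]$ already lies in $\mathcal{F}^M[\mathcal{P}]\subset\mathfrak{G}(k,C,\varsigma)$.

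For (ii), Proposition \ref{reg}(iv) lets me replace $\mathcal{F}_{\zeta+\mu}$ by the concatenation family $\mathcal{H}:=\{F\cup G:F<G,\ F\in\mathcal{F}_\mu,\ G\in\mathcal{F}_\zeta\}$, which is regular of rank $\zeta+\mu+1=\text{rank}(\mathcal{F}_{\zeta+\mu})$, so by (i) it is enough, given $C_1>\Gamma(\mu)$, $C_2>\Gamma(\zeta)$, $k<\omega$, $\varsigma\in B_R$, $L\in[\nn]$, to produce $M\in[L]$ with $\mathcal{H}^M[\mathcal{P}]\subset\mathfrak{G}(k,C_1+C_2,\varsigma)$. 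I will build $M$ by the standard diagonalization: pick $M_0\in[L]$ with $\mathcal{F}_\mu^{M_0}[\mathcal{P}]\subset\mathfrak{G}(k,C_1,\varsigma)$ and inductively $M_p\in[M_{p-1}]$ with $\mathcal{F}_\zeta^{M_p}[\mathcal{P}]\subset\mathfrak{G}(k+p,C_2,\varsigma)$, then take $M(n):=M_n(n)$. For $H\in\mathcal{H}^M[\mathcal{P}]$, writing $(\min E_m)_{m=1}^t=M(A\cup B)$ with $A<B$ and $j:=|A|$, the spreading of $\mathcal{F}_\mu$ shows the $\mu$-piece $H_1$ satisfies $H_1\in\mathcal{F}_\mu^{M_0}[\mathcal{P}]\subset\mathfrak{G}(k,C_1,\varsigma)$, and since $\min B\geqslant j+1$ we have $H_2\subset\{M(n):n\geqslant j+1\}\subset M_j$, whence the spreading of $\mathcal{F}_\zeta$ places $H_2\in\mathcal{F}_\zeta^{M_j}[\mathcal{P}]\subset\mathfrak{G}(k+j,C_2,\varsigma)$. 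For $F=_\mathcal{P}\cup_{n=1}^sF_n\subset H$, cutting at the index $a$ counting the $F_n$ contained in $H_1$ and invoking bimonotonicity together with the permanence-based identity $\mathbb{E}_F\varsigma(a+n)=\mathbb{E}_{F''}\varsigma(n)$, I obtain
\[
s_k(\mathbb{E}_F\varsigma)\leqslant s_k(\mathbb{E}_{F'}\varsigma)+s_{k+a}(\mathbb{E}_{F''}\varsigma)\leqslant C_1+C_2,
\]
and taking infima in $C_1,C_2$ yields $\Gamma(\zeta+\mu)\leqslant\Gamma(\zeta)+\Gamma(\mu)$. The hard part will be that a maximal $\mathcal{P}$-block of $F$ may straddle the boundary between $H_1$ and $H_2$, so that neither $F'$ nor $F''$ sits cleanly inside $H_1$ or $H_2$; I plan to address this via a greedy-packing argument establishing $a\leqslant j$, which ensures that any tail containing $F''$ still lies inside $M_j$ (so the shift-$(k{+}j)$ control I built in subsumes the shift-$(k{+}a)$ control actually needed), and by absorbing a possible straddling block into whichever piece preserves heredity of the relevant $\mathfrak{G}(\cdot,C_i,\varsigma)$.

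For (iii), set $\gamma_0:=\min\{\xi\leqslant\omega_1:\Gamma(\xi)=\infty\}$; by Corollary \ref{pen2} this minimum exists and $\gamma_0$ is a countable limit ordinal. If $\gamma_0$ were not of the form $\omega^\gamma$, its Cantor normal form would permit a decomposition $\gamma_0=\zeta+\mu$ with $0<\zeta,\mu<\gamma_0$ (either by peeling off the last Cantor summand, or by splitting a leading multiplicity $n_1\geqslant 2$). By minimality of $\gamma_0$ we would have $\Gamma(\zeta),\Gamma(\mu)<\infty$, and (ii) would then force $\Gamma(\gamma_0)\leqslant\Gamma(\zeta)+\Gamma(\mu)<\infty$, contradicting the definition of $\gamma_0$. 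Hence $\gamma_0=\omega^\gamma$ for some $0<\gamma<\omega_1$.
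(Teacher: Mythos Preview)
Your proposal is correct and follows essentially the same approach as the paper for all three parts: (i) via Proposition~\ref{reg}(i), (ii) via diagonalization over nested witnesses to $\Gamma(\mu,k)$ and $\Gamma(\zeta,k+p)$ together with part (i) applied to the concatenation family $\mathcal{H}$, and (iii) via additive closure of $\{\mu:\Gamma(\mu)<\infty\}$ and Cantor normal form. The straddling concern you raise in (ii) is a self-imposed complication that the paper sidesteps: rather than first splitting the ambient $H\in\mathcal{H}^M[\mathcal{P}]$ and then an arbitrary $F=_{\mathcal P}\bigcup_{n=1}^s F_n\subset H$, the paper works directly with $F=_{\mathcal P}\bigcup_{n=1}^t E_n$ and cuts at an $E_n$-boundary (choosing $q$ so that $(\min E_n)_{n\leqslant q}$ corresponds to the $\mathcal{F}_\mu$-piece and $(\min E_n)_{n>q}$ to the $\mathcal{F}_\zeta$-piece), so no $\mathcal{P}$-block ever straddles and your greedy-packing and absorption maneuvers become unnecessary. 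The paper also uses a slightly more refined diagonalization than your $M(n)=M_n(n)$: it requires $I_q<M(q)\in L_q$, where $I_q$ is a pre-selected union of $q$ consecutive maximal $\mathcal{P}$-blocks in $L_q$, and then forms the auxiliary set $T=I_q\cup\bigl(M\setminus\bigcup_{n\leqslant q}\text{supp}(\mathbb{P}_{M,n})\bigr)\in[L_q]$ to place the tail piece in $\mathcal{F}_\zeta^{L_q}[\mathcal{P}]$ directly, whereas your spreading argument achieves the same end.
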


\begin{proof}$(i)$ The statement of $(i)$ is symmetric in $\mathcal{F}$ and $\mathcal{G}$, so we only need to prove one direction of the implication. Assume there exists $M\in[L]$ such that $\mathcal{F}^M[\mathcal{P}]\subset \mathfrak{G}(k,C, \varsigma)$. By Proposition \ref{reg}, there exists $K\in[\nn]$ such that $\mathcal{G}(K)\subset \mathcal{F}$.  Let $N(n)=M(K(n))$, so $N\in[M]\subset [L]$. We claim that $\mathcal{G}^N[\mathcal{P}]\subset \mathcal{F}^M[\mathcal{P}]$, from which it will follow that $\mathcal{G}^N[\mathcal{P}]\subset \mathfrak{G}(k,C,\varsigma)$.    Fix $\varnothing\neq F \in \mathcal{G}^N[\mathcal{P}]$. Then we can write $F=\cup_{n=1}^t F_n$ for some $F_1<\ldots <F_t$, $\varnothing\neq F_n\in \mathcal{P}\upp N\subset \mathcal{P}\upp M$, and $(\min F_n)_{n=1}^t \in \mathcal{G}(N)$. The last condition means that $(\min F_n)_{n=1}^t = N(G)=M(K(G))$ for some $G\in \mathcal{G}$. Then $K(G)\in \mathcal{G}(K)\subset \mathcal{F}$, from which it follows that $(\min F_n)_{n=1}^t = M(K(G))\in \mathcal{F}(M)$.  Therefore we have shown that $F=\cup_{n=1}^t F_n$ for some $F_1<\ldots <F_t$, $\varnothing\neq F_n\in \mathcal{P}$, and $(\min F_n)_{n=1}^t\in \mathcal{F}(M)$, so $F\in \mathcal{F}^M[\mathcal{P}]$. Since $F$ was an arbitrary non-empty member of $\mathcal{G}^N[\mathcal{P}]$, and since $\varnothing\in \mathcal{F}^M[\mathcal{P}]$, we are done.

$(ii)$ If $\Gamma(\zeta)=\infty$ or $\Gamma(\mu)=\infty$, the result is trivial, so assume both quantities are finite and fix $\Gamma(\zeta)<B<\infty$ and $\Gamma(\mu)<C<\infty$. Fix  $k<\omega$, $L\in [\nn]$, and $\varsigma\in B_R$.  Since $\Gamma(\zeta,k)\leqslant \Gamma(\zeta)<B$, we may select $L_0\in [L]$ such that $\mathcal{F}^{L_0}_\zeta[\mathcal{P]}\subset \mathfrak{G}(k, B, \varsigma)$.    Since $\sup_{l<\omega} \Gamma(\mu, l)=\Gamma(\mu)<C$, we may recursively select $L_0\supset L_1\supset L_2\supset \ldots$ such that for all $q\in\nn$, $\mathcal{F}_{\mu}^{L_q}[\mathcal{P}]\subset \mathfrak{G}(k+q, C, \varsigma)$.  For each $q\in\nn$, let $I_{q,n}=\text{supp}(\mathbb{P}_{L_q, n})$ and $I_q=\cup_{n=1}^q I_{q,n}$. Let $I_0=\varnothing$. Choose $M(1)<M(2)<\ldots$ such that for each $q\in\nn$, $I_q<M(q)\in L_q$. 

Let $$\mathcal{F}=\{F\cup G: F<G, F\in\mathcal{F}_\mu, G\in \mathcal{F}_\zeta\},$$ and note that $\text{rank}(\mathcal{F})=\zeta+\mu+1=\text{rank}(\mathcal{F}_{\zeta+\mu})$ by Proposition \ref{reg}.  We claim that $$\mathcal{F}^M[\mathcal{P}]\subset \mathfrak{G}(k, B+C, \varsigma).$$   By $(i)$, this will imply that there exists $N\in[M]$ such that $\mathcal{F}_{\zeta+\mu}^N[\mathcal{P}]\subset \mathfrak{G}(k, B+C, \varsigma)$. Since this holds for arbitrary $k<\omega$, $\varsigma\in B_R$, $L\in[\nn]$, $B>\Gamma(\zeta)$, and $C>\Gamma(\mu)$, we will be done with $(ii)$ once we show the inclusion $\mathcal{F}^M[\mathcal{P}]\subset \mathfrak{G}(k, B+C, \varsigma)$.

Fix $\varnothing\neq E\in \mathcal{F}^M[\mathcal{P}]$ and $H=_\mathcal{P}\cup_{n=1}^t E_n\subset E$. Note that $(\min E_n)_{n=1}^t \in \mathcal{F}(M)$, as noted in Remark \ref{triv}.  Therefore there exist $F<G$, $F\in \mathcal{F}_\mu$, and $G\in \mathcal{F}_\zeta$ such that $$M(F)\cup M(G)= M(F\cup G)=(\min E_n)_{n=1}^t.$$ Note that either of the sets $F,G$ could be empty.  With $q=|F|$, $$M(F)=(\min E_n)_{n=1}^q \in \mathcal{F}_\mu(M)$$ and $$M(G)=(\min E_n)_{n=q+1}^t\in \mathcal{F}_\zeta(M).$$     Let $$T=I_q\cup \Bigl(M\setminus \bigcup_{n=1}^q \text{supp}(\mathbb{P}_{M,n})\Bigr)\in [L_q].$$  Since $q=|F|$ and $F<G$, $\min G>q$, and $$M(G)=T(G)\in \mathcal{F}_\zeta(T),$$ and $$H_2:=\bigcup_{n=q+1}^t E_n \in \mathcal{F}^T_\zeta[\mathcal{P}]\subset \mathcal{F}^{L_q}_\zeta[\mathcal{P}]\subset \mathfrak{G}(k+q, C, \varsigma).$$  Since $M(F)\in \mathcal{F}_\mu(M)$, $$H_1:=\bigcup_{n=1}^q E_n\in \mathcal{F}^M_\mu [\mathcal{P}] \subset \mathcal{F}^{L_0}_\mu[\mathcal{P}] \subset \mathfrak{G}(k, B, \varsigma).$$   Therefore \begin{align*} s_k(\mathbb{E}_H \varsigma) & = s_k\Bigl(\sum_{n=1}^t \mathbb{E}_H \varsigma(n)\otimes e_n\Bigr) =s\Bigl(\sum_{n=1}^t \mathbb{E}_H \varsigma(n)\otimes e_{n+k}\Bigr) \\ & = s\Bigl(\sum_{n=1}^q \mathbb{E}_{H_1}\varsigma\otimes e_{n+k} + \sum_{n=q+1}^t \mathbb{E}_{H_2}\varsigma(n-q)\otimes e_{n+k}\Bigr) \\ & \leqslant s\Bigl(\sum_{n=1}^q \mathbb{E}_{H_1}\varsigma\otimes e_{n+k}\Bigr) + s\Bigl(\sum_{n=1}^{t-q} \mathbb{E}_{H_2}\varsigma(n)\otimes e_{n+k+q}\Bigr) \\ & = s_k(\mathbb{E}_{H_1}\varsigma)+s_{k+q}(\mathbb{E}_{H_2}\varsigma) <B+C. \end{align*}

$(iii)$ If $\Gamma=\infty$, then by Corollary \ref{pen2}, there exists a minimum countable ordinal $\lambda$ such that $\Gamma(\lambda)=\infty$. Since $\Gamma(0)=0$, $0<\lambda<\omega_1$.  By the minimality of $\lambda$, if  $\zeta, \mu<\lambda$, then $\Gamma(\zeta)+\Gamma(\mu)<\infty$.  By $(ii)$, $\Gamma(\zeta+\mu)\leqslant \Gamma(\zeta)+\Gamma(\mu)<\infty$, and $\zeta+\mu<\lambda$. Therefore we have shown that $0<\lambda<\omega_1$ is such that if $\zeta, \mu<\lambda$, then $\zeta+\mu<\lambda$. It is a standard fact about ordinals that $\lambda=\omega^\gamma$ for some $0\leqslant \gamma<\omega_1$.  Since $\mu(1)\leqslant 1+\mu(0)=1$ by Lemma \ref{work2}$(iii)$ and $(iv)$, $1<\lambda$ and $0<\gamma$.

\end{proof}

\begin{proposition} If $\Gamma(\omega_1)<\infty$, then $(\mathfrak{P}, \mathcal{P})$ and $R,S$ satisfy $(6)$ of Theorem \ref{main2}. Moreover, $\Gamma(\omega_1)$ is the infimum of $C$ such that $(6)$ is satisfied with constant $C$. 

\label{home2}
\end{proposition}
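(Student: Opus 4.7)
The plan is to observe that condition (6) of Theorem \ref{main2} is essentially a direct translation, through Remark \ref{good remark}(ii), of the statement ``$\Gamma(\omega_1, 0, \varsigma, L) \leqslant C$ for all $\varsigma \in B_R$ and all $L \in [\nn]$''; the only non-trivial input is the identity $\Gamma(\omega_1) = \Gamma(\omega_1, 0)$, which is the limit-ordinal case of Lemma \ref{work2}(v).

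First I would unpack the definitions. By Remark \ref{good remark}(ii), $(\varsigma, N)$ is $(\omega_1, 0, C)$-stable if and only if $\mathbb{E}_{N'}\varsigma \in CB_S$ for every $N' \in [N]$, and $(\varsigma, L)$ is $(\omega_1, 0, C)$-Ramsey if and only if every $M \in [L]$ contains some $N$ with this property. Taking $M := L$ inside the Ramsey definition, this matches precisely the existence clause of condition (6).

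For the forward direction, assume $\Gamma(\omega_1) < \infty$ and fix any $C > \Gamma(\omega_1)$. For arbitrary $\varsigma \in B_R$ and $L \in [\nn]$, we have $\Gamma(\omega_1, 0, \varsigma, L) \leqslant \Gamma(\omega_1) < C$, so $(\varsigma, L)$ is $(\omega_1, 0, C)$-Ramsey. Applying the Ramsey property with the choice $M := L$ produces $N \in [L]$ with $(\varsigma, N)$ being $(\omega_1, 0, C)$-stable, i.e.\ $\mathbb{E}_{N'}\varsigma \in CB_S$ for every $N' \in [N]$. This is exactly condition (6) with constant $C$. Letting $C \downarrow \Gamma(\omega_1)$ shows that $\inf\{C : (6) \text{ holds with constant } C\} \leqslant \Gamma(\omega_1)$.

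For the reverse inequality, suppose (6) holds with constant $C$. Fix $\varsigma \in B_R$, $L \in [\nn]$, and $M \in [L]$; applying (6) to $M$ yields $N \in [M]$ such that $\mathbb{E}_{N'}\varsigma \in CB_S$ for every $N' \in [N]$, which says $(\varsigma, N)$ is $(\omega_1, 0, C)$-stable. Hence $(\varsigma, L)$ is $(\omega_1, 0, C)$-Ramsey, whence $\Gamma(\omega_1, 0, \varsigma, L) \leqslant C$. Taking suprema over $\varsigma$ and $L$ gives $\Gamma(\omega_1, 0) \leqslant C$, and since $\omega_1$ is a limit ordinal Lemma \ref{work2}(v) yields $\Gamma(\omega_1) = \Gamma(\omega_1, 0) \leqslant C$. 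Combining the two inequalities completes the proof. There is no real obstacle here: the entire argument is definitional, the only ingredient beyond direct unpacking being the $\Gamma(\omega_1) = \Gamma(\omega_1, 0)$ reduction from Lemma \ref{work2}(v), which allows (6), a statement phrased solely with $s = s_0$ and $S = S_0$, to control the full supremum $\Gamma(\omega_1) = \sup_{k<\omega} \Gamma(\omega_1, k)$.
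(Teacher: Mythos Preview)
Your proof is correct, and it is more economical than the paper's. In the forward direction the paper re-runs the ill-foundedness argument: for each $\zeta<\omega_1$ it produces $M_\zeta\in[L]$ with $\mathcal{F}_\zeta^{M_\zeta}[\mathcal{P}]\subset\mathfrak{G}(0,C,\varsigma)$, deduces that $\mathfrak{G}(0,C,\varsigma)\upp L$ has uncountable rank, and extracts an infinite branch $M\in[L]$. You bypass this entirely by observing that the rank argument is already packaged into the very definition of $\Gamma(\omega_1,0,\varsigma,L)$ together with the continuity at $\omega_1$ proved in Lemma~\ref{work2}(ii), so that $\Gamma(\omega_1,0,\varsigma,L)<C$ immediately gives the $(\omega_1,0,C)$-Ramsey property and hence condition~(6). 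In the reverse direction the paper handles every $k<\omega$ directly, using a shift by $I=\cup_{n=1}^k\supp(\mathbb{P}_{M_1,n})$ to show $[M]^{<\omega}\subset\mathfrak{G}(k,C,\varsigma)$; you instead check only $k=0$ and then invoke the identity $\Gamma(\omega_1)=\Gamma(\omega_1,0)$ from Lemma~\ref{work2}(v). Both routes are valid; yours simply reuses previously established lemmas rather than repeating their content, at the cost of making the proposition less self-contained.
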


\begin{proof} Assume that $\Gamma(\omega_1)<\infty$ and fix $\Gamma(\omega_1)<C<\infty$.  Note that $\sup_{\zeta<\omega_1} \Gamma(\zeta)=\Gamma(\omega_1)<C$.     Fix $\varsigma\in B_R$ and $L\in [\nn]$.    For each $\zeta<\omega_1$,  $\Gamma(\zeta)<C$, from which it follows that there exists $M_\zeta\in [\nn]$ such that $\mathcal{F}^{M_\zeta}_\zeta[\mathcal{P}]\subset \mathfrak{G}(0, C, \varsigma)$.  From this it follows that $$\text{rank}(\mathfrak{G}(0, C, \varsigma) \upp L)\geqslant \text{rank}(\mathcal{F}^{M_\zeta}_\zeta[\mathcal{P}])=\text{rank}(\mathcal{F}_\zeta[\mathcal{P}])\geqslant \text{rank}(\mathcal{F}_\zeta)=\zeta+1.$$  Since this holds for any $\zeta<\omega_1$, it follows that $\mathfrak{G}(0, C, \varsigma)\upp L$ is ill-founded. Therefore there exists an infinite subset $M$ of $\nn$ such that for every $t\in \nn$, $(M(n))_{n=1}^t \in \mathfrak{G}(0, C, \varsigma)\upp L$. This clearly implies that $M\in [L]$.   Since $\mathfrak{G}(0, C, \varsigma)$ is hereditary, $[M]^{<\omega}\subset \mathfrak{G}(0, C, \varsigma)$.  Now for any $N\in [M]$ and $t\in \nn$, if $H_t=\cup_{n=1}^t \text{supp}(\mathbb{P}_{N, n})$, then $H_t\in [M]^{<\omega}\subset \mathfrak{G}(0, C, \varsigma)$ and $s(\mathbb{E}_{H_t} \varsigma)\leqslant C$.    By the properties of $s$ and $S$, $\mathbb{E}_N \varsigma\in CB_S$.    Since $\varsigma\in B_R$ and $L\in[\nn]$ were arbitrary, condition $(6)$ of Theorem \ref{main2} is satisfied. Moreover, since $C>\Gamma(\omega_1)$ was arbitrary, the infimum of $C$ such that $(\mathfrak{P}, \mathcal{P})$ and $R,S$ satisfy condition $(6)$ is not more than $\Gamma(\omega_1)$.

It remains to show that this infimum is not less than $\Gamma(\omega_1)$. To that end, assume $(\mathfrak{P}, \mathcal{P})$ and $R,S$ satisfy condition $(6)$ of Theorem \ref{main2} with constant $C$. Then for any $\varsigma\in B_R$ and $L\in [\nn]$, there exists $M_1\in[L]$ such that $\mathbb{E}_N\varsigma\in CB_S$ for all $N\in[M_1]$.  Fix $k\in\nn$ and let $I=\cup_{n=1}^k \text{supp}(\mathbb{P}_{M_1,n})$ and $M=M_1\setminus I$.  Here, $I=\varnothing$ if $k=0$.     Fix $F=_\mathcal{P}\cup_{n=1}^t F_n\in [M]^{<\omega}$. Fix $N\in[M]$ such that $F\prec N$ and note that $I\cup N\in [M_1]$ and $\mathbb{E}_{I\cup N}\varsigma(n+k)= \mathbb{E}_N \varsigma(n)$ for all $n\in\nn$ by the permanence property.  By the bimonotone property of $s$, \begin{align*} s_k(\mathbb{E}_F \varsigma) & \leqslant s_k(\mathbb{E}_N\varsigma) \leqslant s(\mathbb{E}_{I\cup N}\varsigma)\leqslant C. \end{align*}  Therefore $\mathcal{F}_{\omega_1}^M[\mathcal{P}]=[M]^{<\omega}\subset \mathfrak{G}(k, C, \varsigma)$.   Since $\varsigma\in B_R$, $k<\omega$, $L\in[\nn]$ were arbitrary, $$\Gamma=\Gamma(\omega_1) =\sup_{\varsigma\in B_R} \sup_{k<\omega}\sup_{L\in[\nn]}\Gamma(\omega_1,k, \varsigma, L) \leqslant C.$$

\end{proof}

The preceding result shows the connection between the conditions appearing the $\zeta=\omega_1$ case of Theorem \ref{main3} and the function $\Gamma$. We now wish to illustrate the connection between the conditions in Theorem \ref{main3} and the function $\Gamma$ in the $\zeta<\omega_1$ case. 

\begin{proposition} Fix $\zeta<\omega_1$,  $k<\omega$,  $\varsigma\in B_R$, and $L\in[\nn]$. \begin{enumerate}[(i)]\item Fix $0<C<\infty$. If $L\in[\nn]$ is such that for each $M_1\in [L]$, there exists $M\in[M_1]$ such that for all $N\in[M]$, $\mathbb{E}_{N|\mathcal{F}_\zeta[\mathcal{P}]} \varsigma\in CB_{S_k}$, then $\Gamma(\zeta,k, \varsigma, L)\leqslant C$.\item If $0<C<\infty$, $L\in[\nn]$, $k<\omega$ are such that $\Gamma(\zeta, k, \varsigma, L)>C$, then there exists $M\in [L]$ such that for all $N\in [M]$, $\mathbb{E}_{N|\mathcal{F}_\zeta[\mathcal{P}]}\varsigma\notin CB_{S_k}$.  In particular, $MAX(\mathcal{F}_\zeta[\mathcal{P}]\upp M) \cap \mathfrak{G}(k, C, \varsigma)=\varnothing$.  \item If $\Gamma(\zeta+1,k, \varsigma,L)<C$, then for any $M_1\in [L]$, there exists $M\in[M_1]$ such that for all $N\in [M]$, $\mathbb{E}_{N|\mathcal{F}_\zeta[\mathcal{P}]}\varsigma\in CB_{S_k}$. \end{enumerate}

\label{home3}
\end{proposition}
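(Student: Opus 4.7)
The plan is to handle the three items in sequence, with (i) providing the principal mechanism of converting the hypothesis ``for all $N\in[M]$'' into the combinatorial sub-partition bound that defines $\mathfrak{G}(k,C,\varsigma)$, (ii) following from (i) via the infinite Ramsey theorem, and (iii) employing the same bimonotonicity manoeuvre as in (i) together with an upgrade from $\mathcal{F}_\zeta$-initial-segment information to $\mathcal{F}_{\zeta+1}$-initial-segment information.

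For part (i), given any $M_1\in[L]$ I apply the hypothesis to produce $M\in[M_1]$ and aim to show $\mathcal{F}_\zeta^M[\mathcal{P}]\subset \mathfrak{G}(k,C,\varsigma)$; this yields $\Gamma(\zeta,k,\varsigma,L)\leqslant C$ directly. Fix $G\in\mathcal{F}_\zeta^M[\mathcal{P}]$ and a sub-partition $F=_{\mathcal{P}}\bigcup_{n=1}^t F_n \subseteq G$; hereditarity applied to the piece-minima confirms $F\in\mathcal{F}_\zeta[\mathcal{P}]$. Setting $N = F\cup (M\cap(\max F,\infty))\in[M]$, the sets $F_1<\cdots<F_t$ form precisely the first $t$ maximal $\mathcal{P}$-pieces of $N$, so $F$ is an initial segment of $N|\mathcal{F}_\zeta[\mathcal{P}]$. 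Since $\mathbb{E}_F\varsigma(n)=\mathbb{E}_{N|\mathcal{F}_\zeta[\mathcal{P}]}\varsigma(n)$ for $1\leqslant n\leqslant t$, bimonotonicity of $s$ on $c_{00}(X)$ yields
\[
s_k(\mathbb{E}_F\varsigma)\;=\;s\!\left(\sum_{n=1}^t \mathbb{E}_{N|\mathcal{F}_\zeta[\mathcal{P}]}\varsigma(n)\otimes e_{n+k}\right)\;\leqslant\;s_k(\mathbb{E}_{N|\mathcal{F}_\zeta[\mathcal{P}]}\varsigma)\;\leqslant\;C.
\]

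For part (ii), the key observation is that the set $\mathcal{V}:=\{N\in[\nn]:s_k(\mathbb{E}_{N|\mathcal{F}_\zeta[\mathcal{P}]}\varsigma)\leqslant C\}$ is clopen, because niceness of $\mathcal{F}_\zeta[\mathcal{P}]$ renders $N\mapsto N|\mathcal{F}_\zeta[\mathcal{P}]$ locally constant. If the desired $M$ fails to exist, then every $[M]$ with $M\in[L]$ meets $\mathcal{V}$, so Theorem \ref{Ramsey} produces $M\in[L]$ with $[M]\subset\mathcal{V}$; part (i) then forces $\Gamma(\zeta,k,\varsigma,L)\leqslant C$, contradicting $\Gamma(\zeta,k,\varsigma,L)>C$. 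The ``in particular'' clause is then immediate: any $F\in MAX(\mathcal{F}_\zeta[\mathcal{P}]\upp M)\cap \mathfrak{G}(k,C,\varsigma)$ is realized as $N|\mathcal{F}_\zeta[\mathcal{P}]$ for some $N\in[M]$, and membership in $\mathfrak{G}(k,C,\varsigma)$ applied with $F$ itself as the sub-partition gives $s_k(\mathbb{E}_F\varsigma)\leqslant C$, which would contradict $\mathbb{E}_{N|\mathcal{F}_\zeta[\mathcal{P}]}\varsigma\notin CB_{S_k}$.

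For part (iii), I extract $M_2\in[M_1]$ from the hypothesis so that $\mathcal{F}_{\zeta+1}^{M_2}[\mathcal{P}]\subset \mathfrak{G}(k,C,\varsigma)$ and pass to a suitable $M\in[M_2]$. Given $N\in[M]$ with $F=N|\mathcal{F}_\zeta[\mathcal{P}]=_{\mathcal{P}}\bigcup_{n=1}^t F_n$, and letting $F_{t+1}$ denote the $(t+1)$-st maximal $\mathcal{P}$-piece of $N$, the goal is to establish $F\cup F_{t+1}\in\mathcal{F}_{\zeta+1}^{M_2}[\mathcal{P}]$; the bimonotonicity step from (i) then delivers $s_k(\mathbb{E}_F\varsigma)\leqslant s_k(\mathbb{E}_{F\cup F_{t+1}}\varsigma)\leqslant C$. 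The combinatorial step is that $(\min F_n)_{n=1}^t\in MAX(\mathcal{F}_\zeta)$ and a spread argument (noting that $(f_2,\ldots,f_{t+1})$ is a spread of $(f_1,\ldots,f_t)$, where $f_n=\min F_n$) shows that its tail $(\min F_n)_{n=2}^t$ is not itself maximal in $\mathcal{F}_\zeta$, so niceness gives $(\min F_n)_{n=2}^{t+1}\in\mathcal{F}_\zeta$ and hence $(\min F_n)_{n=1}^{t+1}\in\mathcal{F}_{\zeta+1}$ by the successor clause in the definition of the fine Schreier families.

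The main obstacle will be upgrading this membership from $\mathcal{F}_{\zeta+1}$ to $\mathcal{F}_{\zeta+1}(M_2)$, which is the actual requirement for inclusion in $\mathcal{F}_{\zeta+1}^{M_2}[\mathcal{P}]$ per Remark \ref{triv}. I expect to handle this by first using Proposition \ref{reg}(i) to pass to $M\in[M_2]$ satisfying $\mathcal{F}_\zeta(M)\subset\mathcal{F}_{\zeta+1}$, and then, if required, applying the almost-monotone property of the fine Schreier families (Proposition \ref{reg}(vi)) to a further sparse sub-sequence so that the position-indices in $M_2$ of the piece-minima $(\min F_n)_{n=1}^{t+1}$ coming from any $N\in[M]$ themselves realize an element of $\mathcal{F}_{\zeta+1}$; this is the analogue for $\zeta+1$ of the mechanism that converts $\mathcal{F}_\zeta$-level data into $\mathcal{F}_\zeta$-indexed data elsewhere in the paper (e.g.\ in the proof of Lemma \ref{work2}(iv)), and it is where the successor gap ``$\zeta$ vs.\ $\zeta+1$'' in the statement of (iii) is paid for.
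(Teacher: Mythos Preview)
Your treatment of parts (i) and (ii) is correct and matches the paper's argument essentially line for line; your use of (i) inside (ii) is a mild streamlining of what the paper does (the paper re-derives the content of (i) inside the proof of (ii)), but the logic is the same.

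Part (iii) is where your proposal diverges from the paper and where there is a genuine gap. The paper does not attempt the direct combinatorial route you outline. Instead, after obtaining $M_0\in[M_1]$ with $\mathcal{F}_{\zeta+1}^{M_0}[\mathcal{P}]\subset\mathfrak{G}(k,C,\varsigma)$, it applies Gasparis' dichotomy (Theorem~\ref{gasp}) to the hereditary pair $\mathcal{F}_{\zeta+1}^{M_0}[\mathcal{P}]$ and $\mathcal{F}_\zeta[\mathcal{P}]$ to produce $M\in[M_0]$ with either $\mathcal{F}_{\zeta+1}^{M_0}[\mathcal{P}]\upp M\subset\mathcal{F}_\zeta[\mathcal{P}]$ or $\mathcal{F}_\zeta[\mathcal{P}]\upp M\subset\mathcal{F}_{\zeta+1}^{M_0}[\mathcal{P}]$. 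A rank comparison ($\omega^\xi(\zeta+1)+1$ versus $\omega^\xi\zeta+1$) rules out the first alternative, so $\mathcal{F}_\zeta[\mathcal{P}]\upp M\subset\mathfrak{G}(k,C,\varsigma)$, and then $N|\mathcal{F}_\zeta[\mathcal{P}]\in\mathfrak{G}(k,C,\varsigma)$ for every $N\in[M]$ is immediate.

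Your alternative route runs into exactly the obstacle you name, and your proposed fix does not close it. The issue is that $\mathcal{F}_{\zeta+1}^{M_2}[\mathcal{P}]$ is only hereditary and spreading \emph{relative to $M_2$}, not spreading in the absolute sense, so neither Proposition~\ref{reg}(i) nor the almost-monotone property gives you control over the $M_2$-position-indices of $(\min F_n)_{n=1}^{t+1}$. Concretely, you know $(\min F_n)_{n=1}^{t+1}\in\mathcal{F}_{\zeta+1}$, and you need $(\min F_n)_{n=1}^{t+1}=M_2(G)$ with $G\in\mathcal{F}_{\zeta+1}$; but $G$ is obtained from $(\min F_n)$ by ``compressing'' along $M_2$, which moves indices \emph{down}, precisely the direction in which spreading gives you nothing. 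Passing to a sparse $M\in[M_2]$ does not help: no matter how sparse $M$ is, the pre-images in $M_2$ of sets in $\mathcal{F}_{\zeta+1}\upp M$ can still fall outside $\mathcal{F}_{\zeta+1}$. What you are really trying to prove is the inclusion $\mathcal{F}_\zeta[\mathcal{P}]\upp M\subset\mathcal{F}_{\zeta+1}^{M_2}[\mathcal{P}]$ for some $M\in[M_2]$, and the clean tool for that---an inclusion between a hereditary family and another hereditary but non-spreading family---is precisely Gasparis' dichotomy combined with a rank comparison. Your detour through $F_{t+1}$ and the spread argument on piece-minima is unnecessary once Gasparis is invoked, and without Gasparis (or an ad hoc re-proof of the relevant special case) the argument does not close.
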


\begin{proof}$(i)$ Fix $M_1\in [L]$ and let $M\in[M_1]$ be such that for all $N\in [M]$, $E_{N|\mathcal{F}_\zeta[\mathcal{P}]} \varsigma\in CB_{S_k}$.  For any $F\in \mathcal{F}_\zeta[\mathcal{P}]\upp M$ and $G=_\mathcal{P}\cup_{n=1}^t G_n\subset F$, there exists $N\in[M]$ such that $G\prec N|\mathcal{F}_\zeta[\mathcal{P}]$. By the properties of $M$,  $\mathbb{E}_{N|\mathcal{F}_\zeta[\mathcal{P}]}\varsigma\in CB_{S_k}$. By bimonotonicity and the permanence property, together with the fact that $G\prec N|\mathcal{F}_\zeta[\mathcal{P}]$, $\mathbb{E}_G\varsigma\in CB_{S_k}$. Since $G=_\mathcal{P}\cup_{n=1}^tG_n \subset F$ was arbitrary, $F\in \mathfrak{G}(k, C, \varsigma)$. Since $F\in \mathcal{F}_\zeta[\mathcal{P}]\upp M$ was arbitrary, $$\mathcal{F}^M_\zeta[\mathcal{P}]\subset \mathcal{F}_\zeta[\mathcal{P}]\upp M\subset \mathfrak{G}(k, C, \varsigma).$$  Since $M_1\in [L]$ was arbitrary, we are finished with $(i)$.

$(ii)$ The condition $\Gamma(\zeta, k, \varsigma, L)>C$ implies the existence of some $L_0\in [L]$ such that there does not exist $M\in [L_0]$ such that $(\varsigma, M)$ is $(\zeta, k, C)$-stable. We let $$\mathcal{V}=\{M\in [\nn]: \mathbb{E}_{M|\mathcal{F}_\zeta[\mathcal{P}]}\varsigma\in CB_{S_k}\}.$$  Since this is a closed, and in fact clopen, set, Theorem \ref{Ramsey} guarantees the existence of some $M\in [L_0]$ such that either $[M]\subset \mathcal{V}$ or $[M]\cap \mathcal{V}=\varnothing$. We will be done once we show that the inclusion $[M]\subset \mathcal{V}$ cannot hold.  If the first alternative were to hold, we could choose $G\in \mathcal{F}_\zeta^M[\mathcal{P}]\subset \mathcal{F}_\zeta[\mathcal{P}]\upp M$ and $F=_\mathcal{P}\cup_{n=1}^t F_n\subset G$. There exists $N\in [M]$ such that $F\prec N|{\mathcal{F}_\zeta[\mathcal{P}]}$, so by bimonotonicity, $$s_k(\mathbb{E}_F\varsigma) \leqslant s_k(\mathbb{E}_{N|{\mathcal{F}_\zeta[\mathcal{P}]}} \varsigma)\leqslant C.$$  Since $G\in \mathcal{F}_\zeta^M[\mathcal{P}]$ and $F=_\mathcal{P}\cup_{n=1}^t F_n\subset G$ were arbitrary, $(\varsigma, M)$ is $(\zeta, k, C)$-stable.  This contradiction finishes the first part of $(ii)$. For the second part of $(ii)$, since for any $N\in[M]$, $N|\mathcal{F}_\zeta[\mathcal{P}]=_\mathcal{P}\cup_{n=1}^t F_n$ for some $F_n$ and $t$, and since $s_k(\mathbb{E}_{N|\mathcal{F}[\mathcal{P}]}\varsigma)>C$, $N|\mathcal{F}_\zeta[\mathcal{P}]\notin \mathfrak{G}(k, C, \varsigma)$. We conclude $(ii)$ by noting that $$MAX(\mathcal{F}_\zeta[\mathcal{P}]\upp M)=\{N|\mathcal{F}_\zeta[\mathcal{P}]: N\in [M]\}.$$


$(iii)$  Fix $M_1\in [L]$. Since $\Gamma(\zeta+1, k, \varsigma, L)<C$, there exists $M_0\in[M_1]$ such that $\mathcal{F}^{M_0}_{\zeta+1}[\mathcal{P}]\subset \mathfrak{G}(k, C, \varsigma)$.   By Theorem \ref{gasp}, there exists $M\in [M_0]$ such that either $\mathcal{F}_{\zeta+1}^{M_0}[\mathcal{P}]\upp M\subset \mathcal{F}_\zeta[\mathcal{P}]$ or $\mathcal{F}_\zeta[\mathcal{P}]\upp M\subset \mathcal{F}^{M_0}_{\zeta+1}[\mathcal{P}]\subset \mathfrak{G}(k, C, \varsigma)$.    By comparing the rank of $\mathcal{F}^{M_0}_{\zeta+1}[\mathcal{P}]\upp M$, which is $\omega^\xi (\zeta+1)+1$, to the rank of $\mathcal{F}_\zeta[\mathcal{P}]\upp M$, which is $\omega^\xi \zeta+1<\omega^\xi(\zeta+1)+1$, we see that the second inclusion must hold.  Therefore $\mathcal{F}_\zeta[\mathcal{P}]\upp M\subset \mathcal{F}^{M_0}_{\zeta+1}[\mathcal{P}]\subset \mathfrak{G}(k, C, \varsigma)$.   From this we deduce that for any $N\in[M]$, since $N|\mathcal{F}_\zeta[\mathcal{P}]\in \mathcal{F}_\zeta[\mathcal{P}]\upp M\subset \mathfrak{G}(k, C, \varsigma)$, $\mathbb{E}_{N|\mathcal{F}_\zeta[\mathcal{P}]}\varsigma\in CB_{S_k}$. 

\end{proof}

One of the major increases in difficulty in the case of a $\xi$-homogeneous probability block with $0<\xi<\omega_1$ is that the convex coefficients a vector in a sequence receives depends upon its position in the sequence.  We now turn to the process of overcoming this difficulty. The idea is, once we obtain some hereditary badness, we can make that badness independent of the position of a vector in the sequence by witnessing the badness with linear functionals.  Then moving the vector within the sequence does not lose the badness.  Our next two results compare $\Gamma$ values between two probability blocks, so we use superscripts to distinguish.

\begin{lemma} Let $\xi<\omega_1$ and $P=(\mathfrak{P}, \mathcal{P})$ be our fixed, $\xi$-homogeneous probability block. Assume $\varsigma=(x_n)_{n=1}^\infty\in B_{\ell_\infty(X)}$,  $k<\omega$, $\zeta<\omega_1$, and $L\in[\nn]$ are such that for each $F\in MAX(\mathcal{F}_\zeta[\mathcal{P}])\upp L$, $s_k(\mathbb{E}^P_F \varsigma) \geqslant D$. Then with $\varrho=(x_{L(n)})_{n=1}^\infty$, for any $\upsilon \leqslant \xi$, any $\upsilon$-homogeneous probability block $Q=(\mathfrak{Q}, \mathcal{Q})$, and $N\in [\nn]$, $$\Gamma^Q(\zeta+1, k, \varrho, N)\geqslant D.$$   

\label{beer}

\end{lemma}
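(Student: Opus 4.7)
The plan is to convert the norm lower bound $s_k(\mathbb{E}^P_F\varsigma)\geqslant D$ into an expectation lower bound via Hahn--Banach, apply Theorem~\ref{god2} (or Theorem~\ref{demigod2} when $\upsilon=0$) to extract, for each $M\in[\nn]$, a block $F\in\mathcal{F}^M_{\zeta+1}[\mathcal{Q}]$ on which the \emph{same} functionals recover a lower bound on $s_k(\mathbb{E}^Q_F\varrho)$, and thereby force $(\varrho,M)$ to fail $(\zeta+1,k,C)$-stability for every $C<D$.

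For each $F=_\mathcal{P}\cup_{n=1}^t F_n\in MAX(\mathcal{F}_\zeta[\mathcal{P}])\upp L$, Hahn--Banach supplies a norm-attaining $\phi^F\in B_{(c_{00}(X),s)^*}$ for the vector $\sum_{n=1}^t\mathbb{E}^P_F\varsigma(n)\otimes e_{n+k}$:
\[
\sum_{n=1}^t\phi^F_{n+k}\bigl(\mathbb{E}^P_F\varsigma(n)\bigr)=\phi^F\Bigl(\sum_{n=1}^t\mathbb{E}^P_F\varsigma(n)\otimes e_{n+k}\Bigr)=s_k(\mathbb{E}^P_F\varsigma)\geqslant D,
\]
where $\phi^F_n\in B_{X^*}$ are the coordinate functionals furnished by bimonotonicity of $s$. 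Define a $[-1,1]$-valued function $g$ on $L\ominus\mathcal{F}_\zeta[\mathcal{P}]$ by $g(i,F)=\phi^F_{n+k}(x_i)$ when $i\in F_n$. Then $\mathbb{E}^P_F g(\cdot,F)\geqslant D$ for every $F\in MAX(\mathcal{F}_\zeta[\mathcal{P}])\upp L$. Fix $\ee>0$, an arbitrary $M\in[\nn]$, and a positive sequence $(\delta_n)_{n=1}^\infty$ with $\sum_n\delta_n$ small enough to close the estimate below. Apply Theorem~\ref{god2} (Theorem~\ref{demigod2} if $\upsilon=0$) to obtain $F=_\mathcal{Q}\cup_{n=1}^t F_n\in\mathcal{F}^M_{\zeta+1}[\mathcal{Q}]$, $H=_\mathcal{P}\cup_{n=1}^t H_n\in MAX(\mathcal{F}_\zeta[\mathcal{P}])\upp L$, and scalars $b_n$ with $\sum_n b_n\geqslant D$, $L(F_n\setminus(\min F_n))\subset H_n$, $\mathbb{Q}_{F,n}(i)\leqslant\delta_n$, and $g(L(m),H)\geqslant b_n-\delta_n$ for $m\in F_n\setminus(\min F_n)$.

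For each $n$ and each $i\in F_n\setminus(\min F_n)$, property (ii) gives $L(i)\in H_n$, whence $\phi^H_{n+k}(x_{L(i)})=g(L(i),H)\geqslant b_n-\delta_n$. Splitting $\mathbb{E}^Q_F\varrho(n)$ into the term of weight $\mathbb{Q}_{F,n}(\min F_n)\leqslant\delta_n$ (whose $\phi^H_{n+k}$-image has absolute value at most $\delta_n$) and the remainder yields $\phi^H_{n+k}(\mathbb{E}^Q_F\varrho(n))\geqslant b_n-O(\delta_n)$, with implicit constant depending only on the uniform bound on $g$. Summing over $n$ and invoking $\|\phi^H\|_{s^*}\leqslant 1$,
\[
s_k(\mathbb{E}^Q_F\varrho)\geqslant\sum_n\phi^H_{n+k}(\mathbb{E}^Q_F\varrho(n))\geqslant\sum_n b_n-O\Bigl(\sum_n\delta_n\Bigr)\geqslant D-\ee,
\]
provided $(\delta_n)$ is sufficiently rapidly decreasing. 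Hence for every $C<D-\ee$, $F\notin\mathfrak{G}(k,C,\varrho)$, so $\mathcal{F}^M_{\zeta+1}[\mathcal{Q}]\not\subset\mathfrak{G}(k,C,\varrho)$. Because $M\in[\nn]$ was arbitrary, no $M\in[N]$ makes $(\varrho,M)$ be $(\zeta+1,k,C)$-stable, so $(\varrho,N)$ is not $(\zeta+1,k,C)$-Ramsey. Letting $\ee\downarrow 0$ gives $\Gamma^Q(\zeta+1,k,\varrho,N)\geqslant D$.

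The principal obstacle is the bookkeeping in the previous paragraph: the convex coefficients in $\mathbb{E}^Q_F\varrho$ are governed by the positions of vectors within $F$, whereas the functionals $\phi^H_{n+k}$ that witness the norm estimate are attached to the positions within $H$. The clause $\mathbb{Q}_{F,n}(i)\leqslant\delta_n$ engineered into Theorem~\ref{god2} is precisely what quarantines the single exceptional index $\min F_n$ (which need not lie in $H_n$) so that its leftover contribution is absorbable, allowing the $H$-functionals to certify lower bounds on $Q$-convex blocks of $\varrho$.
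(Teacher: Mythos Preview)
Your proposal is correct and follows essentially the same route as the paper: norm each $\mathbb{E}^P_F\varsigma$ by a functional $f_H\in B_{S_k^*}$, encode this as the $[-1,1]$-valued function $g(i,F)$, invoke Theorem~\ref{god2} (or Theorem~\ref{demigod2} when $\upsilon=0$) to produce $F\in\mathcal{F}^M_{\zeta+1}[\mathcal{Q}]$ and $H\in MAX(\mathcal{F}_\zeta[\mathcal{P}])\upp L$ with the alignment $L(F_n\setminus(\min F_n))\subset H_n$, and then use $f_H$ to certify $s_k(\mathbb{E}^Q_F\varrho)>C$ for any $C<D$. The paper's write-up differs only cosmetically: it argues by contradiction, handles the trivial $\xi=0$ case (where $P=Q$ is the Dirac block) by a direct combinatorial argument rather than via Theorem~\ref{demigod2}, and in the $\upsilon=0$ case the ``split off $\min F_n$'' step is unnecessary since $F_n$ is a singleton and one applies $g(L(m_n),H)\geqslant b_n-\delta_n$ directly.
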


\begin{proof} We first prove the trivial $\xi=0$ case. This case is trivial, since it implies that $P$ and $Q$ must both be equal to the Dirac probability block.  In this case, $\mathcal{F}_\zeta[\mathcal{P}]=\mathcal{F}_\zeta[\mathcal{Q}]=\mathcal{F}_\zeta$ and $\mathcal{F}_{\zeta+1}[\mathcal{Q}]=\mathcal{F}_{\zeta+1}$. Assume $0<C<\infty$ and $M\in[\nn]$ are such that $\mathcal{F}^M_{\zeta+1}[\mathcal{Q}]\subset \mathfrak{G}^Q(k,C, \varrho)$.     We can then recursively choose $i_n, j_n\in\nn$ such that $i_1=M(1)$ and for all $n\in\nn$, $j_n=L(i_n)$ and $M(j_n)<i_{n+1}\in M$.  By compactness, there exists $t\in\nn$ such that $F:=(j_n)_{n=1}^t\in MAX(\mathcal{F}_\zeta)=MAX(\mathcal{F}_\zeta[\mathcal{P}])$.    Since $j_n=L(i_n)$ for all $n\in\nn$, $F\in MAX(\mathcal{F}_\zeta[\mathcal{P}])\upp L$. Since $i_n\in M$ for all $n\in\nn$, we can write $E:=(i_n)_{n=1}^t=M(G)$ for some $G\in[\nn]^{<\omega}$. Since $i_{n+1}>M(j_n)$ for each $1\leqslant n<t$, $$M(G\setminus (\min G))=(i_n)_{n=2}^t=(i_{n+1})^{t-1}_{n=1}$$ is a spread of $M((j_n)_{n=1}^{t-1})\subset M(F)$. From this it follows that $G\setminus (\min G)$ is a spread of a subset of $F$, and therefore $G\setminus (\min G)\in \mathcal{F}_\zeta$. Therefore $$G=(\min G)\cup (G\setminus (\min G))\in \mathcal{F}_{\zeta+1}$$ and $$E=(i_n)_{n=1}^t=M(G)\in \mathcal{F}_{\zeta+1}^M=\mathcal{F}_{\zeta+1}^M[\mathcal{Q}]\subset \mathfrak{G}^Q(k,C, \varrho).$$   Then since $E=_\mathcal{Q}\cup_{n=1}^t (i_n)$ and  \begin{displaymath}
   \mathbb{E}^Q_E\varrho(n) = \left\{
     \begin{array}{rr}
       x_{L(i_n)} & : 1\leqslant n \leqslant t\\
       0 & : t<n,
     \end{array}
   \right.
\end{displaymath} $s_k(\sum_{n=1}^t x_{L(i_n)}\otimes e_n)\leqslant C$.  But since $F=_\mathcal{P}\cup_{n=1}^t(j_n)=\cup_{n=1}^t(L(i_n))\in MAX(\mathcal{F}_\zeta[\mathcal{P}])\upp L$ and \begin{displaymath}
   \mathbb{E}^P_E\varsigma(n) = \left\{
     \begin{array}{rr}
       x_{j_n}=x_{L(i_n)} & : 1\leqslant n \leqslant t\\
       0 & : t<n,
     \end{array}
   \right. \end{displaymath} it follows from our hypotheses that $s_k(\sum_{n=1}^t x_{L(i_n)}\otimes e_n)\geqslant D>C$, a contradiction. This contradiction finishes the $\xi=0$ case. 

We next prove the $0<\xi<\omega_1$ case. Assume that $0<C<\infty$ and $M\in[\nn]$ are such that $\mathcal{F}^M_{\zeta+1}[\mathcal{Q}]\subset \mathfrak{G}^Q(k, C, \varrho)$.  In order to prove the lemma, we must show that $C\geqslant D$. To that end, we will assume $C<D$ and reach a contradiction.  Fix a sequence $(\ee_n)_{n=1}^\infty \subset (0,1)$ such that $C+\sum_{n=1}^\infty \ee_n<D$.  For each $n\in\nn$, let $\delta_n=\ee_n/4$. Note that for any $n\in\nn$, any real number $b\leqslant 2$, and any $0\leqslant c\leqslant \delta_n$, $(1-c)(b-\delta_n)-c \geqslant b-\ee_n$.

  For each $F\in MAX(\mathcal{F}_\zeta[\mathcal{P}])\upp L$, we may choose $f_F\in B_{S_k^*}$ such that $$f_F(\mathbb{E}^P_F \varsigma)=\text{Re\ }f_F(\mathbb{E}^P_F \varsigma)=s_k(\mathbb{E}^P_F \varsigma)\geqslant D.$$   We define $g:L\ominus \mathcal{F}_\zeta[\mathcal{P}]\to [-1,1]$. Fix $(i,F)\in L\ominus \mathcal{F}_\zeta[\mathcal{P}]$, write $F=_\mathcal{P}\cup_{n=1}^t F_n$, let $1\leqslant j\leqslant t$ be such that $i\in F_j$, and define $$g(i,F)= \text{Re\ }f_F(x_i\otimes e_j)\in [-1,1].$$   Then for $F=_\mathcal{P} \cup_{n=1}^t F_n\in MAX(\mathcal{F}_\zeta[\mathcal{P}])$, \begin{align*} \mathbb{E}_F^P g(\cdot, F) & = \sum_{n=1}^t \mathbb{E}^P_{F_n} g(\cdot, F) = \sum_{n=1}^t \sum_{i\in F_n}\mathbb{P}_{F,n}(i) \text{Re\ }f_F(x_i\otimes e_n)= \text{Re\ }f_F\Bigl(\sum_{n=1}^t \bigl(\sum_{i\in F_n} \mathbb{P}_{F,n}(i)x_i\bigr)\otimes e_n\Bigr) \\ & = \text{Re\ }f_F(\mathbb{E}^P_F \varsigma)=s_k(\mathbb{E}^P_F \varsigma) \geqslant D.\end{align*}

Let us consider the case $0<\upsilon$.    By Theorem \ref{god2}, we may find $F=_\mathcal{Q} \cup_{n=1}^t F_n\in \mathcal{F}^M_{\zeta+1}[\mathcal{Q}]$,  $b_1, \ldots, b_t\in \rr$, $H_1<\ldots <H_t$, $H_n\in MAX(\mathcal{P})\upp L$, $H\in MAX(\mathcal{F}_\zeta[\mathcal{P}])\upp L$ such that \begin{enumerate}[(i)]\item for each $1\leqslant n\leqslant t$ and each $i\in\nn$, $\mathbb{Q}_{F,n}(i)\leqslant \delta_n$, \item for each $1\leqslant n\leqslant t$, $L(F_n\setminus (\min F_n))\subset H_n$, \item $H=_\mathcal{P} \cup_{n=1}^t H_n$, \item for each $1\leqslant n\leqslant t$ and $m\in F_n\setminus (\min F_n)$, $g(L(m), H)\geqslant b_n-\delta_n$,  \item $D\leqslant \sum_{n=1}^t b_n$. \end{enumerate}    Since $F\in \mathcal{F}^M_{\zeta+1}[\mathcal{Q}]\subset \mathfrak{G}(k, C, \varrho)$, $s_k(\mathbb{E}^Q_F \varrho)\leqslant C$.  However, \begin{align*} s_k(\mathbb{E}^Q_F \varrho) &  \geqslant \text{Re\ } f_H(\mathbb{E}^Q_F \varrho) = \text{Re\ }f_H\Bigl(\sum_{n=1}^t \bigl(\sum_{i\in F_n} \mathbb{Q}_{F,n}(i) x_{L(i)}\bigr)\otimes e_n\Bigr) \\ & = \sum_{n=1}^t \sum_{i\in F_n} \mathbb{Q}_{F,n}(i)\text{Re\ }f_H(x_{L(i)} \otimes e_n) \\ & = \sum_{n=1}^t \mathbb{Q}_{F,n}(\min F_n)\text{Re\ }f_H(x_{L(\min F_n)}\otimes e_n) + \sum_{n=1}^t \sum_{i\in F_n\setminus (\min F_n)} \mathbb{Q}_{F,n}\text{Re\ }f_H(x_{L(i)}\otimes e_n) \\ & = \sum_{n=1}^t \mathbb{Q}_{F,n}(\min F_n)\text{Re\ }f_H(x_{L(\min F_n)}\otimes e_n) + \sum_{n=1}^t \sum_{i\in F_n\setminus (\min F_n)} \mathbb{Q}_{F,n}(i)g(L(i), H).\end{align*} 

For the last equality, we have used the fact that for $1\leqslant n\leqslant t$ and $i\in F_n\setminus (\min F_n)$, $L(i)\in H_n$, from which it follows that $$g(L(i), H)= \text{Re\ }f_H(x_{L(i)}\otimes e_n).$$   Now continuing the inequality, \begin{align*} s_k(\mathbb{E}^Q_F \varrho) &  \geqslant \sum_{n=1}^t \mathbb{Q}_{F,n}(\min F_n)\text{Re\ }f_H(x_{L(\min F_n)}\otimes e_n) + \sum_{n=1}^t \sum_{i\in F_n\setminus (\min F_n)} \mathbb{Q}_{F,n}(i)g(L(i), H) \\ & \geqslant  \sum_{n=1}^t \mathbb{Q}_{F,n}(\min F_n)(-1) + \sum_{n=1}^t (1- \mathbb{Q}_{F,n}(\min F_n)) (b_n-\delta_n) \\ & = \sum_{n=1}^t \bigl[(1-\mathbb{Q}_{F,n}(\min F_n))(b_n-\delta_n) - \mathbb{Q}_{F,n}(\min F_n)\bigr] \\ & \geqslant \sum_{n=1}^t b_n-\ee_n \geqslant D-\sum_{n=1}^\infty  \ee_n>C. \end{align*} This is a contradiction, and finishes the $0<\upsilon $ case.

We now consider the $\upsilon=0$ case. Note that in this case, $Q$ is the Dirac block and $\mathcal{F}_{\zeta+1}^M[\mathcal{Q}]=\mathcal{F}_{\zeta+1}(M)$.  By Theorem \ref{demigod2}, there exist $F=(m_n)_{n=1}^t \in \mathcal{F}_{\zeta+1}(M)$, $L\in [L]$,  $b_1, \ldots, b_t\in\rr$,  and  sets $H_1, \ldots, H_t\in \mathcal{P}$, $H\in MAX(\mathcal{F}_\zeta[\mathcal{P}])$, such that  $L(m_n)\in H_n$,    $H=_\mathcal{P}\bigcup_{n=1}^t H_n$,  for each $1\leqslant n\leqslant t$, $g(L(m_n), H) \geqslant b_n-\delta_n$, and $\sum_{n=1}^t b_n\geqslant D$.   Let $\varrho=(x_{L(n)})_{n=1}^\infty$.     Then \begin{align*} s_k\Bigl(\sum_{n=1}^t x_{L(m_n)}\otimes e_n\Bigr) & \geqslant \text{Re\ }f_H\Bigl(\sum_{n=1}^t x_{L(m_n)}\otimes e_n\Bigr) =\sum_{n=1}^t g(L(m_n), H)\geqslant \sum_{n=1}^t b_n-\delta_n \geqslant D-\sum_{n=1}^\infty \ee_n>C. \end{align*} Since $\mathbb{E}^Q_F\varrho=\sum_{n=1}^t x_{m_n}\otimes e_n$ and $s_k(\mathbb{E}^Q_F\varrho)\leqslant C$, this is a contradiction.  This contradiction finishes this case.

\end{proof}

\begin{corollary} Let $\xi<\omega_1$ and $P=(\mathfrak{P}, \mathcal{P})$ be our fixed, $\xi$-homogeneous probability block. Fix $\upsilon\leqslant \xi$ and any $\upsilon$-homogeneous probability block $Q=(\mathfrak{Q}, \mathcal{Q})$. \begin{enumerate}[(i)]\item For any $\zeta<\omega_1$ and $k<\omega$,  $\Gamma^P(\zeta, k)\leqslant \Gamma^Q(\zeta+1, k)$ and $\Gamma^P(\zeta)\leqslant \Gamma^Q(\zeta+1)$. \item If $\zeta\leqslant \omega_1$ is a limit ordinal, $\Gamma^P(\zeta)\leqslant \Gamma^Q(\zeta)$.  \item If $\upsilon=\xi$ and if $\zeta\leqslant \omega_1$ is a limit ordinal, $\Gamma^P(\zeta)= \Gamma^Q(\zeta)$.\end{enumerate}

\label{shift1}
\end{corollary}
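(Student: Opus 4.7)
My plan is to derive all three parts from Lemma~\ref{beer} together with the monotonicity and continuity of $\Gamma$ established in Lemma~\ref{work2}. Lemma~\ref{beer} already performs the essential combinatorial transfer: whenever $s_k(\mathbb{E}^P_F \varsigma)\geqslant D$ for every $F\in MAX(\mathcal{F}_\zeta[\mathcal{P}])\upp L$, the subsequence $\varrho=(x_{L(n)})_{n=1}^\infty$ satisfies $\Gamma^Q(\zeta+1,k,\varrho,N)\geqslant D$ for every $\upsilon$-homogeneous block $Q$ with $\upsilon\leqslant\xi$ and every $N$. The corollary will then amount to little more than the restatement of this fact in terms of the $\Gamma$-functionals.

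For (i), I would fix $\zeta<\omega_1$, $k<\omega$, $\varsigma\in B_R$, and $L\in[\nn]$, and show $\Gamma^P(\zeta,k,\varsigma,L)\leqslant \Gamma^Q(\zeta+1,k)$. Choose any $C$ with $0<C<\Gamma^P(\zeta,k,\varsigma,L)$ (if none exists the inequality is vacuous). Proposition~\ref{home3}(ii) produces $M\in[L]$ with $\mathbb{E}^P_{N|\mathcal{F}_\zeta[\mathcal{P}]}\varsigma\notin CB_{S_k}$ for every $N\in[M]$; since the map $N\mapsto N|\mathcal{F}_\zeta[\mathcal{P}]$ surjects $[M]$ onto $MAX(\mathcal{F}_\zeta[\mathcal{P}])\upp M$ (a consequence of the niceness of $\mathcal{F}_\zeta[\mathcal{P}]$ from Proposition~\ref{reg}(v)), this says exactly that $s_k(\mathbb{E}^P_F\varsigma)>C$ for every $F\in MAX(\mathcal{F}_\zeta[\mathcal{P}])\upp M$. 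This is precisely the hypothesis of Lemma~\ref{beer} with $L$ replaced by $M$ and $D=C$, so for $\varrho=(x_{M(n)})_{n=1}^\infty$ I obtain $\Gamma^Q(\zeta+1,k,\varrho,N)\geqslant C$ for every $N\in[\nn]$. Since $B_R$ is closed under subsequences, $\varrho\in B_R$, hence $\Gamma^Q(\zeta+1,k)\geqslant C$. Letting $C\uparrow \Gamma^P(\zeta,k,\varsigma,L)$ and taking the supremum over $\varsigma,L$ yields the first inequality of (i); a further supremum over $k$ yields the second.

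For (ii), suppose $\zeta\leqslant \omega_1$ is a limit ordinal. Each $\mu<\zeta$ satisfies $\mu+1\leqslant \zeta$, so (i) combined with monotonicity of $\Gamma^Q$ in its first argument (Lemma~\ref{work2}(i)) gives $\Gamma^P(\mu)\leqslant \Gamma^Q(\mu+1)\leqslant \Gamma^Q(\zeta)$. Continuity of $\zeta\mapsto \Gamma^P(\zeta)$ at the limit ordinal $\zeta$ (Lemma~\ref{work2}(ii)) then lets me take the supremum over $\mu<\zeta$ to conclude $\Gamma^P(\zeta)\leqslant \Gamma^Q(\zeta)$. For (iii), the extra hypothesis $\upsilon=\xi$ makes the roles of $P$ and $Q$ symmetric in the hypotheses of (ii), so applying (ii) in both directions produces the equality.

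No individual step is technically difficult; the only point requiring attention is the translation in (i) from the $\Gamma^P$-statement supplied by Proposition~\ref{home3}(ii) to the pointwise hypothesis on \emph{every} maximal member of $\mathcal{F}_\zeta[\mathcal{P}]\upp M$ demanded by Lemma~\ref{beer}. This identification is where the niceness of $\mathcal{F}_\zeta[\mathcal{P}]$ is used; the remainder is formal manipulation of suprema via monotonicity and continuity.
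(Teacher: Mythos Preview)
Your proposal is correct and follows essentially the same route as the paper's proof: part (i) is obtained by combining Proposition~\ref{home3}(ii) with Lemma~\ref{beer} (the paper does this slightly more tersely, without spelling out the surjection $N\mapsto N|\mathcal{F}_\zeta[\mathcal{P}]$ onto $MAX(\mathcal{F}_\zeta[\mathcal{P}])\upp M$, though this is exactly what underlies its argument), and parts (ii) and (iii) are deduced from (i) via the monotonicity and continuity of $\Gamma$ in its first argument. The only cosmetic difference is that in (ii) the paper invokes Lemma~\ref{work2}(v) to reduce to $\Gamma(\cdot,0)$ before passing to the supremum, whereas you work directly with $\Gamma(\cdot)$ and appeal to Lemma~\ref{work2}(ii); these are equivalent.
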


\begin{proof}$(i)$ If $\xi=0$, then $\upsilon=0$ and $P=Q$ is the Dirac probability block. Assume $0<\xi$. Suppose that $\Gamma^P(\zeta, k)>D$. There exists $\varsigma\in B_R$ such that $\Gamma^P(\zeta, k, \varsigma, \nn)>D$.   This means there exists $L\in[\nn]$ such that there does not exist $M\in[L]$ such that $(\varsigma, M)$ is $(k, D, \varsigma)$-stable.  By Proposition \ref{home3}$(ii)$, there exists $L\in[\nn]$ such that for each $N\in [L]$, $\mathbb{E}_{N|\mathcal{F}_\zeta[\mathcal{P}]} \varsigma\notin DB_{S_k}$.     By Lemma \ref{beer}, $\Gamma^Q(\zeta+1, k)\geqslant D$.  Since this holds for any $D<\Gamma^P(\zeta, k)$, $\Gamma^P(\zeta, k) \leqslant \Gamma^Q(\zeta+1, k)$.

The second inequality of $(i)$ follows from the first by taking the supremum over $k$.

$(ii)$ By $(i)$ and Lemma \ref{work2}$(v)$, for a limit ordinal $\zeta\leqslant \omega_1$, $$\Gamma^P(\zeta)=\Gamma^P(\zeta, 0)= \sup_{\mu<\zeta} \Gamma^P(\mu, 0) \leqslant \sup_{\mu<\zeta}\Gamma^Q(\mu+1, 0)=\Gamma^Q(\zeta, 0)=\Gamma^Q(\zeta).$$  

$(iii)$ This follows from $(ii)$, since the inequality holds in both directions.

\end{proof}

We are now ready to prove the remaining implications $(1)\Rightarrow (3)\Rightarrow (4)$ of Theorem \ref{main3}, which we now recall.

\begin{corollary} Fix $\zeta\leqslant \omega_1$. \begin{enumerate}[(i)]\item If for every $\varsigma\in B_R$, there exist $C$ and $M\in[\nn]$ such that for all $N\in[M]$, $\mathbb{E}_{N|\mathcal{F}_\zeta[\mathcal{P}]}\varsigma\in CB_S$, then for every $\varsigma\in B_R$ and $L\in[\nn]$, there exist $C$ and $M\in[L]$ such that for all $N\in [M]$, $\mathbb{E}_{N|\mathcal{F}_\zeta[\mathcal{P}]}\varsigma\in CB_S$. \item If for each $\varsigma\in B_R$ and $L\in[\nn]$, there exist $M\in [L]$ and $0<C<\infty$ such that $\mathbb{E}_{N|\mathcal{F}_\zeta[\mathcal{P}]}\varsigma\in B_S$, then there exists a constant $C$ such that for all $\varsigma\in B_R$ and $L\in[\nn]$, there exists $M\in[L]$ such that for all $N\in [M]$, $\mathbb{E}_{N|\mathcal{F}_\zeta[\mathcal{P}]}\varsigma \in B_S$.  \end{enumerate}

\label{boardtown}
\end{corollary}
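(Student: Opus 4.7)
My plan is to establish both parts via the $\Gamma^P$ machinery from Section 6, combined with Proposition \ref{home3}(iii) to extract the desired $M\in[L]$ from finiteness of $\Gamma^P$ at an appropriate level.

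For (i), the plan is to prove that the hypothesis forces $\Gamma^P(\zeta+1,0,\varsigma,L)<\infty$ for each $\varsigma\in B_R$ and $L\in[\nn]$ (where the finite value may depend on $\varsigma$ and $L$), and then apply Proposition \ref{home3}(iii) with $M_1=L$ and constant $C$ strictly greater than $\Gamma^P(\zeta+1,0,\varsigma,L)$ to produce the required $M\in[L]$. To establish this pointwise finiteness by contradiction, assume $\Gamma^P(\zeta+1,0,\varsigma_0,L_0)=\infty$ for some $\varsigma_0\in B_R$ and $L_0\in[\nn]$. Proposition \ref{home3}(ii), applied at level $\zeta+1$, extracts for each $C<\infty$ a set $M_C\in[L_0]$ such that $s(\mathbb{E}^P_F\varsigma_0)>C$ for every $F\in MAX(\mathcal{F}_{\zeta+1}[\mathcal{P}])\upp M_C$. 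Lemma \ref{beer}, applied with $\zeta$ replaced by $\zeta+1$, then yields $\varrho_C:=(x_{M_C(n)})_{n=1}^\infty\in B_R$ with $\Gamma^P(\zeta+2,0,\varrho_C)\geqslant C$. A diagonal construction over $C\to\infty$ combined with the hypothesis applied to the diagonalized sequence (which gives some level-$\zeta$ bound on a suitable $M'$) and Lemma \ref{work2}(iv) (bridging levels $\zeta$, $\zeta+1$, $\zeta+2$ via the inequality $\Gamma(\zeta+p,k,\varsigma)\leqslant p+\Gamma(\zeta,k+p,\varsigma)$) produces the needed contradiction.

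For (ii), the plan is the gliding hump argument paralleling the Principle of Uniform Boundedness sketched in Section 2. Assuming for contradiction that no uniform constant works, one obtains (via Proposition \ref{home3} and Lemma \ref{work2}(vi)) that $\Gamma^P(\omega_1)=\infty$; Corollary \ref{pen2} and Lemma \ref{work3}(iii) then yield a minimum countable limit ordinal $\gamma=\omega^{\gamma'}$ with $\Gamma^P(\gamma)=\infty$, while $a_{\zeta'}:=\Gamma^P(\zeta',0)$ is finite for every $\zeta'<\gamma$ and $\{a_{\zeta'}:\zeta'<\gamma\}$ is unbounded. Following the template of Section 2, I would select $\varsigma_n\in B_R$, $\zeta_n\uparrow\gamma$, and constants $C_n,D_n$ so that $\varsigma_n$ exhibits $D_n$-badness at level $\zeta_n$ (witnessed on sets in $MAX(\mathcal{F}_{\zeta_n}[\mathcal{P}])$), with $a_{\zeta_n}$ playing the role of $\|A_n\|$ and $C_n$ the pointwise upper bound on $s$-norms of $\mathbb{E}^P_{N|\mathcal{F}_{\zeta_m}[\mathcal{P}]}\varsigma_n$ for $m<n$; $D_n$ is chosen to dominate both $\sum_{k<n}C_k/D_k^{1/2}$ and $\sum_{k>n}a_{\zeta_k}/D_k^{1/2}$. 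Combining $\varsigma_n/D_n^{1/2}$ via a block-diagonal construction (with successive supports arranged according to the almost monotone property of the fine Schreier families, Proposition \ref{reg}(vi)) into a single $\varsigma^*\in B_R$ yields a sequence for which $\sup_M s(\mathbb{E}^P_{M|\mathcal{F}_\zeta[\mathcal{P}]}\varsigma^*)$ is infinite on every infinite $M\in[\nn]$, contradicting the hypothesis of (ii) applied to $\varsigma^*$.

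The principal obstacle throughout is the position-dependence of the convex coefficients in the $0<\xi<\omega_1$ case, handled systematically by Lemma \ref{beer} (whose combinatorial core is Theorem \ref{god2}). A secondary challenge in (ii) is verifying that the gliding hump interference estimates cancel correctly: the analogs of the estimates displayed in Section 2 for the $k<n$ and $k>n$ cases must be carried out explicitly, using $a_{\zeta_n}$ to control the behavior of later $\varsigma_k$ over sets in $\mathcal{F}_{\zeta_n}[\mathcal{P}]$, which is precisely where the quantified ordinal framework of Section 2 replaces the operator norm $\|A_n\|$ of the classical PUB argument.
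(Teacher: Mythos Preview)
Your plan for (ii) is broadly aligned with the paper's gliding hump argument, though note two corrections: the contradiction hypothesis yields $\Gamma^P(\zeta)=\infty$ (not $\Gamma^P(\omega_1)$), and the paper combines the $\varrho_l$ by summing $\varrho=\sum_l D_l^{-1/2}\varrho_l$ in the Banach space $R$, not by a block-diagonal splice of supports.

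Your plan for (i), however, has a genuine gap. After invoking Lemma~\ref{beer} at level $\zeta+1$ you obtain $\Gamma^P(\zeta+2,0,\varrho_C,N)\geqslant C$ for all $N$, and you propose to close the argument with Lemma~\ref{work2}(iv). That inequality reads $\Gamma(\zeta+2,0,\varrho_C)\leqslant 2+\Gamma(\zeta,2,\varrho_C)$, so you would need a bound on $\Gamma(\zeta,2,\varrho_C)$, i.e.\ control of $s_2$-norms at level $\zeta$. The hypothesis of (i) only bounds $s$-norms ($k=0$), and there is no general inequality passing from $\Gamma(\zeta,0,\cdot)$ to $\Gamma(\zeta,2,\cdot)$. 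The ``diagonal construction over $C\to\infty$'' does not repair this: the pointwise constant $C'_C$ furnished by the hypothesis for $\varrho_C$ depends on $C$ and may diverge, so no contradiction emerges. Separately, writing $\zeta+1$ and $\zeta+2$ makes no sense when $\zeta=\omega_1$, a case you do not address.

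The paper resolves this by first descending from $\zeta+1$ to the largest non-successor ordinal $\beta\leqslant\zeta$; Lemma~\ref{work2}(v) gives $\Gamma(\beta,k,\varsigma,M_0)=\infty$ for \emph{every} $k$. One then extracts, for each $k$, an ordinal $\mu_k<\beta$ and a set $M_k$ on which $s_k(\mathbb{E}_{N|\mathcal{F}_{\mu_k}[\mathcal{P}]}\varsigma)>k$. Because $\beta$ is a limit ordinal, $\mu_k+k+1<\beta\leqslant\zeta$, so when Lemma~\ref{beer} is applied (to the diagonal subsequence $\varrho=(x_{M(n)})_n$) the resulting lower bound $\Gamma(\mu_k+k+1,0,\varrho,N)\geqslant k$ lives \emph{below} $\zeta$, and monotonicity gives $\Gamma(\zeta,0,\varrho,N)=\infty$ for every $N$, contradicting the hypothesis applied to $\varrho$. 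The $\zeta=\omega_1$ case is handled by an entirely different ill-founded rank argument. The descent to $\beta$ is the idea your outline is missing.
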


\begin{proof} $(i)$ We first prove the $\zeta<\omega_1$ case by contraposition. Suppose there exist $\varsigma\in B_R$ and $L\in[\nn]$ such that for all $M\in[L]$ and $0<C<\infty$, there exists $N\in[M]$ such that $\mathbb{E}_{N|\mathcal{F}_\zeta[\mathcal{P}]}\varsigma\notin CB_S$.  By Proposition \ref{home3}$(iii)$, $\inf_{M_0\in [L]}\Gamma(\zeta+1, 0, \varsigma, M_0)=\infty$. Indeed, if there were some $M_0\in [L]$ and $0<C<\infty$ such that $\Gamma(\zeta+1, 0, \varsigma, M_0)<C$, then by Proposition \ref{home3}$(iii)$, there exists $M\in [M_0]$ such that for all $N\in [M]$, $\mathbb{E}_{N|\mathcal{F}_\zeta[\mathcal{P}]}\in CB_S$, a contradiction.   Therefore $\Gamma(\zeta+1, 0, \varsigma, M_0)=\infty$ for all $M_0\in [L]$.   Let $\beta$ be the largest non-successor ordinal not exceeding $\zeta$ and note that by Lemma \ref{work2}$(v)$, $\Gamma(\beta, k, \varsigma, M_0)=\infty$ for all $M_0\in [L]$ and $k<\omega$.   Since $\Gamma(p)\leqslant p$ for any $p<\omega$ by Lemma \ref{work2}, this implies that $\zeta\geqslant\omega$. 

\begin{claim} For any $k<\omega$ and $M_0\in [L]$, there exist $\mu<\beta$ and $M_1\in [M_0]$ such that for all $N\in [M_1]$, $\mathbb{E}_{N|\mathcal{F}_\mu[\mathcal{P}]}\varsigma\notin k B_{S_k}$.\label{claim1}\end{claim}

  Let us prove the claim. Suppose that for some $k<\omega$ and $M_0\in [L]$, no such $\mu<\beta$ and $M_1\in [M_0]$ exist.  Fix $\mu<\beta$ and  define the closed set $$\mathcal{V}_k=\{M\in[\nn]: \mathbb{E}_{M|\mathcal{F}_\mu[\mathcal{P}]}\varsigma\in k B_{S_k}\}.$$ Then for any $K\in [M_0]$, we use  Theorem \ref{Ramsey} to deduce the existence of some $M\in [K]$ such that either $[M]\subset \mathcal{V}_k$ or $[M]\cap \mathcal{V}_k=\varnothing$.   By our contradiction hypothesis in the proof of the claim, it follows that the alternative $[M]\cap \mathcal{V}_k=\varnothing$ cannot hold, otherwise $M_1=M$ and $\mu$ are as in the conclusion of the claim.  From this it follows that $[M]\subset \mathcal{V}_k$.  Therefore we have shown that for any $K\in [M]$ and $\mu<\beta$, there exist $M\in [K]$ such that for all $N\in[M]$, $\mathbb{E}_{N|\mathcal{F}_\mu[\mathcal{P}]}\varsigma\in kB_{S_k}$. It follows from Proposition \ref{home3}$(i)$ that $\Gamma(\mu, k, \varsigma, M_0)\leqslant k+1$.  Since this holds for any $\mu<\beta$, by Lemma \ref{work2}$(ii)$, $\Gamma(\beta, k, \varsigma, M_0)\leqslant k+1$, contradicting the fact that $\Gamma(\beta, k, \varsigma, M_0)=\infty$ established in the paragraph before the claim. This is the necessary contradiction, and we have proved the claim. 

We apply Claim \ref{claim1} recursively to select $L\supset M_1\supset M_2\supset \ldots$ and $\mu_1, \mu_2, \ldots<\beta$ such that for all $1\leqslant k<\omega$ and $N\in [M_k]$, $\mathbb{E}_{N|\mathcal{F}_{\mu_k}[\mathcal{P}]}\varsigma\notin k B_{S_k}$.  At each stage of the recursion, if necessary we may replace $M_k$ with a subset thereof and use the almost monotone property of the fine Schreier families together with the fact that $\mu_k<\beta\leqslant \zeta$ to deduce that $\mathcal{F}_{\mu_k}[\mathcal{P}]\upp M_k\subset \mathcal{F}_\zeta[\mathcal{P}]$.   Choose $M(1)<M(2)<\ldots$ such that  $M(k)\in M_k$.   Fix $1\leqslant k<\omega$ and $N\in[M]$. Let $I=\cup_{n=1}^k \text{supp}(\mathbb{P}_{N,n})$ and note that $(N|{\mathcal{F}_{\mu_k+k}[\mathcal{P}]})\setminus I=(N\setminus I)|{\mathcal{F}_{\mu_k}[\mathcal{P}]}$ by the permanence property. Also, note that, since $I$ contains at least the first $k$ members of $N$, $(N|{\mathcal{F}_{\mu_k+k}[\mathcal{P}]})\setminus I \subset M_k$, from which it follows that $(N|{\mathcal{F}_{\mu_k+k}[\mathcal{P}]})\setminus I \in MAX(\mathcal{F}_{\mu_k}[\mathcal{P}]\upp M_k)$.     Also by the permanence property, $\mathbb{E}_{N|\mathcal{F}_{\mu_k+k}[\mathcal{P}]}\varsigma (n+k)= \mathbb{E}_{(N\setminus I)|{\mathcal{F}_{\mu_k}[\mathcal{P}]}}\varsigma(n)$ for all $n\in\nn$.    By the bimonotone property of $s$, \begin{align*} s(\mathbb{E}_{N|\mathcal{F}_{\mu_k+k}[\mathcal{P}]}\varsigma) & = s\Bigl(\sum_{n=1}^\infty \mathbb{E}_{N|\mathcal{F}_{\mu_k+k}[\mathcal{P}]}\varsigma(n)\otimes e_n\Bigr) \geqslant s\Bigl(\sum_{n=1}^\infty \mathbb{E}_{(N\setminus I)|{\mathcal{F}_{\mu_k}[\mathcal{P}]}}\varsigma(n)\otimes e_{n+k}\Bigr) \\ & =s_k(\mathbb{E}_{(N\setminus I)|{\mathcal{F}_{\mu_k}[\mathcal{P}]}}\varsigma)>k.\end{align*}  This implies that for any $N\in [M]$, $\Gamma(\mu_k+k+1 , 0, \varsigma, N)\geqslant k$ by Proposition \ref{home3}.   Therefore $M\in [L]$ and $\mu_1, \mu_2,\ldots <\beta$ have the property that for any $1\leqslant k<\omega$ and any $N\in [M]$, $s(\mathbb{E}_{N|\mathcal{F}_{\mu_k+k}} \varsigma) \geqslant k$.      By another application of Lemma \ref{work2}$(i)$, for any $N\in[\nn]$, $$\Gamma(\zeta, 0, \varrho, N)\geqslant \sup_k \Gamma(\mu_k+k+1, 0, \varrho, N)=\infty.$$   By another application of Proposition \ref{home3}$(i)$, there cannot exist $N_0\in [\nn]$ and $0<C<\infty$ such that for all $N\in [N_0]$, $\mathbb{E}_{N|\mathcal{F}_\zeta[\mathcal{P}]}\varrho \in CB_S$.  This finishes the proof by contraposition in the $ \zeta<\omega_1$ case. 

It remains to prove the $\zeta=\omega_1$ case.  Fix $\varsigma\in B_R$ and $L\in[\nn]$.  Our goal is to show that there exist $n\in\nn$ and $M\in[L]$ such that $[M]^{<\omega}\subset \mathfrak{G}(0,n,\varsigma)$.  If condition $(1)$ of Theorem \ref{main3} is satisfied for $\zeta=\omega_1$, then it is also satisfied for all $\zeta<\omega_1$.  This means that for each $\zeta<\omega_1$, there exist $n_\zeta\in\nn$ and $M_\zeta\in[L]$ such that $\mathcal{F}_\zeta[\mathcal{P}]\upp M_\zeta\subset \mathfrak{G}(0, n_\zeta, \varsigma)\upp L$.  This implies that $$\sup_{n\in\nn} \text{rank}(\mathfrak{G}(0, n, \varsigma)\upp L) \geqslant \omega_1,$$ from which it follows that for some $n\in\nn$, $\mathfrak{G}(0, n, \varsigma)\upp L$ is ill-founded.  Therefore there exists some $M\in [L]$ such that $\mathfrak{G}(0,n, \varsigma)\upp L$ contains all initial segments of $M$. Since $\mathfrak{G}(0, n, \varsigma)\upp L$ contains all subsequences of its members, $[M]^{<\omega}\subset \mathfrak{G}(0, n, \varsigma)\upp L$. This completes the $\zeta=\omega_1$ case.

$(ii)$ First let us note that, by Lemma \ref{work2}$(v)$ and Proposition \ref{home3}$(iii)$, it is sufficient to prove that under the hypothesis of $(ii)$, $\Gamma(\zeta)<\infty$. We prove this by contradiction. 

Assume that for each $\varsigma\in B_R$ and $L\in[\nn]$, there exist $M\in[L]$ and $0<C<\infty$ such that for all $N\in [M]$, $\mathbb{E}_{N|\mathcal{F}_\zeta[\mathcal{P}]}\varsigma\in CB_S$, and assume also that $\Gamma(\zeta)=\infty$.   Let $$\beta=\min \{\mu\leqslant \omega_1: \Gamma(\beta)=\infty\}\leqslant \zeta.$$  Note that by Corollary \ref{pen2}, $\beta$ is a limit ordinal and for any $k<\omega$, $\{\Gamma(\mu, k): \mu<\beta\}$ is an unbounded subset of $[0, \infty)$.

Note that for any $0<D<\infty$ and $k<\omega$, there exist $\varsigma\in B_R$ and  $\mu<\beta$ such that $\Gamma(\mu, k, \varsigma)>D+1$.   By Proposition \ref{home3}$(ii)$, there exists $M\in [\nn]$ such that for all $N\in [M]$, $\mathbb{E}_{N|\mathcal{F}_\zeta[\mathcal{P}]}\varsigma\notin (D+1)B_{S_k}$.   Let $\varsigma=(x_n)_{n=1}^\infty$ and $\varrho=(x_{M(n)})_{n=1}^\infty$. By Lemma \ref{beer} applied with $Q=P$, $$\inf_{N\in [\nn]} \Gamma(\mu+1, k, \varrho, N)\geqslant D+1>D.$$   By another application of Proposition \ref{home3}$(ii)$, this implies that for any $L\in [\nn]$, there exists $M\in [L]$ such that for any $N\in [M]$, $\mathbb{E}_{N|\mathcal{F}_{\mu+1}[\mathcal{P}]} \varrho\notin DB_{S_k}$.    Therefore in this paragraph we have shown that for any $0<D<\infty$ and $k<\omega$, there exist $\mu<\beta$ and $\varrho\in B_R$ such that for any $L\in [\nn]$, there exists $M\in [L]$ such that  for all $N\in [M]$, $s_k(\mathbb{E}_{N|\mathcal{F}_\mu[\mathcal{P}]} \varrho)>D$.

Fix $D_1>4$.  Fix $\varrho_1\in B_R$ and $\mu_1<\beta$ such that for any $L\in[\nn]$, there exists $M\in [L]$ such that for all $N\in [M]$, $s_0(\mathbb{E}_{N|\mathcal{F}_{\mu_1}[\mathcal{P}]} \varrho_1)>D_1$.    Fix $M_1\in [\nn]$ and $0<C_1<\infty$ such that for any $N\in [M_1]$, $s(\mathbb{E}_{N|\mathcal{F}_\zeta[\mathcal{P}]}\varrho_1) \leqslant C_1$. 

Now assume that for some $l\in\nn$, constants $D_1, \ldots, D_l, C_1, \ldots, C_l$, sequences $\varrho_1, \ldots, \varrho_l\in B_R$, $M_1\supset \ldots \supset M_l$, and $\mu_1, \ldots, \mu_l<\beta$ have been chosen. Choose $D_{l+1}>4^{l+1}$ so large that \begin{enumerate}[(a)]\item $\sum_{k=1}^l \frac{k+C_k}{D_k^{1/2}} < D_{l+1}^{1/2}/3$, \item $\underset{1\leqslant k\leqslant l}{\max} \frac{k+\Gamma(\mu_k+1)}{D_k^{1/2}D_{l+1}^{1/2}}<\frac{1}{3\cdot 2^{l+1}}$. \end{enumerate}   We may select $\varrho_{l+1}\in B_R$ and $\mu_{l+1}<\beta$ such that for any $L\in [\nn]$, there exists $M\in[L]$ such that for all $N\in [M]$, $s_l(\mathbb{E}_{N|\mathcal{F}_{\mu_{l+1}}[\mathcal{P}]} \varrho_{l+1})>D_{l+1}$.  Choose $K\in [M_l]$ such that for all $N\in [K]$, $s_l(\mathbb{E}_{N|\mathcal{F}_{\mu_{l+1}}[\mathcal{P}]} \varrho_{l+1})>D_{l+1}$.    By hypothesis, we may fix $K_0\in [K]$ and $C_{l+1}$ such that for all $N\in [K_0]$,  $s( \mathbb{E}_{N|\mathcal{F}_\zeta[\mathcal{P}]}\varrho_{l+1}) \leqslant C_{l+1}$.    Now using Proposition \ref{home3}$(iii)$ recursively, we may find $K_0\supset K_1\supset \ldots \supset K_l$ such that for each $1\leqslant k\leqslant l$ and each $N\in [K_k]$, $s_l(\mathbb{E}_{N|\mathcal{F}_{\mu_k}[\mathcal{P}]}\varrho_{l+1}) \leqslant \Gamma(\mu_k+1)+1$.    Let $M_{l+1}=K_l$.    This completes the recursive construction.

Now let $\varrho=\sum_{l=1}^\infty \frac{1}{D_l^{1/2}}\varrho_l$. Since $$\sum_{l=1}^\infty \frac{1}{D_l^{1/2}}\leqslant \sum_{l=1}^\infty \frac{1}{2^l}=1$$ and since $R$ is a Banach space, the series above converges and $\varrho\in B_R$.  Fix $L(1)<L(2)<\ldots$ such that for all $l\in\nn$, $L(l)\in M_l$.    By hypothesis, there exist $M\in [L]$ and $0<C<\infty$ such that for all $N\in [M]$, $s(\mathbb{E}_{N|\mathcal{F}_\zeta[\mathcal{P}]}) \leqslant C$. By the bimonotone property, this implies that for any $G=_\mathcal{P}\cup_{n=1}^t G_n\in \mathcal{F}_\zeta[\mathcal{P}]\upp N$, $s(\mathbb{E}_G \varrho)\leqslant C$.

Choose $k$ so large that $D_k^{1/2}/3>C$. By the almost monotone property, there exists $j\in\nn$ such that if $j\leqslant G\in \mathcal{F}_{\mu_k+k-1}$, then $G\in \mathcal{F}_\zeta$. From this it follows that if $j\leqslant G\in\mathcal{F}_{\mu_k+k-1}[\mathcal{P}]$, then $G\in \mathcal{F}_\zeta[\mathcal{P}]$.     For each $l\in\nn$, let $J_l=\cup_{n=1}^{l-1}J^l_n$, where $J^l_1, \ldots, J^l_{l-1}\subset \nn$ are any sets such that $j\leqslant J^l_1 <\ldots <J^l_{l-1}$ and $J^l_n\in MAX(\mathcal{P})\upp M_l$.       Choose $j\leqslant N(1)<N(2)<\ldots $such that for each $l\in\nn$, $J_l<N(l)$.   Let $G=N|\mathcal{F}_{\mu_k+k-1}[\mathcal{P}]\in MAX(\mathcal{F}_{\mu_k+k-1}[\mathcal{P}])\upp N$. Since $j\leqslant G\in \mathcal{F}_{\mu_k+k-1}[\mathcal{P}]\upp N$ and $N\in [M]$, $G\in \mathcal{F}_\zeta[\mathcal{P}]\upp M$, and $s(\mathbb{E}_G\varrho)\leqslant C$. The rest of the proof involves providing  estimates for $s(\mathbb{E}_G\varrho_l)$ for each $l\in\nn$, and combining these estimates to contradict $s(\mathbb{E}_G\varrho)\leqslant C$.  We perform these estimates for $l=k$, $l<k$, and $l>k$.

In the remainder of the proof, we use the convention that for sets $I_1, \ldots, I_r$, $\cup_{n=i}^j I_n=\varnothing$ if $i>j$.  Write $G=_\mathcal{P} \cup_{n=1}^t G_n$.  Note that since $G=_\mathcal{P}\cup_{n=1}^t G_n\in MAX(\mathcal{F}_{\mu_k+k-1}[\mathcal{P}])$, it follows that $\cup_{n=k}^t G_n\in MAX(\mathcal{F}_{\mu_k}[\mathcal{P}])$, and by heredity, $\cup_{n=l}^t G_n\in \mathcal{F}_{\mu_k}[\mathcal{P}]$ for each $l\geqslant k$.

For each $l\in\nn$, let $$E_l=\bigcup_{n=1}^{\min \{t, l-1\} } G_n \text{\ \ and\ \ }F_l=\bigcup_{n=l}^t G_n.$$  Note that $E_l<F_l$, $G=E_l\cup F_l$,  and by the triangle inequality, $$s(\mathbb{E}_{E_l}\varrho_l)\leqslant \sum_{n=1}^{\min \{t, l-1\}} s(\mathbb{E}_{E_l} \varrho_l(n)) \leqslant l-1.$$ Note also that since $M(l)\leqslant F_l\in [M]^{<\omega}$, $F_l\in [M_l]^{<\omega}$. By the previous paragraph, $F_l\in \mathcal{F}_{\mu_k}[\mathcal{P}]$ for each $l\geqslant k$, from which it follows that $F_l\in \mathcal{F}_{\mu_k}[\mathcal{P}]\upp M_l$ for each $l\geqslant k$.    Also, by the previous paragraph, $F_k\in MAX(\mathcal{F}_{\mu_k}[\mathcal{P}])$, so $F_k\in MAX(\mathcal{F}_{\mu_k}[\mathcal{P}])\upp M_k$.

  For convenience, if $F_l=\varnothing$, let $\mathbb{E}_{F_l}\varrho_l$ be the zero sequence.  By the permanence property, $\mathbb{E}_{F_l}\varrho_l(n)=\mathbb{E}_G\varrho_l(n+l-1)$ for each $n\in\nn$, so that \begin{align*} s(\mathbb{E}_G \varrho_l) &  =s\Bigl(\sum_{n=1}^\infty \mathbb{E}_G \varrho_l(n)\otimes e_n\Bigr) \leqslant s\Bigl(\sum_{n=1}^{\min \{t, l-1\}} \mathbb{E}_{E_l} \varrho_l(n)\otimes e_n\Bigr) + s\Bigl(\sum_{n=l}^\infty \mathbb{E}_{F_l}\varrho_l(n)\otimes e_{n+l-1}\Bigr) \\ & = s(\mathbb{E}_{E_l}\varrho_l)+s_{l-1}(\mathbb{E}_{F_l}\varrho_l) \leqslant l-1 +s_{l-1}(\mathbb{E}_{F_l} \varrho_l).\end{align*} If $l>k$, our choice of $M_l$  and $F_l\in \mathcal{F}_{\mu_k}[\mathcal{P}]\upp M_l$ can be combined with the previous inequality to find that $$s(\mathbb{E}_G\varrho_l) \leqslant l-1+s_{l-1}(\mathbb{E}_{F_l}\varrho_l) \leqslant l-1+\Gamma(\mu_k+1)+1=l+\Gamma(\mu_k+1).$$  Combining this with (b), $$s(\mathbb{E}_G\varrho_l) \leqslant \frac{D_l^{1/2}D^{1/2}_k}{3\cdot 2^l}.$$    	If $k=l$, since $F_k\in MAX(\mathcal{F}_{\mu_k}[\mathcal{P}])\upp M_k$, our choice of $M_k$ and bimonotonicity yield that \begin{align*} D_k &  < s_{k-1}(\mathbb{E}_{F_k}\varrho_k)=s\Bigl(\sum_{n=1}^\infty \mathbb{E}_{F_k} \varrho_k(n)\otimes e_{n+k-1}\Bigr) \\ & = s\Bigl(\sum_{n=k}^\infty \mathbb{E}_G \varrho_k(n)\otimes e_n\Bigr) \leqslant s\Bigl(\sum_{n=1}^\infty \mathbb{E}_G\varrho_k(n)\otimes e_n\Bigr)= s(\mathbb{E}_G\varrho_k). \end{align*} This implies that $t\geqslant l$.

Now consider $l<k$. Recall the set $J_l$ chosen prior to choosing $N$. The set $J_l$ is the union of $l-1$ consecutive, maximal members of $\mathcal{P}\upp M_l$, $j\leqslant J_l$, and $\max J_l < N(l)\leqslant F_l\in [M]^{<\omega}$. Since $M(l)\leqslant N(l)\leqslant F_l\in [M]^{<\omega}$, $F_l\in [M_l]^{<\omega}$.  Then \begin{align*} j & \leqslant J_l\cup F_l= \Bigl[J_l\cup \bigl(\bigcup_{n=l}^{k-1}G_n\bigr)\Bigr]\cup F_k \in \mathcal{F}_{\mu_k+k-1}[\mathcal{P}]\upp M_l \subset \mathcal{F}_\zeta[\mathcal{P}]\upp M_l. \end{align*} Here we have used that $J_l<\cup_{n=l}^{k-1}G_n<F_k$, each of these three sets is a subset of $M_l$, $F_k\in \mathcal{F}_{\mu_k}[\mathcal{P}]$, and $J_l\cup (\cup_{n=l}^{k-1}G_n)$ is a union of $k-1$ successive members of $MAX(\mathcal{P})$.

This implies that  $s(\mathbb{E}_{J_l\cup F_l} \varrho_l)\leqslant C_l$ by our choice of $M_l$.   By the permanence property, for all $n\geqslant l$, $$\mathbb{E}_G\varrho_l(n)=\mathbb{E}_{E_l\cup F_l} \varrho_l(n)=\mathbb{E}_{F_l}\varrho_l(n+l-1) = \mathbb{E}_{J_l\cup F_l}\varrho_l(n).$$   Therefore \begin{align*} s(\mathbb{E}_G\varrho_l) & =s\Bigl(\sum_{n=1}^\infty \mathbb{E}_G\varrho_l(n)\otimes e_n\Bigr)\leqslant l-1+s\Bigl(\sum_{n=l}^\infty \mathbb{E}_G\varrho_l(n)\otimes e_n\Bigr) \\ &  = l-1+s\Bigl(\sum_{n=l}^\infty \mathbb{E}_{J_l\cup F_l} \varrho_l(n)\otimes e_n\Bigr) \leqslant  l+s(\mathbb{E}_{J_l\cup F_l} \varrho_l) \leqslant l+C_l. \end{align*}

Combining these estimates and using (a),  we find that \begin{align*}  C & \geqslant s(\mathbb{E}_G\varrho) \geqslant \frac{s(\mathbb{E}_G\varrho_k)}{D_k^{1/2}}-\sum_{l=1}^{k-1} \frac{s(\mathbb{E}_G \varrho_l)}{D_l^{1/2}} - \sum_{l=k+1}^\infty \frac{s(\mathbb{E}_G\varrho_l)}{D_l^{1/2}} \\ & > D_k^{1/2} - D_k^{1/2}/3 - \sum_{l=k+1}^\infty \frac{D_k^{1/2}}{3\cdot 2^l} \\ & \geqslant D_k^{1/2}/3>C.  \end{align*}  This contradiction finishes the proof.

\end{proof}

\section{Examples}

In this section, we wish to discuss the distinction of these properties for distinct $\xi$. Distinguishing $0<\xi<\omega_1$ from $\xi=0$ will establish the distinctness of our properties from that studied by Freeman and Knaust/Odell.   

For the remainder of this work, for a Banach space $X$ and $1<p\leqslant \infty$, we let $S_p$ denote the space of $(x_n)_{n=1}^\infty$ such that there exists a constant $C$ such that for any $(a_n)_{n=1}^\infty\in c_{00}$, $\|\sum_{n=1}^\infty  a_nx_n\|\leqslant C\|\sum_{n=1}^\infty a_ne_n\|_{\ell_p}$. We let $s_p((x_n)_{n=1}^\infty)$ denote the infimum of such $C$. As noted in Section $3$, $s_p$ is a bimonotone norm on $c_{00}(X)$ and $S_p$ is its natural domain. 

We also let $R=c_0^w(X)$, the space of weakly null sequences in $X$, endowed with the $\|\cdot\|_{\ell_\infty(X)}$ norm.  Since $R$ is closed in $\ell_\infty(X)$, $R$ is a subsequential space.   We let $\Gamma_p$ denote the $\Gamma$ function from the previous sections with this choice of $R$ and $s=s_p$, $S=S_p$.    Of course, these notations should depend on $X$, but this omission will cause no confusion.    For $\xi<\omega_1$, we let $\gamma_p(\xi)$ denote the minimum ordinal $\zeta$  such that for any $\xi$-homogeneous probability block $P$, $\Gamma^P_p(\zeta,0)=\infty$ if any such $\zeta$ exists, and $\gamma_p(\xi)=\omega_1$ if no such $\zeta$ exists.  We note that by Corollary \ref{shift1}, in order to compute $\gamma_p(\xi)$, it is sufficient to consider only $P=(\mathfrak{S}_\xi, \mathcal{S}_\xi)$, the repeated averages hierarchy. For concreteness, we consider only these probability blocks for the remainder.  We let $\mathbb{E}^\xi_N$ denote the convex blockings with respect to the probability block $(\mathfrak{S}_\xi, \mathcal{S}_\xi)$. The function $\Gamma^\xi$ is defined similarly. 

We recall that for a Banach space $X$ and $0<\xi<\omega_1$, we say a sequence $(x_n)_{n=1}^\infty$ is an $\ell_1^\xi$-\emph{spreading model} provided that it is bounded and $$0<\inf\Bigl\{\|x\|: E\in \mathcal{S}_\xi, 1=\sum_{n\in E} |a_n|, x=\sum_{n\in E} a_nx_n\Bigr\}.$$   We say the sequence $(x_n)_{n=1}^\infty$ is $\xi$-\emph{weakly null} provided it is weakly null and has no subsequence which is an $\ell_1^\xi$-spreading model. If $\varsigma=(x_n)_{n=1}^\infty\subset X$ is $\xi$-weakly null, then for any $\ee>0$ and $L\in[\nn]$, using \cite[Theorem $4.12$]{CN} as in the proof of \cite[Proposition $4.13$]{CN}, we can find $M\in[\nn]$ such that for all $N\in[M]$ and $n\in\nn$, $\|\mathbb{E}^\xi_N \varsigma(n)\|\leqslant \ee/2^n$.  Therefore for any $N\in[M]$, $$s_p(\mathbb{E}^\xi_N\varsigma)\leqslant \sum_{n=1}^\infty \|\mathbb{E}^\xi_N \varsigma(n)\|\leqslant \ee.$$  This shows that $\Gamma^\xi_p(\omega_1, 0, \varsigma)=0$.    That is, we have trivially small behavior with respect to blockings of $\varsigma$ at level $\xi$ of the repeated averages hierarchy when $\varsigma$ is $\xi$-weakly null. The analogy for the $\xi=0$, sequence/subsequence case would be the case that $\varsigma$ is a norm null sequence.  Therefore if $X$ has the property that every weakly null sequence in $X$  is $\xi$-weakly null (that is, if $X$ has the $\xi$-\emph{weak Banach-Saks property}), then $\Gamma^\xi_p(\omega_1)=0$. Thus the study of $\xi$-convex blocks in a $\xi$-weak Banach-Saks space is trivial. 

On the other hand, if $X$ is a Banach space which admits a weakly null sequence which is not $\xi+1$-weakly null (that is, if $X$ fails to have the $\xi+1$-weak Banach-Saks property), then $X$ admits a weakly null $\ell_1^{\xi+1}$-spreading model, say $\varsigma=(x_n)_{n=1}^\infty$.     Then $$0<\ee := \inf\Bigl\{\|x\|: E\in\mathcal{S}_{\xi+1}, 1=\sum_{n\in E}|a_n|, x=\sum_{n\in E}a_nx_n\Bigr\}.$$  Assume that $\mathcal{F}_k^M[\mathcal{S}_\xi]\subset \mathfrak{G}(0,C,\varsigma)$ for some $k\in\nn$, $M\in[\nn]$, and $0<C<\infty$.  Since $\mathcal{F}_k(M)=\mathcal{F}_k\upp M$, this simply means that for any $F_1<\ldots<F_k$, $F_n\in \mathcal{S}_\xi\upp M$, $\cup_{n=1}^k F_n\in \mathfrak{G}(0,C,\varsigma)$.     Then if $N\in[M]$ has $k\leqslant N$ and $\cup_{n=1}^k F_n=N|\mathcal{F}_k[\mathcal{S}_\xi]\in\mathcal{S}_{\xi+1}$, $$C\geqslant \Bigl\|\sum_{n=1}^k \frac{1}{k^{1/p}} \mathbb{E}_N\varsigma(n)\Bigr\|\geqslant \ee k/k^{1/p}.$$  Since this holds for any $k\in\nn$ and $M\in[\nn]$, it follows that $\Gamma^\xi_p(k)\geqslant \ee k /k^{1/p}\underset{k\to \infty}{\to}\infty$ and $\gamma_p(\xi)=\omega$.  This is the smallest possible value of $\gamma_p(\xi)$, and we see the opposite behavior to that in the previous paragraph.   

Therefore the only spaces with interesting behavior of the function $\gamma_p(\xi)$ are spaces in which every weakly null sequence is $\xi+1$-weakly null, but in which there exists a weakly null sequence which is not $\xi$-weakly null.   We now discuss a general method for constructing such a space.  Suppose that $H$ is a Banach space which is the completion of some norm $c_{00}$ such that the canonical $c_{00}$ basis is normalized, $1$-unconditional, shrinking basis for $H$. For $x=\sum_{n=1}^\infty a_ie_i\in c_{00}$, we let $Ex=\sum_{i\in E} a_ie_i$.  For $x\in c_{00}$, we also let $\ran(x)$ be the smallest integer interval containing $\{i\in\nn: a_i\neq 0\}$.   Let us define $H_\xi$ to be the completion of $c_{00}$ with respect to the norm $$\|x\|_{H_\xi}=\sup\Bigl\{\Bigl\|\sum_{n=1}^\infty \|E_nx\|_{\ell_1}e_{\max E_n}\Bigr\|_H: E_1<E_2<\ldots, E_n\in \mathcal{S}_\xi\Bigr\}.$$   It is easy to see that $(e_n)_{n=1}^\infty$ is a normalized, $1$-unconditional basis for $H_\xi$.

Let us also assume that the canonical basis of $H$ is \emph{block stable} and $1$-\emph{left dominant}.  By block stable, we mean there exists a constant $B\geqslant 1$ such that for any normalized block sequences $(x_n)_{n=1}^\infty, (y_n)_{n=1}^\infty$ such that $$\max\{\max \ran(x_n), \max \ran(y_n)\} < \min \{\min \ran(x_{n+1}), \min \ran(y_{n+1})\}$$ for all $n\in\nn$, then  $$\frac{1}{B}\|\sum_{n=1}^\infty a_nx_n\Bigr\|\leqslant \Big\|\sum_{n=1}^\infty a_ny_n\Bigr\|\leqslant B\Bigl\|\sum_{n=1}^\infty a_nx_n\Bigr\|$$ for all $(a_n)_{n=1}^\infty\in c_{00}$. If we wish to emphasize the constant $B$, we say $H$ is $B$-\emph{block stable}.  By $1$-left dominant, we mean that for any increasing sequences $(m_n)_{n=1}^\infty, (l_n)_{n=1}^\infty$ of positive integers such that $m_n\leqslant l_n$ for all $n\in\nn$, it follows that  $$\Bigl\|\sum_{n=1}^\infty a_ne_{l_n}\Bigr\|\leqslant \Bigl\|\sum_{n=1}^\infty a_ne_{m_n}\Bigr\|$$ for all $(a_n)_{n=1}^\infty\in c_{00}$.

We will use the following.

\begin{lemma} Suppose that the canonical basis of $H$ is normalized, $1$-unconditional, shrinking, $B$-block stable, and $1$-left dominant.  Then for $\xi<\omega_1$, $C>B$, any $\varsigma=(x_n)_{n=1}^\infty\subset B_{H_\xi}$ which is either a weakly null sequence or a block sequence, and any $A,L\in[\nn]$, there exists $M\in[L]$ such that for any $N\in[M]$, $\mathbb{E}_N^\xi \varsigma(n)$ is $C$-dominated by $(e_{A(n)})_{n=1}^t$. 

\label{exam1}
\end{lemma}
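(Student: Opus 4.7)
The plan is to first reduce to the case that $\varsigma$ is a normalized block sequence, then position the blocks appropriately relative to $A$, and finally bound the $H_\xi$-norm of a linear combination of level-$\xi$ averages by an $H$-norm at those positions.

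Since the canonical basis $(e_n)$ of $H$ is normalized, $1$-unconditional and shrinking, a standard gliding-hump argument (applied in $H_\xi$, whose basis $(e_n)$ is also shrinking) produces, for any weakly null $\varsigma \subset B_{H_\xi}$, some $M_0 \in [L]$ and a normalized block sequence $(y_n)$ of $(e_n)$ in $B_{H_\xi}$ such that $\sum_n \|x_{M_0(n)} - y_n\|_{H_\xi}$ is as small as desired. Absorbing this perturbation into the constant, we may assume $\varsigma = (x_n)$ is itself a normalized block sequence with $\ran(x_1) < \ran(x_2) < \cdots$. Now choose $M \in [L] \cap [M_0]$ with $A(n) \leqslant \min \ran(x_{M(n)})$ for every $n$, which is possible since $\min \ran(x_{M_0(n)}) \to \infty$. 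For any $N \in [M]$, set $G_k = \supp(\mathbb{P}_{N,k}) \in MAX(\mathcal{S}_\xi)\upp N$, $z_k = \mathbb{E}^\xi_N\varsigma(k) = \sum_{i \in G_k}\mathbb{P}_{N,k}(i)\, x_i$, and $p_k = \min \ran(x_{\min G_k})$; the choice of $M$ gives $A(k) \leqslant p_k$.

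The heart of the proof is the estimate
$$\Bigl\|\sum_k a_k z_k\Bigr\|_{H_\xi} \leqslant B'\, \Bigl\|\sum_k a_k e_{p_k}\Bigr\|_H$$
for every $(a_k) \in c_{00}$, with $B'$ arbitrarily close to $B$. Expanding the left-hand side by the definition of $\|\cdot\|_{H_\xi}$, one tests against sequences $E_1 < E_2 < \cdots$ in $\mathcal{S}_\xi$, obtaining $\|\sum_j \beta_j e_{\max E_j}\|_H$, where $\beta_j = \sum_k |a_k|\, \|E_j z_k\|_{\ell_1}$ (using disjoint ranges of the $z_k$). The main obstacle is that the outer test sets $E_j \in \mathcal{S}_\xi$ must be disentangled from the inner $\mathcal{S}_\xi$-supports $G_k$ of the averages; the idea is to split each $E_j$ according to whether it sits inside a single $\ran(z_k)$---where the bound $\|E_j z_k\|_{\ell_1} \leqslant \|z_k\|_{H_\xi} \leqslant 1$ suffices---or straddles several of them, the latter contributing only a bounded correction because the $E_j$ are disjoint members of $\mathcal{S}_\xi$ and the $G_k$ are successive maximal members of $\mathcal{S}_\xi$. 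Repackaging the resulting sum using $1$-unconditionality of $(e_n)$ in $H$, and then applying $B$-block stability to compare the resulting normalized block vector to the simple basis vectors at positions $p_k$, yields the displayed inequality.

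Given the inequality, $1$-left dominance of $(e_n)$ together with $A(k) \leqslant p_k$ gives $\|\sum_k a_k e_{p_k}\|_H \leqslant \|\sum_k a_k e_{A(k)}\|_H$; taking $B'$ (and, in the weakly null case, the initial perturbation) sufficiently close to $B$ produces the desired $C$-domination by $(e_{A(k)})$ for any $C > B$. The main difficulty really lies in the central estimate, where two overlapping layers of $\mathcal{S}_\xi$-structure---one in the outer $H_\xi$-norm and one in the repeated-average supports $G_k$---must effectively cancel, leaving only $H$-structure at the positions $p_k$; block stability and $1$-left dominance are precisely the mechanisms that translate this cancellation into concrete domination by the specified subsequence $(e_{A(n)})$.
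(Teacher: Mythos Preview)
Your overall architecture matches the paper's---reduce to a block sequence, push supports past $A$, test the $H_\xi$-norm against $E_1<E_2<\ldots\in\mathcal{S}_\xi$, separate each $E_j$ into an ``inside one $\ran(z_k)$'' part handled by block stability and a ``straddling'' remainder, then finish with $1$-left dominance. The gap is in the straddling step. Your assertion that the straddling $E_j$'s contribute ``only a bounded correction because the $E_j$ are disjoint members of $\mathcal{S}_\xi$ and the $G_k$ are successive maximal members of $\mathcal{S}_\xi$'' is false for the $M$ you have chosen. Take $\xi=1$, $H=c_0$, $x_i=e_{100^i}$, $A=L=\nn$; your only condition on $M$ is $A(n)\leqslant 100^{M(n)}$, which any $M$ satisfies. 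With $N=\{2,3,4,\ldots\}$ one has $G_k=\{2^k,\ldots,2^{k+1}-1\}$ and $z_k=2^{-k}\sum_{i\in G_k}e_{100^i}$. The single set $E=\{100^{i}:3\leqslant i\leqslant 10^6\}\in\mathcal{S}_1$ (it has $\min E=10^6$ and $|E|<10^6$) has $\|Ez_k\|_{\ell_1}=1$ for every $k\geqslant 2$ with $G_k\subset\{3,\ldots,10^6\}$, so $\bigl\|\sum_{k\leqslant t}z_k\bigr\|_{H_\xi}\geqslant t-1$, whereas $\bigl\|\sum_{k\leqslant t}e_{A(k)}\bigr\|_{c_0}=1$. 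Thus no fixed $B'$ works, and the straddling correction is unbounded.

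What is missing is precisely the mechanism the paper uses: one must refine $M$ further, via a diagonalization over the $\xi$-sufficiency of the probability block. Concretely, one introduces the regular family $\mathcal{G}=\{E:(\max\ran(y_{L(n)}))_{n\in E}\in\mathcal{S}_\xi\}$, which has rank $\omega^\xi+1$, and recursively chooses $M_1\supset M_2\supset\ldots$ so that for $N\in[M_{t+1}]$ the measure $\mathbb{S}^\xi_{N,1}$ gives mass at most $\ee_{t+1}$ to any $G\in\mathcal{G}$ with $\min G\leqslant r_t$; this is exactly an appeal to $\xi$-sufficiency (the paper cites \cite[Lemma~4.3, Corollary~4.8]{CN}). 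After diagonalizing to $M$, one can show that for each straddling $E_{\max T_n}$ and each $m>n$, the set of indices $i\in G_m$ whose range meets $E_{\max T_n}$ pulls back (via $L$) to a set in $\mathcal{G}$ with controlled minimum, forcing $\|E_{\max T_n}z_m\|_{\ell_1}\leqslant\ee_m$ and hence the total straddling correction is at most $a\sum_n\sum_{m>n}\ee_m<a\ee$. This is the substantive content of the paper's Claim~1 and cannot be bypassed. A secondary issue: you invoke that the basis of $H_\xi$ is shrinking, but in the paper that is deduced \emph{from} this lemma (Corollary~\ref{shrink}); fortunately the gliding-hump approximation of a weakly null sequence by a block sequence needs only the coordinate functionals of the Schauder basis, not shrinking, so this is easily repaired.
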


\begin{proof} Fix $\varsigma=(x_n)_{n=1}^\infty \subset B_{H_\xi}$ which is either a weakly null sequence or a block sequence. Fix $A,L\in[\nn]$.  Fix $\ee>0$ such that $2\ee+B<C$ and  a decreasing sequence $(\ee_n)_{n=1}^\infty \subset (0,1)$ such that $2\sum_{n=1}^\infty \sum_{m=n}^\infty \ee_m<\ee$.      If $(x_n)_{n=1}^\infty$ is weakly null, then by replacing $L$ with a subset thereof, we may assume that there exists a  sequence $(y_n)_{n=1}^\infty \subset B_{H_\xi}$ such that $(y_{L(n)})_{n=1}^\infty$ is a block sequence,  for any $n\in\nn$, $\|x_n-y_n\|_{H_\xi}<\ee_n$, and $\min \ran(y_{L(n)})>A(n)$ for all $n\in\nn$.    If $(x_n)_{n=1}^\infty$ is already a block sequence, we let $y_n=x_n$ and, by replacing $L$ with a subset thereof, assume $\min \ran(y_{L(n)})>A(n)$.   In either case, we have a  sequence $(y_n)_{n=1}^\infty \subset B_{H_\xi}$ such that $(y_{L(n)})_{n=1}^\infty$ is a block sequence, and for each $n\in\nn$,   $\|y_n-x_n\|_{H_\xi}<\ee_n$ and $\min \ran(y_{L(n)})>A(n)$.  Let $\varrho=(y_n)_{n=1}^\infty$.

Let $$\mathcal{G}=\{E: (\max \supp(y_{L(n)}))_{n\in E}\in \mathcal{S}_\xi\},$$ which is a regular family with rank $\omega^\xi+1$ \cite[Proposition $3.1$]{Con}. Let $M_1=L$ and let $r_1\in\nn$ be such that $M_1(1)=L(r_1)$.

Now assume that $M_1\supset \ldots \supset M_t\in[L]$, $r_1<\ldots <r_t$ have been chosen.   Combining \cite[Lemma $4.3$]{CN} and \cite[Corollary $4.8$]{CN}, there exists $M_{t+1}\in [M_t]$ such that $$\sup \{\mathbb{S}^\xi_{N,1}(G): G\in \mathcal{G}, N\in [M_{t+1}], \min G\leqslant r_t \} \leqslant \ee_{t+1}.$$ Let $r_{t+1}\in \nn$ be such that $M_{t+1}(t+1)=L(r_{t+1})$.      This completes the recursive process.  Let $M(t)=M_t(t)$ for each $t\in\nn$ and note that $M\in [L]$.

Fix $N\in[M]$.     Fix scalars $(a_n)_{n=1}^\infty\in c_{00}$. Let $$x= \sum_{n=1}^\infty a_n\mathbb{E}^\xi_N \varsigma(n)$$   and $$y=\sum_{n=1}^\infty a_n\mathbb{E}^\xi_N \varrho(n).$$  We omit the trivial case $0=a_1=a_2=\ldots$ and assume that $a:=\max_{n\in\nn}|a_n|>0$.   Let $I_1=[1, \max \ran \mathbb{E}^\xi_N \varrho(1)]$ and for $n\in\nn$, $$I_{n+1}=(\max \ran( \mathbb{E}^\xi_N \varrho(n)), \max \ran(\mathbb{E}^\xi_N \varrho(n+1)].$$    Fix $E_1<E_2<\ldots$, $E_j\in \mathcal{S}_\xi$, such that $$\|y\|_{H_\xi}=\Bigl\|\sum_{j=1}^s \|E_jy\|_{\ell_1} e_{\max E_j}\Bigr\|_H.$$   Note that by omitting superfluous $E_j$, we may assume that for each $1\leqslant j\leqslant s$, there exists at least one value of $n\in\nn$ such that $E_j\mathbb{E}^\xi_N \varrho(n)\neq 0$.

 For each $n\in\nn$, let $$T_n=\{j\in \nn: \min E_j\in I_n\}.$$   Note that for each $n\in\nn$ and $j\in T_n$, $E_j\mathbb{E}^\xi_N \varrho(m)=0$ for all $m<n$.  Note also that for each $n\in\nn$ and $j\in T_n\setminus (\max T_n)$, $E_j\mathbb{E}^\xi_N \varrho(m)=0$ for all $m>n$.    From this it follows that \begin{align*} \|x\|_{H_\xi} & \leqslant a \ee+\|y\|_{H_\xi}  \\ & \leqslant a\ee+\Bigl\|\sum_{n=1}^\infty |a_n|\sum_{j\in T_n} \|E_j\mathbb{E}^\xi_N \varrho(n)\|_{\ell_1} e_{\max E_j}\Bigr\|_H  + a\sum_{n=1}^\infty\sum_{m=n+1}^\infty \|E_{\max T_n} \mathbb{E}^\xi_N \varrho(m)\|_{\ell_1}. \end{align*}  Here we agree to the convention that if $T_n=\varnothing$, $E_{\max T_n}$ denotes the zero projection.

\begin{claimp} $\sum_{n=1}^\infty \sum_{m=n+1}^\infty \|E_{\max T_n} \mathbb{E}^\xi_N \varrho(m)\|_{\ell_1}\leqslant \sum_{n=1}^\infty \sum_{m=n+1}^\infty \ee_m$, from which it follows that $$a\sum_{n=1}^\infty \sum_{m=n+1}^\infty \|E_{\max T_n} \mathbb{E}^\xi_N \varrho(m)\|_{\ell_1} \leqslant a\ee \leqslant \ee\Bigl\|\sum_{n=1}^\infty a_ne_{A(n)}\Bigr\|_H.$$   

\end{claimp}

\begin{claimp} Let $h_n=\sum_{j\in T_n} \|E_j\mathbb{E}^\xi_N \varrho(n)\|_{\ell_1} e_{\max E_j}$. Then $$ \Bigl\|\sum_{n=1}^\infty |a_n|\sum_{j\in T_n} \|E_j\mathbb{E}^\xi_N \varrho(n)\|_{\ell_1} e_{\max E_j}\Bigr\|_H  = \Bigl\|\sum_{n=1}^\infty |a_n|h_n\Bigr\|_H \leqslant B\Bigl\|\sum_{n=1}^\infty a_n e_{A(n)}\Bigr\|_H.$$  

\end{claimp}

Let us assume the claims and see how this finishes the proof, assuming that $\ee>0$ was chosen small enough that $2\ee+B<C$. We note that since the basis of $H$ is normalized and bimonotone, $a\ee\leqslant \|\sum_{n=1}^\infty a_n e_{A(n)}\|_H$, so the estimate above combined with the two claims  yields that \begin{align*} \|x\|_{H_\xi} & \leqslant a \ee+\|y\|_{H_\xi}  \\ & \leqslant a\ee+\Bigl\|\sum_{n=1}^\infty |a_n|\sum_{j\in T_n} \|E_j\mathbb{E}^\xi_N \varrho(n)\|_{\ell_1} e_{\max E_j}\Bigr\|_H  + a\sum_{n=1}^\infty \sum_{m=n+1}^\infty \|E_{\max T_n} \mathbb{E}^\xi_N \varrho(m)\|_{\ell_1} \\ & \leqslant (2\ee+B)\Bigl\|\sum_{n=1}^\infty a_ne_{A(n)}\Bigr\|_H.  \end{align*}

We now prove Claim $1$.   In the proof, recall that $r_t\in\nn$ has the property that $M(t)=M_t(t)=L(r_t)$ for each $t\in\nn$.  Note that it is sufficient to show that for each $m,n\in\nn$ with $m<n$, $\|E_{\max T_n} \mathbb{E}^\xi_N \varrho(m)\|_{\ell_1}\leqslant \ee_m$.    To that end, fix such an $m,n$. The result is trivial if $T_n=\varnothing$ or if $E_{\max T_n}\mathbb{E}^\xi_N \varrho(m)=0$, so assume $E_{\max T_n}\mathbb{E}^\xi_N\varrho(m)\neq \varnothing$.     Let $t\in\nn$ be such that $$M(t+1)=\min \supp(\mathbb{S}_{N,m})$$ and note that $t+1\geqslant m$. Let $N_0=N\setminus \cup_{i=1}^{m-1}\supp(\mathbb{S}^\xi_{N,i})\in [M_{t+1}]$ and note that by the permanence property, $\mathbb{S}^\xi_{N,m}=\mathbb{S}^\xi_{N_0,1}$.       Let $$J=\{i\in\supp(\mathbb{S}_{N,m}): E_{\max T_n}y_i\neq 0\}\in [N]$$ and note that, since $N\subset L$, $J=L(J_0)$ for some $J_0\in [\nn]^{<\omega}$. For each $j\in J$, fix some $m_j\in E_{\max T_n}\cap \ran(y_j)$.       By definition of $T_n$, $$\min E_{\max T_n}  \leqslant \max I_n = \max\{\max \ran(y_r): r\in \supp(\mathbb{S}^\xi_{N,n})\} \leqslant \max \ran(y_{M(t)})=\max \ran(y_{L(r_t)}).$$   Let $G=(r_t)\cup J_0$ and note that $$(\max \ran(y_{L(j)}))_{j\in G} = (\max \ran(y_{L(r_t)}))\cup (\max \ran(y_i):j\in J),$$ which is a spread of $$(\min E_{\max T_n})\cup (m_j:j\in J)\subset E_{\max T_n}\in \mathcal{S}_\xi.$$  Therefore $G\in \mathcal{G}$ and $\min G\leqslant r_t$. By our choice of $M_{t+1}$ and since $N_0\in [M_{t+1}]$, $$\mathbb{S}^\xi_{N,m}(G)=\mathbb{S}^\xi_{N_0,1}(G)\leqslant \ee_{t+1}\leqslant \ee_m.$$    Since $E_{\max T_m}\in \mathcal{S}_\xi$ and $y_i\in B_{H_\xi}$ for all $i\in\nn$, \begin{align*} \|E_{\max T_n} \mathbb{E}^\xi_{N,m}\|_{\ell_1} & = \sum_{j\in J} \mathbb{S}^\xi_{N,m}(j)\|E_{\max T_n}y_j\|_{\ell_1} \leqslant \sum_{j\in J} \mathbb{S}^\xi_{N,m}(j) \\ & = \sum_{j\in J_0}\mathbb{S}^\xi_{N,m}(L(j))=\mathbb{S}^\xi_{N,m}(G)\leqslant\ee_m. \end{align*}

We now prove Claim $2$.  Let $h_n$ be defined as in Claim $2$.  For $n\in\nn$ and $j\in T_n$, let $G_j=E_j\cap I_n$.  Note that since $\text{supp}(\mathbb{E}^\xi_N\varrho(n))\subset I_n$, $G_j\mathbb{E}^\xi_N\varrho(n)=E_j\mathbb{E}^\xi_N\varrho(n)$ for all $n\in\nn$ and $j\in T_n$. Furthermore, it follows from the definition of $T_n$ that for $j\in T_n$, $\min E_j\in T_n$, so that  $G_j\neq \varnothing$.    Also,  $\max G_j\leqslant \max E_j$ for each $j\in T_n$.   For $n\in\nn$, let $$S_n=\{j\in T_n: E_j\mathbb{E}^\xi_N\varrho(n)\neq 0\}$$    and note that $$h_n=\sum_{j\in T_n} \|E_j\mathbb{E}^\xi_N \varrho(n)\|_{\ell_1} e_{\max E_j}= \sum_{j\in S_n}\|E_j\mathbb{E}^\xi_N\varrho(n)\|_{\ell_1} e_{\max E_j}.$$  For $n\in\nn$, let $$g_n=\sum_{j\in S_n}\|G_j\mathbb{E}^\xi_N\varrho(n)\|_{\ell_1} e_{\max G_j}$$ and note that by $1$-left dominance, $$\Bigl\|\sum_{n=1}^\infty |a_n|h_n\Bigr\|_H \leqslant \Bigl\|\sum_{n=1}^\infty |a_n|g_n\Bigr\|_H.$$    Note that since $\ran(g_n)\subset I_n$, $(g_n)_{n=1}^\infty$ (after omitting any zero vectors if necessary) is a block sequence in $H$. Note also that $$\|g_n\|_H =\Bigl\|\sum_{j\in S_n}\|G_j\mathbb{E}^\xi_N\varrho(n)\|_{\ell_1} e_{\max G_j}\Bigr\|_H \leqslant \|\mathbb{E}^\xi_N \varrho(n)\|_{H_\xi}\leqslant 1.$$  

For each $n\in\nn$ and $j\in S_n$, since $N\in[\nn]$, $L(r)\in \cup_{s=1}^r \supp(\mathbb{S}^\xi_{N,n})$ for each $r\in\nn$. Therefore for each $n\in\nn$ and $j\in S_n$,  \begin{align*} \max E_j & \geqslant \min \{\min \ran(y_r): r\in \text{supp}(\mathbb{S}^\xi_{N,n})\} \geqslant \min \ran(y_{\min \supp(\mathbb{S}^\xi_{N,n})}) \\ & \geqslant \min \ran(y_{L(n)}) >A(n).\end{align*} Therefore $\min \ran(g_n)\geqslant A(n)$.  By $B$-block stability and $1$-unconditionality, $(g_n)_{n: g_n\neq 0}$ is $B$-dominated by $(e_{\min \ran(g_n)})_{n:g_n\neq 0}$, which is $1$-dominated by $(e_{A(n)})_{n: g_n\neq 0}$ by $1$-left dominance.  Therefore  \begin{align*}  \Bigl\|\sum_{n=1}^\infty |a_n|\sum_{j\in T_n} \|E_j\mathbb{E}^\xi_N \varrho(n)\|_{\ell_1} e_{\max E_j}\Bigr\|_H & = \Bigl\|\sum_{n=1}^\infty |a_n|h_n\Bigr\|_H \leqslant \Bigl\|\sum_{n=1}^\infty |a_n|g_n\Bigr\|_H \\ & \leqslant B\Bigl\|\sum_{n=1}^\infty |a_n|e_{A(n)}\Bigr\|. \end{align*} This gives Claim $2$. 

\end{proof}

\begin{corollary} If $\zeta$, $C$,  $1<p\leqslant \infty$ are as in Lemma \ref{exam1}, then the canonical basis of $H_\xi$ is shrinking. Furthermore,  the canonical basis of $H_\xi$ is weakly null and not $\xi$-weakly null.

\label{shrink}
\end{corollary}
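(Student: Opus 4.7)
The plan is to verify the three assertions in turn, beginning with the easiest. For the claim that $(e_n)_{n=1}^\infty$ is \emph{not $\xi$-weakly null}, I would show the basis itself is an $\ell_1^\xi$-spreading model of constant $1$. Given $E\in\mathcal{S}_\xi$ and scalars $(a_n)_{n\in E}$, setting $y=\sum_{n\in E}a_n e_n$ and using the single-piece partition $E_1=E$ in the supremum defining $\|\cdot\|_{H_\xi}$, we obtain
\[
\|y\|_{H_\xi} \;\geq\; \bigl\|\|y\|_{\ell_1}\, e_{\max E}\bigr\|_H \;=\; \|y\|_{\ell_1}\,\|e_{\max E}\|_H \;=\; \sum_{n\in E}|a_n|,
\]
since $(e_n)$ in $H$ is normalized. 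By the spreading property of $\mathcal{S}_\xi$ the same inequality holds for every subsequence of $(e_n)$, so the basis is not $\xi$-weakly null.

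For the \emph{shrinking} assertion, I would use the standard criterion that a basis is shrinking if and only if every bounded block sequence is weakly null. Suppose otherwise; after passing to a subsequence and renormalising, there exist a block sequence $(x_k)_{k=1}^\infty\subset B_{H_\xi}$, a functional $x^*\in H_\xi^*$, and $\delta>0$ with $\mathrm{Re}\, x^*(x_k)\geq\delta$ for all $k$. Fix $C>B$ and apply the block-sequence case of Lemma~\ref{exam1} with $\varsigma=(x_k)$, $A=L=\nn$, to obtain $M\in[\nn]$ such that for every $N\in[M]$ the sequence $(\mathbb{E}_N^\xi\varsigma(k))_{k=1}^\infty$ is $C$-dominated in $H$ by $(e_k)_{k=1}^\infty$.

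Fix one such $N$. The domination says precisely that the linear map $T\colon H\to H_\xi$ determined on the basis by $Te_k=\mathbb{E}_N^\xi\varsigma(k)$ extends to a bounded operator with $\|T\|\leq C$, and is therefore weak-to-weak continuous. Since $H$ has a shrinking basis, $(e_k)$ is weakly null in $H$, so $(Te_k)_{k=1}^\infty=(\mathbb{E}_N^\xi\varsigma(k))_{k=1}^\infty$ is weakly null in $H_\xi$. On the other hand, each $\mathbb{E}_N^\xi\varsigma(k)$ is a convex combination of the $x_j$'s indexed by $\supp(\mathbb{S}^\xi_{N,k})$, and each such $x_j$ satisfies $\mathrm{Re}\, x^*(x_j)\geq\delta$, whence $\mathrm{Re}\, x^*(\mathbb{E}_N^\xi\varsigma(k))\geq\delta$ for every $k$. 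This contradicts weak nullity and establishes that $(e_n)$ is shrinking in $H_\xi$.

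The \emph{weakly null} conclusion is then immediate: for every $x^*\in H_\xi^*$, the shrinking property gives $\|x^*|_{\overline{\mathrm{span}}\{e_m:m\geq n\}}\|\to 0$, so in particular $|x^*(e_n)|\to 0$. The only delicate point I foresee is the application of Lemma~\ref{exam1}: one must use precisely the block-sequence clause of its hypothesis and, crucially, note that the domination it supplies is measured in the $H$-norm rather than the $H_\xi$-norm, so that the shrinking property of the base space $H$ can be transported through $T$ back to $H_\xi$.
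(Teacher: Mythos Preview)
Your proposal is correct and follows essentially the same route as the paper. The paper's proof is terser---it simply observes that Lemma \ref{exam1} forces every bounded block sequence in $H_\xi$ to have a convex block dominated by a weakly null sequence in $H$, hence itself weakly null---whereas you spell out the operator $T$ and the contradiction explicitly; the argument for the $\ell_1^\xi$-spreading model and the deduction of weak nullity from shrinking are identical to the paper's.
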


\begin{proof} Lemma \ref{exam1} yields that any bounded block sequence in $H_\xi$ has a convex block sequence which is dominated by a weakly null sequence, which implies that any bounded block sequence in $H_\xi$ is weakly null. Therefore the canonical basis of $H_\xi$ is shrinking. 

It follows from the previous paragraph that the canonical basis of $H_\xi$ is weakly null. It is evident that the canonical basis of $H_\xi$ is such that for any $E\in \mathcal{S}_\xi$ and scalars $(a_n)_{n\in E}$, $$\Bigl\|\sum_{n\in E}a_ne_n\Bigr\|_{H_\xi} \geqslant \Bigl\|\bigl(\sum_{n\in E}|a_n|\bigr)e_{\max E_n}\Bigr\|_H = \sum_{n\in E}|a_n|.$$   Therefore the canonical $H_\xi$ basis is an $\ell_1^\xi$-spreading model, and therefore not $\xi$-weakly null.

\end{proof}

We recall that for an ordinal $1\leqslant \mu<\omega_1$ and $0<\vartheta<1$, the \emph{Figiel-Johnson Tsirelson space} $T_{\mu, \vartheta}$ is the space which is the completion of $c_{00}$ with respect to the implicitly defined norm $$\|x\|_{T_{\mu,\vartheta}}= \max\Bigl\{\|x\|_{c_0}, \sup\bigl\{\vartheta\sum_{n=1}^t \|I_nx\|_{T_{\mu,\vartheta}}: I_1<\ldots <I_n, (\min I_n)_{n=1}^t\in \mathcal{S}_\mu\bigr\}\Bigr\}.$$   We let $1/p+1/q=1$ and let $T^{(q)}_{\mu, \vartheta}$ denote the $q$-convexification of $T_{\mu,\vartheta}$. It is easy to see that for any block sequence $(x_n)_{n=1}^t\in T^{(q)}_{\mu,\vartheta}$ such that $(\min \ran(x_n))_{n=1}^t\in \mathcal{S}_\mu$, $$\vartheta^{1/q}\Bigl(\sum_{n=1}^t \|x_n\|_{T_{\mu,\vartheta}}^q\Bigr)^{1/q} \leqslant \Bigl\|\sum_{n=1}^t x_n\Bigr\|_{T_{\mu,\vartheta}}\leqslant \Bigl(\sum_{n=1}^t \|x_n\|_{T_{\mu,\vartheta}}^q\Bigr)^{1/q}.$$   Recall that for non-empty regular families $\mathcal{F}, \mathcal{G}$, $$\mathcal{F}[\mathcal{G}]=\{\varnothing\}\cup \Bigl\{\bigcup_{n=1}^t F_n: F_1<\ldots <F_t, \varnothing\neq F_n\in\mathcal{G}, (\min F_n)_{n=1}^t \in \mathcal{F}\Bigr\}.$$ If $\text{rank}(\mathcal{F})=\mu_1+1$ and $\text{rank}(\mathcal{G})=\mu_2+1$, then $\text{rank}(\mathcal{F}[\mathcal{G}])=\mu_2\mu_1+1$.     We also define $\mathcal{F}^{\otimes 1}=\mathcal{F}$ and $\mathcal{F}^{\otimes m+1}= \mathcal{F}[\mathcal{F}^{\otimes m}]$ for $m\in\nn$. Since $\text{rank}(\mathcal{S}_\mu)=\omega^\mu+1$, it follows that $\text{rank}(\mathcal{S}_\mu^{\otimes m})=\omega^{\mu m}+1$.  It is easy to see by induction that for any $m\in\nn$ and any block sequence $(x_n)_{n=1}^t\subset T^{(q)}_{\mu,\vartheta}$ such that $(\min \ran(x_n))_{n=1}^t\in \mathcal{S}_\mu^{\otimes m}$, $$\vartheta^{m/q}\Bigl(\sum_{n=1}^t \|x_n\|_{T_{\mu,\vartheta}^{(q)}}^q\Bigr)^{1/q} \leqslant \Bigl\|\sum_{n=1}^t x_n\Bigr\|_{T_{\mu,\vartheta}^{(q)}}\leqslant \Bigl(\sum_{n=1}^t \|x_n\|_{T_{\mu,\vartheta}^{(q)}}^q\Bigr)^{1/q}.$$

An easy duality argument yields that, if $H=(T_{\mu, \vartheta}^{(q)})^*$, then for any $m\in\nn$ and any block sequence $(x_n)_{n=1}^t\in H$ such that $(\min \ran(x_n))_{n=1}^t\in \mathcal{S}_\mu^{\otimes m}$, $$\Bigl(\sum_{n=1}^t \|x_n\|_H^p\Bigr)^{1/p} \leqslant \Bigl\|\sum_{n=1}^t x_n\Bigr\|_H\leqslant \vartheta^{-m/q}\Bigl(\sum_{n=1}^t \|x_n\|_H^p\Bigr)^{1/p},$$ with the $\ell_p$ norm replaced by the maximum if $p=\infty$.

Before completing our examples, we isolate the following piece of bookeeping.   

\begin{lemma}  Fix $\xi<\omega_1$,  For any $M\in[\nn]$, there exists $T\in [M]$ such that for any $F=_{\mathcal{S}_\xi}\cup_{n=1}^t F_n\in \mathcal{F}_{\omega^\mu}^T[\mathcal{S}_\xi] $, there exist $N\in[M]$ and $H\in \mathcal{F}_{\omega^\mu}$ such that $\cup_{n=1}^t F_n=\cup_{n\in H}\supp(\mathbb{S}^\xi_{N,n})$.

\label{veryeasy}
\end{lemma}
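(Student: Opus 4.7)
The plan is to construct $T \in [M]$ so sparsely that between consecutive elements of $T$ there is ample room in $M$ to insert maximal $\mathcal{S}_\xi$-blocks as padding; then, given any $F$ of the stated form, I would build $N \in [M]$ by interleaving the $F_n$'s with precisely the right number of padding blocks so that each $F_n$ occupies position $g_n$ in the greedy $\mathcal{S}_\xi$-decomposition of $N$. Here $G = (g_n)_{n=1}^t \in \mathcal{F}_{\omega^\mu}$ is the unique set satisfying $T(G) = (\min F_n)_{n=1}^t$ (whose existence is guaranteed by Remark \ref{triv} combined with the hypothesis $F \in \mathcal{F}_{\omega^\mu}^T[\mathcal{S}_\xi]$), and the output set will be $H := G$.

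To construct $T$, I would proceed recursively: having chosen $T(1) < \ldots < T(k-1)$ in $M$, pick $T(k) \in M$ strictly beyond the $(k-1)$-st element of the greedy $\mathcal{S}_\xi$-decomposition of the tail of $M$ past $T(k-1)$. Since $\mathcal{S}_\xi$ is nice, such a greedy decomposition exists and each of its blocks lies in $MAX(\mathcal{S}_\xi)$ (the spreading property forces any locally maximal $\mathcal{S}_\xi$-initial-segment of an infinite subset of $\nn$ to be globally maximal). The resulting $T \in [M]$ then satisfies: for every $k \geq 2$, the interval $(T(k-1), T(k))$ contains at least $k-1$ consecutive maximal $\mathcal{S}_\xi$-blocks of $M$.

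Now fix $F =_{\mathcal{S}_\xi} \bigcup_{n=1}^t F_n \in \mathcal{F}_{\omega^\mu}^T[\mathcal{S}_\xi]$, set $g_0 := 0$, and define $c_n := g_n - g_{n-1} - 1 \geq 0$. For each $n$, since $F_{n-1} \subset T$ and $\max F_{n-1} < \min F_n = T(g_n)$, we have $\max F_{n-1} \leq T(g_n - 1)$ (with conventions $\max F_0 := 0$ and $T(0) := 0$); hence the interval $(T(g_n - 1), T(g_n)) \subset (\max F_{n-1}, \min F_n)$ contains at least $g_n - 1 \geq c_n$ consecutive maximal $\mathcal{S}_\xi$-blocks of $M$ by the construction of $T$. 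Let $A_n$ be the union of the first $c_n$ such blocks, and set $N := A_1 \cup F_1 \cup A_2 \cup F_2 \cup \ldots \cup A_t \cup F_t \cup M'$ for any infinite tail $M' \subset M$ lying past $\max F_t$; evidently $N \in [M]$.

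For the verification, note that each constituent block (the $c_n$ components of each $A_n$ together with each $F_n$) lies in $MAX(\mathcal{S}_\xi)$, so absolute maximality prevents any of these blocks from being absorbed into a larger $\mathcal{S}_\xi$-initial-segment by the next element of $N$; hence the greedy $\mathcal{S}_\xi$-decomposition of $N$ reads off exactly these blocks in order. A direct count shows that $F_n$ occupies position $n + \sum_{j=1}^n c_j = g_n$ in $N$'s decomposition, so $\supp(\mathbb{S}^\xi_{N, g_n}) = F_n$ and $\bigcup_{n \in G} \supp(\mathbb{S}^\xi_{N, n}) = F$, yielding $H := G \in \mathcal{F}_{\omega^\mu}$. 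The main obstacle is this exact index-matching--arranging for $F_n$ to land at position $g_n$ rather than some shifted position--which is resolved by the telescoping choice $c_n = g_n - g_{n-1} - 1$, itself made feasible by the sparsity built into $T$.
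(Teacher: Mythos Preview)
Your proposal is correct and takes essentially the same approach as the paper's proof: both choose $T$ sparsely in $M$ so that between consecutive $T$-points there are many maximal $\mathcal{S}_\xi$-blocks of $M$ available, then given $F$ both take $H=G$ where $T(G)=(\min F_n)_{n=1}^t$ and build $N$ by inserting exactly $g_n-g_{n-1}-1$ padding blocks before each $F_n$ so that $F_n$ becomes the $g_n^{th}$ block of $N$'s greedy decomposition. The only cosmetic difference is that the paper realizes the sparsity via an explicit index function $K(p)=p(p+1)/2$ on a fixed greedy decomposition $E_1<E_2<\ldots$ of $M$, while you phrase it recursively; the verification is the same.
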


\begin{proof} Let $K(1)=1$ and $K(p+1)=K(p)+p+1> p+1$.     Fix $E_1<E_2<\ldots$ with $E_n\in MAX(\mathcal{S}_\xi)\upp M$. Let $T(n)=\min E_{K(n)}\in M$ for all $n\in\nn$.   Note that $T\in [M]$.   Fix $F=_{\mathcal{S}_\xi}\cup_{n=1}^t F_n\in \mathcal{F}_{\omega^\mu}^T[\mathcal{S}_\xi]$ and note that if $\min F_n= T(i_n)$, then $H=(i_n)_{n=1}^t\in \mathcal{F}_{\omega^\mu}$, since $(\min F_n)_{n=1}^t\in \mathcal{F}_{\omega^\mu}^T$.

For $n=1, \ldots, t$, let $j_n$ be such that $T(j_n)=\max F_n$ and note that $i_1\leqslant j_1<i_2\leqslant j_2<\ldots$.    Since $\min F_1=T(i_1)=\min E_{K(i_1)} \geqslant K(i_1)\geqslant i_1$, we can select $J_1\subset (1, K(i_1))$ with $|J_1|=i_1-1$. Let $G_1=\cup_{i\in J_1} E_i$ and note that $G_1$ is a union of $i_1-1$ consecutive, maximal members of $\mathcal{S}_\xi\upp M$, and $G_1<F_1$.    Therefore $G_1\cup F_1$ is a union of $i_1$ consecutive, maximal members of $\mathcal{S}_\xi\upp M$ and $F_1$ is the last of those consecutive sets.   

Next, suppose that $G_1<F_1<\ldots <G_n<F_n$ have been chosen such that $G_1\cup F_1\cup \ldots \cup G_n\cup F_n$ is the union of $i_n$ consecutive, maximal members of $\mathcal{S}_\xi\upp M$ and $F_m$ is the $i_m^{th}$ of those sets for each $m=1, \ldots, n$.  If $n=t$, let $N_0\in [M]$ be such that $F_t<N_0$, and we are done with the recursive process.  In this case, we let $N=G_1\cup F_1\cup \ldots \cup G_t\cup F_t\cup N_0$.  If $n<t$, we complete the recursive step as follows: Since $j_n<i_{n+1}$, $$K(i_{n+1})-K(j_n)\geqslant K(i_{n+1})-K(i_{n+1}-1)=i_{n+1}.$$   Therefore we can choose a subset $J_{n+1}$ of $(K(j_n), K(i_{n+1}))$ of cardinality $i_{n+1}-i_n-1$. Let $G_{n+1}=\cup_{i\in J_{n+1}}E_i$ and note that $F_n<G_{n+1}<F_{n+1}$ and $G_{n+1}$ is a union of $i_{n+1}-i_n-1$ consecutive, maximal members of $\mathcal{S}_\xi\upp M$.  From this it follows that $G_1\cup F_1\cup \ldots \cup F_n\cup G_{n+1}\cup F_{n+1}$ is a union of $i_{n+1}$ consecutive, maximal members of $\mathcal{S}_\xi\upp M$.    

Now if $N=G_1\cup F_1\cup \ldots \cup G_n\cup F_n\cup N_0\in[M]$ as above, then $\cup_{n=1}^t F_n=\cup_{n=1}^t \supp(\mathbb{S}^\xi_{N,i_n})=\cup_{n\in H}\supp(\mathbb{S}^\xi_{N,n})$, as desired.

\end{proof}

\begin{corollary} Fix $1<p\leqslant \infty$ and let $1/p+1/q=1$. \begin{enumerate}[(i)]\item If $H=\ell_p$ (resp. $c_0$ if $p=\infty$), then with $X=H_\xi$, $R=c_0^w(X)$, and $S=S_p$,  $\gamma_p(\xi)=\omega_1$.  \item For $0<\vartheta<1$, $1\leqslant \mu<\omega_1$,  and $H=(T^{(q)}_{\mu, \vartheta})^*$, $\omega^\mu<\gamma_p(\xi)<\infty$. \end{enumerate}

\label{exam2}
\end{corollary}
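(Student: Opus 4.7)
The plan is to obtain both statements by applying Lemma \ref{exam1} and combining with the specific features of $\ell_p$ (resp.\ $c_0$) or of the dual Tsirelson norm $H = (T^{(q)}_{\mu,\vartheta})^*$. First one checks $H$ satisfies the hypotheses of Lemma \ref{exam1}: this is trivial with $B = 1$ for $\ell_p$ and $c_0$; for the dual Tsirelson space the basis is normalized, $1$-unconditional, shrinking (by reflexivity of $T^{(q)}_{\mu,\vartheta}$), block stable with some constant $B$, and $1$-left dominant, dually to the $1$-right dominance of the $T^{(q)}_{\mu,\vartheta}$ basis. For part (i), apply Lemma \ref{exam1} with $A = \nn$ to any weakly null $\varsigma \in B_{H_\xi}$ and $L \in [\nn]$; since the $\ell_p$ (or $c_0$) basis is subsymmetric, $C$-domination by $(e_{A(n)})$ in $H$ is exactly $C$-domination by the full $\ell_p$ (resp.\ $c_0$) basis, giving $s_p(\mathbb{E}_N^\xi\varsigma) \leqslant C$ for all $N \in [M]$. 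This verifies condition $(6)$ of Theorem \ref{main2} with constant $C$, so $\Gamma_p^\xi(\omega_1) < \infty$ and $\gamma_p(\xi) = \omega_1$.

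For the upper bound $\gamma_p(\xi) < \infty$ in part (ii), take $\varsigma = (e_n)_{n=1}^\infty$ in $H_\xi$, weakly null by Corollary \ref{shrink}. The defining supremum of the $H_\xi$ norm, evaluated with $E_n = F_n$, gives
\[
\Bigl\|\sum_{n=1}^t \mathbb{E}_F^\xi\varsigma(n)\Bigr\|_{H_\xi} \geqslant \Bigl\|\sum_{n=1}^t e_{\max F_n}\Bigr\|_H
\]
for any $F =_{\mathcal{S}_\xi} \bigcup_{n=1}^t F_n$. Standard Tsirelson-dual computations show $\|\sum_{n=1}^t e_{k_n}\|_H / t^{1/p}$ grows without bound as $(k_n)_{n=1}^t$ is placed at progressively deeper iterates of $\mathcal{S}_\mu$, and Lemma \ref{veryeasy} allows any such configuration to be realized inside some $\mathcal{F}_\zeta^M[\mathcal{S}_\xi]$ with $\zeta < \omega_1$. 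Hence $\Gamma_p^\xi(\omega_1) = \infty$ and Corollary \ref{pen2} produces a countable ordinal $\gamma_p(\xi)$.

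For the lower bound $\omega^\mu < \gamma_p(\xi)$, first use Proposition \ref{reg}(i) to pick $L_0 \in [L]$ with $\mathcal{F}_{\omega^\mu}(L_0) \subset \mathcal{S}_\mu$, then run the diagonal construction inside the proof of Lemma \ref{exam1} starting from $L_0$ to obtain $M \in [L_0]$. For any $F \in \mathcal{F}_{\omega^\mu}^M[\mathcal{S}_\xi]$ with $F =_{\mathcal{S}_\xi} \bigcup_{n=1}^t F_n$, one has $(\min F_n)_{n=1}^t \in \mathcal{F}_{\omega^\mu}(M) \subset \mathcal{S}_\mu$, so the auxiliary block sequence $(g_n)_{n=1}^t$ occurring in that proof (for which $\min \ran(g_n) = \min F_n$ up to the controlled perturbations already tracked there) has $\mathcal{S}_\mu$-admissible minima. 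At this point, instead of invoking block stability and $1$-left dominance to bound $\|\sum |a_n| g_n\|_H$ by $\|\sum |a_n| e_{A(n)}\|_H$, apply directly the $\mathcal{S}_\mu$-admissible upper estimate
\[
\Bigl\|\sum a_n g_n\Bigr\|_H \leqslant \vartheta^{-1/q}\Bigl(\sum |a_n|^p\Bigr)^{1/p}
\]
valid in $(T^{(q)}_{\mu,\vartheta})^*$. Combining with the tail estimates already present in the proof of Lemma \ref{exam1} yields a uniform $s_p(\mathbb{E}_F^\xi\varsigma) \leqslant C$, so $\Gamma_p^\xi(\omega^\mu) < \infty$ and $\omega^\mu < \gamma_p(\xi)$.

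The main obstacle lies in this final modification. Lemma \ref{exam1} as stated only outputs $H$-domination by $(e_{A(n)})$, and $1$-left dominance runs the wrong way to deduce an $\ell_p$-bound from this, since for any fixed $A$ one cannot expect $(A(1),\dots,A(t)) \in \mathcal{S}_\mu$ for all $t$. Hence one must re-enter the proof of Lemma \ref{exam1} at the step bounding the auxiliary block sequence $(g_n)$, and exploit the preliminary passage to $L_0$ to replace the block-stability step by the Tsirelson $\mathcal{S}_\mu$-admissible estimate; the rest of the gliding-hump combinatorics in that proof carries over unchanged.
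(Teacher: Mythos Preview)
Your treatment of part (i) matches the paper exactly: apply Lemma \ref{exam1} with $A=\nn$ and use subsymmetry of the $\ell_p$ basis.

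For the lower bound $\omega^\mu<\gamma_p(\xi)$ in (ii), your route genuinely differs from the paper's. The paper keeps Lemma \ref{exam1} as a black box: it fixes $A\in[\nn]$ with $A(G)\in\mathcal{S}_\mu$ for all $G\in\mathcal{F}_{\omega^\mu}$, applies Lemma \ref{exam1} to get $C$-domination by $(e_{A(n)})_{n=1}^\infty$, and then invokes Lemma \ref{veryeasy} to realize any $F=_{\mathcal{S}_\xi}\bigcup_{n=1}^tF_n\in\mathcal{F}^T_{\omega^\mu}[\mathcal{S}_\xi]$ as $\bigcup_{n\in H}\supp(\mathbb{S}^\xi_{N,n})$ for some $N\in[M]$ and $H\in\mathcal{F}_{\omega^\mu}$; since $A(H)\in\mathcal{S}_\mu$, the $\mathcal{S}_\mu$-admissible dual estimate converts $H$-domination into $\ell_p$-domination. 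Your alternative --- pass first to $L_0$ with $\mathcal{F}_{\omega^\mu}(L_0)\subset\mathcal{S}_\mu$, then re-enter the proof of Lemma \ref{exam1} and replace the block-stability step by the admissible estimate on $(g_n)$ --- can be made to work, but needs more care than you indicate: one must arrange $\min\ran(y_k)\geqslant k$ for $k\in L$ so that $(\min\ran(g_n))_{n\leqslant t}$ is a \emph{spread} of $(\min F_n)_{n\leqslant t}\in\mathcal{S}_\mu$ (your ``$\min\ran(g_n)=\min F_n$ up to perturbations'' is not quite right). The paper's approach is more modular; yours avoids the bookkeeping of Lemma \ref{veryeasy}.

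Your upper bound $\gamma_p(\xi)<\omega_1$ has a gap. The displayed inequality $\|\sum_n\mathbb{E}^\xi_F\varsigma(n)\|_{H_\xi}\geqslant\|\sum_n e_{\max F_n}\|_H$ is correct, but the claim that $\|\sum_{n=1}^t e_{k_n}\|_H/t^{1/p}\to\infty$ via ``standard Tsirelson-dual computations'' is not substantiated, and Lemma \ref{veryeasy} is irrelevant here. The dual estimate recalled in the paper only gives $\|\sum e_{k_n}\|_H\leqslant\vartheta^{-m/q}t^{1/p}$ for $\mathcal{S}_\mu^{\otimes m}$-admissible families, which is the wrong direction. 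The paper instead shows that for \emph{every} $M\in[\nn]$, $\mathbb{E}_M\varsigma$ $1$-dominates the subsequence $(e_{\max\supp(\mathbb{S}^\xi_{M,n})})_{n=1}^\infty$ of the $H$ basis, and then argues that no subsequence of the $H$ basis is $\ell_p$-dominated: if one were, it would be $\ell_p$-equivalent (since the $H$ basis dominates $\ell_p$), giving a complemented $\ell_p$ in $H$, hence $\ell_q$ in $T^{(q)}_{\mu,\vartheta}$, hence $\ell_1$ in $T_{\mu,\vartheta}$, which is impossible. You should replace your sketch with this argument.
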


\begin{rem}\upshape The examples from Corollary \ref{exam2} yield that we can obtain uncountably many distinct values of $\gamma_p(\xi)$ for different choices of $H$.  

\end{rem}

\begin{proof}[Proof of Corollary \ref{exam2}]In the proof, to avoid repetition, we leave it to the reader to make the appropriate substitution of $\ell_p$ norms with maxima and $\ell_p$ with $c_0$ in the $p=\infty$ case.

$(i)$ In the case $H=\ell_p$, then the canonical basis of $H$ is normalized, $1$-unconditional, shrinking, $1$-block stable, and $1$-left dominant. By Lemma \ref{exam1}, for any $C>1$ and any $L\in[\nn]$, there exists $M\in[\nn]$ such that for any $N\in[M]$, $\mathbb{E}^\xi_N\varsigma$ is $C$-dominated by the $\ell_p$ basis.  This yields that $\gamma_p(\xi)=\omega_1$, and in fact $\Gamma^\xi_p(\omega_1)\leqslant 1$.   In this case, the space $H_\xi$ is the higher order Baernstein space $X_{\xi,p}$.

$(ii)$  It was shown in \cite{LT} that $T_{\mu, \vartheta}$ is block stable. From this it easily follows that $T^{(q)}_{\mu, \vartheta}$ and its dual $H=(T^{(q)}_{\mu, \vartheta})^*$ are block stable. Let $B$ be such that $H$ is $B$-block stable.    Since the rank of $\mathcal{S}_\mu$ is $\omega^\mu+1$, there exists $A\in [\nn]$ such that for any $G\in \mathcal{F}_{\omega^\mu}$, $A(G)\in \mathcal{S}_\mu$.

 By Lemma \ref{exam1}, for any $C>B$, $\varsigma\in B_{c_0^w(H_\xi)}$, and $L\in[\nn]$, there exists $M\in[L]$ such that for any $N\in[\nn]$, $\mathbb{E}^\xi_N\varsigma$ is $C$-dominated by $(e_{A(n)})_{n=1}^\infty$.    Let $T\in [M]$ be as in the conclusion of Lemma \ref{veryeasy}.   Then for any $F=_{\mathcal{S}_\xi}\cup_{n=1}^t F_n \in \mathcal{F}_{\omega^\mu}^T[\mathcal{S}_\xi]$, there exist $H\in \mathcal{F}_{\omega^\mu}$ and $N\in[M]$ such that $F=\cup_{n\in H}\supp(\mathbb{S}^\xi_{N,n})$.     Write $H=(i_n)_{n=1}^t$.    Fix any scalars $(a_n)_{n=1}^t$ and let $b_{i_n}=a_n$ for $n=1, \ldots, t$. Let $b_n=0$ for $n\in \nn\setminus H$.    Then since $A(H)\in \mathcal{S}_\mu$, \begin{align*}\Bigl\|\sum_{n=1}^t a_n\mathbb{E}^\xi_F \varsigma(n)\Bigr\|_{H_\xi} & =\Bigl\|\sum_{n=1}^\infty b_n \mathbb{E}^\xi_N \varsigma(n)\Bigr\|_{H_\xi} \leqslant C\Bigl\|\sum_{n=1}^\infty b_n e_{A(n)}\Bigr\|_H \\ & =C\Bigl\|\sum_{n\in H} b_ne_{A(n)}\Bigr\|_H \leqslant  C\vartheta^{-1/q}\Bigl(\sum_{n\in H} |b_n|^p\Bigr)^{1/p} = C\vartheta^{-1/q}\Bigl(\sum_{n=1}^t |a_n|^p\Bigr)^{1/p}. \end{align*}  This yields that  $\Gamma^\xi_p(\mu)\leqslant B/\vartheta^{1/q}$ and $\gamma_p(\xi)>\mu$.

  However, if we take the canonical basis $\varsigma=(e_n)_{n=1}^\infty\in B_{c_0^w(H_\xi)}$, then for any $M\in[\nn]$, if $M=\cup_{n=1}^\infty E_n$, then for any $(a_n)_{n=1}^\infty\in c_{00}$, \begin{align*} \Bigl\|\sum_{n=1}^\infty a_n \mathbb{E}^\xi_M (n)\Bigr\|_{H_\xi} & \geqslant \Bigl\|\sum_{n=1}^\infty \Bigl(\bigl\|\supp(\mathbb{S}^\xi_{M,n})\sum_{m=1}^\infty a_m \mathbb{E}^\xi_M (m) \bigr\|_{\ell_1}\Bigr)e_{\max \supp(\mathbb{S}^\xi_{M,n})}\Bigr\|_H \\ & = \Bigl\|\sum_{n=1}^\infty |a_n|e_{\max \supp(\mathbb{S}^\xi_{M,n})}\Bigr\|_H.\end{align*}   Therefore $\mathbb{E}_M\varsigma$ $1$-dominates the subsequence $(e_{\max \supp(\mathbb{S}^\xi_{M,n})})_{n=1}^\infty$ of the $H$ basis. Since the $H=(T^{(q)}_{\mu,\vartheta})^*$ basis does not have a subsequence dominated by the $\ell_p$ basis, this shows that there cannot exist any $M\in[\nn]$ such that $\mathbb{E}_M\varsigma\in S_p$. This shows that $\gamma_p(\xi)<\omega_1$. 

To see that the basis of $H$ has no subsequence dominated by the $\ell_p$ basis, note that since the canonical basis of $H$ dominates the $\ell_p$ basis, any subsequence of the basis of $H$ which is dominated by the $\ell_p$ basis must be equivalent to the $\ell_p$ basis. If such a sequence existed, then $H$ would admit a complemented copy of $\ell_p$. Then $T^{(q)}_{\mu, \vartheta}$ would contain an isomorphic copy of $\ell_q$, and $T_{\mu, \vartheta}$ would contain an isomorphic copy of $\ell_1$.  But $T_{\mu, \vartheta}$ famously contains no isomorphic copy of $\ell_1$.

\end{proof}





\end{document}